\DeclareMathOperator{\Aut}{Aut}
\DeclareMathOperator{\Char}{Char}
\DeclareMathOperator{\Coker}{Coker}
\DeclareMathOperator{\ET}{ET}
\DeclareMathOperator{\Gal}{Gal}
\DeclareMathOperator{\Hom}{Hom}
\DeclareMathOperator{\Img}{Im}
\DeclareMathOperator{\Ker}{Ker}
\DeclareMathOperator{\rank}{rank}
\DeclareMathOperator{\res}{res}
\DeclareMathOperator{\Res}{Res}
\DeclareMathOperator{\trdeg}{tr.deg}
\DeclareMathOperator{\Tr}{Tr}
\newcommand{\nek}{,\ldots,}
\newcommand{\inv}{^{-1}}
\newcommand{\isom}{\cong}
\newcommand{\tensor}{\otimes}
\newtheorem{thm}{Theorem}[section]
\newtheorem{cor}[thm]{Corollary}
\newtheorem{lem}[thm]{Lemma}
\newtheorem{prop}[thm]{Proposition}
\newtheorem{defin}[thm]{Definition}
\newtheorem{exam}[thm]{Example}
\newtheorem{examples}[thm]{Examples}
\newtheorem{rem}[thm]{Remark}
\newtheorem{conj}[thm]{Conjecture}
\newtheorem{rems}[thm]{Remarks}
\numberwithin{equation}{section}
\newcommand{\alp}{\alpha}
\newcommand{\Gam}{\Gamma}
\newcommand{\eps}{\epsilon}
\newcommand{\lam}{\lambda}
\newcommand{\sig}{\sigma}
\newcommand{\dbC}{\mathbb{C}}
\newcommand{\dbF}{\mathbb{F}}
\newcommand{\dbQ}{\mathbb{Q}}
\newcommand{\dbR}{\mathbb{R}}
\newcommand{\dbZ}{\mathbb{Z}}
\newcommand{\grm}{\mathfrak{m}}
\newcommand{\calE}{\mathcal{E}}
\newcommand{\calG}{\mathcal{G}}
\newcommand{\calZ}{\mathcal{Z}}
\begin{document}

\title[The Elementary Type Conjecture]{The Elementary Type Conjecture for Maximal Pro-$p$ Galois groups}

\author{Ido Efrat}
\address{Earl Katz Family Chair in Pure Mathematics\\
Department of Mathematics\\
Ben-Gurion University of the Negev\\
P.O.\ Box 653, Be'er-Sheva 84105\\
Israel} \email{efrat@bgu.ac.il}

\thanks{This work was supported by the Israel Science Foundation (grant No.\ 569/21). }

\subjclass[2010]{Primary 12F10, Secondary 12G05,  20J06, 13A18, 12E30}

\maketitle
\begin{center}
\itshape
Dedicated to J\'an Min\'a\v c on his 71th birthday
\end{center}

\begin{abstract}
The Elementary Type Conjecture in Galois theory provides a concrete inductive description of the finitely generated maximal pro-$p$ Galois groups $G_F(p)$ of fields $F$ containing a root of unity of order $p$.
We describe several variants of this conjecture, and prove various connections between these variants and other conjectures in field and Galois theory.
We focus on a strong arithmetical variant of the conjecture, and its implications to the realization of pro-$p$ Demu\v skin groups as Galois groups.
\end{abstract}

%\centerline{\bf Draft  -- Not for Distribution}

\tableofcontents

\section{Introduction}
\subsection{The general philosophy}
A fortunate, yet rare, situation in mathematics is where all objects in a certain category can be constructed in finitely many steps from a well-understood list of building blocks using a well-understood list of operations.
For instance, finite-dimensional linear spaces over a field $F$ are isomorphic to direct sums of finitely many copies of $F$.
Similarly, finitely generated abelian groups are the direct sums of cyclic groups of the forms $\dbZ$ or $\dbZ/q$, where $q$ is a prime power.
The situation differs somewhat for finite groups: while their building blocks -- the finite simple groups -- have been classified, constructing arbitrary finite groups from these `atoms' via composition series can be highly complex.

Fields, by contrast, possess a profoundly rich structure, rendering a classification of this kind seemingly unattainable. 
Similarly, canonical Galois groups of fields, such as their absolute Galois groups, are not expected to exhibit such a neat organization. 
In fact, characterizing the profinite groups realizable as absolute Galois groups remains a wide-open problem.

However, in this paper we describe a similar \textsl{conjectural} phenomenon in the context of fields, where certain canonical Galois groups, if finitely generated, can  expectedly be constructed from a well-understood list of `atoms' using two simple group-theoretic operations. 

More specifically, we fix throughout the paper a prime number $p$ and consider fields $F$ containing a root of unity of order $p$.
Let $G_F(p)=\Gal(F(p)/F)$ be the maximal pro-$p$ Galois group of $F$, where $F(p)$ stands for the maximal pro-$p$ Galois extension of $F$.
Thus $G_F(p)$ is the maximal pro-$p$ quotient of the absolute Galois group $G_F=\Gal(F^{\rm sep}/F)$ of $F$, where $F^{\rm sep}$ denotes its separable closure.
The group-theoretic structure of $G_F(p)$ as a pro-$p$ group is also a mystery in general.
But if we restrict ourselves to such groups which are \textsl{finitely generated} (as pro-$p$ groups), then the \textsl{Elementary Type Conjecture} for pro-$p$ Galois groups, described in this paper, predicts a complete classification of this subfamily in the above style.
See \S1.3 below for a formulation of this conjecture in its strongest (arithmetical) form. 

It is known that the group $G_F(p)$ encodes key arithmetic data on the arithmetical structure of $F$, such as orderings, Witt rings (when $p=2$), and certain valuations.
Thus this conjectural classification can be thought of as a partial classification of the arithmetic structure of fields with $(F^\times:(F^\times)^p)<\infty$ (by Kummer theory, this is equivalent to $G_F(p)$ being finitely generated).   
This conjectural description therefore serves as a very convenient `testing ground' for other conjectures in field theory and Galois cohomology.
See \S\ref{section on applications}.

This paper has multiple objectives:
First, we describe the conjecture,  its several variants, some of the existing evidence for its validity, and several applications to other open problems in the arithmetic of fields.
Second, we give detailed proofs of some related facts and connections, which seem to be known to experts, but which have no references in the relevant literature.
Further, we analyze connections between the Elementary Type Conjecture and other major problems in Galois theory and valuation theory. 
These include realization of Demu\v skin groups as Galois groups (see below),  construction of valuations from Galois-cohomological and Milnor $K$-theory data,  and connections with rigidity in fields.

\subsection{Brief history}
The Elementary Type Conjecture emerged from studying quadratic forms over fields of characteristic $\neq 2$.
In the 1970s and 1980s, equivalent formalisms -- quadratic form schemes, quaternionic maps, and abstract Witt rings -- sought to axiomatize this theory (see \cite{Marshall04}, \cite{Kula91}, \cite{Lam05}*{Ch.\ XII, \S8} and references therein). 
Within these frameworks, `local type' structures mimic quadratic forms over local and finite fields. 
Formal operations, called \textit{products} and \textit{extensions}, build the class of \textit{elementary type} structures from these local types. 
Such structures are always realizable over fields $F$ of characteristic $\neq2$ with $(F^\times:(F^\times)^2)<\infty$, raising the question: 
Are all such structures which are realizable over fields of elementary type? 
(See \cite{Marshall04}*{\S3.4} and \cite{Lam05}*{p.\ 463}.) 
This was confirmed using elementary tools in many cases, like $(F^\times:(F^\times)^2)\leq 128$ (see \S\ref{section on evidence}).

Another path to the conjecture arose from \textit{Pythagorean fields} -- fields $F$ of characteristic $\neq2$ satisfying $F^2=F^2+F^2$, or equivalently, such that $G_F(2)$ is generated by elements of order $2$.
An analysis of the ordered field structure of such fields due to Br\"ocker (\cite{Broecker76}, \cite{Broecker77}), Craven \cite{Craven78}, and Kula \cite{Kula79} revealed that, under the finiteness assumption, they have an elementary type in the appropriate sense.
Moreover, they showed that in the Pythagorean case the operation of extension can be interpreted valuation-theoretically.
This was further studied by Jacob \cite{Jacob81} and Min\'a\v c \cite{Minac86}, who interpreted the elementary type decomposition in the Pythagorean case in terms of the maximal pro-$2$ Galois group $G_F(2)$.
See also \cite{Efrat93} and \cite{EfratHaran94} for generalization to the non-finitely generated Pythagorean case.

The next major advance was due to Jacob and Ware (\cite{JacobWare89}, \cite{JacobWare91}), who translated the conjectural elementary type decomposition of Witt rings of arbitrary fields $F$ of characteristic $\neq2$ and $(F^\times:(F^\times)^2)<\infty$ into a decomposition theory of finitely generated maximal pro-$2$ Galois groups with their cyclotomic pro-$2$ character (see \S\ref{section on Galois cyclotomic pro p pairs}).

\subsection{The arithmetical approach}
In \cite{Efrat95} we suggested the following stronger \textsl{arithmetical version} of the Elementary Type Conjecture, for an arbitrary prime number $p$:

\begin{conj}
\label{ETC introduction}
Let $F$ be a field containing a root of unity of order $p$ and such that $G_F(p)$ is finitely generated (equivalently, $(F^\times:(F^\times)^p)<\infty$).
Then $G_F(p)$ is the free product in the category of pro-$p$ groups of finitely many (closed) subgroups of the following forms:
\begin{enumerate}
\item
$\dbZ_p$;
\item
Decomposition groups of valuations on $F$ with a nontrivial inertia group in $F(p)$;
\item
When $p=2$, copies of $\dbZ/2$ (equivalently, Galois groups of relative real closures  in $F(2)$ of orderings on $F$). 
\end{enumerate}
\end{conj}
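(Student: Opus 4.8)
The plan is to argue by induction on the invariant $d=\dim_{\dbF_p} H^1(G_F(p),\dbF_p)=\dim_{\dbF_p} F^\times/(F^\times)^p$, which is finite precisely under the stated hypothesis. The base cases $d\le 1$ give the cyclic atoms $G_F(p)\isom \dbZ_p$ or (for $p=2$) $\dbZ/2$, so the content is in the inductive step: peel off a single free pro-$p$ factor of one of the three prescribed forms, and recognize the complementary free factor as $G_{F'}(p)$ for a field $F'$ with strictly smaller invariant, to which induction applies. The mechanism that makes such a peeling visible is cohomological. By the norm residue (Bloch--Kato) theorem one knows that $H^*(G_F(p),\dbF_p)$ is a quadratic $\dbF_p$-algebra, and the qualitative shape of this algebra is what should separate the atoms from the decomposable cases.

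The first dichotomy I would set up is rigidity versus non-rigidity. When $F$ is $p$-rigid the cohomology ring is an exterior algebra on $H^1$, and one expects $G_F(p)$ to be a free pro-$p$ group of the corresponding rank, i.e.\ a free product of copies of $\dbZ_p$ together with copies of $\dbZ/2$ contributed by the orderings when $p=2$; the interpretation of case (3) as Galois groups of relative real closures is exactly the ordering-theoretic side of this. The genuinely substantive half is the non-rigid case, and here the heart of the argument is the theme flagged in \S1.1: passing from a failure of rigidity to a valuation. If $F$ is not $p$-rigid, the goal is to manufacture a $p$-henselian valuation $v$ on $F$ (or a suitable coarsening) whose decomposition group $Z_v\le G_F(p)$ is a free factor with nontrivial inertia in $F(p)$, so that the residue field $\bar F$ has strictly smaller invariant and carries $G_{\bar F}(p)$. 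Recovering $v$ purely from Galois-cohomological and Milnor $K$-theoretic data is precisely the programme of Arason--Elman--Jacob, Ware, and Engler--Koenigsmann, via Koenigsmann's $p$-henselianity criteria; this is where the link to rigidity in fields is used.

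Having produced a candidate factor, one must check that splitting it off is literally a free pro-$p$ product decomposition rather than merely an abstract quotient. For this I would use that the mod-$p$ cohomology of a free pro-$p$ product splits as a direct sum in positive degrees; so it suffices to exhibit the decomposition on $H^1$ and $H^2$ and then propagate it using quadraticity of the ring, together with the cyclotomic pro-$p$ character to keep track of the arithmetic (Tate twist) data that distinguishes the decomposition-group factors of case (2) from free $\dbZ_p$ factors. A secondary bookkeeping point is termination: each split strictly reduces $d$, so the induction halts after finitely many steps and assembles $G_F(p)$ as a finite free product of atoms.

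The step I expect to be the main obstacle is exactly the one that keeps the statement a conjecture rather than a theorem. Guaranteeing that \emph{every} non-rigid finitely generated $G_F(p)$ admits a valuation with a decomposition-group free factor, and that the induction never stalls on an \emph{exotic} indecomposable group that satisfies all presently known cohomological constraints (quadraticity, vanishing of higher Massey products, and the like) yet is not of elementary type, is unresolved in general. Ruling out such hypothetical atoms is the very content of the conjecture, so no unconditional proof of the full statement is available; the realistic target, pursued in the later sections, is the conditional and partial results together with the reductions to the valuation-construction problem.
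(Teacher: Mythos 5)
The statement you set out to prove is Conjecture \ref{ETC introduction}: it is \emph{open}, the paper contains no proof of it, and your own final paragraph concedes that the decisive step cannot currently be carried out. So what you have written is a strategy sketch, not a proof, and it should be judged as such. Judged that way, your non-rigid branch does match the paper's actual machinery: failure of rigidity is converted into a $p$-Henselian valuation with $\Gam_v\neq p\Gam_v$ via the Arason--Elman--Jacob/Ware constructions (Theorem \ref{Efr0626 5 5} and Theorem \ref{existence of valuations}(a) in the paper), and the paper's \S\ref{section on rigidity}--\S\ref{section on equivalence of conjectures} run exactly this kind of induction --- but only \emph{conditionally}, to prove that the Lego form (Conjecture \ref{ETC Lego version}) plus the Demu\v skin Conjecture (Conjecture \ref{strong conjecture on Demuskin fields}) imply the arithmetical form (Conjecture \ref{ETC arithmetical form}).

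Beyond the unavoidable gap, two concrete steps in your sketch would fail as stated. First, your rigid-case claim is wrong: when $(F^\times)^p$ is totally rigid, $G_F(p)$ is \emph{not} in general a free pro-$p$ product of copies of $\dbZ_p$ (and $\dbZ/2$'s). Total rigidity again produces a $p$-Henselian valuation with large inertia (Theorem \ref{existence of valuations}(b),(c)), so the resulting factor is an extension $\dbZ_p^m\rtimes\bar\calG$, i.e.\ a decomposition group of type (2) --- for example $F=\dbC((t_1))((t_2))$ is rigid with $G_F(p)\isom\dbZ_p^2$, which has $H^2\neq0$ and is not free; your identification ``exterior algebra on $H^1$ $\Rightarrow$ free'' conflates two different cohomology rings, since free pro-$p$ groups have $H^2=0$. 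Second, your sketch never confronts the case the paper isolates as \emph{the} obstruction: pro-$p$ Demu\v skin groups of rank $\geq3$. These are indecomposable under both free products and extensions (Lemmas \ref{Demuskin groups are indecomposable} and \ref{Demuskin groups are not extensions}), they satisfy every cohomological constraint you invoke (quadraticity, non-degenerate cup product), and the valuation witnessing them is wild ($\Char\,\bar F_v=p$), so it is invisible to the tame rigidity-to-valuation machinery you rely on; attaching a valuation to them is precisely the content of the open Conjecture \ref{strong conjecture on Demuskin fields}. Your generic mention of ``exotic indecomposables'' gestures at this, but the paper's main structural insight --- that the Demu\v skin realization problem is \emph{exactly} the difference between the Lego and arithmetical forms --- is absent from your outline.
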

  
As explained in \S\ref{section on the elementary type conjecture}, this conjecture implies a `Lego form' description of $G_F(p)$:
It can be inductively constructed in finitely many steps from the maximal pro-$p$ Galois groups of $\dbC$, $\dbR$ (when $p=2$), the finite extension of $\dbQ_p$ containing a $p$th root of unity, and finite fields, using free pro-$p$ products and certain semi-direct products $\dbZ_p\rtimes\cdot$.
Again, to make this inductive description precise one also needs to keep track of the cyclotomic action.

This arithmetical viewpoint enabled progress on Conjecture \ref{ETC introduction} using valuation-theoretic machinery.
E.g., it was proved for algebraic extensions of fields for which the Hasse principle for the Brauer groups holds for every finite extension.
These include global fields, function fields in one variable over local fields, and function fields in one variable over pseudo algebraically closed (PAC) fields (see \S\ref{section on evidence}).

\subsection{Demu\v skin groups as Galois groups}  
Conjecture \ref{ETC introduction} is closely related to another important open problem in infinite Galois theory, namely, the appearance of pro-$p$ Demu\v skin groups as maximal pro-$p$ Galois groups of fields.
These groups were investigated in classical works by Serre \cite{Serre63} and Labute \cite{Labute67}, and we refer to \S\ref{section on Demushkin groups} for their precise cohomological definition.
When $F$ is a finite extension of $\dbQ_p$ containing a $p$th root of unity, $G_F(p)$ is Demu\v skin;
however not all Demu\v skin groups are realizable in this manner.
It is currently unknown which pro-$p$ Demu\v skin groups are realizable as $G_F(p)$ for some $F$, and in this case, what is the arithmetical structure of $F$.
As we shall see in \S\ref{section on arithmetically Demushkin fields}, Conjecture \ref{ETC introduction} completely answers these questions.
Specifically, it implies that if $G_F(p)$ is Demu\v skin, then $F$ is an \textsl{arithmetically Demu\v skin field}, which, loosely speaking, means that it is endowed with a valuation which resembles a $p$-adic valuation (see \S\ref{section on arithmetically Demushkin fields} for the precise definition).
Moreover, in  \S\ref{section on equivalence of conjectures} we prove that Conjecture \ref{ETC introduction} is in fact \textsl{equivalent} to the weaker `Lego form' of the conjecture, as described above, plus the claim that if $G_F(p)$ is Demu\v skin, then $F$ is arithmetically Demu\v skin. 

I thank the referee for their comments.

\section{Cyclotomic Pro-$p$ Pairs}
\label{section on cyclotomic pro-p pairs}
We write $\dbZ_p^\times$ for the group of units of $\dbZ_p$ and $\dbZ_p^{\times,1}=1+p\dbZ_p$ for its subgroup of $1$-units.  

For reasons that will become clear below, we will work here not in the category of pro-$p$ groups, but rather in a category of pairs $\calG=(G,\theta)$, where $G$ is a pro-$p$ group and  $\theta\colon G\to \dbZ_p^{\times,1}$ is a continuous homomorphism.
We call $\calG$ a \textsl{cyclotomic pro-$p$ pair} (some authors use the term \textsl{an oriented pro-$p$ group}).
We refer to $\theta$ as the \textsl{character} of $\calG$.
A \textsl{morphism} $\varphi\colon (G_1,\theta_1)\to(G_2,\theta_2)$ of cyclotomic pro-$p$ pairs is a continuous homomorphism $\varphi\colon G_1\to G_2$ such that $\theta_1=\theta_2\circ\varphi$.

We list some `atomic' cyclotomic pro-$p$ pairs to be used in the sequel:

\begin{enumerate}
\item[(1)]
The \textsl{trivial pair} is $(1,1)$, i.e., $G$ is the trivial group and $\theta$ is the trivial homomorphism.
\item[(2)]
For every $\alp\in\dbZ_p^{\times,1}$ we have the pair $\calZ^\alp=(\dbZ_p,\theta)$, where $\theta\colon\dbZ_p\to\dbZ_p^{\times,1}$ is the continuous homomorphism mapping the generator $1$ of $\dbZ_p$ to $\alp$.
\item[(3)]
When $p=2$ we have the pair
\[
\calE=(\dbZ/2,\theta),
\]
where $\theta\colon \dbZ/2\to\{\pm1\}\leq\dbZ_2^\times$ is the nontrivial homomorphism.
\end{enumerate}
An additional family of `atomic' pairs, namely the cyclotomic pro-$p$ pairs of \textsl{$p$-adic type}, will be discussed in \S\ref{section on Demushkin groups}.

In the category of cyclotomic pro-$p$ pairs we have two fundamental constructions:

The \emph{free product} of cyclotomic pro-$p$ pairs $\calG_1=(G_1,\theta_1)$ and $\calG_2=(G_2,\theta_2)$ is the pro-$p$ pair
\[
\calG_1*\calG_2=(G_1*_pG_2,\theta_1*_p\theta_2),
\]
where $G_1*_pG_2$ is the free product in the category of pro-$p$ groups, and $\theta_1*_p\theta_2\colon G_1*_pG_2\to \dbZ_p^{\times,1}$ is the unique homomorphism of pro-$p$ groups that extends $\theta_1$ and $\theta_2$, as provided by the universal property of $G_1*_pG_2$.
This construction has the usual universal property of coproducts in the category of cyclotomic pro-$p$ pairs.

Next let $A$ be a free abelian pro-$p$ group.
Thus $A\cong\dbZ_p^m$ for some cardinal number $m$, and thus we have a multiplication map $\dbZ_p^\times\times A\to A$.
Given a cyclotomic pro-$p$ pair $\bar\calG=(\bar G,\bar\theta)$, the \emph{extension} of $\bar\calG$ by $A$ is the pair
\[
A\rtimes\bar\calG=(A\rtimes \bar G,\theta),
\]
where $A\rtimes \bar G$ is the semi-direct product with respect to the action
\[
\bar ga\bar g^{-1}=\bar\theta(\bar g)a
\]
for $\bar g\in\bar G$ and $a\in A$, and where $\theta\colon A\rtimes \bar G\to \dbZ_p^{\times,1}$ is the composition of the projection $\pi\colon A\rtimes \bar G\to\bar G$ with $\bar\theta\colon\bar G\to \dbZ_p^{\times,1}$.
Then $\pi\colon\calG\to\bar\calG$ is an epimorphism of pro-$p$ pairs.

If $A'$ is another free abelian pro-$p$ group, then
\begin{equation}
\label{associativity of extension}
(A\times A')\rtimes\bar\calG\isom A\rtimes(A'\rtimes\bar\calG).
\end{equation}

\begin{exam}
\label{Z2 E is E E}
\rm
When $p=2$, $\dbZ_2\rtimes\calE\isom\calE*\calE$. 
\end{exam}

To any cyclotomic pro-$p$ pair $\calG=(G,\theta)$ we associate a homomorphism 
\begin{equation}
\label{eps calG}
\eps_\calG\colon G\to\dbZ/p
\end{equation}
as follows:
If $p=2$, it is the composition of $\theta$ with the projection $\dbZ_2^{\times,1}=\{\pm1\}\times(1+4\dbZ_2)\to\{\pm1\}$, and if $p>2$ we simply take $\eps_\calG=0$.
Note that when $p=2$ and $\Img(\theta)\not\subseteq1+4\dbZ_2$, the map $\eps_\calG$ is a surjective morphism $\calG\to\calE$.

Given a cyclotomic pro-$p$ pair $\calG=(G,\theta)$, we write $\dbZ_p(1)$ for the $G$-module with underlying group $\dbZ_p$ and $G$-action defined by $g\cdot a=\theta(g)a$ for $g\in G$ and $a\in \dbZ_p$.
For $k\geq1$ we similarly define a $G$-module $\dbZ/p^k(1)$.
Thus $\dbZ/p(1)=\dbZ/p$ with the trivial $G$-action.
We say that $\calG$ has the \textsl{formal Hilbert 90} property if for every $k\geq1$ the natural map of profinite cohomology groups $H^1(G,\dbZ/p^k(1))\to H^1(G,\dbZ/p)$ is surjective (note that this terminology slightly differs from that of \cite{MerkurjevScavia23}, where the surjectivity is assumed for every closed subgroup of $G$).

\section{Galois cyclotomic pro-$p$ pairs} 
\label{section on Galois cyclotomic pro p pairs} 
The motivating example of cyclotomic pro-$p$ pairs comes from Galois theory, as follows:

Let $F$ be a field containing a root of unity of order $p$.
In particular, $\Char\,F\neq p$.
We write $\mu_{p^r}$ for the group of all roots of unity of order $p^r$ in the separable closure $F^{\rm sep}$ of $F$, and set $\mu_{p^\infty}=\bigcup\mu_{p^r}$.
As in the Introduction, we write
\[
G_F(p)=\Gal(F(p)/F), \quad G_F=\Gal(F^{\rm sep}/F),
\]
for the maximal pro-$p$ Galois group and the absolute Galois group of $F$, respectively.
The \textsl{pro-$p$ cyclotomic character} of $G_F(p)$ is the composition
\[
\chi_F=\chi_{F,p}\colon G_F(p)\xrightarrow{\res}\Aut(\mu_{p^\infty}/\mu_p)\xrightarrow{\sim}\dbZ_p^{\times,1},
\]
where the latter isomorphism associates $\alp\in\dbZ_p^{\times,1}$ with the automorphism $\zeta\mapsto\zeta^\alp$ in $\Aut(\mu_{p^\infty}/\mu_p)$.

\begin{defin}
\label{pairs of Galois type}
\rm
The \textsl{Galois cyclotomic pro-$p$ pair} of $F$ is 
\[
\calG_F(p)=(G_F(p),\chi_F).
\]
We call such pairs \textsl{cyclotomic pro-$p$ pairs of Galois type}.
\end{defin}

\begin{rems}
\label{aaa}
\rm
(1) \quad
For every $k\geq1$ we have $\dbZ/p^k(1)=\mu_{p^k}$ as $G_F(p)$-modules. 
Recall the Kummer isomorphism
\[
F^\times/(F^\times)^{p^k}\xrightarrow{\sim}H^1(G_F(p),\mu_{p^k}), \quad a(F^\times)^{p^k}\mapsto (a)_F,
\]
where $(a)_F=(a)_{F,p^k}$ is the \textsl{Kummer element} of $a$.
Since the natural map $F^\times/(F^\times)^{p^k}\to F^\times/(F^\times)^p$ is surjective, so is the map $H^1(G_F(p),\mu_{p^k})\to H^1(G_F(p),\mu_p)$.
Further,  $H^1(G_F(p),\mu_p)=H^1(G_F(p),\dbZ/p)$ since $\mu_p\subseteq F$.
Thus $\calG_F(p)$ has the formal Hilbert 90 property.  

\medskip

(2) \quad
The homomorphism $\eps_\calG\colon G_F(p)\to\dbZ/p$  of (\ref{eps calG}) may be identified with the Kummer element $(-1)_F$ in $H^1(G_F(p),\dbZ/p)=\Hom(G_F(p),\dbZ/p)$.
\end{rems}

\begin{examples}
\label{examples of realizable pairs}
\rm  
(1) \quad
The cyclotomic pro-$p$ pair $\calG_\dbC(p)$ is the trivial pair.

\medskip

(2) \quad
For $p=2$ one has $\calG_\dbR(2)=\calE$.

\medskip

(3) \quad
Let $F=\dbF_{l^r}$ be a finite field containing a root of unity of order $p$, and where $l$ is a prime number and $r\geq1$.
Then $l^r\equiv1\pmod p$ and we have $\calG_F(p)\isom\calZ^{l^r}$.

\medskip

(4) \quad
More generally, for every $1\neq\alp\in\dbZ_p^{\times,1}$ there exist (many) fields $F$ such that the absolute Galois group $G_F$ is pro-$p$ (so in particular, $F$ contains a root of unity of order $p$) and $\calG_F(p)\isom\calZ^\alp$.
For instance, define $\sig\in \Gal(\dbQ(\mu_{p^\infty})/\dbQ(\mu_p))$ by $\sig(\zeta)=\zeta^\alp$ for $\zeta\in\mu_{p^\infty}$, and let $K$ be its fixed field.
Then $\Gal(\dbQ(\mu_{p^\infty})/K)=\langle\sig\rangle\isom\dbZ_p$.
Taking any homomorphic section of the restriction map $G_K\to\Gal(\dbQ(\mu_{p^\infty})/K)$ and a lift $\hat\sig$ of $\sig$, we obtain an extension $F$ of $K$ with $G_F=\langle\hat\sig\rangle\isom\dbZ_p$ and $\chi_F(\hat\sig)=\alp$.

In the remaining case where $\alp=1$, we have $\calZ^1=\dbZ_p\rtimes(1,1)=\dbZ_p\rtimes\calG_{\dbC}(p)$.
This pair can be realized, e.g., as $\calG_{\dbC((t))}(p)$, where $\dbC((t))$ is the field of power series in $t$ with coefficients in $\dbC$; See \S\ref{realizations of extensions} below.
\end{examples}

\section{Cohomology of extensions}
\label{section on cohomology of extension}
Given a profinite group $G$ we henceforth abbreviate $H^i(G)=H^i(G,\dbZ/p)$, $i\geq0$, for the $i$th profinite cohomology group of $G$ with its trivial action on $\dbZ/p$.
We identify $H^1(G)$ with the group of all continuous homomorphisms $\psi\colon G\to\dbZ/p$.
It is dual to $G/\Phi(G)$, where $\Phi(G)=G^p[G,G]$ is the Frattini subgroup.

Throughout this section we consider an extension 
\begin{equation}
\label{setting of an extension}
\bar\calG=(\bar G,\bar\theta), \quad \calG=(G,\theta)=A\rtimes\bar\calG,  \quad A=\dbZ_p^m, 
\end{equation}
where $m$ is a cardinal number.
We write $A$ additively.

There is a canonical $G$-action on $H^1(A)$ given by $({}^g\psi)(a)=\psi(g\inv ag)$ for $g\in G$, $\psi\in H^1(A)$, and $a\in A$.

\begin{lem}
\label{cohomological properties of extensions}
\begin{enumerate}
\item[(a)]
The restriction homomorphisms yield an isomorphism $H^1(G)\isom H^1(A)\oplus H^1(\bar G)$.
\item[(b)]
The action of $G$ on $H^1(A)$ is trivial.
\end{enumerate}
\end{lem}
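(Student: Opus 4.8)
The plan is to prove (b) first, since its key congruence drives (a) as well. Because $A$ is abelian, conjugation by elements of $A$ acts trivially on $A$, so the $G$-action on $H^1(A)=\Hom(A,\dbZ/p)$ factors through $\bar G$. For $\bar g\in\bar G$ the semidirect-product rule $\bar g a\bar g\inv=\bar\theta(\bar g)a$ gives $({}^{\bar g}\psi)(a)=\psi(\bar\theta(\bar g)\inv a)$ for $\psi\in H^1(A)$ and $a\in A$. The crux is that $\psi$ is $\dbZ_p$-linear (being a continuous homomorphism $\dbZ_p^m\to\dbZ/p$), while $\bar\theta(\bar g)\inv\in\dbZ_p^{\times,1}=1+p\dbZ_p$ reduces to $1$ modulo $p$; hence $\psi(\bar\theta(\bar g)\inv a)=\bar\theta(\bar g)\inv\psi(a)=\psi(a)$ in $\dbZ/p$. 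This proves (b), and records the identity $\psi(\bar\theta(\bar g)a)=\psi(a)$ that I shall reuse.

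For (a) I would construct the inverse to $(\res_A,\res_{\bar G})$ by hand, using that $G=A\rtimes\bar G$ is split. Each $g\in G$ factors uniquely as $g=a\,s(\bar g)$ with $a\in A$ and $\bar g\in\bar G$, where $s\colon\bar G\hookrightarrow G$ is the canonical section. Given $\psi\in H^1(A)$ and $\bar\phi\in H^1(\bar G)$, I set $\phi(a\,s(\bar g))=\psi(a)+\bar\phi(\bar g)$. Multiplying $g_1=a_1s(\bar g_1)$ and $g_2=a_2s(\bar g_2)$ by the semidirect-product law produces the $A$-component $a_1+\bar\theta(\bar g_1)a_2$, and the identity from (b) gives $\psi(a_1+\bar\theta(\bar g_1)a_2)=\psi(a_1)+\psi(a_2)$, so that $\phi(g_1g_2)=\phi(g_1)+\phi(g_2)$; thus $\phi\in H^1(G)$. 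The assignment $(\psi,\bar\phi)\mapsto\phi$ is visibly a two-sided inverse to restriction, since $\phi|_A=\psi$ and $\phi\circ s=\bar\phi$, establishing the isomorphism.

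Equivalently, one may quote the five-term inflation--restriction sequence for $1\to A\to G\to\bar G\to1$ with trivial coefficients,
\[
0\to H^1(\bar G)\xrightarrow{\Inf}H^1(G)\xrightarrow{\res}H^1(A)^{\bar G}\to H^2(\bar G),
\]
in which $H^1(A)^{\bar G}=H^1(A)$ by (b); the transgression vanishes because the extension is split, and the resulting short exact sequence splits via $s$. Either way the only genuine subtlety is the congruence $\bar\theta(\bar g)\equiv1\pmod p$, which trivializes the cyclotomic twist on mod-$p$ coefficients; granted this, both parts are formal. I expect this congruence to be the single point deserving care, and everything downstream to be routine.
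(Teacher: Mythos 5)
Your proposal is correct, and for part (b) it coincides with the paper's proof: both come down to the single congruence $\theta(g)\equiv1\pmod p$, i.e.\ $\bar\theta(\bar g)\inv a-a\in pA$. For part (a) the routes differ only in packaging. The paper observes that $[a,g]=a-\theta(g)a\in pA$ forces $G/\Phi(G)=A/pA\times\bar G/\Phi(\bar G)$ and then dualizes, using that $H^1(G)=\Hom(G,\dbZ/p)$ is dual to the Frattini quotient; you instead construct the inverse of $(\res_A,\res_{\bar G})$ explicitly on characters via the section $s$, verifying the homomorphism identity with the invariance from (b). This is precisely the dual of the paper's Frattini computation, so the two arguments are the same calculation seen from opposite sides; your inflation--restriction variant is a third, equally valid packaging (the split extension kills the transgression, and (b) gives $H^1(A)^{\bar G}=H^1(A)$). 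Two cosmetic points, neither a gap: since $H^1(G)$ consists of \emph{continuous} homomorphisms, you should remark that your constructed $\phi$ is continuous, which is immediate because $(a,\bar g)\mapsto a\,s(\bar g)$ is a homeomorphism $A\times\bar G\to G$; and your appeal to $\dbZ_p$-linearity of $\psi$ (true, by continuity) is more than needed --- any group homomorphism $\psi\colon A\to\dbZ/p$ kills $pA$, so $\bar\theta(\bar g)\inv a-a\in pA$ already yields $\psi(\bar\theta(\bar g)\inv a)=\psi(a)$, which is exactly how the paper phrases it and also covers the infinite-rank case $A=\dbZ_p^m$ without further comment.
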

\begin{proof}
(a) \quad
For $g\in G$ and $a\in A$ we have $[a,g]=a-\theta(g)a\in pA$.
Therefore 
\[
G/\Phi(G)=A/pA\times \bar G/\Phi(\bar G),
\]
and the assertion follows by duality.

\medskip

(b) \quad
Let $g\in G$, $\psi\in H^1(A)$ and $a\in A$.
As $\theta(g)\inv a-a\in pA$, one has
\[
({}^g\psi)(a)=\psi(g\inv ag)=\psi(\theta(g)\inv a)=\psi(a).
\qedhere
\]
\end{proof}

Following Wadsworth \cite{Wadsworth83}*{Th.\ 3.1 and Cor.\ 3.4} in the case $m=1$, we now describe the structure of $H^i(G)$ for any $i\geq0$.

First we take a totally ordered set $(L,\leq)$ of cardinality $m$.
Using Lemma \ref{cohomological properties of extensions}(a) we choose elements $\beta_l\in H^1(G)$, $l\in L$, whose restrictions to $H^1(A)$ form an $\dbF_p$-linear basis and whose restrictions to $H^1(\bar G)$ are zero.

\begin{prop}
\label{generalized Wadsworth formula}
One has 
\[
H^i(G)=\bigoplus_{j=0}^i\bigoplus_{{l_1\nek l_j\in L}\atop{l_1<\cdots<l_j}}\inf(H^{i-j}(\bar G))\cup\beta_{l_1}\cup\cdots\cup\beta_{l_j}.
\]
Moreover, for any list $l_1<\cdots<l_j$ in $L$ the map 
\[
H^{i-j}(\bar G)\to  H^i(G), \quad
\bar\varphi\mapsto \inf(\bar\varphi) \cup\beta_{l_1}\cup\cdots\cup\beta_{l_j},
\]
is injective.
\end{prop}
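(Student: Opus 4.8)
The plan is to compute $H^i(G)$ via the Lyndon--Hochschild--Serre spectral sequence of the extension $1\to A\to G\to\bar G\to 1$ and to show that it degenerates at $E_2$, the degeneration being forced by the existence of the classes $\beta_l$. First I would identify the $E_2$-page. By Lemma \ref{cohomological properties of extensions}(b) the group $\bar G$ acts trivially on $H^1(A)$; since $A=\dbZ_p^m$ is abelian, its mod-$p$ cohomology ring is the exterior algebra $H^*(A)=\Lambda\bigl(H^1(A)\bigr)$ (for $m=1$ this is immediate from $\cd(\dbZ_p)=1$, for finite $m$ by Künneth, and for infinite $m$ by writing $A=\varprojlim_S\dbZ_p^S$ over finite $S$ and commuting cohomology with the limit), and the $G$-action is by graded-ring automorphisms, hence trivial on all of $H^*(A)$. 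Thus $E_2^{s,t}=H^s(\bar G)\tensor H^t(A)$, and $H^t(A)$ has the $\dbF_p$-basis $\{x_{l_1}\cup\cdots\cup x_{l_t}\mid l_1<\cdots<l_t\}$, where $x_l=\res\beta_l$ is the chosen basis of $H^1(A)=E_2^{0,1}$.

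Next I would prove $E_2=E_\infty$. The bottom row $E_2^{*,0}=H^*(\bar G)$ consists of permanent cycles, since $d_r$ on it targets $E_r^{*+r,1-r}=0$ for $r\geq 2$; concretely these classes are the inflations from $\bar G$. The point of the $\beta_l$ is that the edge homomorphism $H^*(G)\twoheadrightarrow E_\infty^{0,*}\hookrightarrow E_2^{0,*}$ is restriction to $A$, and each generator $x_l=\res\beta_l$ of $E_2^{0,1}$ lies in its image; therefore $E_\infty^{0,1}=E_2^{0,1}$ and every differential out of $E_r^{0,1}$ vanishes. Since the bigraded algebra $E_2$ is generated by $E_2^{*,0}$ together with $E_2^{0,1}$, multiplicativity of the spectral sequence (the $d_r$ being derivations vanishing on algebra generators) propagates the vanishing to the whole page, giving degeneration at $E_2$.

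Finally I would read off the stated formula from the associated graded. Degeneration yields $\gr H^i(G)=\bigoplus_{j=0}^i H^{i-j}(\bar G)\tensor H^j(A)$, the summand with index $j$ being $E_\infty^{i-j,j}$. To match this with cup products, observe that $\inf(\bar\varphi)$ represents the bottom-row class $\bar\varphi\in E_\infty^{i-j,0}$ while each $\beta_l$ represents $x_l\in E_\infty^{0,1}$; by compatibility of the cup product on $H^*(G)$ with the filtration and with the induced product on $E_\infty$, the element $\inf(\bar\varphi)\cup\beta_{l_1}\cup\cdots\cup\beta_{l_j}$ lies in filtration degree $i-j$ and represents $\bar\varphi\tensor(x_{l_1}\cup\cdots\cup x_{l_j})\in E_\infty^{i-j,j}$. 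As $\bar\varphi$ runs through a basis of $H^{i-j}(\bar G)$ and $(l_1<\cdots<l_j)$ through the increasing tuples, these represent a full basis of $\gr H^i(G)$, hence lift to a basis of $H^i(G)$; this gives simultaneously the direct-sum decomposition and, for each fixed tuple, the injectivity of $\bar\varphi\mapsto\inf(\bar\varphi)\cup\beta_{l_1}\cup\cdots\cup\beta_{l_j}$.

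The main obstacle is the degeneration step together with the precise identification of the graded pieces with honest cup products: everything rests on the $\beta_l$ being genuine permanent cycles lifting the degree-one generators of the top row, and on tracking the multiplicative structure carefully enough to see that the associated-graded classes are exactly $\bar\varphi\tensor(x_{l_1}\cup\cdots\cup x_{l_j})$ rather than these modulo higher filtration. An alternative that sidesteps the spectral sequence is induction on $m$: the case $m=1$ is Wadsworth's theorem, the general finite case follows from the associativity isomorphism \eqref{associativity of extension}, namely $(A\times A')\rtimes\bar\calG\isom A\rtimes(A'\rtimes\bar\calG)$, applied with $A=\dbZ_p$, and the infinite case then follows by a direct-limit argument.
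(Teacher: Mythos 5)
Your proposal is correct, but it takes a genuinely different route from the paper. The paper's own proof is exactly the argument you relegate to your final paragraph: induction on $m$, with the case $m=0$ trivial, the case $m=1$ quoted from Wadsworth (\cite{Wadsworth83}*{Th.\ 3.1 and Cor.\ 3.4}, whose hypotheses are checked via Lemma \ref{cohomological properties of extensions}), the inductive step performed through the associativity isomorphism (\ref{associativity of extension}) in the form $\dbZ_p\rtimes\calG\isom\dbZ_p^{m+1}\rtimes\bar\calG$, the injectivity statement obtained by a straightforward induction, and the infinite case by a limit argument. Your main argument instead runs the Lyndon--Hochschild--Serre spectral sequence of $1\to A\to G\to\bar G\to1$ and is sound: Lemma \ref{cohomological properties of extensions}(b) gives triviality of the action on $H^1(A)$, hence on all of $H^*(A)=\Lambda(H^1(A))$ since the action is by ring automorphisms; the edge map $H^1(G)\to E_\infty^{0,1}\subseteq E_2^{0,1}$ is restriction to $A$, so the $\beta_l$ force $E_\infty^{0,1}=E_2^{0,1}$ (only $d_2$ can act there, as $E_r^{r,2-r}=0$ for $r\geq3$), and the derivation property kills all differentials since $E_2$ is generated by the bottom row and $E_2^{0,1}$; finally, multiplicativity of the filtration identifies the symbol of $\inf(\bar\varphi)\cup\beta_{l_1}\cup\cdots\cup\beta_{l_j}$ with $\bar\varphi\tensor x_{l_1}\cdots x_{l_j}$, and lifting a graded basis along the finite filtration of $H^i(G)$ yields both the decomposition and the injectivity at once. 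Two standard points you gloss over but which do hold: $H^s(\bar G,V)\isom H^s(\bar G)\tensor V$ for an infinite-dimensional trivial discrete $\dbF_p$-module $V$ (profinite cohomology commutes with filtered colimits of discrete modules, and $H^t(A)$ is such a colimit when $m$ is infinite), and the multiplicativity of the profinite Hochschild--Serre spectral sequence. What each approach buys: yours is self-contained -- it in effect reproves Wadsworth's theorem rather than citing it, treats all cardinals $m$ uniformly without a separate limit step, and obtains the injectivity claim more transparently from the graded basis argument; the paper's route is shorter given the reference, uses no spectral-sequence machinery, and leans on the structural identity (\ref{associativity of extension}) that the paper needs elsewhere anyway.
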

\begin{proof}
First assume that $m$ is finite.
We argue by induction on $m$.

The case $m=0$ is immediate.
For $m=1$ this is \cite{Wadsworth83}*{Th.\ 3.1 and Cor.\ 3.4}, where we note that the assumptions of these results are satisfied by Lemma \ref{cohomological properties of extensions}. 

Next, assume the assertion for $m\geq1$ and consider the extension $\dbZ_p\rtimes\calG\isom\dbZ_p^{m+1}\rtimes\bar\calG$.
Let $\beta_{l_0}$ correspond to the new copy of $\dbZ_p$ in this extension, so $l_0\not\in L$, and we may assume that $l<l_0$ for every $l\in L$.
By the case $m=1$ and the induction hypothesis, the underlying group $\dbZ_p\rtimes G$ of $\dbZ_p\rtimes\calG$ satisfies
\[
\begin{split}
H^i(\dbZ_p\rtimes G)&=\inf(H^i(G))\oplus(\inf(H^{i-1}(G))\cup\beta_{l_0}) \\
&=\bigoplus_{j=0}^i\bigoplus_{{l_1\nek l_j\in L}\atop{l_1<\cdots<l_j}}\inf(H^{i-j}(\bar G))\cup\beta_{l_1}\cup\cdots\cup \beta_{l_j} \\
&\qquad\oplus\bigoplus_{j=0}^{i-1}\bigoplus_{{l_1\nek l_j\in L}\atop{l_1<\cdots<l_j}}\inf(H^{i-1-j}(G))\cup\beta_{l_1}\cup\cdots\cup\beta_{l_j}\cup\beta_{l_0} \\
&=\bigoplus_{j=0}^i\bigoplus_{{l_1<\cdots<l_j}\atop{l_1\nek l_j\in L\cup\{l_0\}}}\inf(H^{i-j}(\bar G))\cup\beta_{l_1}\cup\cdots\cup\beta_{l_j}.
\end{split}
\]

The injectivity as in the last assertion of the proposition follows by straightforward induction.

The infinite case follows from the finite case by a limit argument. 
\end{proof}

\section{Valuations}
\label{section on valuations}
As we shall see in \S\ref{section on p-henselian valuations and extensions}, extensions of cyclotomic pro-$p$ pairs of Galois type arise naturally in valuation-theoretic settings. 
We begin by recalling some basic concepts and facts about valuations on fields, originating in the works of Krull, Neukirch, and others. 
Our primary reference is \cite{Efrat06}.

\subsection{Basic notions}
Let $F$ be a field and let $F^\times$ be its multiplicative group.
A (Krull) valuation on $F$ is a group epimorphism $v\colon F^\times\to\Gam_v$, where $\Gam_v=(\Gam_v,\geq)$, called the \textsl{value group} of $v$,  is an ordered abelian group, satisfying the \textsl{ultra-metric inequality} $v(a+b)\geq\min\{v(a),v(b)\}$ for every $a,b\in F^\times$ with $a\neq-b$.
Equivalently, 
\[
O_v=\{a\in F^\times\ |\ v(a)\geq0\}\cup\{0\}
\]
is a valuation ring in $F$.
Its unique maximal ideal is 
\[
\grm_v=\{a\in F^\times\ |\ v(a)>0\}\cup\{0\},
\]
and its \textsl{residue field} is $\bar F_v=O_v/\grm_v$.
We denote the residue of $a\in O_v$ in $\bar F_v$ by $\bar a$. 
Being an ordered abelian group, $\Gam_v$ is torsion-free.
The valuation $v$ is called \textsl{discrete} if $\Gam_v$ is isomorphic as an ordered abelian group to $\dbZ$ with its natural order.
Then an element $t$ of $F^\times$  such that $v(t)$ corresponds to $1\in\dbZ$ is called a \textsl{uniformizer} for $v$.
We do not distinguish between valuations with the same valuation ring.

%Valuations $v,v'$ on $F$ are called \textsl{equivalent} if $O_v=O_{v'}$;
%Alternatively, there is an isomorphism $\varphi\colon \Gam_v\xrightarrow{\sim}\Gam_{v'}$ of ordered abelian groups such that $v'=\varphi\circ v$ \cite{Efrat06}*{Prop.\ 3.2.3(e)}.

Given valuations $v$ and $u$ on $F$, we say that $v$ is \textsl{finer} than $u$ (and $u$ is \textsl{coarser} than $v$) if $O_v\subseteq O_u$.
In this case the image of $O_v$ under the residue map $O_u\to\bar F_u$ is the valuation ring $O_w$ of a valuation $w=v/u$ on $\bar F_u$, called the \textsl{quotient valuation} of $v$ by $u$.
Conversely, given a valuation $u$ on $F$ and a valuation $w$ on $\bar F_u$, there is a unique valuation $v$ on $F$ which is finer than $u$ and such that $w=v/u$  \cite{Efrat06}*{Ch.\ 5}.
Then $\Gam_u=\Gam_v/\Gam_w$.
Since $\Gam_u$ is torsion-free, a snake lemma argument \cite{Efrat06}*{Prop.\ 5.2.1(h)} gives an exact sequence of abelian groups 
\begin{equation}
\label{exact sequence for coarsenings}
0\to\Gam_w/p\Gam_w\to\Gam_v/p\Gam_v\to\Gam_u/p\Gam_u\to 0.
\end{equation}

\subsection{Galois theory of valued fields}
\label{subsection on Galois theory of valued fields}
For every valuation $v$ on $F$ and every field extension $E$ of $F$ there is a valuation $\tilde v$ on $E$ which extends $v$ \cite{Efrat06}*{Cor.\ 14.1.2}.
There is a natural embedding $\Gam_v\hookrightarrow\Gam_{\tilde v}$ of ordered abelian groups, and a field embedding $\bar F_v\subseteq\bar E_{\tilde v}$.
When $E/F$ is algebraic, $\Gam_{\tilde v}/\Gam_v$ is a torsion group \cite{Efrat06}*{Cor.\ 14.2.3(a)}.

Now let $E$ be a Galois extension of $F$ and let $G=\Gal(E/F)$.
The \textsl{decomposition}, \textsl{inertia}, and \textsl{ramification} groups of a valued field extension $(E,\tilde v)/(F,v)$ are defined by
\[
\begin{split}
Z=Z(\tilde v/v)&=\{\sig\in\Gal(E/F)\ |\ \tilde v\circ\sig=\tilde v\},\\
T=T(\tilde v/v)&=\{\sig\in\Gal(E/F)\ |\ \forall a\in O_{\tilde v}:\ \tilde v(\sig(a)-a)>0\},\\
V=V(\tilde v/v)&=\{\sig\in T(\tilde v/v)\ |\ \forall a\in E^\times:\ \tilde v(1-\sig(a)/a)>0\},
\end{split}
\]
respectively  \cite{Efrat06}*{Ch.\ 15--16}.
One has $Z\trianglerighteq T\trianglerighteq V$.
The quotient $T/V$ is an abelian profinite group \cite{Efrat06}*{Cor.\ 16.2.7(d)}.
The group $V$ is pro-$p'$, if $p'=\Char\bar F_v>0$, and is trivial if $\Char\bar F_v=0$ \cite{Efrat06}*{Th.\ 16.2.3}.
There is an exact sequence
\begin{equation}
\label{exact sequence for decomposition group}
1\to T(\tilde v/v)\to Z(\tilde v/v)\xrightarrow{\Phi_v} \Aut(\bar E_{\tilde v}/\bar F_v)\to 1,
\end{equation} 
where $\Phi_v$ is induced by the residue map \cite{Efrat06}*{Prop.\ 16.1.3(a)(c) and Cor.\ 15.2.3(b)}.

Any other extension of $v$ to $E$ has the same valuation ring as $\tilde v\circ\sig$ for some $\sig\in G$ \cite{Efrat06}*{Th.\ 14.3.2}, and one has 
\begin{equation}
\label{conjugacy of decomposition groups}
Z(\tilde v\circ\sig/v)=\sig\inv Z(\tilde v/v)\sig
\end{equation}
\cite{Efrat06}*{Remark 15.1.1(b)}.

The valuation $v$ on $F$ is \textsl{Henselian relative to $E$} if it has a unique extension $u$ to $E$, or equivalently, $\Gal(E/F)=Z(\tilde v/v)$.
When $v$ is Henselian with respect to the algebraic closure $F^{\rm alg}$  (or equivalently, the separable closure $F^{\rm sep}$ \cite{Efrat06}*{Prop.\ 14.2.5}), we will say that $v$ is \textsl{Henselian}.
When $v$ is Henselian with respect to $F(p)$ we will say that $v$ is \textsl{$p$-Henselian}.
If $\Char\,\bar F_v\neq p$, then $v$ is $p$-Henselian if and only if $1+\grm_v\leq(F^\times)^p$ \cite{Efrat06}*{Prop.\ 18.2.4}.
If $v$ and $u$ are valuations on $F$ with $u$ coarser than $v$, then $v$ is Henselian relative to $E$ if and only if $u$ is Henselian relative to $E$  and $w=v/u$ is Henselian relative to $\bar F_u(p)$ \cite{Efrat06}*{Th.\ 20.3.2}.

\begin{lem}
\label{inertia groups}
Let $v$ be a $p$-Henselian valuation on a field $F$, let $u$ be a valuation on $F$ coarser than $v$, and let $w=v/u$ be the quotient valuation on $\bar F_u$.
Let $T(\tilde v/v)$, $T(\tilde u/u)$, and $T(\tilde w/w)$ be the inertia groups of $v,u,w$, chosen with respect to compatible extensions $\tilde v,\tilde u,\tilde w$ to $F(p)$ and $\bar F_v(p)$, respectively.
Then $T(\tilde w/w)\isom T(\tilde v/v)/T(\tilde u/u)$.
\end{lem}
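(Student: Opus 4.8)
The plan is to realize all three inertia groups inside the single pro-$p$ group $G=G_F(p)=\Gal(F(p)/F)$ and to compare the residue-reduction maps attached to the two valuations $\tilde u$ and $\tilde v$ via the exact sequence \eqref{exact sequence for decomposition group}. Put $E=F(p)$, fix the (unique, by $p$-Henselianity) extension $\tilde v$ of $v$ to $E$, let $\tilde u$ be the coarsening of $\tilde v$ extending $u$, and set $\tilde w=\tilde v/\tilde u$, a quotient valuation on the residue field $\bar E_{\tilde u}$ extending $w=v/u$. Since $v$ is $p$-Henselian, \cite{Efrat06}*{Th.\ 20.3.2} shows that $u$ is $p$-Henselian and that $w$ is $p$-Henselian (Henselian relative to $\bar F_u(p)$); in particular $v$ and $u$ have unique extensions to $E$, so $Z(\tilde v/v)=Z(\tilde u/u)=G$.

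First I would apply \eqref{exact sequence for decomposition group} to $(E,\tilde u)/(F,u)$. Since $Z(\tilde u/u)=G$, the reduction map $\Phi_u\colon G\to\Aut(\bar E_{\tilde u}/\bar F_u)$ is surjective with kernel $T(\tilde u/u)$, and $\bar E_{\tilde u}/\bar F_u$ is a pro-$p$ Galois extension. By the standard correspondence between unramified pro-$p$ extensions of a $p$-Henselian field and separable pro-$p$ extensions of its residue field, $\bar E_{\tilde u}=\bar F_u(p)$, so $\tilde w$ is an extension of $w$ to $\bar F_u(p)$ with residue field $\bar E_{\tilde v}=\bar F_v(p)$, matching the statement. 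As $w$ is $p$-Henselian we get $Z(\tilde w/w)=\Gal(\bar E_{\tilde u}/\bar F_u)$, and transitivity of residue fields in the tower $\tilde u\leq\tilde v$ (namely $\overline{(\bar E_{\tilde u})}_{\tilde w}=\bar E_{\tilde v}$ and $\overline{(\bar F_u)}_w=\bar F_v$) shows that the target of $\Phi_w$ in \eqref{exact sequence for decomposition group} for $(\bar E_{\tilde u},\tilde w)/(\bar F_u,w)$ coincides with the target $\Aut(\bar E_{\tilde v}/\bar F_v)$ of $\Phi_v$ in \eqref{exact sequence for decomposition group} for $(E,\tilde v)/(F,v)$.

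The heart of the argument is the commutativity $\Phi_v=\Phi_w\circ\Phi_u$ of the reduction maps. Concretely, for $\sigma\in G$ and $a\in O_{\tilde v}$ one reduces $a$ first modulo $\tilde u$ into $\bar E_{\tilde u}$ and then modulo $\tilde w$, recovering the residue of $a$ modulo $\tilde v$; running $\sigma(a)$ through the same two-step reduction shows that reducing $\sigma$ modulo $\tilde v$ agrees with first reducing $\sigma$ modulo $\tilde u$ (giving $\Phi_u(\sigma)$) and then modulo $\tilde w$. Granting this, $\Ker\Phi_v=\Phi_u\inv(\Ker\Phi_w)$, that is $T(\tilde v/v)=\Phi_u\inv(T(\tilde w/w))$. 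In particular $T(\tilde u/u)=\Ker\Phi_u\subseteq T(\tilde v/v)$, and since $\Phi_u$ is surjective, restricting it to $T(\tilde v/v)$ gives a surjection onto $T(\tilde w/w)$ with kernel $T(\tilde u/u)$, yielding the desired isomorphism $T(\tilde v/v)/T(\tilde u/u)\isom T(\tilde w/w)$.

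I expect the main obstacle to be making the commutative triangle $\Phi_v=\Phi_w\circ\Phi_u$ precise, i.e.\ the transitivity of residue automorphisms across the composite valuation, including the identification $\overline{(\bar E_{\tilde u})}_{\tilde w}=\bar E_{\tilde v}$ of residue fields and the verification that $\tilde v$ is recovered from $\tilde u$ and $\tilde w=\tilde v/\tilde u$; the final kernel computation is then formal. A secondary point requiring care is the identification $\bar E_{\tilde u}=\bar F_u(p)$ together with the passage from $w$ being Henselian relative to $\bar F_u(p)$ to $Z(\tilde w/w)$ being the full group $\Gal(\bar E_{\tilde u}/\bar F_u)$.
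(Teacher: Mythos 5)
Your proposal is correct and follows essentially the same route as the paper: the paper simply cites the commutative diagram of \cite{Efrat06}*{\S20.3} relating $\Phi_u$, $\Phi_v$, $\Phi_w$, observes that $p$-Henselianity of $v$ forces $u$ and $w$ to be $p$-Henselian so that the decomposition groups are full and $\Phi_v=\Phi_w\circ\Phi_u$, and then obtains $T(\tilde w/w)\isom T(\tilde v/v)/T(\tilde u/u)$ by a snake-lemma argument, which is exactly your kernel-and-surjectivity computation. The one point where you are slightly loose is the identification $\bar E_{\tilde u}=\bar F_u(p)$: when $\Char\,\bar F_u=p$ and $\bar F_u$ is imperfect the residue field of $F(p)$ may acquire a purely inseparable part, but since purely inseparable extensions have no nontrivial automorphisms one still has $\Aut(\bar E_{\tilde u}/\bar F_u)\isom G_{\bar F_u}(p)$, which is precisely what the cited diagram in \cite{Efrat06}*{\S20.3} packages for you.
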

\begin{proof}
We have a commutative diagram  (see \cite{Efrat06}*{\S20.3}):
\[
\xymatrix{
Z(\tilde u/u)\ar[r]^{\Phi_u}&G_{\bar F_u}(p)&Z(\tilde w/w)\ar@{^{(}->}[l]\ar[d]^{\Phi_w}\\
Z(\tilde v/v)\ar@{^{(}->}[u]\ar[rr]^{\Phi_v}&&G_{\bar F_v}(p)=G_{(\bar F_u)_w(p)}.
}
\]
Since $v$ is $p$-Henselian, so are $u$ and $w$.
Therefore the two inclusion maps are equalities, so $\Phi_v=\Phi_w\circ\Phi_u$.
A snake lemma argument shows that $\Ker(\Phi_w)\isom\Coker(\Ker(\Phi_u)\hookrightarrow \Ker(\Phi_v))$, as claimed. 
\end{proof}

\subsection{Explicit constructions of valuations}
\label{subsection on explicit constructions of valuations}
Consider subgroups  $S\leq H\leq F^\times$.
We will be interested in valuations $v$ on $F$ such that $1+\grm_v\leq S$ and $O_v^\times\leq H$. 
The set of all such valuations may be empty.
However if it is nonempty, then it contains a coarsest element, denoted in \cite{Efrat06}*{Sect.\ 11.1} by $v_{(2)}(S,H)$.
A deep and subtle construction of valuations due to Arason, Elman, Jacob and Ware (\cite{Ware81}, \cite{Jacob81}, \cite{ArasonElmanJacob87}) allows one to describe $v_{(2)}(S,H)$ explicitly as follows: 
Let
\[
\begin{split}
O^-(S,H)&=(1-S)\cap H,\\
O^+(S,H)&=\bigl\{x\in H\ \bigm|\ xO^-(S,H)\subseteq O^-(S,H)\bigr\},\\
O(S,H)&=O^-(S,H)\cup O^+(S,H).
\end{split}
\]
Then, for $v=v_{(2)}(S,H)$ one has
\[
O_v=O(S,H).
\]
See \cite{ArasonElmanJacob87}*{Prop.\ 3.2} or \cite{Efrat06}*{Lemma 11.3.3 and Lemma 11.3.4}.

In our applications we take $S=(F^\times)^p$, so we assume $(F^\times)^p\leq H$.
The following proposition is based on \cite{Efrat99c}*{Lemma 2.6 and Cor.\ 2.7} and \cite{HwangJacob95}*{Lemma 2.10}:

\begin{prop}
\label{existence of tame valuations}
Suppose that $\Char\,F\neq p$ and there exists a valuation $v$ on $F$ such that $1+\grm_v\leq(F^\times)^p$ and $O_v^\times\leq H$.
Then there exists such a valuation with $\Char\,\bar F_v\neq p$.
In particular, $v$ is $p$-Henselian. 
\end{prop}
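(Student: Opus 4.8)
The plan is to realize the required valuation as an explicit coarsening of $v$, after first disposing of the cases in which $v$ already works, and to isolate the one genuinely nonformal inclusion, which I would take from the cited lemmas.

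First I would reduce to the essential case. If $\Char\,F=\ell>0$ with $\ell\neq p$, then the prime field $\dbF_\ell$ is contained in $O_v$, so $\Char\,\bar F_v=\ell\neq p$ and $v$ itself already has all the required properties. Hence I may assume $\Char\,F=0$. In this situation $v(p)\geq0$, and $\Char\,\bar F_v=p$ is equivalent to $v(p)>0$; so if $v(p)=0$ then $\bar F_v$ has residue characteristic $\neq p$ and $v$ again works. The substance therefore lies entirely in the case $v(p)>0$.

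Next I would pass to a coarsening tailored to $p$. Let $\Delta$ be the smallest convex subgroup of $\Gam_v$ containing $v(p)$, and let $u$ be the unique coarsening of $v$ with $\Gam_w=\Delta$, where $w=v/u$. Since $v(p)\in\Delta$, we get $u(p)=0$, hence $\Char\,\bar F_u\neq p$. As $u$ is coarser than $v$ we have $\grm_u\subseteq\grm_v$, so the crucial condition is inherited automatically: $1+\grm_u\subseteq1+\grm_v\leq(F^\times)^p$. Once the remaining inclusion $O_u^\times\leq H$ is known, the final assertion is immediate: $u$ then has $\Char\,\bar F_u\neq p$ and $1+\grm_u\leq(F^\times)^p$, so by \cite{Efrat06}*{Prop.\ 18.2.4} it is $p$-Henselian.

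The hard part is exactly the inclusion $O_u^\times\leq H$. It does not follow formally from $O_v^\times\leq H$, because coarsening enlarges the unit group: indeed $O_u^\times=\{a\in F^\times:v(a)\in\Delta\}$ strictly contains $O_v^\times$ in general, and there is no reason a priori for an element of intermediate $v$-value to lie in $H$. This is precisely the point secured by \cite{Efrat99c}*{Lemma 2.6 and Cor.\ 2.7} and \cite{HwangJacob95}*{Lemma 2.10}: invoking the standing hypothesis $(F^\times)^p\leq H$ together with $1+\grm_v\leq(F^\times)^p$, passing to this $p$-adic coarsening preserves the condition $O^\times\leq H$. Granting this, $u$ is a valuation with $1+\grm_u\leq(F^\times)^p$, $O_u^\times\leq H$ and $\Char\,\bar F_u\neq p$, and by \cite{Efrat06}*{Prop.\ 18.2.4} it is $p$-Henselian, as required. (Equivalently, one could take the coarsest valuation $v_{(2)}((F^\times)^p,H)$ supplied by the Arason--Elman--Jacob--Ware construction and argue by minimality that its residue characteristic is $\neq p$, but this routes through the same unit-preservation obstacle.)
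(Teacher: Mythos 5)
Your proposal is correct in all its formal reductions, and it takes a genuinely different route from the paper; the one step you flag as hard is exactly where the two diverge. The paper never coarsens the given $v$: it first replaces $v$ by the \emph{coarsest} valuation $v_{(2)}((F^\times)^p,H)$, whose valuation ring is the explicit Arason--Elman--Jacob set $O((F^\times)^p,H)$, and then shows by contradiction that this valuation already has $\Char\,\bar F_v\neq p$. The first half of that contradiction argument is precisely the computation underlying your cited lemmas: if $\Char\,\bar F_v=p$, then $\bar F_v^\times$ has no $p$-torsion, so $1+\grm_v\leq(F^\times)^p$ forces $1+\grm_v=(1+\grm_v)^p$, and expanding $1+a=(1+b)^p\in 1+b^p+p\grm_v$ yields $\grm_v\setminus p\grm_v\subseteq(F^\times)^p\leq H$; the second half then derives a contradiction directly from the explicit description of $O((F^\times)^p,H)$ (via $p\in H$, hence $p\inv\in H\setminus O((F^\times)^p,H)$, hence $p\inv\not\in O^+$), so no unit-preservation statement for a coarsening is ever needed. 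Your route does go through, and your attribution is accurate --- \cite{Efrat99c}*{Lemma 2.6 and Cor.\ 2.7} and \cite{HwangJacob95}*{Lemma 2.10} are indeed the results the paper says the proposition is based on --- but be aware that your citation outsources the entire mathematical content: a self-contained version of your argument needs the computation above, followed by the observation that every $\gamma\in\Delta$ with $\gamma\geq0$ is a sum of elements of the interval $[0,v(p)]$ (subtract $v(p)$ repeatedly), which factors any $a\in O_u^\times$ as a product of elements of $\grm_v\setminus p\grm_v$, their inverses, and a $v$-unit, giving $O_u^\times\leq(F^\times)^pO_v^\times\leq H$ (this also covers the degenerate case $\Delta=\Gam_v$, where $u$ is trivial and one concludes $H=F^\times$). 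What the paper's normalization buys is a short self-contained proof with no coarsening lemma; what your approach buys is that it works for the given $v$ directly and exhibits the tame valuation as an explicit $p$-adic coarsening --- and, combined with the minimality of $v_{(2)}((F^\times)^p,H)$, it would even reprove the paper's statement, since a strictly coarser valuation satisfying both conditions contradicts coarsest-ness.
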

\begin{proof}
After replacing $v$ by the valuation $v_{(2)}((F^\times)^p,H)$, we may assume that $O_v=O((F^\times)^p,H)$.

Assume that $\Char\,\bar F_v=p$, i.e., $p\in\grm_v$.
Then $\bar F_v^\times$ has no $p$-torsion, so the homomorphism $(1+\grm_v)/(1+\grm_v)^p\to O_v^\times/(O_v^\times)^p$ is injective.
The homomorphism $O_v^\times/(O_v^\times)^p\to F^\times/(F^\times)^p$ is also injective, and we conclude from $1+\grm_v\leq(F^\times)^p$ that 
\[
1+\grm_v=(1+\grm_v)^p;
\]
See \cite{Efrat06}*{Lemma 3.2.4}.
For every $a\in\grm_v$ we may therefore find $b\in \grm_v$ with $1+a=(1+b)^p\in 1+b^p+p\grm_v$.
Assuming further that $a\not\in p\grm_v$, we deduce using the ultra-metric inequality that $a\in b^p(1+\grm_v)\subseteq (F^\times)^p$.
Thus we have shown that 
\[
\grm_v\setminus p\grm_v\subseteq(F^\times)^p\leq H.
\]

In particular, $p\in\grm_v\setminus p\grm_v\subseteq H$, and therefore $p\inv\in H\setminus O((F^\times)^p,H)$, so
$p\inv\not\in O^+((F^\times)^p,H)$.
This means that there exists $c\in O^-((F^\times)^p,H)$ for which $p\inv c\not\in O^-((F^\times)^p,H)$.
Since $O_v^\times \leq H$ and $c\not\in H$ we have $c\in\grm_v$.
Moreover, $c\not\in H$ implies $p\inv c\not\in H$, so even $p\inv c\not\in O_v$.
Therefore $c\in\grm_v\setminus p\grm_v\subseteq H$, a contradiction.

Consequently, $\Char\,\bar F_v\neq p$.
Since $1+\grm_v\leq(F^\times)^p$, this implies that $v$ is $p$-Henselian.
\end{proof}

\section{$p$-Henselian valuations and extensions}
\label{section on p-henselian valuations and extensions}
\subsection{The tame case}
Extensions of cyclotomic pro-$p$ pairs arise naturally in the valuation-theoretic context as follows:

\begin{prop}
\label{extensions from tame valuations}
Let $F$ be a field containing a root of unity of order $p$.
Let $v$ be a $p$-Henselian valuation on $F$ such that $\Char\,\bar F_v\neq p$ and let $\tilde v$ be its unique extension to $F(p)$.
Let $m=\dim_{\dbF_p}(\Gam_v/p\Gam_v)$, considered as a cardinal number, and let $T=T(\tilde v/v)$.
Then $T\isom\dbZ_p^m$ and the residue map induces a decomposition
\[
\calG_F(p)=T\rtimes\calG_{\bar F_v}(p).
\]
\end{prop}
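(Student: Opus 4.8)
The plan is to exhibit the asserted decomposition as a split form of the inertia exact sequence (\ref{exact sequence for decomposition group}), enriched by its cyclotomic data. Since $v$ is $p$-Henselian, $\tilde v$ is the unique extension of $v$ to $F(p)$, whence $Z(\tilde v/v)=G_F(p)$ and (\ref{exact sequence for decomposition group}) becomes
\[
1\to T\to G_F(p)\xrightarrow{\Phi_v}\Aut(\overline{F(p)}_{\tilde v}/\bar F_v)\to 1.
\]
First I would identify the quotient. As $\Char\,\bar F_v\neq p$, every pro-$p$ extension of $\bar F_v$ is separable and lifts uniquely to an unramified pro-$p$ extension of $F$; so by the Henselian theory of \cite{Efrat06} the inertia field $F_{ur}=F(p)^T$ has residue field $\bar F_v(p)$ with $\Gal(F_{ur}/F)\isom G_{\bar F_v}(p)$ via residues. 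Since $F(p)/F_{ur}$ is totally ramified it leaves the residue field unchanged, so $\overline{F(p)}_{\tilde v}=\bar F_v(p)$ and the quotient in the sequence is $G_{\bar F_v}(p)$.

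Next I would determine the structure of $T$. Because $\Char\,\bar F_v\neq p$, the ramification group $V=V(\tilde v/v)$ is pro-$p'$ by \cite{Efrat06}*{Th.\ 16.2.3}, hence trivial inside the pro-$p$ group $G_F(p)$; thus $T=T/V$ is abelian. The tame inertia pairing $(\sig,\tilde v(a))\mapsto\overline{\sig(a)/a}$ then identifies $T$ with $\Hom(\Gam_{\tilde v}/\Gam_v,\mu_{p^\infty})$. To evaluate this I would compute $\Gam_{\tilde v}/\Gam_v$: since $\mu_{p^\infty}\subseteq F(p)$, every $p$-power root $a^{1/p^n}$ of $a\in F^\times$ lies in $F(p)$, while $\Gam_{\tilde v}/\Gam_v$ is $p$-primary torsion as $F(p)/F$ is pro-$p$; hence $\Gam_{\tilde v}$ is the $p$-divisible hull of $\Gam_v$ and $\Gam_{\tilde v}/\Gam_v\isom\Gam_v\tensor_{\dbZ}(\dbQ_p/\dbZ_p)$. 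This is a $p$-divisible $p$-torsion group whose $p$-torsion subgroup is $\Gam_v/p\Gam_v$, so it is a sum of $m$ copies of $\dbQ_p/\dbZ_p$; its Pontryagin dual is $\dbZ_p^m$, giving $T\isom\dbZ_p^m$.

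For the splitting I would argue field-theoretically rather than through $H^2$. By Zorn's Lemma choose a maximal totally ramified pro-$p$ extension $M$ of $F$ inside $F(p)$. Maximality forces the value group of $M$ to exhaust $\Gam_{\tilde v}$ — otherwise adjoining a $p$-th root of a suitable element of $M$ would yield a strictly larger totally ramified extension — so $F(p)/M$ has trivial value-group extension and separable residue extension, i.e.\ it is unramified. Hence the residue map identifies $C:=\Gal(F(p)/M)$ with $G_{\bar F_v}(p)$, and $\Phi_v$ restricts to an isomorphism on $C$. Therefore $C$ is a complement to $T=\Ker\Phi_v$, giving $G_F(p)=T\rtimes C$ with $\Phi_v$ as the projection $\pi$.

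Finally I would reconcile the cyclotomic data, which is the actual point of the statement. Since $\mu_{p^\infty}\subseteq F_{ur}$, the inertia group $T$ fixes $\mu_{p^\infty}$, so $\chi_F|_T=1$; and since reduction gives an isomorphism $\mu_{p^\infty}(F(p))\isom\mu_{p^\infty}(\bar F_v(p))$ compatible with $C\isom G_{\bar F_v}(p)$, one gets $\chi_F=\chi_{\bar F_v}\circ\Phi_v$. The $G_F(p)$-equivariance of the tame pairing — using that each $\sig\in Z$ fixes $\tilde v$ and acts on $\mu_{p^\infty}$ through $\chi_F(\sig)$ — yields $\sig\tau\sig\inv=\chi_F(\sig)\tau$ for $\sig\in G_F(p)$ and $\tau\in T$; restricted to $C$ this is precisely the action $\bar g a\bar g\inv=\chi_{\bar F_v}(\bar g)a$ defining $T\rtimes\calG_{\bar F_v}(p)$. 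Assembling these gives $\calG_F(p)=T\rtimes\calG_{\bar F_v}(p)$. I expect the main obstacle to be this last stage together with the structure computation of $T$: it is here that the quantitative value-group input and the cyclotomic action must be matched, in order to upgrade the bare group-theoretic splitting to an isomorphism of cyclotomic pro-$p$ pairs.
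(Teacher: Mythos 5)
Your proposal is correct, but it is genuinely more self-contained than the paper's proof, which at the group level is essentially a citation: after observing that $\Char\,\bar F_v\neq p$ forces $V(\tilde v/v)=1$ and that $p$-Henselianity gives $Z(\tilde v/v)=G_F(p)$, the paper invokes \cite{Efrat06}*{Cor.\ 22.1.2 and Prop.\ 22.1.3} for both $T\isom\dbZ_p^m$ and the split exact sequence, and handles the pair structure with the single remark that the cyclotomic characters are compatible with the residue map. You instead reconstruct this package from more primitive inputs: the quotient is identified via Hensel lifting of unramified extensions; $T$ is computed through the tame inertia pairing $T\isom\Hom(\Gam_{\tilde v}/\Gam_v,\mu_{p^\infty})$ of \cite{Efrat06}*{Th.\ 16.2.6} together with the observation that $\Gam_{\tilde v}$ is the $p$-divisible hull of $\Gam_v$ (which rests on $F(p)^\times=(F(p)^\times)^p$, so $\Gam_{\tilde v}/\Gam_v\isom\Gam_v\tensor(\dbQ_p/\dbZ_p)$ is a sum of exactly $m$ copies of $\dbQ_p/\dbZ_p$ with dual $\dbZ_p^m$); and the splitting is produced by a Zorn's Lemma choice of a maximal totally ramified subextension $M$, whose value group is forced to exhaust $\Gam_{\tilde v}$, so that $\Gal(F(p)/M)$ is a complement mapped isomorphically by $\Phi_v$. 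All of these steps check out; in particular your Zorn step implicitly secures $a\notin(M^\times)^p$ by choosing $\gamma\in\Gam_{\tilde v}\setminus\Gam_M$ with $p\gamma\in\Gam_M$, and your claim that $F(p)/F_{ur}$ leaves the residue field unchanged follows from the exact sequence (\ref{exact sequence for decomposition group}) applied over $F_{ur}$ (trivial automorphism group of a normal separable residue extension), where separability uses that every finite subextension of $\overline{F(p)}_{\tilde v}/\bar F_v$ has $p$-power degree prime to the residue characteristic. What each approach buys: the paper's two-line proof is efficient because the heavy lifting sits in \cite{Efrat06}; your version makes visible exactly where each hypothesis enters ($\mu_p\subseteq F$ for $p$-divisibility of $\Gam_{\tilde v}$, $\Char\,\bar F_v\neq p$ for triviality of $V$ and injectivity of reduction on $\mu_{p^\infty}$), and, importantly, it supplies the equivariance computation $\sig\tau\sig\inv=\tau^{\chi_F(\sig)}$ that upgrades the group-theoretic splitting to the asserted isomorphism of cyclotomic pro-$p$ pairs --- the one point the paper compresses into a single sentence.
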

\begin{proof}
As $\Char\,\bar F_v\neq p$, the ramification group $V(\tilde v/v)$ is trivial.
Since $v$ is $p$-Henselian, $Z(\tilde v/v)=G_F(p)$.
Therefore \cite{Efrat06}*{Cor.\ 22.1.2 and Prop.\ 22.1.3} give the assertions at the pro-$p$ group level.
We further note that the cyclotomic characters $\chi_F$ and $\chi_{\bar F_v}$ are compatible with the residue map. 
\end{proof}

\subsection{The wild case}
The case of $p$-Henselian valued fields with residue characteristic $p$ is more subtle.
Under the additional assumption that $G_F(p)$ is finitely generated, the arithmetical structure of $F$ is then given by Theorem \ref{the wild case} below.
First we note the following lemma.

\begin{lem}
\label{not pro p}
Let $E$ be a field of characteristic $0$ and let $w$ be a discrete valuation on $E$ with finite residue field $\bar E_w$ of characteristic $p$.
Then $G_E$ is not pro-$p$.
\end{lem}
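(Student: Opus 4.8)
The plan is to exhibit a finite separable extension $E'/E$ of prime degree $\ell\neq p$. Then $G_{E'}=\Gal(E^{\rm sep}/E')$ is an open subgroup of $G_E$ of index $[E':E]=\ell$; since every open subgroup of a pro-$p$ group has $p$-power index (its normal core is open with finite $p$-group quotient), the existence of such an $E'$ forces $G_E$ not to be pro-$p$. The extension $E'$ will be built by lifting a residue-field extension through the valuation ring, so the whole argument stays elementary and, importantly, needs no Henselian hypothesis.

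Concretely, I would fix a prime $\ell\neq p$ and write $\bar E_w=\dbF_q$ with $q=p^f$. Since every finite field carries irreducible polynomials of each positive degree, choose an irreducible $\bar g\in\dbF_q[X]$ of degree $\ell$. Lifting each coefficient along $O_w\to O_w/\grm_w=\dbF_q$ produces a monic $g\in O_w[X]$ of degree $\ell$ reducing to $\bar g$, and I set $E'=E[X]/(g)$.

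The crux is that $g$ stays irreducible over $E$, for which I would use that the valuation ring $O_w$ is integrally closed in $E$. If $g=g_1g_2$ with $g_1,g_2\in E[X]$ monic of positive degree, then all roots of $g_1$ and $g_2$ are roots of the monic polynomial $g\in O_w[X]$, hence integral over $O_w$; the coefficients of $g_1,g_2$, being up to sign the elementary symmetric functions of these roots, therefore lie in $O_w$ by integral closedness. Reducing modulo $\grm_w$ yields $\bar g=\bar g_1\bar g_2$ with $\bar g_1,\bar g_2$ monic of positive degree (degrees are preserved since the $g_i$ are monic), contradicting irreducibility of $\bar g$. Thus $g$ is irreducible, $E'/E$ is a field extension of degree $\ell$, and as $\Char E=0$ it is separable; the conclusion of the first paragraph then applies.

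I expect the irreducibility step to be the only real obstacle: because $E$ is not assumed Henselian or complete, one cannot invoke Hensel's lemma, and a priori $g$ might factor over $E$ even though $\bar g$ is irreducible. The Gauss-type argument through integral closedness of $O_w$ is precisely what circumvents this and makes the lift work over an arbitrary discretely valued field. As an alternative one could argue through the decomposition group $Z=Z(\tilde w/w)\leq G_E$: by the exact sequence (\ref{exact sequence for decomposition group}) it surjects onto $\Aut(\overline{\dbF_q}/\dbF_q)\cong\hat{\dbZ}$, which is not pro-$p$, so the closed subgroup $Z$, and hence $G_E$, is not pro-$p$; this route instead shifts the difficulty to identifying the residue field of $E^{\rm sep}$ with $\overline{\dbF_q}$.
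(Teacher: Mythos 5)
Your proof is correct, and it takes a genuinely different route from the paper. The paper passes to the completion $(\hat E,\hat w)$, which -- being complete, discretely valued, of characteristic $0$, and having finite residue field of characteristic $p$ -- is a finite extension of $\dbQ_p$; it then invokes the Krasner--Ostrowski lemma to get $\hat E^{\rm alg}=\hat E E^{\rm alg}$, so that $G_{\hat E}$ embeds as a closed subgroup of $G_E$, and concludes because absolute Galois groups of $p$-adic fields are not pro-$p$. Your argument avoids both completion and Krasner--Ostrowski: lifting an irreducible monic $\bar g\in\dbF_q[X]$ of prime degree $\ell\neq p$ and using that $O_w$ is integrally closed in $E$ (so any monic factorization of $g$ over $E$ descends to $O_w[X]$ and reduces to a nontrivial monic factorization of $\bar g$) produces a separable extension $E'/E$ of degree $\ell$, which is incompatible with $G_E$ being pro-$p$ since open subgroups of pro-$p$ groups have $p$-power index. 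All the steps check out: the factors of a monic polynomial can be normalized to be monic, their coefficients are (up to sign) elementary symmetric functions of roots of $g$ and hence integral over $O_w$, and $\Char\,E=0$ gives separability of $g$. What each approach buys: yours is elementary and strictly more general -- it nowhere uses discreteness of $w$, only that the residue field admits a finite separable extension of degree prime to $p$, so it applies to arbitrary valuations with finite residue field; the paper's proof is heavier but yields more, namely the embedding of the full $p$-adic group $G_{\hat E}$ as a closed subgroup of $G_E$, which matches the local-field framework used elsewhere in the paper (Theorem \ref{the wild case}, Proposition \ref{properties of the completion}). Your suggested alternative via the decomposition group surjecting onto $\Aut(\overline{\dbF_q}/\dbF_q)\cong\hat{\dbZ}$ is also viable, but, as you rightly flag, identifying the residue field of $(E^{\rm sep},\tilde w)$ with $\overline{\dbF_q}$ essentially reduces to the same lifting argument you already gave.
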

\begin{proof}
The completion $(\hat E,\hat w)$ of $(E,w)$ has the same value group and residue field as $(E,w)$, and therefore it is a finite extension of $\dbQ_p$ with its canonical ($p$-adic) valuation \cite{Serre67}*{Ch.\ II, Sect.\ 5}. 
It is a consequence of the Krasner--Ostrowski lemma that the algebraic closures satisfy $\hat E^{\rm alg}=\hat E E^{\rm alg}$ \cite{Efrat06}*{Cor.\ 18.5.3}.
Hence $G_{E^{\rm alg}\cap \hat E}=G_{\hat E}$, and this group embeds as a closed subgroup of $G_E$.
But by Galois theory of $p$-adic fields,  $G_{\hat E}$ is not pro-$p$.
Therefore $G_E$ also cannot be pro-$p$.
\end{proof}

The following fact was proved in \cite{Efrat99b}:

\begin{thm}
\label{the wild case}
Let $F$ be a field containing a root of unity of order $p$ and such that $G_F(p)$ is finitely generated.
Let $v$ be a $p$-Henselian valuation on $F$ with $\Char\,\bar F_v=p$.
Then there exists a coarsening $u$ of $v$ with residue field $E=\bar F_u$ of characteristic $0$ and quotient valuation $w=v/u$ on $E$, such that one of the following holds:
\begin{enumerate}
\item[(i)]
$w$ is discrete, $\bar F_v=\bar E_w$ is finite, the completion $(\hat E,\hat w)$ of $(E,w)$ is a finite extension of $\dbQ_p(\mu_p)$, and $\calG_{\hat E}(p)\isom \calG_E(p)$ via restriction; or
\item[(ii)]
$\Gam_w=p\Gam_w$ and $G_E(p)$ is a finitely generated free pro-$p$ group.
\end{enumerate}
\end{thm}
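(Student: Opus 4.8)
The plan is to isolate a canonical coarsening of $v$ that separates the characteristic-$0$ and characteristic-$p$ parts, and then to read off the dichotomy from the quotient valuation, using finite generation of $G_F(p)$ at every stage. First I would reduce to $\Char F=0$: since $\mu_p\subseteq F$ we have $\Char F\neq p$, and if $\Char F=\ell>0$ then $\Char\bar F_v=\ell\neq p$, contrary to hypothesis. As $p\in\grm_v$ while every other rational prime is coprime to $p$, one also gets $v(p)>0$ and $v(q)=0$ for all primes $q\neq p$. I would then take $u$ to be the \emph{finest} coarsening of $v$ whose residue field has characteristic $0$; equivalently, $u$ corresponds to the smallest convex subgroup of $\Gam_v$ containing $v(p)$, so that $v(p)\in\Gam_w$ but $u(p)=0$. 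This makes $E=\bar F_u$ of characteristic $0$ while $w=v/u$ satisfies $w(p)>0$, so $\bar E_w=\bar F_v$ has characteristic $p$.

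The second step is to transport the hypotheses to $(E,w)$. By the coarsening criterion \cite{Efrat06}*{Th.\ 20.3.2}, $p$-Henselicity of $v$ forces $u$ to be $p$-Henselian and $w$ to be $p$-Henselian on $E$. Applying Proposition \ref{extensions from tame valuations} to the tame valuation $u$ (of residue characteristic $\neq p$) yields $\calG_F(p)=T(\tilde u/u)\rtimes\calG_E(p)$ with $T(\tilde u/u)\cong\dbZ_p^{m'}$; since $G_F(p)$ is finitely generated, so is its quotient $G_E(p)$, and $m'<\infty$. In particular $\dim_{\dbF_p}(E^\times/(E^\times)^p)<\infty$. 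I would also record that, $\bar E_w$ being of characteristic $p$, its maximal pro-$p$ Galois group $G_{\bar E_w}(p)$ has cohomological $p$-dimension at most $1$ and is therefore free pro-$p$, and that it is a quotient of $G_E(p)$ through the inertia sequence (\ref{exact sequence for decomposition group}) for the $p$-Henselian $w$.

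The dichotomy is then governed by whether $\Gam_w$ is $p$-divisible. If $\Gam_w=p\Gam_w$, I would prove $G_E(p)$ is free by showing $H^2(G_E(p))=0$: the residue contribution vanishes because $G_{\bar E_w}(p)$ is free, while $p$-divisibility of $\Gam_w$ annihilates the inertial classes modulo $p$, so no Demu\v skin-type relation survives; this gives alternative (ii). If instead $\Gam_w\neq p\Gam_w$, I claim finite generation forces $w$ to be discrete with $\Gam_w\cong\dbZ$ and the residue field $\bar E_w$ finite. Granting this, the completion $(\hat E,\hat w)$ is a complete discretely valued field of characteristic $0$ with finite residue field of characteristic $p$, hence a finite extension of $\dbQ_p$; as $\mu_p\subseteq E$ it is a finite extension of $\dbQ_p(\mu_p)$. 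Finally, the Krasner-type identity $\hat E^{\mathrm{alg}}=\hat E\,E^{\mathrm{alg}}$ used in Lemma \ref{not pro p}, together with $p$-Henselicity of $w$, yields $\calG_{\hat E}(p)\cong\calG_E(p)$ by restriction, giving alternative (i).

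The hard part is the wild-case analysis underlying this dichotomy, namely justifying the two implications just invoked. In the non-$p$-divisible branch one must rule out dense value groups (such as the tame tower producing $\Gam_w=\dbZ[1/\ell]$, $\ell\neq p$) and infinite residue fields; both are excluded because in residue characteristic $p$ the one-unit group $1+\grm_w$ contributes to $E^\times/(E^\times)^p$ a number of independent classes measured by the filtration jumps of $\grm_w$ below the level $w(p)$, so a value group dense below $w(p)$, or an infinite residue field, makes $\dim_{\dbF_p}(E^\times/(E^\times)^p)$ infinite — contradicting finite generation. A parallel analysis of the Artin--Schreier module $\bar E_w/\wp(\bar E_w)$ is what forces the intermediate residue fields of characteristic $p$ to carry no nontrivial valuation, pinning $w$ to rank one. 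Carrying out these one-unit and Artin--Schreier computations precisely — the technical core of \cite{Efrat99b} — is where the real work lies, together with the $H^2$-vanishing argument in the $p$-divisible branch.
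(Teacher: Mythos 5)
You should first note what the benchmark is here: the paper does not prove Theorem \ref{the wild case} at all --- it quotes it from \cite{Efrat99b}. Measured against that reference, your outline reproduces the correct skeleton: the reduction to $\Char F=0$; the choice of $u$ via the smallest convex subgroup of $\Gam_v$ containing $v(p)$; the transport of $p$-Henselianity to $u$ and $w$ via \cite{Efrat06}*{Th.\ 20.3.2}; the use of Proposition \ref{extensions from tame valuations} for the tame coarsening $u$ to get finite generation of $G_E(p)$ and finiteness of $\dim_{\dbF_p}(E^\times/(E^\times)^p)$; and the endgame in case (i) (completion is finite over $\dbQ_p(\mu_p)$, restriction isomorphism via a Krasner argument, with $\mu_p\subseteq E$ by reduction of $\zeta_p$). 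But as a proof your text is circular exactly where the theorem has content: you explicitly defer ``the one-unit and Artin--Schreier computations'' to ``the technical core of \cite{Efrat99b}'' --- that is, to the very theorem being proved. The claims you leave to that core are not routine: that finite generation forces the convex subgroup generated by $v(p)$ to have rank one, and that a non-$p$-divisible rank-one $\Gam_w$ must be discrete with finite residue field, are asserted with only a heuristic about filtration jumps of $1+\grm_w$ below $w(p)$; making that count rigorous (including the change of regime of $(1+a)^p=1+pa+\cdots+a^p$ near level $w(p)/(p-1)$, and the injectivity of the relevant one-unit classes into $E^\times/(E^\times)^p$) is the bulk of the cited paper.

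There is also a step that fails as stated, not merely a deferral. In the $p$-divisible branch you argue $H^2(G_E(p))=0$ because ``the residue contribution vanishes since $G_{\bar E_w}(p)$ is free, while $p$-divisibility of $\Gam_w$ annihilates the inertial classes.'' This presupposes a decomposition of $G_E(p)$, or of its $H^2$, into inertia and residue parts in the style of Proposition \ref{extensions from tame valuations} and Proposition \ref{generalized Wadsworth formula}. Those results require $\Char\,\bar F_v\neq p$, whereas here $\Char\,\bar E_w=p$: in this wild situation the ramification group $V(\tilde w/w)$ is itself pro-$p$ (see \S\ref{subsection on Galois theory of valued fields}), so it can be enormous inside $G_E(p)$, the inertia subgroup need not split off, and no Wadsworth-type formula for $H^i(G_E(p))$ exists. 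Consequently ``inertial classes annihilated modulo $p$'' has no meaning in this setting, and freeness of $G_E(p)$ when $\Gam_w=p\Gam_w$ --- the hardest assertion in the theorem --- is left without any valid mechanism. (Your correct observation that $\cd_p\leq 1$ for the characteristic-$p$ residue field only controls a quotient of $G_E(p)$ through the sequence (\ref{exact sequence for decomposition group}); it says nothing about $H^2$ of $G_E(p)$ itself, since $H^2$ does not vanish along quotients.) To repair this branch you would need the actual Kummer-theoretic analysis of $E^\times/(E^\times)^p$ and its pairing structure carried out in \cite{Efrat99b}, not the tame inertia formalism of \S\ref{section on cohomology of extension}.
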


\begin{defin}
\label{pairs of p-adic type}
\rm
Cyclotomic pro-$p$ pairs of the form $\calG_{\hat E}(p)$, where $\hat E$ is a finite extension of $\dbQ_p(\mu_p)$, will be called cyclotomic pairs of \textsl{$p$-adic type}.
\end{defin}

\begin{cor}
\label{cor to wild case}
Let $(F,v)$ be a valued field as in Theorem \ref{the wild case}.
Then $\calG_F(p)\isom\dbZ_p^m\rtimes\bar\calG$ where $0\leq m<\infty$, and either:
\begin{enumerate}
\item[(i)]
$\bar\calG$ is a cyclotomic pro-$p$ pair of $p$-adic type; or
\item[(ii)]
The underlying group of $\bar\calG$ is a finitely generated free pro-$p$ group.
\end{enumerate}
Moreover, when the absolute Galois group $G_F$ is a pro-$p$ group, only case (ii) is possible.
\end{cor}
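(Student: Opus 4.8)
The plan is to feed the coarsening produced by Theorem \ref{the wild case} into Proposition \ref{extensions from tame valuations}, and then to read off the two alternatives of the corollary directly from the dichotomy of Theorem \ref{the wild case}. Let $u$ be the coarsening of $v$ given by Theorem \ref{the wild case}, with residue field $E=\bar F_u$ of characteristic $0$ and quotient valuation $w=v/u$ on $E$. Since $u$ is coarser than the $p$-Henselian valuation $v$, it is itself $p$-Henselian (as noted in the proof of Lemma \ref{inertia groups}), and $\Char\,E=0\neq p$. As $F$ contains a root of unity of order $p$, I would apply Proposition \ref{extensions from tame valuations} to $u$ to obtain a decomposition
\[
\calG_F(p)\isom\dbZ_p^m\rtimes\calG_E(p), \qquad m=\dim_{\dbF_p}(\Gam_u/p\Gam_u),
\]
and set $\bar\calG=\calG_E(p)$. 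To see that $m<\infty$, I would invoke Lemma \ref{cohomological properties of extensions}(a): it exhibits $H^1(\dbZ_p^m)$ as a direct summand of $H^1(G_F(p))$, and the latter is finite-dimensional because $G_F(p)$ is finitely generated, forcing $m<\infty$.

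The two alternatives of the corollary then match the two cases of Theorem \ref{the wild case}. In case (i) the restriction map yields $\calG_E(p)\isom\calG_{\hat E}(p)$ with $\hat E$ a finite extension of $\dbQ_p(\mu_p)$, so by Definition \ref{pairs of p-adic type} the pair $\bar\calG$ is of $p$-adic type, giving alternative (i). In case (ii) the underlying group $G_E(p)$ of $\bar\calG$ is a finitely generated free pro-$p$ group, which is alternative (ii).

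For the final assertion, suppose $G_F$ is a pro-$p$ group, so that $F(p)=F^{\rm sep}$ and hence the $p$-Henselian valuation $u$ is in fact Henselian. I would show that case (i) is then impossible. On one hand, in case (i) the field $E$ has characteristic $0$ and carries the discrete valuation $w$, whose residue field $\bar E_w=\bar F_v$ is finite of characteristic $p$; by Lemma \ref{not pro p}, $G_E$ is \emph{not} pro-$p$. On the other hand, applying the exact sequence (\ref{exact sequence for decomposition group}) to the Henselian valuation $u$ and the extension $F^{\rm sep}/F$ -- where $Z(\tilde u/u)=G_F$, and where, since $\Char\,E=0$ makes $E$ perfect, the residue field of $F^{\rm sep}$ is $E^{\rm sep}=E^{\rm alg}$ so that the right-hand term is $G_E$ -- exhibits $G_E$ as a continuous quotient of $G_F$, hence as a pro-$p$ group. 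This contradiction leaves only case (ii).

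The step I expect to be the main obstacle is this last paragraph: one must pass from the $p$-Henselian hypothesis to genuine Henselianity of $u$, and correctly identify $\Aut(\overline{F^{\rm sep}}_{\tilde u}/E)$ with the full absolute Galois group $G_E$ (using perfectness of the characteristic-$0$ residue field $E$), so as to realize $G_E$ as a quotient of $G_F$ at the level of absolute Galois groups rather than their maximal pro-$p$ quotients. The remaining steps are a routine assembly of the cited structural results.
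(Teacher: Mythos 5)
Your proposal is correct and follows essentially the same route as the paper: pass to the coarsening $u$, note it is $p$-Henselian, apply Proposition \ref{extensions from tame valuations} to get $\calG_F(p)\isom\dbZ_p^m\rtimes\calG_E(p)$, match the two alternatives to the dichotomy of Theorem \ref{the wild case}, and rule out case (i) when $G_F$ is pro-$p$ by exhibiting $G_E$ as a quotient of $G_F$ via (\ref{exact sequence for decomposition group}) and contradicting Lemma \ref{not pro p} (your explicit upgrade from $p$-Henselian to Henselian via $F(p)=F^{\rm sep}$ is exactly what the paper leaves implicit). The only cosmetic difference is your derivation of $m<\infty$ from Lemma \ref{cohomological properties of extensions}(a) and the finite generation of $G_F(p)$, where the paper instead uses Kummer theory together with the surjection $\Gam_v/p\Gam_v\to\Gam_u/p\Gam_u$ from (\ref{exact sequence for coarsenings}); both arguments are valid.
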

\begin{proof}
The field $F$ is $p$-Henselian also with respect to the coarsening $u$ of $v$ (see \S\ref{subsection on Galois theory of valued fields}).
Since $G_F(p)$ is finitely generated, $(F^\times:(F^\times)^p)<\infty$.
The valuation $v$ induces a group epimorphism $F^\times/(F^\times)^p\to\Gam_v/p\Gam_v$.
Since there is a group epimorphism $\Gam_v/p\Gam_v\to\Gam_u/p\Gam_u$ (see (\ref{exact sequence for coarsenings})), $m:=\dim_{\dbF_p}(\Gam_u/p\Gam_u)<\infty$.
By Proposition \ref{extensions from tame valuations}, $\calG_F(p)\isom\dbZ_p^m\rtimes\calG_E(p)$.
Hence we may take $\bar\calG=\calG_E(p)$.

Finally, suppose that $G_F$ is pro-$p$.
By (\ref{exact sequence for decomposition group}), $G_E=G_{\bar F_u}$ is an epimorphic image of $G_F$, whence is also pro-$p$.
By Lemma \ref{not pro p}, case (i) of Theorem \ref{the wild case} is impossible, so we are in case (ii).
\end{proof}

\section{Realization of the basic operations over fields}
\label{section on reaization of the operations over fields}
In this section we show that the two basic operations in the category of cyclotomic pro-$p$ pairs -- extensions and free products -- are also well defined when restricted to Galois cyclotomic pairs $\calG_F(p)$ of Galois type (Definition \ref{pairs of Galois type}).
In addition, they are well defined even when we restrict to the sub-family of pairs $\calG_F(p)$ for which the absolute Galois group $G_F$ is pro-$p$.

\subsection{Realization of extensions}
\begin{prop}
\label{realizations of extensions}
Let $\bar F$ be a field containing a root of unity of order $p$ and let $m\leq m'$ be cardinal numbers.
Then:
\begin{enumerate}
\item[(a)]
There exists a Henselian valued field $(F,v)$ containing $\bar F$ with
\[
\calG_F(p)\isom\dbZ_p^m\rtimes\calG_{\bar F}(p), \ 
\trdeg_{\bar F}F=m', \  \dim_{\dbF_p}(\Gam_v/p\Gam_v)=m, 
\]
and $\bar F_v/\bar F$ a purely inseparable field extension.
\item[(b)]
If in addition $G_{\bar F}$ is a pro-$p$ group, then we can find $F$ as in (a) with $G_F$ a pro-$p$ group.
\end{enumerate}
\end{prop}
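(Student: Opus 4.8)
The plan is to reduce both parts to Proposition \ref{extensions from tame valuations} by constructing a suitable Henselian valued field and, for (b), passing to a pro-$p$-Sylow subgroup. For (a) I will build a Henselian valued field $(F,v)\supseteq\bar F$ with $v$ trivial on $\bar F$, residue field $\bar F_v=\bar F$, value group $\Gam_v=\Gam$ satisfying $\dim_{\dbF_p}(\Gam/p\Gam)=m$, and $\trdeg_{\bar F}F=m'$. Since $\mu_p\subseteq\bar F$ forces $\Char\bar F\neq p$, the residue characteristic is $\neq p$; as $v$ is Henselian, hence $p$-Henselian, Proposition \ref{extensions from tame valuations} then yields $\calG_F(p)\isom\dbZ_p^m\rtimes\calG_{\bar F_v}(p)=\dbZ_p^m\rtimes\calG_{\bar F}(p)$. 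Here the residue field is literally $\bar F$; a purely inseparable residue extension would be equally harmless, since it induces an isomorphism $G_{\bar F_v}\isom G_{\bar F}$, hence $\calG_{\bar F_v}(p)\isom\calG_{\bar F}(p)$, respecting the cyclotomic characters.

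To realize $\Gam$, fix a cardinal $\kappa$ with $m+\kappa=m'$ (possible since $m\leq m'$) and set $\Gam=\dbZ^{(m)}\oplus\dbQ^{(\kappa)}$, endowed with any compatible total order (every torsion-free abelian group is orderable). Then $\Gam/p\Gam\isom\dbF_p^{(m)}$ has dimension $m$, while the rational rank $\dim_\dbQ(\Gam\otimes_\dbZ\dbQ)$ equals $m'$. Form the group algebra $\bar F[\Gam]$ with basis $\{t^\gamma\}_{\gamma\in\Gam}$; as $\Gam$ is ordered, $\bar F[\Gam]$ is an integral domain, and its fraction field $F_0$ carries the valuation $v_0\bigl(\sum_\gamma a_\gamma t^\gamma\bigr)=\min\{\gamma\mid a_\gamma\neq0\}$, which has value group $\Gam$ and residue field $\bar F$. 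Choosing $\dbZ$-independent $\gamma_i\in\Gam$ that span $\Gam\otimes_\dbZ\dbQ$ shows that $F_0$ is algebraic over $\bar F(t^{\gamma_i})$, a purely transcendental extension of the right cardinality, so $\trdeg_{\bar F}F_0=m'$. Finally let $(F,v)$ be the Henselization of $(F_0,v_0)$; it is a separable-algebraic immediate extension, so it preserves $\Gam$, the residue field, and the transcendence degree while rendering $v$ Henselian. This proves (a).

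For (b), assume $G_{\bar F}$ is pro-$p$, keep the field $F$ of (a), and let $F^\flat=(F^{\sep})^H$ be the fixed field of a pro-$p$-Sylow subgroup $H$ of $G_F$. Then $G_{F^\flat}=H$ is pro-$p$; moreover $F^\flat/F$ is algebraic, so $\trdeg_{\bar F}F^\flat=m'$, and $F^\flat$ is Henselian as an algebraic extension of the Henselian field $F$. To compute $\calG_{F^\flat}(p)$, consider the natural map $H\hookrightarrow G_F\twoheadrightarrow G_F(p)$, where $G_F(p)\isom\dbZ_p^m\rtimes\calG_{\bar F}(p)$ by (a) and Proposition \ref{extensions from tame valuations}. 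Writing $N=\Ker(G_F\to G_F(p))$ and $T_0$ for the full inertia group of $v$ in $G_F$, the fact that $G_{\bar F}=G_F/T_0$ is pro-$p$ gives $N\subseteq T_0$. By Proposition \ref{extensions from tame valuations} the image of $T_0$ in $G_F(p)$ is the inertia subgroup $\isom\dbZ_p^m$, and $H\cap T_0$ is a pro-$p$-Sylow of $T_0$ mapping isomorphically onto it; hence $H\cap N=1$. Since a pro-$p$-Sylow surjects onto the pro-$p$ group $G_F/T_0=G_{\bar F}$, the map $H\to G_F(p)$ is onto, and therefore an isomorphism of cyclotomic pairs. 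Thus $G_{F^\flat}$ is pro-$p$ with $\calG_{F^\flat}(p)\isom\dbZ_p^m\rtimes\calG_{\bar F}(p)$. Passing to $H$ localizes the value group away from $p$, so $\Gam_v$ becomes $\Gam\otimes_\dbZ\dbZ_{(p)}$, which still has mod-$p$ dimension $m$ and rational rank $m'$, while the residue field remains purely inseparable over $\bar F$.

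The delicate step is the identification of $H$ in (b): in positive residue characteristic the full inertia $T_0$ is noncommutative, its wild part being a nontrivial pro-$(\Char\bar F)$ group, so one cannot read off $\calG_{F^\flat}(p)$ from the tame inertia alone. The crux is that, because $G_{\bar F}$ is pro-$p$ while the wild inertia and the prime-to-$p$ tame parts have order prime to $p$, the kernel $N$ meets the pro-$p$-Sylow $H$ trivially; equivalently, the pro-$p$-Sylow of $T_0$ maps isomorphically onto the $\dbZ_p^m$ afforded by Proposition \ref{extensions from tame valuations}. This is precisely what makes the wild ramification disappear upon passing to $F^\flat$ and keeps the value group, residue field, and transcendence degree under control.
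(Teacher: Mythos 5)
Your part (a) is essentially the paper's construction: the paper also takes a Hahn-type value group of rational rank $m'$ with $\dim_{\dbF_p}(\Gam/p\Gam)=m$ and the field of generalized rational functions $\bar F(\Gam)$ inside the power series field $\bar F((\Gam))$, obtaining Henselianity by passing to the relative algebraic closure of $\bar F(\Gam)$ in $\bar F((\Gam))$ rather than to the Henselization, and then invokes the same tame-decomposition result (Proposition \ref{extensions from tame valuations}); your variant with $\Gam=\dbZ^{(m)}\oplus\dbQ^{(\kappa)}$ and the Henselization is equally valid for (a). In part (b), however, you take a genuinely different route. The paper builds the $p$-structure into the valuation from the start: it chooses $\Gam=\dbZ_{(p)}^I\times\dbQ^{I'\setminus I}$, which is $\ell$-divisible for every prime $\ell\neq p$, so that the tame inertia quotient $T(\tilde v/v)/V(\tilde v/v)\isom\Hom(\tfrac1{p^\infty}\dbZ,\mu_{p^\infty})^I$ is already pro-$p$; then $G_F/V(\tilde v/v)$ is pro-$p$ by the inertia exact sequence, and the splitting theorem \cite{Efrat06}*{Th.\ 22.2.1} produces a section field $F'$ with $G_{F'}\isom G_F/V(\tilde v/v)$, after which the invariants are controlled via \cite{KuhlmannPankRoquette86}. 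Your approach instead keeps the generic $\Gam$ and kills both the wild part $V$ and the prime-to-$p$ tame part in one stroke by passing to the fixed field $F^\flat$ of a pro-$p$-Sylow subgroup $H$ of $G_F$; the paper's method buys explicit control of $(\Gam_{v'},\bar F_{v'})$ from quotable theorems, while yours is more self-contained group-theoretically (and your observations that $H\cap T_0$ is a Sylow of $T_0$, that $N\subseteq T_0$, that $H$ surjects onto $G_{\bar F}$, and that $\Gam\subseteq\Gam_{v^\flat}\subseteq\Gam\tensor\dbZ_{(p)}$ forces $\Gam_{v^\flat}/p\Gam_{v^\flat}\isom\Gam/p\Gam$ are all sound).

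One step in your (b) deserves tightening: you justify that $P=H\cap T_0$ maps isomorphically onto the inertia $\dbZ_p^m$ of $G_F(p)$ essentially by surjectivity, but for an \emph{infinite} cardinal $m$ a continuous surjection $\dbZ_p^m\to\dbZ_p^m$ need not be injective (a shift on $\dbZ_p^{\aleph_0}$ is a counterexample), and the statement is asserted for arbitrary cardinals. The injectivity does hold here, but one should see it dually: $P$ is the $p$-Sylow of $T_0/V\isom\Hom(\Delta/\Gam,\mu')$ (with $\Delta$ the divisible hull), i.e.\ $P\isom\Hom\bigl((\Delta/\Gam)_p,\mu_{p^\infty}\bigr)$, and the inertia in $G_F(p)$ is $\Hom(\Gam\tensor\dbZ[1/p]/\Gam,\mu_{p^\infty})$; since the $p$-primary part of $\Delta/\Gam$ is exactly $\Gam\tensor\dbZ[1/p]/\Gam$, the restriction map is an isomorphism. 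With this repair (and it is only this identification, not a new idea), your Sylow argument goes through and gives $H\cap N=1$, so your proof is correct.
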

\begin{proof}
(a) \quad
Let $\dbZ_{(p)}$ be the localization of $\dbZ$ at the ideal $p\dbZ$, and consider it as an ordered abelian group with respect to the order induced from $\dbQ$.
Take a well-ordered set $I'$ of cardinality $m'$ and a subset $I$ of $I'$ of cardinality $m$.
Let $\Gam=\dbZ_{(p)}^I\times\dbQ^{I'\setminus I}$, considered as a totally ordered abelian group with respect to the lexicographic order.
Then $\dim_{\dbF_p}(\Gam/p\Gam)=m$.

Next let $\bar F((\Gam))$ be the field of formal power series in the variable $t$ with coefficients in $\bar F$ and exponents in $\Gam$ and with well-ordered support \cite{Efrat06}*{\S 2.8}.
The function of taking the minimum of the support is a Henselian valuation $\hat v$ on $F((\Gam))$ with value group $\Gam$ and residue field $\bar F$ \cite{Efrat06}*{Example 4.2.1 and Cor.\ 18.4.2}. 

Let $\bar F(\Gam)$ be the field of generalized rational functions over $\bar F$ with exponents in $\Gam$, that is, $\bar F(\Gam)$ is the subfield of $\bar F((\Gam))$ generated by all formal power series with finite support \cite{Efrat06}*{\S2.9}. 
It has the same value group $\Gam$ and residue field $\bar F$ as $\bar F((\Gam))$.
Let $F$ be the relative algebraic closure of $\bar F(\Gam)$ in $\bar F((\Gam))$.
The restriction $v$ of $\hat v$ to $F$ then satisfies $\Gam_v=\Gam$ and $\bar F_v=\bar F$.
Since $(\bar F((\Gam)),\hat v)$ is Henselian, $(F,v)$ is also Henselian \cite{Efrat06}*{Prop.\ 15.3.3}.

We identify elements of $\bar F$ as constant rational functions.
In this way $\bar F$ is a relatively algebraically closed subfield of $\bar F(\Gam)$ with relative transcendence degree $\dim_\dbQ(\Gam\tensor\dbQ)=m'$ \cite{Efrat06}*{Prop.\ 2.9.1}.

By assumption, $\Char\,\bar F\neq p$.
By Proposition \ref{extensions from tame valuations}, $\calG_F(p)\isom\dbZ_p^m\rtimes\calG_{\bar F}(p)$.

\medskip

(b) \quad
In the above construction, let $\tilde v$ be the unique extension of $v$ to $F^{\rm sep}$.
It has value group $\Gam\tensor_\dbZ\dbQ=\dbQ^{I'}$.
In the notation of \S\ref{subsection on Galois theory of valued fields}
\[
T(\tilde v/v)/V(\tilde v/v)\isom \Hom(\dbQ^{I'}/(\dbZ_{(p)}^I\times\dbQ^{I'\setminus I}),\mu_{p^\infty})=\Hom(\tfrac1{p^\infty}\dbZ,\mu_{p^\infty})^I
\]
\cite{Efrat06}*{Th.\ 16.2.6}, which is a pro-$p$ group.
From (\ref{exact sequence for decomposition group}) we obtain an exact sequence
\[
1\to T(\tilde v/v)/V(\tilde v/v)\to G_F/V(u/v)\to G_{\bar F}
\to1.
\]
Since $G_{\bar F}$ is pro-$p$, we deduce that $G_F/V(\tilde v/v)$ is also pro-$p$.

Moreover, the restriction homomorphism $G_F\to G_F/V(\tilde v/v)$ splits \cite{Efrat06}*{Th.\ 22.2.1}.
Let $F'$ be the corresponding separable algebraic extension of $F$ and $v'=\tilde v|_{F'}$.
Thus $G_{F'}\isom  G_F/V(\tilde v/v)$.
We recall that $V(\tilde v/v)$ is either trivial or a pro-$p'$ group if $p'=\Char\,\bar F>0$.
In the latter case, by \cite{KuhlmannPankRoquette86}*{Lemma 4.2}, $\Gam_{v'}/\Gam_v$ has only $p'$-torsion and  $(\overline{F'})_{v'}/\bar F_v$ is purely inseparable.
Since $\Gam_v=\Gam$ is $p'$-divisible, we deduce that $\Gam_{v'}/p=\Gam/p$ \cite{Efrat06}*{Lemma 1.1.4(c)}.
Furthermore, $G_{(\overline{F'})_{v'}}\isom G_{\bar F_v}=G_{\bar F}$. 
Consequently, $(F',v')$ is a valued field as required. 
\end{proof}

\subsection{Realization of free  products}
Part (b) of the following proposition is essentially \cite{EfratHaran94}*{Prop.\ 1.3}.

\begin{prop}
\label{realizations of free products}
Let $F_1\nek F_n$ be fields which contain a root of unity of order $p$.
Then:
\begin{enumerate}
\item[(a)]
There exists a field $F$ containing a root of unity of order $p$ such that
\[
\calG_F(p)\isom\calG_{F_1}(p)*\cdots*\calG_{F_n}(p).
\]
\item[(b)]
If in addition $G_{F_1}\nek G_{F_n}$ are pro-$p$, then we can find $F$ as in (a) with $G_F$ a pro-$p$ group.
\end{enumerate}
\end{prop}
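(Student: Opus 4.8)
The plan is to realize the free product of maximal pro-$p$ Galois groups as the Galois group of a suitable field built from the $F_i$ via a valuation-theoretic construction, following the classical technique of assembling fields along a common residue field using a generalized power series or function field construction. The key realization theorem for free pro-$p$ products in this setting is due to Haran, and part (b) is attributed to \cite{EfratHaran94}*{Prop.\ 1.3}, so I would reduce part (a) to that kind of statement and then treat (b) as the more delicate variant.

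First I would reduce to the case $n=2$ by associativity of the free product, then iterate. For the construction itself, the natural approach is to find a field $F$ carrying a valuation $v$ whose residue field is a common `base' over which the $F_i$ sit, arranged so that the decomposition groups of the various extensions of $v$ generate $G_F(p)$ freely. Concretely, one takes a field $F_0$ (for instance a large enough field, or $\dbQ$ or a suitable rational function field) over which each $F_i$ embeds, and forms a field whose maximal pro-$p$ Galois group is the free pro-$p$ product of the $G_{F_i}(p)$, with the cyclotomic characters $\chi_{F_i}$ matching up on the common $\mu_p$. The compatibility of the characters under the free product construction $\theta_1 *_p \theta_2$ is exactly what makes this a statement about cyclotomic pro-$p$ pairs rather than bare pro-$p$ groups, and I would verify that the cyclotomic character of $F$ restricts to $\chi_{F_i}$ on each factor, so that the isomorphism $\calG_F(p)\isom\calG_{F_1}(p)*\cdots*\calG_{F_n}(p)$ holds as cyclotomic pairs.

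For part (b), the additional requirement is that $G_F$ itself (not merely its maximal pro-$p$ quotient) be pro-$p$. Here I would follow the strategy already used in the proof of Proposition \ref{realizations of extensions}(b): after building $F$ as in (a), pass to a carefully chosen separable algebraic extension $F'$ of $F$ that kills the prime-to-$p$ part of the absolute Galois group while leaving the maximal pro-$p$ Galois group (and its cyclotomic pair) unchanged. Since each $G_{F_i}$ is assumed pro-$p$, the wild and tame ramification contributions from the non-$p$ part can be split off using the splitting of the relevant restriction maps, as in \cite{Efrat06}*{Th.\ 22.2.1} and the analysis via the inertia and ramification groups of (\ref{exact sequence for decomposition group}).

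The main obstacle, I expect, is ensuring in part (a) that the decomposition groups corresponding to the several factors $G_{F_i}(p)$ actually generate $G_F(p)$ as a \emph{free} pro-$p$ product, rather than merely generating it or generating a proper subgroup, while simultaneously controlling the cyclotomic character so that it restricts correctly to each $\chi_{F_i}$. This is precisely the content of Haran's free-product realization, so the technical work lies in checking that the hypotheses of that result are met by the valued field one constructs and that the characters glue; the reduction to $n=2$ and the passage to $F'$ in part (b) are then comparatively routine given the tools assembled in Sections \ref{section on valuations} and \ref{section on p-henselian valuations and extensions}.
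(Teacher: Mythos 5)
Your outline points toward the right family of techniques (power series fields, valuations, intersections of $p$-Henselian subfields of $F(p)$), but two steps essential to part (a) are missing or wrong. First, the engine of the proof is not a free-product realization theorem of Haran: what is actually needed is the theorem of Jacob--Wadsworth \cite{JacobWadsworth86}*{Th.\ 4.3} (or \cite{Efrat97b}*{Prop.\ 4.3}) that if $E=E_1\cap\cdots\cap E_n$ with each $E_i\subseteq E(p)$ $p$-Henselian, and the induced valuations $v_1\nek v_n$ on $E$ induce pairwise distinct topologies (rank $1$ suffices), then $G_E(p)=G_{E_1}(p)*_p\cdots *_pG_{E_n}(p)$; the reference \cite{EfratHaran94}*{Prop.\ 1.3} concerns part (b), not the decomposition itself. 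Relatedly, your phrase ``decomposition groups of the various extensions of $v$'' for a single valuation $v$ cannot work: distinct extensions of one valuation to $F(p)$ have \emph{conjugate} decomposition groups by (\ref{conjugacy of decomposition groups}); one needs $n$ pairwise independent valuations, one per factor, which the paper produces by viewing $F_i((\dbQ))$ as power series in $t+a_i$ for distinct $a_i\in F_0$, and setting $E_i=E_0(p)\cap F_i((\dbQ))$ with $E_0=F_0(t)$, where $\Gam_{u_i}=p\Gam_{u_i}$ kills the inertia so that $\calG_{E_i}(p)\isom\calG_{F_i}(p)$ by Proposition \ref{extensions from tame valuations}. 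Second --- and this is a genuine missing idea --- ``a field $F_0$ over which each $F_i$ embeds'' is not the right normalization: since $F_i$ must appear as the residue field of a valuation on an algebraic extension of $E_0=F_0(t)$, each $F_i$ must be \emph{algebraic} over one common $F_0$. This is impossible as stated when the $F_i$ have different characteristics, which is exactly why the paper first replaces each $F_i$ by a characteristic-$0$ field with the same absolute Galois group via a Witt vector construction \cite{Efrat95}*{Prop.\ 4.7}, and then equalizes the transcendence degrees over $\dbQ$ by applying Proposition \ref{realizations of extensions} with $m=0$. Without these two normalizations your construction cannot start.

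For part (b), your plan --- build $F$ as in (a) and then pass to an extension $F'$ killing the prime-to-$p$ part of $G_F$ via \cite{Efrat06}*{Th.\ 22.2.1} --- would fail. That theorem splits off the ramification group of a \emph{single} Henselian valuation; the field $E$ produced in (a) carries $n$ independent $p$-Henselian valuations and no Henselian one, and $G_E$ has in general no closed normal pro-$p'$ subgroup with pro-$p$ quotient, so there is no algebraic extension $F'$ with $G_{F'}$ pro-$p$ and $G_{F'}\isom G_F(p)$: passing to an extension whose absolute Galois group is pro-$p$ generally changes the maximal pro-$p$ quotient. The paper instead modifies the construction \emph{before} intersecting: it takes $E_i$ to be the relative algebraic closure of $E_0$ in $F_i((\dbQ))$, whose absolute Galois group is pro-$p$ because $G_{F_i}$ is, and then replaces each $E_i$ by an $E_0$-isomorphic copy so that all the $G_{E_i}$ lie in one fixed $p$-Sylow subgroup of $G_{E_0}$; consequently $G_E$ is pro-$p$ for $E=E_1\cap\cdots\cap E_n$, and the argument of (a) goes through. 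Your reduction to $n=2$ with iteration is harmless but unnecessary, since the intersection argument treats all $n$ factors simultaneously.
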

\begin{proof}
(a) \quad
A construction using the Witt vectors functor shows that for every field $\bar K$ there exists a field $K$ of characteristic $0$ such that $G_K\isom G_{\bar K}$, where moreover $K$ is Henselian with respect to a valuation $v$ with residue field containing $\bar K$ \cite{Efrat95}*{Prop.\ 4.7}.
In particular, if $\bar K$ contains a root of unity of order $p$, then the same holds for $K$.
Hence we may assume that $\Char\,F_i=0$ for each $i$.

By Proposition \ref{realizations of extensions} (with $m=0$), we may assume that $F_1\nek F_n$ have the same transcendence degree over $\dbQ$.
After identifying transcendence bases, we may assume further that $F_1\nek F_n$ are algebraic extensions of a single field $F_0$ containing a root of unity of order $p$.

Next, let $t$ be a transcendental element over $F_0$ and set $E_0=F_0(t)$.
Choose distinct elements $a_1\nek a_n\in F_0$ (e.g., $a_i=i$). 
For each $1\leq i\leq n$ we consider $F_i((\dbQ))$ as the field of formal power series in the variable $t+a_i$ with exponents in $\dbQ$ and coefficients in $F_i$.
Let $w_i$ be its canonical Henselian valuation.
Then $E_0=F_0(t+a_i)$ embeds in $F_i((\dbQ))$ in the natural way.

Let $E_i=E_0(p)\cap F_i((\dbQ))$, and let $u_i$ be the restriction of $w_i$ to $E_i$.
It also has residue field $F_i$, and it contains $(t+a_i)^{1/p^k}$ for every $k\geq1$, so $\Gam_{u_i}=p\Gam_{u_i}$.
Moreover, it is Henselian relative to $E_0(p)$, that is, $p$-Henselian \cite{Efrat06}*{Prop.\ 15.3.3}.
By Proposition \ref{extensions from tame valuations}, $\calG_{E_i}(p)\isom\calG_{F_i}(p)$ for each $1\leq i\leq n$ via the residue maps.

Take $E=E_1\cap\cdots\cap E_n$.
Then $E_0\subseteq E\subseteq E_i\subseteq E_0(p)$ for every $1\leq i\leq n$.
Let $v_i$ be the restriction of $u_i$ to $E$.
Then $v_i(t+a_i)=1$ and $v_i(t+a_j)=0$ for $j\neq i$.
Thus $v_1\nek v_n$ are distinct.
Moreover, their value groups are ordered subgroup of $\dbQ$, so they have rank $1$ \cite{Efrat06}*{Th.\ 2.5.2}.
It follows that they induce distinct topologies on $E$ \cite{Efrat06}*{Example 10.1.1 and Th.\ 10.1.7}.
By \cite{JacobWadsworth86}*{Th.\ 4.3} or \cite{Efrat97b}*{Prop.\ 4.3},
\[
G_E(p)=G_{E_1}(p)*_p\cdots*_pG_{E_n}(p)\isom
G_{F_1}(p)*_p\cdots*_pG_{F_n}(p).
\]
Moreover, this isomorphism is compatible with the cyclotomic characters, so it also holds for the Galois cyclotomic pro-$p$ pairs.

\medskip

(b) \quad
Assume that $G_{F_1}\nek G_{F_n}$ are pro-$p$.
As in (a) we may assume that $F_1\nek F_n$ have characteristic $0$.
Then the absolute Galois group of $F_i((\dbQ))$ is also pro-$p$ for every $i$.
We amend the previous construction by taking $E_i$ to be the relative algebraic closure of $E_0$ in $F_i((\dbQ))$ with the restricted valuation.
Its absolute Galois group is pro-$p$ as well.
After replacing $E_i$ by an $E_0$-isomorphic copy, we may assume that $G_{E_i}$ is contained in some given pro-$p$-Sylow subgroup of $G_{E_0}$.
It follows that $G_E$ is pro-$p$, and the rest of the argument continues as in (a).
\end{proof}

\section{The Elementary Type Conjecture}
\label{section on the elementary type conjecture}
We first state the conjecture in its arithmetical form, as in \cite{Efrat95} and \cite{Efrat97a}.
As before, we assume that $F$ is a field containing a root of unity of order $p$.

\begin{conj}[ETC - arithmetical form]
\label{ETC arithmetical form}
Suppose that $G_F(p)$ is finitely generated as a pro-$p$ group.
Then there is a free pro-$p$ product decomposition $G_F(p)=G_1*_p\cdots*_pG_n$, where for each $1\leq i\leq n$ one of the following holds:
\begin{enumerate}
\item[(i)]
$G_i\isom\dbZ_p$;
\item[(ii)]
$G_i$ is the decomposition group of some extension $\tilde v$ of a valuation $v$ on $F$ to $F(p)$ with nontrivial inertia group;
\item[(iii)]
$p=2$ and $G_i\isom\dbZ/2$.
\end{enumerate}
\end{conj}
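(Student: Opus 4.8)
Since the statement is a conjecture, what follows is necessarily a program rather than a complete argument, and I will indicate where the genuine difficulty — the reason it remains open — lies. The natural attempt proceeds by induction on the invariant $d=\dim_{\dbF_p}H^1(G_F(p))=\dim_{\dbF_p}F^\times/(F^\times)^p$, which is finite precisely under our hypothesis. The base cases $d\le 1$ produce the atoms: $d=0$ is the trivial pair $\calG_\dbC(p)$, while $d=1$ yields $G_F(p)\isom\dbZ_p$ (type (i)) or, when $p=2$, $G_F(p)\isom\dbZ/2$ (type (iii)), by inspecting the possible quotients together with the formal Hilbert $90$ property of $\calG_F(p)$ (Remarks \ref{aaa}(1)). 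For the inductive step one seeks a dichotomy: either $\calG_F(p)$ carries a valuation-theoretic piece, which reduces the problem to a residue field of strictly smaller $d$, or its cohomology exhibits a nontrivial free pro-$p$ product decomposition.

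The first alternative is where the machinery of Sections \ref{section on valuations}--\ref{section on p-henselian valuations and extensions} enters. The plan is to read off a candidate subgroup $(F^\times)^p\le H\le F^\times$ from the symbol structure on $H^1(G_F(p))$ — concretely, from the elements behaving rigidly under the cup product $H^1\cup H^1\to H^2$, equivalently under the Milnor $K^M_2/p$ multiplication via Bloch--Kato — and to feed $H$ into the Arason--Elman--Jacob--Ware construction $v_{(2)}((F^\times)^p,H)$. If a suitable $H$ yields a valuation $v$ with $1+\grm_v\le(F^\times)^p$ and $O_v^\times\le H$, then Proposition \ref{existence of tame valuations} makes $v$ $p$-Henselian, so $Z(\tilde v/v)=G_F(p)$ is a single decomposition group with (one checks) nontrivial inertia, settling the arithmetical form with $n=1$ of type (ii). The finer `Lego' description is then obtained by recursing on the residue pair, since Proposition \ref{extensions from tame valuations} (residue characteristic $\neq p$) or Theorem \ref{the wild case} and Corollary \ref{cor to wild case} (residue characteristic $p$) present
\[
\calG_F(p)\isom\dbZ_p^m\rtimes\calG_{\bar F_v}(p),\qquad m\ge 1,
\]
so that by Lemma \ref{cohomological properties of extensions}(a) the residue pair $\calG_{\bar F_v}(p)$, over the honest field $\bar F_v$, has invariant $d-m<d$, and the induction hypothesis applies.

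The second alternative is a purely cohomological criterion for a finitely generated pro-$p$ group to be a nontrivial free pro-$p$ product: one partitions a minimal generating set so that all cross cup products vanish and no mixed degree-$2$ relations survive, i.e.\ $H^*(G_F(p))$ splits as a graded product of quadratic algebras. When such a partition exists one obtains $G_F(p)=G_1*_p\cdots*_pG_n$, each factor has strictly smaller $d$ (as $H^1$ is additive over free products), and one applies the induction to the factors. Verifying that each factor is again realizable as a Galois pair with the formal Hilbert $90$ property — so that the induction is legitimate — is a technical but expected point, for which the realization results of Section \ref{section on reaization of the operations over fields} supply the converse direction.

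The hard part, and indeed the reason the statement is only a conjecture, is to close the trichotomy: to show that a finitely generated $\calG_F(p)$ admitting \emph{neither} a valuation with nontrivial inertia \emph{nor} a nontrivial free-product decomposition must be one of the remaining atoms, namely $\calE$ (real type), a procyclic pair $\calZ^\alp$ (finite or quasi-finite type), or a pair of $p$-adic type (Definition \ref{pairs of p-adic type}). The substantive content is the construction of a $p$-adic-like valuation out of purely Galois-cohomological data in the indecomposable Demu\v skin case — equivalently, the assertion that if $G_F(p)$ is Demu\v skin then $F$ is arithmetically Demu\v skin, which the introduction isolates as the missing link between the present arithmetical form and the weaker `Lego form'. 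It is exactly the residue-characteristic-$p$ (wild) regime of Theorem \ref{the wild case}, where the sought valuation need not be tame and no general cohomological recipe for producing it is known, that forms the crux on which a full proof would stand or fall.
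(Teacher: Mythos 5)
What you were asked to prove is Conjecture \ref{ETC arithmetical form}, which is \emph{open}: the paper contains no proof of it, only implications among its variants, so your decision to present a program and to flag its incompleteness is the right one. Moreover, the program you sketch is essentially the paper's own. Rigid elements detected by the cup product are transported to $K^M_2(F)/pK^M_2(F)$ via the diagram (\ref{Mer-Sus}) and Lemma \ref{bilinear pairings and rigidity}, then fed into the Arason--Elman--Jacob--Ware construction, with Proposition \ref{existence of tame valuations} upgrading the output to a tame $p$-Henselian valuation; this is exactly Theorem \ref{existence of valuations}. The induction through $\calG_F(p)\isom\dbZ_p^m\rtimes\calG_{\bar F_v}(p)$, with the invariant dropping by Lemma \ref{cohomological properties of extensions}(a), is how Theorem \ref{decomposition theorem} and \S\ref{section on equivalence of conjectures} proceed. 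And your identification of the crux --- that the indecomposable, non-atomic case is the Demu\v skin one, where a $p$-adic-like valuation must be manufactured from cohomological data in the wild regime --- coincides precisely with the paper's isolation of Conjecture \ref{strong conjecture on Demuskin fields}, and with its theorem that the Lego form (Conjecture \ref{ETC Lego version}) together with the Demu\v skin Conjecture implies (indeed is equivalent to) the arithmetical form.

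One step you assert is, however, not available. In your second alternative you claim that a partition of a minimal generating set with vanishing cross cup products (equivalently, a splitting of the cohomology as a graded product) \emph{yields} a free pro-$p$ product decomposition $G_F(p)=G_1*_p\cdots *_pG_n$. No such cohomological criterion is known, even for maximal pro-$p$ Galois groups: a splitting of $H^*$ in degrees $\leq 2$ is necessary for a free product decomposition, but it is not known to be sufficient, since the cup product records only the degree-two part of the defining relations --- a relator lying deeper in the filtration makes all relevant cup products vanish without producing any free factor. Tellingly, the paper never invokes such a criterion: its unconditional free product decompositions come from arithmetic input (distinct rank-one valuation topologies, via \cite{Efrat97b} and Corollary \ref{full fields}), and otherwise the decomposition is a \emph{hypothesis}, namely the Lego form. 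So your trichotomy should be restated with the free-product branch assumed rather than derived; once that is done, your program collapses onto the paper's actual result in \S\ref{section on equivalence of conjectures}, with the Demu\v skin Conjecture as the sole remaining obstruction, exactly as you say.
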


\begin{rems}
\label{remarks on the arithmetical ETC}
\rm
(1) \quad
We allow the case $n=0$, in which $G_F(p)=1$.

\medskip

(2) \quad
In (ii) there is no dependence on the choice of the extension $\tilde v$ of $v$.
Indeed, changing the extension amounts to replacing the decomposition group $Z(\tilde v/v)$ by a conjugate subgroup in $G_F(p)$ (see (\ref{conjugacy of decomposition groups})).
Furthermore, for closed subgroups $G_1\nek G_n$ of a general pro-$p$ group $G$  and for every $\sig_1\nek\sig_n\in G$, if $G=G_1*_p\cdots*_pG_n$, then also $G=G^{\sig_1}*_p\cdots*_pG_n^{\sig_n}$ \cite{Ribes91}*{Lemma 4.1}.

\medskip

(3) \quad
In case (iii), the fixed field $E_i$ of $G_i$ in $F(2)$ is a \textsl{Euclidean closure} (i.e., a relative real closure in $F(2)$) of $F$ with respect to some ordering on $F$; See \cite{Becker74} or \cite{Efrat06}*{\S19.2}.

\medskip

(4) \quad
On the other hand, free factors of type (i) need not arise from arithmetic objects.
For example, since $\dbZ_p$ is a projective profinite group it can be realized as the absolute Galois group of a pseudo algebraically closed (PAC) field \cite{FriedJarden23}*{Th.\ 23.1.1}.
Such fields carry no nontrivial valuations with a nontrivial decomposition group \cite{FriedJarden23}*{Cor.\ 11.5.5}, nor do they carry an ordering, since this would yield a $2$-torsion in the absolute Galois group.   
\end{rems}

We recall that the \textsl{rank} $\rank(G)$ of a pro-$p$ group $G$ is the minimal number of generators of $G$.
Alternatively, $\rank(G)=\dim_{\dbF_p}H^1(G)$ \cite{NeukirchSchmidtWingberg}*{Prop.\ 3.9.1}.
For a cyclotomic pro-$p$ pair $\calG=(G,\theta)$ we define $\rank(\calG)=\rank(G)$. 

\begin{thm}
\label{decomposition theorem}
Suppose that Conjecture \ref{ETC arithmetical form} holds for $F$.
Then 
\[
\calG_F(p)\isom\calG_1*\cdots*\calG_n,
\]
where for each $1\leq i\leq n$ either:
\begin{enumerate}
\item[(1)]
$\calG_i=\calZ^\alp$ for some $\alp\in\dbZ_p^{\times,1}$;
\item[(2)]
$p=2$ and $\calG_i=\calE$;
\item[(3)]
$\calG_i$ is a cyclotomic pro-$p$ pair of $p$-adic type;
\item[(4)]
$\calG_i=\dbZ_p^m\rtimes\bar\calG_i$, where $1\leq m<\infty$ and $\bar\calG_i$ is a cyclotomic pro-$p$ pair of Galois type with $\rank(\bar\calG_i)<\rank(\calG_i)$.
\end{enumerate}
\end{thm}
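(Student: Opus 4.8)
The plan is to promote the group-theoretic decomposition of Conjecture \ref{ETC arithmetical form} to the level of cyclotomic pairs, and then to identify each resulting factor. Given $G_F(p)=G_1*_p\cdots*_pG_n$, I would equip each $G_i$ with the restricted character $\theta_i=\chi_F|_{G_i}\colon G_i\to\dbZ_p^{\times,1}$, forming pairs $\calG_i=(G_i,\theta_i)$. By the universal property of the free product of pairs, $\theta_1*_p\cdots*_p\theta_n$ is the unique homomorphism on $G_1*_p\cdots*_pG_n$ extending all the $\theta_i$; since $\chi_F$ itself restricts to each $\theta_i$, uniqueness forces $\theta_1*_p\cdots*_p\theta_n=\chi_F$, whence $\calG_F(p)=\calG_1*\cdots*\calG_n$. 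It then suffices to show that each $\calG_i$ either already has one of the four listed forms or refines into such factors, using associativity of the free product together with (\ref{associativity of extension}) and the coproduct property.

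The two easy cases are (i) and (iii) of Conjecture \ref{ETC arithmetical form}. If $G_i\isom\dbZ_p$, then $\calG_i=(\dbZ_p,\theta_i)=\calZ^\alp$ with $\alp=\theta_i(1)$, giving form (1). If $p=2$ and $G_i\isom\dbZ/2$, then by Remark \ref{remarks on the arithmetical ETC}(3) the fixed field $E_i=F(2)^{G_i}$ is a Euclidean closure of $F$; as $E_i$ is formally real it contains only $\pm1$ as roots of unity, while $\mu_{2^\infty}\subseteq F(2)$, so the nontrivial $\sig\in G_i$ cannot fix $\mu_{2^\infty}$. Since $\chi_F(\sig)^2=1$ this forces $\chi_F(\sig)=-1$ (complex conjugation, $\sig(\zeta)=\zeta\inv$), so $\theta_i$ is the nontrivial homomorphism $\dbZ/2\to\{\pm1\}$, i.e.\ $\calG_i=\calE$, giving form (2).

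The substance is case (ii), where $G_i=Z(\tilde v/v)$ is a decomposition group with nontrivial inertia. I would set $E_i=F(p)^{G_i}$; since $\Gal(F(p)/E_i)=G_i$ is pro-$p$ and $F(p)$ admits no proper pro-$p$ extension, $F(p)=E_i(p)$ and hence $G_i=G_{E_i}(p)$. As $\chi_F$ and $\chi_{E_i}$ act identically on $\mu_{p^\infty}$, we have $\calG_i=\calG_{E_i}(p)$, of Galois type, and $v_i=\tilde v|_{E_i}$ is $p$-Henselian with inertia group $T=T(\tilde v/v_i)=T(\tilde v/v)\neq1$; moreover $G_{E_i}(p)$ is finitely generated, being a free factor of $G_F(p)$. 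I would then branch on $\Char\,\bar{(E_i)}_{v_i}$. If it is $\neq p$, Proposition \ref{extensions from tame valuations} gives $\calG_i=\dbZ_p^m\rtimes\calG_{\bar{(E_i)}_{v_i}}(p)$ with $m=\dim_{\dbF_p}(\Gam_{v_i}/p\Gam_{v_i})$; nontriviality of $T$ forces $m\geq1$, finite generation forces $m<\infty$, and the rank inequality $\rank(\bar\calG_i)<\rank(\calG_i)$ follows from Lemma \ref{cohomological properties of extensions}(a), since $\rank(\calG_i)=m+\rank(\bar\calG_i)$; this is form (4). If instead the residue characteristic is $p$, Corollary \ref{cor to wild case} gives $\calG_i=\dbZ_p^m\rtimes\bar\calG$ with $0\leq m<\infty$, where $\bar\calG$ is of $p$-adic type or has free pro-$p$ underlying group. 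For $m\geq1$ this is again form (4) (both alternatives for $\bar\calG$ being of Galois type, with the rank inequality as above); for $m=0$ with $\bar\calG$ of $p$-adic type this is form (3); and for $m=0$ with $\bar\calG$ free of rank $r$, I would note that a free pro-$p$ pair splits as a free product of $r$ copies of pairs $\calZ^\alp$ (the character being freely determined on free generators), so $\calG_i$ refines into factors of form (1).

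The main obstacle I anticipate is the bookkeeping in case (ii): correctly identifying $\calG_i$ with the Galois pair $\calG_{E_i}(p)$ via $F(p)=E_i(p)$, verifying that the nontrivial-inertia hypothesis translates into $m\geq1$ in the tame case, and confirming that the outputs of Corollary \ref{cor to wild case} exhaust precisely forms (3) and (4) up to the harmless refinement of a free factor into copies of $\calZ^\alp$. Everything else reduces to the universal property of free products of pairs and the cohomological rank formula of Lemma \ref{cohomological properties of extensions}.
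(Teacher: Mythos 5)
Your proposal is correct and follows essentially the same route as the paper: the same identification $\calG_i=\calG_{E_i}(p)$ (your restricted-character construction is equivalent, since $\chi_F|_{G_{E_i}(p)}=\chi_{E_i}$), the same tame/wild branching via Proposition \ref{extensions from tame valuations} and Corollary \ref{cor to wild case}, the same rank count via Lemma \ref{cohomological properties of extensions}(a), and the same refinement of a residual free factor into $\calZ^\alp$'s. Your only cosmetic deviations — computing $\chi(\sig)=-1$ directly in the Euclidean case rather than citing $F(2)=E_i(\sqrt{-1})$, and deducing $m<\infty$ from finite rank rather than from $(F^\times:(F^\times)^p)<\infty$ — are immaterial.
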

\begin{proof}
Consider the decomposition as in Conjecture \ref{ETC arithmetical form}.
For each $1\leq i\leq n$ let $E_i$ be the fixed field of $G_i$ in $F(p)$, and let $\calG_i=\calG_{E_i}(p)$.
Thus $\calG_F(p)=\calG_1*\cdots*\calG_n$.
Each $\calG_i$ is an epimorphic image of $\calG_F(p)$, and in particular its underlying group is finitely generated.

When $G_i\isom\dbZ_p$ we clearly have (1).

When $p=2$ and $G_i\isom\dbZ/2$, the field $E_i$ is Euclidean (Remark \ref{remarks on the arithmetical ETC}(3)).
In particular $F(2)=E_i(\sqrt{-1})$.
Hence the pro-$2$ cyclotomic character $\chi_{E_i}$ is nontrivial, and we get (2). 

It therefore remains to consider the case where $G_i$ is a decomposition group of some extension $\tilde v$ of a valuation $v$ on $F$ to $F(p)$ for which the inertia group $T(\tilde v/v)$ is nontrivial. 
The restriction of $\tilde v$ to $E_i$ is a Henselian valuation relative to $F(p)=E_i(p)$, i.e., it is $p$-Henselian.
It has the same value group $\Gam_v$ and residue field $\bar F_v$ as $v$ \cite{Efrat06}*{Th.\ 15.2.2}.

First consider the tame case $\Char\bar F_v\neq p$.
Therefore, by Proposition \ref{extensions from tame valuations}, $T(\tilde v/v)\isom\dbZ_p^m$, where $m=\dim_{\dbF_p}(\Gam_v/p\Gam_v)$, and 
\[
\calG_i=\calG_{E_i}(p)=T(\tilde v/v)\rtimes \calG_{\bar F_v}(p)\isom\dbZ_p^m\rtimes\calG_{\bar F_v}(p).
\]
Since $G_F(p)$ is finitely generated, the group $F^\times/(F^\times)^p$ is finite, and therefore so is its quotient $\Gam_v/p\Gam_v$.
Since $T(\tilde v/v)$ is nontrivial, we therefore have $1\leq m<\infty$.
By Lemma \ref{cohomological properties of extensions}(a), 
\[
\rank(G_i)=\dim_{\dbF_p}H^1(G_i)=\dim_{\dbF_p}H^1(G_{\bar F_v}(p))+m>\rank(G_{\bar F_v}(p)).
\]
Moreover, $G_i$ is a quotient of $G_F(p)$, so 
\[
\rank(G_F(p))\geq\rank(G_i)>\rank(G_{\bar F_v}(p)).
\]
Taking $\bar F=\bar F_v$, we obtain (4) in this case.

Finally we consider the wild case, where $\Char\,\bar F_v=p$.
Then, by Corollary \ref{cor to wild case}, $\calG_i=\dbZ_p^m\rtimes\bar\calG_i$, where $0\leq m<\infty$ and $\bar\calG_i$ is either a cyclotomic pro-$p$ pair of $p$-adic type or its underlying group is a finitely generated free pro-$p$ group.
Note that cyclotomic pairs whose underlying group is a finitely generated free pro-$p$ groups are of Galois type, by Proposition \ref{realizations of free products} and Example \ref{examples of realizable pairs}(4).
When $m\geq1$ we are therefore in case (4).

Hence we may assume that $m=0$.
If $\calG_i=\bar\calG_i$ is of $p$-adic type, then we are in case (3).
If the underlying group of $\calG_i=\bar\calG_i$ is a finitely generated free pro-$p$ group, then $\calG_i$ is a free product of pairs of type (1), so we are done by increasing $n$.
\end{proof}

\begin{defin}
\label{Definition of ETp}
\rm
The class $\ET_p$ of cyclotomic pro-$p$ pairs of \textsl{elementary type} is the minimal class of cyclotomic pro-$p$ pairs (up to isomorphism) containing the `building blocks':
\begin{enumerate}
\item[(1)]
the trivial pair $(1,1)$; 
\item[(2)]
the pairs $\calZ^\alp$, where $\alp\in\dbZ_p^{\times,1}$;
\item[(3)]
$\calE$, if $p=2$;
\item[(4)]
the cyclotomic pro-$p$ pairs of $p$-adic type;
\end{enumerate}
and which is close under the operations of extensions $\bar\calG\mapsto\dbZ_p\rtimes\bar\calG$ and free products.
\end{defin}

By induction, pairs in $\ET_p$ have finite rank.

\begin{conj}[ETC - Lego form]
\label{ETC Lego version}
Every cyclotomic pro-$p$ pair of Galois type and of finite rank is in $\ET_p$.
\end{conj}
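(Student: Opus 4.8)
This is the Lego form of the Elementary Type Conjecture, and as stated it is an open problem; what follows is the program by which I would attack it, rather than a complete argument. The natural strategy is to \emph{deduce} it from the stronger arithmetical form, Conjecture~\ref{ETC arithmetical form}, using the machinery already assembled above. Indeed, Theorem~\ref{decomposition theorem} shows that once the arithmetical decomposition of $G_F(p)$ is available, the pair $\calG_F(p)$ splits as a free product of factors of types (1)--(4), and every such factor is visibly built from the allowed `building blocks' by the two permitted operations. The plan is therefore to run a strong induction on $\rank(\calG_F(p))$, feeding Theorem~\ref{decomposition theorem} at each step; note that the implication ``arithmetical form $\Rightarrow$ Lego form'' is the easy half of the equivalence flagged in the Introduction, the hard half requiring in addition the Demu\v skin claim.

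In more detail, let $\calG=\calG_F(p)$ be of Galois type and of finite rank $n$, and assume Conjecture~\ref{ETC Lego version} for all Galois-type pairs of rank $<n$. When $n=0$ the pair is trivial and lies in $\ET_p$. For $n\geq1$, grant the arithmetical form for $F$ and apply Theorem~\ref{decomposition theorem} to obtain $\calG\isom\calG_1*\cdots*\calG_n$ with each $\calG_i$ of type (1)--(4). Factors of types (1), (2), (3) are the pairs $\calZ^\alp$, $\calE$, or pairs of $p$-adic type, hence are building blocks of $\ET_p$. A factor of type (4) has the form $\dbZ_p^m\rtimes\bar\calG_i$ with $\bar\calG_i$ of Galois type and $\rank(\bar\calG_i)<\rank(\calG_i)\leq n$; by the inductive hypothesis $\bar\calG_i\in\ET_p$, and using the associativity \eqref{associativity of extension} to rewrite $\dbZ_p^m\rtimes\bar\calG_i$ as an iterated single-$\dbZ_p$ extension together with the closure of $\ET_p$ under extensions, we get $\calG_i\in\ET_p$. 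Since $\ET_p$ is closed under free products, $\calG\in\ET_p$, closing the induction.

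The entire difficulty is thus concentrated in the arithmetical form, and this is where I expect the real obstacle to lie. What one must produce, from the abstract pro-$p$ group $G_F(p)$ together with its cyclotomic character, is either a genuine free-product decomposition or else a single valuation on $F$ with nontrivial inertia in $F(p)$ (or, when $p=2$, an ordering). The available tools are valuation-theoretic: the Arason--Elman--Jacob--Ware construction recalled in Proposition~\ref{existence of tame valuations}, which manufactures a $p$-Henselian valuation out of a suitable subgroup $H$ of $F^\times$, together with rigidity phenomena and the cup-product structure on $H^1(G_F(p))$ (equivalently, the mod-$p$ Milnor $K$-theory of $F$). The plan would be to read off from this cohomological data a subgroup $H$ whose associated valuation has the required inertia, and in the indecomposable case to show that the residue structure is of $p$-adic type, i.e.\ that $F$ is arithmetically Demu\v skin.

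The hard part is precisely that no general mechanism is known to extract such a valuation from $G_F(p)$ alone: all unconditional cases (global fields, function fields in one variable over local or PAC fields) proceed through a Hasse principle for the Brauer groups of all finite extensions, which supplies the missing local--global input forcing the valuations to exist. Producing a substitute for this Hasse principle from purely group-cohomological hypotheses---and, in parallel, ruling out the non-decomposable, non-$p$-adic case via the Demu\v skin analysis flagged in the Introduction---is the crux, and is the reason Conjecture~\ref{ETC Lego version} remains open.
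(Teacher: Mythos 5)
Your proposal is correct in substance and follows exactly the paper's own treatment: the statement is an open conjecture, and the paper's only argument concerning it is the conditional reduction of Corollary~\ref{cor to ETC}, which deduces the Lego form from the arithmetical form (Conjecture~\ref{ETC arithmetical form}) via Theorem~\ref{decomposition theorem} together with the rank induction you spell out (using \eqref{associativity of extension} to unwind $\dbZ_p^m\rtimes\bar\calG_i$, and the strict rank drop in case (4) to close the induction). Your diagnosis of where the genuine difficulty lies---extracting valuations from the pair $(G_F(p),\chi_F)$, the role of the Hasse-principle cases, and the Demu\v skin obstruction---likewise matches the paper's discussion in \S\ref{section on evidence} and \S\ref{section on equivalence of conjectures}.
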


In view of Theorem \ref{decomposition theorem}, we have:

\begin{cor}
\label{cor to ETC}
Conjecture \ref{ETC arithmetical form} implies Conjecture \ref{ETC Lego version}.
\end{cor}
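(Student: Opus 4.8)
The plan is to prove Conjecture \ref{ETC Lego version} by induction on $\rank(\calG)$, where $\calG$ ranges over cyclotomic pro-$p$ pairs of Galois type and of finite rank, using Theorem \ref{decomposition theorem} as the main engine. Write $\calG=\calG_F(p)$. Since $\calG$ has finite rank, the group $G_F(p)$ is finitely generated, so Conjecture \ref{ETC arithmetical form} applies to $F$ and Theorem \ref{decomposition theorem} provides a free product decomposition $\calG\isom\calG_1*\cdots*\calG_n$ in which each factor $\calG_i$ is of one of the four types listed there. The goal will be to show that every $\calG_i$ lies in $\ET_p$; since $\ET_p$ is closed under free products (Definition \ref{Definition of ETp}), this yields $\calG\in\ET_p$.

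For the base case $\rank(\calG)=0$ the underlying group is trivial and $\calG$ is the trivial pair, which is a building block of $\ET_p$. For the inductive step I would assume that every Galois-type pair of rank strictly smaller than $\rank(\calG)$ already lies in $\ET_p$. Factors $\calG_i$ of types (1), (2), (3) are precisely the building blocks $\calZ^\alp$, $\calE$ (for $p=2$), and the pairs of $p$-adic type appearing in Definition \ref{Definition of ETp}, so they lie in $\ET_p$ with nothing further to check.

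It remains to treat a factor $\calG_i=\dbZ_p^m\rtimes\bar\calG_i$ of type (4). Here Theorem \ref{decomposition theorem} guarantees that $\bar\calG_i$ is again of Galois type and that $\rank(\bar\calG_i)<\rank(\calG_i)$. Because $\calG_i$ is a retract, hence a quotient, of $\calG$, one has $\rank(\calG_i)\leq\rank(\calG)$, so $\rank(\bar\calG_i)<\rank(\calG)$ and the induction hypothesis gives $\bar\calG_i\in\ET_p$. Using the associativity isomorphism (\ref{associativity of extension}) I would rewrite the extension by $\dbZ_p^m$ as an $m$-fold iteration of extensions by a single copy of $\dbZ_p$, namely $\dbZ_p^m\rtimes\bar\calG_i\isom\dbZ_p\rtimes(\dbZ_p\rtimes\cdots\rtimes(\dbZ_p\rtimes\bar\calG_i))$, and then invoke the closure of $\ET_p$ under the operation $\bar\calG\mapsto\dbZ_p\rtimes\bar\calG$ to conclude $\calG_i\in\ET_p$. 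This completes the induction.

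The argument is largely bookkeeping once Theorem \ref{decomposition theorem} is available, and I do not expect a substantive obstacle. The two load-bearing points, which I would flag explicitly, are that the recursion terminates --- guaranteed precisely by the strict rank drop $\rank(\bar\calG_i)<\rank(\calG_i)$ in part (4) of Theorem \ref{decomposition theorem} --- and that closure of $\ET_p$ under extension by a single $\dbZ_p$ is enough to absorb extensions by $\dbZ_p^m$, which is exactly what (\ref{associativity of extension}) provides.
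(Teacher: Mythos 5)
Your proof is correct and takes essentially the same route as the paper, which states the corollary with no argument beyond the phrase ``in view of Theorem \ref{decomposition theorem}''. Your induction on rank --- treating factors of types (1)--(3) as building blocks, and absorbing a type (4) factor $\dbZ_p^m\rtimes\bar\calG_i$ via the associativity isomorphism (\ref{associativity of extension}) together with the strict rank drop $\rank(\bar\calG_i)<\rank(\calG_i)$ --- is exactly the bookkeeping the paper leaves implicit.
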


The following converse of Conjecture \ref{ETC Lego version} holds unconditionally:

\begin{prop}
Every cyclotomic pro-$p$ pair $\calG$ in $\ET_p$ is of Galois type.
\end{prop}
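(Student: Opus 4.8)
The plan is to argue by structural induction on the way $\calG$ is built up inside $\ET_p$, following the recursive Definition \ref{Definition of ETp}. Since $\ET_p$ is by definition the \emph{minimal} class containing the four families of building blocks and closed under the two operations $\bar\calG\mapsto\dbZ_p\rtimes\bar\calG$ and free product, it suffices to check (a) that each building block is of Galois type, and (b) that the collection of pairs of Galois type is itself closed under both operations. Formally one inducts on the number of construction steps needed to produce $\calG$ from the building blocks; the minimality of $\ET_p$ then guarantees that every pair in $\ET_p$ is reached.

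For the base cases, I would match each of the four families directly against the examples already computed. The trivial pair $(1,1)$ equals $\calG_\dbC(p)$ by Example \ref{examples of realizable pairs}(1); each pair $\calZ^\alp$ with $\alp\in\dbZ_p^{\times,1}$ is realized as $\calG_F(p)$ for a suitable field $F$ by Example \ref{examples of realizable pairs}(4), treating the cases $\alp\neq1$ and $\alp=1$ separately (the latter via $\dbC((t))$); the pair $\calE$, when $p=2$, equals $\calG_\dbR(2)$ by Example \ref{examples of realizable pairs}(2); and the cyclotomic pairs of $p$-adic type are of Galois type by their very Definition \ref{pairs of p-adic type}, being of the form $\calG_{\hat E}(p)$ with $\hat E$ a finite extension of $\dbQ_p(\mu_p)$. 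Thus every building block lies in the class of Galois-type pairs.

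For the inductive step, I assume the pairs feeding into a construction operation are already of Galois type. If $\calG=\dbZ_p\rtimes\bar\calG$ with $\bar\calG=\calG_{\bar F}(p)$, then Proposition \ref{realizations of extensions}(a), applied with $m=1$, furnishes a field $F$ with $\calG_F(p)\isom\dbZ_p\rtimes\calG_{\bar F}(p)=\calG$, so $\calG$ is of Galois type. Similarly, if $\calG=\calG_1*\cdots*\calG_n$ with each $\calG_i=\calG_{F_i}(p)$, then Proposition \ref{realizations of free products}(a) produces a field $F$ with $\calG_F(p)\isom\calG_{F_1}(p)*\cdots*\calG_{F_n}(p)=\calG$, again of Galois type. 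This closes the induction.

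Since all the substantive work has already been carried out in the two realization propositions, I expect no genuine obstacle here; the only points requiring care are bookkeeping ones — matching each building block with the example realizing it, and confirming that the extension operation of Definition \ref{Definition of ETp} (a single factor $\dbZ_p$) is precisely the $m=1$ instance of Proposition \ref{realizations of extensions}. The closure of the Galois-type class under the two operations then propagates the property from the building blocks to every pair in $\ET_p$.
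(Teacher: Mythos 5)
Your proof is correct and follows essentially the same route as the paper's: both match the building blocks against Examples \ref{examples of realizable pairs}(1),(2),(4) and Definition \ref{pairs of p-adic type}, and then invoke Propositions \ref{realizations of extensions}(a) (with $m=1$) and \ref{realizations of free products}(a) for closure under the two operations. Your write-up merely makes explicit the structural induction and the trivial-pair base case that the paper leaves implicit.
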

\begin{proof}
For building blocks $\calZ^\alp$ this is Example \ref{examples of realizable pairs}(4).
When $p=2$ we have $\calE=\calG_\dbR(2)$.
For $p$-adic pairs this holds by definition.
The operations of extensions and free products are realizable Galois-theoretically by Proposition \ref{realizations of extensions} and Proposition \ref{realizations of free products}, respectively.
\end{proof}

\section{Variants}
\label{section on variants}
Restricting to subfamilies of fields gives variants of the Elementary Type Conjecture, which we now describe.

\subsection{The absolute case}
Here we restrict to fields $F$ whose absolute Galois group $G_F$ is pro-$p$.

\begin{thm}
\label{absolute decomposition theorem}
If Conjecture \ref{ETC arithmetical form} holds for $F$ and $G_F$ is pro-$p$, then 
\[
\calG_F\isom\calG_1*\cdots*\calG_n,
\]
where for each $1\leq i\leq n$ either:
\begin{enumerate}
\item[(i)]
$\calG_i=\calZ^\alp$ for some $\alp\in\dbZ_p^{\times,1}$;
\item[(ii)]
$p=2$ and $\calG_i=\calE$;
\item[(iii)]
$\calG_i=\dbZ_p^m\rtimes\bar\calG_i$, where $m\geq1$ and  $\bar\calG_i=\calG_{\bar F}(p)$ is the cyclotomic pro-$p$ pair of some field $\bar F$, with $G_{\bar F}$ a pro-$p$ group of rank strictly smaller than that of $G_F$.
\end{enumerate}
\end{thm}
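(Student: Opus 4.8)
The plan is to run the argument of Theorem \ref{decomposition theorem} almost verbatim, using the hypothesis that $G_F$ is pro-$p$ in exactly two places: to collapse the distinction between $G_F(p)$ and $G_F$, and to eliminate the $p$-adic free factors that appear in case (3) of Theorem \ref{decomposition theorem}.

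First I would observe that since $G_F$ is pro-$p$ we have $F(p)=F^{\sep}$, whence $G_F(p)=G_F$ and $\calG_F(p)=\calG_F$. Applying Conjecture \ref{ETC arithmetical form} yields a decomposition $G_F=G_1*_p\cdots*_pG_n$; for each $i$ I would let $E_i$ be the fixed field of $G_i$ in $F^{\sep}=F(p)$, so that $G_{E_i}=G_i$ is a closed subgroup of the pro-$p$ group $G_F$ and is therefore itself pro-$p$. Since $H^1(G_F)=\bigoplus_iH^1(G_i)$ for a free pro-$p$ product, each $G_i$ is finitely generated with $\rank(G_i)\leq\rank(G_F)$. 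Setting $\calG_i=\calG_{E_i}(p)$ then gives $\calG_F=\calG_1*\cdots*\calG_n$, with each $\calG_i$ of Galois type and with pro-$p$ absolute Galois group.

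I would next classify the factors as in the proof of Theorem \ref{decomposition theorem}. The cases $G_i\isom\dbZ_p$ and ($p=2$, $G_i\isom\dbZ/2$) give (i) and (ii) verbatim. When $G_i$ is the decomposition group of an extension $\tilde v$ of a valuation $v$ with nontrivial inertia, its restriction to $E_i$ is $p$-Henselian with value group $\Gam_v$ and residue field $\bar F_v$. In the tame case $\Char\,\bar F_v\neq p$, Proposition \ref{extensions from tame valuations} gives $\calG_i\isom\dbZ_p^m\rtimes\calG_{\bar F_v}(p)$ with $1\leq m<\infty$; the exact sequence (\ref{exact sequence for decomposition group}) exhibits $G_{\bar F_v}$ as a quotient of the pro-$p$ group $G_{E_i}$, hence pro-$p$, and the rank computation of Theorem \ref{decomposition theorem} (via Lemma \ref{cohomological properties of extensions}(a)) gives $\rank(G_{\bar F_v})<\rank(\calG_i)\leq\rank(G_F)$, which is case (iii) with $\bar F=\bar F_v$.

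The one point requiring care will be the wild case $\Char\,\bar F_v=p$. Here I would invoke Corollary \ref{cor to wild case}: since $G_{E_i}$ is pro-$p$, its $p$-adic alternative is excluded (ultimately by Lemma \ref{not pro p}), leaving $\calG_i\isom\dbZ_p^m\rtimes\bar\calG_i$ with $\bar\calG_i=(\bar G_i,\bar\theta)$ having a finitely generated free pro-$p$ underlying group. Choosing a free generating set $x_1\nek x_r$ of $\bar G_i$ and using that a character out of a free pro-$p$ group is determined by its values on free generators, I would identify $\bar\calG_i\isom\calZ^{\bar\theta(x_1)}*\cdots*\calZ^{\bar\theta(x_r)}$; by Example \ref{examples of realizable pairs}(4) and Proposition \ref{realizations of free products}(b) this equals $\calG_{\bar F}(p)$ for some $\bar F$ with $G_{\bar F}$ pro-$p$ and $\rank(G_{\bar F})=r<r+m\leq\rank(G_F)$. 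If $m\geq1$ this is case (iii); if $m=0$ then $\calG_i$ is itself a free product of such $\calZ^\alp$ factors, which I would absorb into factors of type (i) by enlarging $n$. The main obstacle is precisely this wild-case bookkeeping---verifying that the free pro-$p$ factor, carrying its inherited cyclotomic character, is realizable with pro-$p$ absolute Galois group of strictly smaller rank---rather than any genuinely new input beyond the results already established.
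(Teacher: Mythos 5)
Your proposal is correct and follows the paper's proof essentially verbatim: the paper likewise runs the argument of Theorem \ref{decomposition theorem} with exactly your two amendments, using pro-$p$-ness of $G_{E_i}$ to make its quotient $G_{\bar F_v}$ pro-$p$ in the tame case (via (\ref{exact sequence for decomposition group})) and to exclude the $p$-adic alternative in the wild case (via Theorem \ref{the wild case} and Lemma \ref{not pro p}, as packaged in Corollary \ref{cor to wild case}). Your only (harmless) detour is in the wild case: rather than re-realizing the free pro-$p$ factor as $\calZ^{\bar\theta(x_1)}*\cdots*\calZ^{\bar\theta(x_r)}$ over a freshly constructed field, the paper simply takes $\bar F=\bar F_u$, the residue field of the coarsening $u$, whose absolute Galois group is already pro-$p$ as an epimorphic image of $G_{E_i}$.
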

\begin{proof}
We argue as in the proof of Theorem \ref{decomposition theorem} with the following two amendments:

In the tame case we note that $G_{E_i}$ is pro-$p$, and therefore its quotient $G_{\bar F_v}$ is also pro-$p$ (see (\ref{exact sequence for decomposition group})).

In the wild case, Theorem \ref{the wild case} shows that $\calG_i=\dbZ_p^m\rtimes\bar\calG_i$, where $m\geq0$ and the underlying group of $\bar\calG_i$ is a finitely generated free pro-$p$ group.
Thus in this case $p$-adic type pairs do not arise in the decomposition.
\end{proof}

\begin{defin}
\rm
The class $\ET_p^{\rm abs}$ of cyclotomic pro-$p$ pairs of \textsl{absolute elementary type} is the minimal class of cyclotomic pro-$p$ pairs (up to isomorphism) which contains the `building blocks' $\calZ^\alp$, $\alp\in\dbZ_p^{\times,1}$, and $\calE$, if $p=2$,
and which is closed under the operations of extensions $\bar\calG\mapsto\dbZ_p\rtimes\bar\calG$ and free products.
\end{defin}

Theorem \ref{absolute decomposition theorem} gives:

\begin{cor}
\label{cor to absolute ETC}
Assume Conjecture \ref{ETC arithmetical form}.
Then for every field $F$ such that $G_F$ is pro-$p$ and finitely generated we have $\calG_F(p)\in\ET_p^{\rm abs}$.
\end{cor}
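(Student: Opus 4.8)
The plan is to argue by induction on $r=\rank(G_F)$, which is finite because $G_F$ is assumed finitely generated. Since $G_F$ is pro-$p$, it coincides with its maximal pro-$p$ quotient, so $G_F(p)=G_F$ and hence $\calG_F(p)=\calG_F$; this is what lets me feed $F$ directly into Theorem \ref{absolute decomposition theorem}. For the base case $r=0$ the group $G_F$ is trivial, so $\calG_F(p)$ is the trivial pair $(1,1)$. This pair is the initial object of the category of cyclotomic pro-$p$ pairs, i.e.\ the empty free product, and therefore lies in $\ET_p^{\rm abs}$ by closure under free products (nullary case).

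For the inductive step I would invoke Theorem \ref{absolute decomposition theorem} to obtain a decomposition $\calG_F(p)\isom\calG_1*\cdots*\calG_n$ into factors of types (i)--(iii). As $\ET_p^{\rm abs}$ is closed under free products, it suffices to place each $\calG_i$ in $\ET_p^{\rm abs}$. Factors of type (i) are the building blocks $\calZ^\alp$, and a factor of type (ii) is the building block $\calE$; both lie in $\ET_p^{\rm abs}$ by definition. For a factor $\calG_i=\dbZ_p^m\rtimes\bar\calG_i$ of type (iii), with $\bar\calG_i=\calG_{\bar F}(p)$ and $m\geq1$, I would first use the associativity of extensions (\ref{associativity of extension}) to realize $\dbZ_p^m\rtimes\bar\calG_i$ as an $m$-fold iteration of the single extension $\bar\calG\mapsto\dbZ_p\rtimes\bar\calG$; since $\ET_p^{\rm abs}$ is closed under this operation, it remains only to show $\bar\calG_i\in\ET_p^{\rm abs}$. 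Here $\bar F$ contains a root of unity of order $p$ (as $\calG_{\bar F}(p)$ is a pair of Galois type), its absolute Galois group $G_{\bar F}$ is pro-$p$, and $\rank(G_{\bar F})<r$, so in particular $G_{\bar F}$ is finitely generated. Applying the induction hypothesis to $\bar F$ --- legitimate because Conjecture \ref{ETC arithmetical form} is assumed for all fields, hence for $\bar F$ --- yields $\bar\calG_i=\calG_{\bar F}(p)\in\ET_p^{\rm abs}$, and therefore $\calG_i\in\ET_p^{\rm abs}$.

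Assembling the factors via closure under free products then gives $\calG_F(p)\in\ET_p^{\rm abs}$, completing the induction. I do not expect a genuine obstacle here: the conceptual content is entirely carried by Theorem \ref{absolute decomposition theorem}, and the corollary is a structural bookkeeping argument. The single point that requires care is the well-foundedness of the recursion, which hinges on the \emph{strict} rank drop $\rank(G_{\bar F})<\rank(G_F)$ in case (iii); this strict inequality is precisely what Theorem \ref{absolute decomposition theorem} supplies, so the induction terminates after finitely many steps.
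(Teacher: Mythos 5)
Your proposal is correct and is precisely the argument the paper intends: the paper derives the corollary directly from Theorem \ref{absolute decomposition theorem} with the rank induction left implicit, and your write-up simply makes that induction explicit (including the strict rank drop in case (iii), the iteration of extensions via (\ref{associativity of extension}), and the $n=0$ convention of Remark \ref{remarks on the arithmetical ETC}(1) handling the trivial pair). No genuine differences or gaps to report.
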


\begin{rem}
\label{converse of ETC abs}
\rm
Again, the converse of Corollary \label{cor to absolute ETC} holds unconditionally.
Indeed, for building blocks $\calZ^\alp$ this was shown in Example \ref{examples of realizable pairs}(4).
In the case $\calG=\calE$, $p=2$, we have $\calG=\calG_\dbR(2)$ and $G_\dbR\isom\dbZ/2$.

The operations of extensions and free products are realizable for pro-$p$ absolute Galois groups by  Proposition \ref{realizations of extensions}(b) and \ref{realizations of free products}(b), respectively.
\end{rem}

\subsection{Fields containing $\mu_{p^\infty}$}
Here we restrict to fields $F$ of characteristic $\neq p$  which contain the group $\mu_{p^\infty}$ of all roots of unity of $p$-power order.
Equivalently, the cyclotomic character $\chi_F$ is trivial, so a description of $\calG_F(p)$ amounts to describing $G_F(p)$.

\begin{thm}
\label{decomposition theorem for fields containing all roots of unity}
Suppose that Conjecture \ref{ETC arithmetical form} holds for $F$ where $\Char\,F\neq p$ and $\mu_{p^\infty}\subseteq F$.
Then 
\[
G_F(p)\isom G_1*_p\cdots*_pG_n,
\]
where for each $1\leq i\leq n$ either:
\begin{enumerate}
\item[(i)]
$G_i=\dbZ_p$;
\item[(ii)]
$G_i=\dbZ_p^m\times G_{\bar F_i}(p)$, where $1\leq m<\infty$ and $\bar F_i$ is a field such that $\Char\,\bar F_i\neq p$, $\mu_{p^\infty}\subseteq \bar F_i$, and the rank of $G_{\bar F_i}(p)$ is strictly smaller than that of $G_F(p)$.
\end{enumerate}
\end{thm}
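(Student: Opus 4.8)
The plan is to deduce the theorem from Theorem \ref{decomposition theorem} by exploiting the fact that $\mu_{p^\infty}\subseteq F$ forces the cyclotomic character $\chi_F$ to vanish. First I would observe that, since $\mu_p\subseteq F$, the full cyclotomic character identifies $\Gal(F(\mu_{p^\infty})/F)$ with a subgroup of the pro-$p$ group $\dbZ_p^{\times,1}$; hence $F(\mu_{p^\infty})/F$ is a pro-$p$ extension, so $\mu_{p^\infty}\subseteq F(p)$ and $\chi_F$ records the action of $G_F(p)$ on $\mu_{p^\infty}$. Consequently $\mu_{p^\infty}\subseteq F$ is equivalent to $\chi_F=1$, so $\calG_F(p)=(G_F(p),1)$.

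Applying Theorem \ref{decomposition theorem} yields $\calG_F(p)\cong\calG_1*\cdots*\calG_n$ with each $\calG_i$ of one of the four listed types. Writing $\calG_i=(G_i,\theta_i)$, the character of the free product restricts to $\theta_i$ on each canonical factor $G_i$; since the total character is $\chi_F=1$, every $\theta_i$ is trivial. The remaining task is to run through the four types under this triviality constraint.

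For type (1), $\calZ^\alp$ has trivial character precisely when $\alp=1$, and then its underlying group is $\dbZ_p$, giving case (i). Type (2), the pair $\calE$, has nontrivial character by its very definition and hence cannot appear. For type (3), a $p$-adic pair $\calG_{\hat E}(p)$ with $\hat E$ a finite extension of $\dbQ_p(\mu_p)$ has trivial character only if $\mu_{p^\infty}\subseteq\hat E$; but a finite extension of $\dbQ_p$ contains only finitely many roots of unity of $p$-power order, so this never holds and type (3) is excluded. For type (4), $\calG_i=\dbZ_p^m\rtimes\bar\calG_i$ with $\bar\calG_i=\calG_{\bar F_i}(p)$ of Galois type and $1\leq m<\infty$: triviality of $\theta_i$ forces the character $\bar\theta_i=\chi_{\bar F_i}$ of $\bar\calG_i$ to be trivial, whence the semidirect action $\bar g a\bar g^{-1}=\bar\theta_i(\bar g)a$ is trivial and $G_i=\dbZ_p^m\times G_{\bar F_i}(p)$. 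As before, $\chi_{\bar F_i}=1$ gives $\mu_{p^\infty}\subseteq\bar F_i$ and in particular $\Char\,\bar F_i\neq p$, and the rank bound $\rank(\bar\calG_i)<\rank(\calG_i)\leq\rank(\calG_F(p))$ yields $\rank(G_{\bar F_i}(p))<\rank(G_F(p))$. This is exactly case (ii).

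The genuinely arithmetic step --- and the only one beyond bookkeeping --- is the exclusion of type (3): one must know that a $p$-adic type pair carries a nontrivial cyclotomic character, equivalently that no finite extension of $\dbQ_p(\mu_p)$ contains $\mu_{p^\infty}$. This is the same local-field input that powers Lemma \ref{not pro p}. All other steps merely transport the vanishing of $\chi_F$ through the free-product and semidirect-product constructions.
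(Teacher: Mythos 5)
Your proposal is correct and takes essentially the same route as the paper's proof: apply Theorem \ref{decomposition theorem}, observe that triviality of $\chi_F$ makes each factor's character trivial, thereby excluding $\calE$ and the $p$-adic pairs (whose characters are nontrivial) and turning the semidirect product in type (4) into a direct product with $\mu_{p^\infty}\subseteq\bar F_i$. The only difference is that you make explicit some details the paper leaves implicit --- the restriction of the free-product character to the factors, and the fact that no finite extension of $\dbQ_p(\mu_p)$ contains $\mu_{p^\infty}$ --- which is harmless.
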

\begin{proof}
Take the decomposition as in Theorem \ref{decomposition theorem}.
Then $\calG_i=(G_{E_i}(p),1)$ for some subextensions $F(\mu_{p^\infty})\subseteq E_1\nek E_n\subseteq F(p)$.
Since the characters of $\calE$ (when $p=2$) and of $p$-adic pairs are nontrivial, only cases (i) and (iv) of this theorem are possible, yielding the two possibilities above. 
\end{proof}

\begin{defin}
\rm
The class $\ET_p^{\infty}$ of cyclotomic pro-$p$ pairs of elementary type with trivial character is the minimal class of pairs (up to isomorphism) which contains
$\calZ^1$ and which is closed under the operations of free products and extensions $(\bar G,1)\mapsto(\dbZ_p\times\bar G,1)$.
\end{defin}

Theorem \ref{decomposition theorem for fields containing all roots of unity} gives:

\begin{cor}
\label{cor to infinite ETC}
Assume Conjecture \ref{ETC arithmetical form}.
Then every cyclotomic pro-$p$ pair of Galois type with trivial character and of finite rank is contained in $\ET_p^{\infty}$.
\end{cor}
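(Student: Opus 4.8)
The plan is to reduce Corollary \ref{cor to infinite ETC} to the already-established Theorem \ref{decomposition theorem for fields containing all roots of unity} combined with the converse realizability statements proved earlier. Let $\calG=(G,1)$ be a cyclotomic pro-$p$ pair of Galois type with trivial character and finite rank; so $\calG=\calG_F(p)$ for some field $F$, and triviality of $\chi_F$ means precisely that $\mu_{p^\infty}\subseteq F$ (and $\Char\,F\neq p$, since $F$ contains a $p$th root of unity). By hypothesis we assume Conjecture \ref{ETC arithmetical form}, so $\calG_F(p)$ decomposes as in Theorem \ref{decomposition theorem for fields containing all roots of unity}, giving a free pro-$p$ product $G_F(p)\isom G_1*_p\cdots*_pG_n$ whose factors are each either $\dbZ_p$ or of the form $\dbZ_p^m\times G_{\bar F_i}(p)$ with $\mu_{p^\infty}\subseteq\bar F_i$ and $\rank(G_{\bar F_i}(p))<\rank(G_F(p))$.

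The core of the argument is an induction on $\rank(\calG)$. First I would observe that each free factor, equipped with its trivial character, is again a cyclotomic pro-$p$ pair of Galois type with trivial character (the character of $\calG_{E_i}(p)$ is the restriction of $\chi_F$, hence trivial). For a factor of type (i), $(\dbZ_p,1)=\calZ^1$ is a building block of $\ET_p^\infty$. For a factor of type (ii) I would use the associativity relation \eqref{associativity of extension} to write $\dbZ_p^m\times G_{\bar F_i}(p)$ as an $m$-fold iterated extension $(\dbZ_p\times(\cdots(\dbZ_p\times G_{\bar F_i}(p))\cdots),1)$; since $\rank(G_{\bar F_i}(p))<\rank(G_F(p))$ and $(G_{\bar F_i}(p),1)$ is again of Galois type with trivial character, the induction hypothesis places it in $\ET_p^\infty$, and closure of $\ET_p^\infty$ under the extension operation $(\bar G,1)\mapsto(\dbZ_p\times\bar G,1)$ then puts the whole factor in $\ET_p^\infty$. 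Finally, closure of $\ET_p^\infty$ under free products assembles $\calG$ from these factors, completing the induction.

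The one genuine subtlety — and the step I expect to require the most care — is verifying that the semidirect products $\dbZ_p^m\rtimes\calG_{\bar F_i}(p)$ appearing abstractly in Theorem \ref{decomposition theorem} really degenerate to direct products $(\dbZ_p^m\times G_{\bar F_i}(p),1)$ in the trivial-character case. This is where the hypothesis $\mu_{p^\infty}\subseteq F$ is essential: the semidirect action is defined via the character $\bar\theta=\chi_{\bar F_i}$ (see \eqref{setting of an extension}), and triviality of $\chi_F$ forces $\chi_{\bar F_i}$ to be trivial as well, so the action $\bar g a\bar g^{-1}=\bar\theta(\bar g)a$ becomes trivial and the semidirect product collapses to a direct product. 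One must also confirm that the exclusion of $\calE$ and of $p$-adic type factors — whose characters are nontrivial — is legitimate; this is exactly the observation already recorded in the proof of Theorem \ref{decomposition theorem for fields containing all roots of unity}. Once these character-triviality bookkeeping points are settled, the remainder is a routine structural induction using only the stated closure properties of $\ET_p^\infty$.
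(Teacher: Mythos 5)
Your proposal is correct and matches the paper's intended argument: the paper states Corollary \ref{cor to infinite ETC} as an immediate consequence of Theorem \ref{decomposition theorem for fields containing all roots of unity}, leaving implicit exactly the rank induction and the trivial-character bookkeeping (semidirect products collapsing to direct products, exclusion of $\calE$ and $p$-adic factors) that you spell out. Your version is simply a more explicit write-up of the same proof.
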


\begin{rem}
\rm
Note that $\ET_p^{\infty}\subseteq \ET_p^{\rm abs}$.
Therefore, by Remark \ref{converse of ETC abs}, the converse of Corollary \label{cor to infinite ETC} holds unconditionally.
\end{rem}

\subsection{The Pythagorean case}
\label{subsection on the Pythagorean case}
Here we take $p=2$ and assume that the field $F$ satisfies $\Char\,F\neq2$.
One says that $F$ is \textsl{Pythagorean} if $F^2=F^2+F^2$.
Equivalently, $G_F(2)$ is generated by elements of order $2$ (\cite{Becker74}, \cite{Efrat06}*{Prop.\ 19.2.5 and Th.\ 19.2.10}).
One has $\Char\,F=0$ unless $G_F(2)=1$.
The following variant of Theorem \ref{decomposition theorem} was proved (unconditionally) by Jacob \cite{Jacob81} and Min\'a\v c \cite{Minac86}.
Generalizations to non-finitely generated Pythagorean fields were given in \cite{Efrat93} and \cite{EfratHaran94}.

\begin{thm}[Jacob, Min\'a\v c]
\label{decomposition theorem for Pythagorean fields}
Suppose that $F$ is Pythagorean and $(F^\times:(F^\times)^2)<\infty$.
Then 
\[
\calG_F(2)\isom\calG_1*\cdots*\calG_n,
\]
where for each $1\leq i\leq n$ either:
\begin{enumerate}
\item[(i)]
$\calG_i\isom\calE$;
\item[(ii)]
$\calG_i=\dbZ_2^m\rtimes \bar\calG_i$, where $1\leq m<\infty$ and $\bar\calG_i=\calG_{\bar F_i}(2)$ for a Pythagorean field $\bar F_i$ such that the rank of $G_{\bar F_i}(2)$ is strictly smaller than that of $G_F(2)$.
\end{enumerate}
\end{thm}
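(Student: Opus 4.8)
The plan is to derive the stated decomposition from the general Theorem \ref{decomposition theorem}, using as the essential arithmetic input the fact that Conjecture \ref{ETC arithmetical form} holds \emph{unconditionally} for Pythagorean fields, and then to discard the free factors whose type is incompatible with $G_F(2)$ being generated by involutions.

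First I would record the reduction to the formally real case. By Becker's theorem the Pythagorean hypothesis says that $G_F(2)$ is topologically generated by its involutions, and (Remark \ref{remarks on the arithmetical ETC}(3)) these involutions correspond to the Euclidean closures of $F$ in $F(2)$, hence to the orderings of $F$. Thus if $F$ is not formally real then $G_F(2)$ has no involutions and is trivial, the degenerate case $n=0$. So I may assume $F$ is formally real. The main ingredient is then the theorem of Jacob \cite{Jacob81} and Min\'a\v c \cite{Minac86} that Conjecture \ref{ETC arithmetical form} holds for every Pythagorean $F$ with $(F^\times:(F^\times)^2)<\infty$. Its proof goes by induction on $\dim_{\dbF_2}\!\bigl(F^\times/(F^\times)^2\bigr)$: either the reduced theory of $F$ (its space of orderings) splits as a nontrivial direct sum, giving a free pro-$2$ product decomposition of $\calG_F(2)$ into the Galois pairs of smaller Pythagorean fields, or no such splitting exists and the explicit valuation construction of Arason, Elman, Jacob and Ware recalled in \S\ref{subsection on explicit constructions of valuations}, applied with $S=(F^\times)^2$ and a suitable intermediate group $H$, yields a valuation $v$ with $1+\grm_v\leq(F^\times)^2$ and nontrivial inertia in $F(2)$; by Proposition \ref{existence of tame valuations} it may be taken $2$-Henselian with $\Char\,\bar F_v\neq 2$, and its decomposition group is then a free factor of the required valuation-theoretic type.

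Granting this, Theorem \ref{decomposition theorem} provides $\calG_F(2)\isom\calG_1*\cdots*\calG_n$ with each $\calG_i$ of one of the four listed types, and it remains to eliminate every type other than $\calE$ and the tame extensions. For this I would use that a free pro-$2$ product admits, for each $1\leq i\leq n$, a retraction $\pi_i\colon G_F(2)\to G_i$ killing all the other factors. Since $\pi_i$ is a homomorphism, it carries each of the involutions generating $G_F(2)$ to an element of order dividing $2$ in $G_i$; if $G_i$ has no involutions, all these generators die and the surjectivity of $\pi_i$ forces $G_i=1$. Now the underlying group $\dbZ_2$ of a factor of type (1), the torsion-free Demu\v skin group of a factor of type (3), and the group $\dbZ_2^m\rtimes(\text{free pro-}2)$ arising in the wild case of type (4) all contain no involutions, since a short computation shows that an involution of $\dbZ_2^m\rtimes H$ must project to an involution of $H$. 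Hence these factors are trivial and do not occur, leaving only $\calE$, which gives (i), and the tame extensions $\calG_i=\dbZ_2^m\rtimes\calG_{\bar F_v}(2)$ with $\Char\,\bar F_v\neq2$ and $1\leq m<\infty$. For the latter the same projection argument shows $\calG_{\bar F_v}(2)$ has an involution, so $\bar F_v$ is formally real; and since $1+\grm_v\leq(F^\times)^2$, a sum $\bar a^2+\bar b^2$ of two nonzero squares in $\bar F_v$ lifts to a unit sum of squares in $F$, which is a square, whose residue is the desired square in $\bar F_v$. Thus $\bar F_v$ is Pythagorean, giving (ii); the rank inequality is already part of Theorem \ref{decomposition theorem}.

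The main obstacle is the arithmetic input of the second paragraph: establishing Conjecture \ref{ETC arithmetical form} unconditionally for Pythagorean fields. This is exactly the Jacob--Min\'a\v c theorem and rests on the structure theory of spaces of orderings together with the Arason--Elman--Jacob--Ware valuation machinery; by contrast the remaining steps, excluding the unwanted factor types and checking that the residue field is Pythagorean, are formal consequences of the results already assembled above.
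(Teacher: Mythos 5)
You should know at the outset that the paper contains no proof of this theorem: it is stated with the attribution to Jacob and Min\'a\v c, and the only in-paper justification is the remark at the end of \S\ref{section on evidence} that Corollary \ref{full fields} recovers the Pythagorean case. Measured against that implicit route, your proposal is correct and follows essentially the same plan: obtain Conjecture \ref{ETC arithmetical form} unconditionally for Pythagorean $F$ with $(F^\times:(F^\times)^2)<\infty$, feed it into Theorem \ref{decomposition theorem}, and then use the generation of $G_F(2)$ by involutions to discard the incompatible factor types. Your elimination arguments are sound: the retraction of a free pro-$2$ product onto a free factor carries a topological generating set of involutions to elements of order dividing $2$, so any torsion-free factor is trivial; this kills type (1), type (3), and the wild extensions, and your computation that an involution $(a,h)$ of $\dbZ_2^m\rtimes H$ forces $h^2=1$ and, when $h=1$, $a=0$, is exactly right. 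The remaining tame factors $\dbZ_2^m\rtimes\calG_{\bar F_v}(2)$ then yield case (ii), with the rank inequality coming from Theorem \ref{decomposition theorem}(4) together with $\rank(G_i)\leq\rank(G_F(2))$.

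Three refinements. First, to avoid even the appearance of circularity --- the statement you are proving \emph{is} the Jacob--Min\'a\v c theorem, so invoking ``the theorem of Jacob and Min\'a\v c'' as your main ingredient must be read carefully as citing only its valuation-theoretic content --- you can instead stay entirely inside the paper: by Becker's theorem plus a Frattini argument, $G_F(2)$ is topologically generated by finitely many involutions, so $F$ is the intersection of the finitely many corresponding Euclidean closures, and Corollary \ref{full fields} then gives Conjecture \ref{ETC arithmetical form} for $F$ unconditionally; this is precisely the paper's intended recovery of the theorem and bypasses the space-of-orderings and Arason--Elman--Jacob--Ware machinery you only sketch. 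Second, your list of torsion-free cases omits the wild type-(4) factors $\dbZ_2^m\rtimes\bar\calG_i$ with $\bar\calG_i$ of $2$-adic type and $m\geq1$ (Corollary \ref{cor to wild case}(i)); your projection lemma disposes of these as well, since $2$-adic maximal pro-$2$ Galois groups are torsion-free (their fields are not formally real), but the case should be mentioned explicitly. Third, the Pythagorean property of $\bar F_v$ follows with no lifting at all from Becker's criterion: the semidirect projection $G_i\to G_{\bar F_v}(2)$ maps a generating set of elements of order dividing $2$ onto such a set, so $\bar F_v$ is Pythagorean. Your residue-lifting argument is also valid, but only because (as in the proof of Theorem \ref{decomposition theorem}) the valuation producing the factor lives on $F$ itself and its decomposition field has the same residue field $\bar F_v$; as written, your appeal to ``$1+\grm_v\leq(F^\times)^2$'' and to sums of squares in $F$ silently uses this identification, and it deserves a sentence.
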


\begin{defin}
\rm
The class $\ET^{\rm Pyth}$ of cyclotomic pro-$2$ pairs of \textsl{Pythagorean elementary type} is the minimal class of pairs (up to isomorphism) which contains  $\calE$
and which is closed under the operations of extensions $\bar\calG\mapsto\dbZ_2\rtimes\bar\calG$ and free products.
\end{defin}

\begin{cor}
\label{cor to Pythagorean ETC}
For every field $F$ of characteristic $0$, the following two conditions are equivalent:
\begin{enumerate}
\item[(a)]
$F$ is Pythagorean and $(F^\times:(F^\times)^2)<\infty$;
\item[(b)] 
$\calG_F(2)\in\ET^{\rm Pyth}$.
\end{enumerate}
\end{cor}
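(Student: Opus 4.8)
The plan is to prove the two implications of Corollary~\ref{cor to Pythagorean ETC} separately, exploiting the machinery already assembled in the excerpt.

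\textbf{Direction (a)~$\Rightarrow$~(b).} First I would observe that this is essentially a restatement of Theorem~\ref{decomposition theorem for Pythagorean fields}, which is the unconditional Jacob--Min\'a\v c result. Indeed, if $F$ is Pythagorean with $(F^\times:(F^\times)^2)<\infty$, then that theorem yields a free product decomposition $\calG_F(2)\isom\calG_1*\cdots*\calG_n$ in which each factor is either $\calE$ or of the form $\dbZ_2^m\rtimes\bar\calG_i$ with $\bar\calG_i=\calG_{\bar F_i}(2)$ for a Pythagorean field $\bar F_i$ of strictly smaller rank. I would then argue by induction on $\rank(G_F(2))$: the base case $\rank=0$ gives the trivial group (which sits in $\ET^{\rm Pyth}$ as the empty free product), and the case $\rank=1$ with $F$ Pythagorean forces $\calG_F(2)\isom\calE$. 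For the inductive step, each $\bar\calG_i$ is the pair of a Pythagorean field of smaller rank, so by induction $\bar\calG_i\in\ET^{\rm Pyth}$; using the associativity relation \eqref{associativity of extension} to reduce $\dbZ_2^m\rtimes\bar\calG_i$ to iterated single $\dbZ_2$-extensions $\bar\calG\mapsto\dbZ_2\rtimes\bar\calG$, each factor $\calG_i$ lies in $\ET^{\rm Pyth}$, and hence so does their free product $\calG_F(2)$ by closure of the class.

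\textbf{Direction (b)~$\Rightarrow$~(a).} Here I would argue by structural induction on the construction of $\calG_F(2)\in\ET^{\rm Pyth}$, realizing each building block and each operation by a Pythagorean field of characteristic $0$ with $(F^\times:(F^\times)^2)<\infty$. The building block $\calE$ is realized by $\dbR$, which is Pythagorean. For the closure operations I would invoke the realizability results of \S\ref{section on reaization of the operations over fields}: Proposition~\ref{realizations of free products} realizes free products and Proposition~\ref{realizations of extensions} realizes the extension $\bar\calG\mapsto\dbZ_2\rtimes\bar\calG$ over suitable valued fields. The content of this direction is therefore to check that the Pythagorean property and the finiteness $(F^\times:(F^\times)^2)<\infty$ are \emph{preserved} under these constructions. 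Finiteness is immediate since every pair in $\ET^{\rm Pyth}$ has finite rank, and $\rank(G_F(2))=\dim_{\dbF_2}H^1(G_F(2))=\dim_{\dbF_2}\bigl(F^\times/(F^\times)^2\bigr)$ by Kummer theory. For the Pythagorean property, I would use the group-theoretic characterization that $F$ is Pythagorean iff $G_F(2)$ is generated by involutions: a free pro-$2$ product of groups generated by involutions is again generated by involutions, and for the extension $\dbZ_2^m\rtimes\bar G$ one checks that this group is generated by involutions precisely when $\bar G$ is and the cyclotomic character is suitably nontrivial, which is exactly the situation produced by the realization in Proposition~\ref{realizations of extensions}.

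\textbf{Main obstacle.} I expect the delicate point to be the second direction, specifically verifying that the explicit field $F$ produced by the realization propositions is genuinely Pythagorean, rather than merely having the correct Galois pair abstractly. The realization constructions in \S\ref{section on reaization of the operations over fields} guarantee the isomorphism type of $\calG_F(2)$ and the compatibility of cyclotomic characters, but Pythagoreanness is an arithmetic condition on $F$ that must be read off from the group-theoretic structure; I would bridge this gap via the equivalence ``$F$ Pythagorean $\Leftrightarrow$ $G_F(2)$ generated by involutions'' cited in \S\ref{subsection on the Pythagorean case}, reducing everything to the stability of this group-theoretic property under free products and $\dbZ_2$-extensions with the nontrivial character inherited from $\calE$. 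Care is also needed to ensure $\Char\,F=0$ throughout, which the Witt-vector construction invoked in Proposition~\ref{realizations of free products} supplies.
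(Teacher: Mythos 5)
Your direction (a)$\Rightarrow$(b) coincides with the paper's, whose entire proof of this implication is the citation of Theorem \ref{decomposition theorem for Pythagorean fields}; the induction on $\rank(G_F(2))$ that you spell out -- splitting $\dbZ_2^m\rtimes\bar\calG_i$ into iterated single extensions via (\ref{associativity of extension}), and noting that the strictly smaller finite rank of $G_{\bar F_i}(2)$ gives $(\bar F_i^\times:(\bar F_i^\times)^2)<\infty$ so the inductive hypothesis applies -- is exactly the bookkeeping the paper leaves implicit, and it is correct.

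For (b)$\Rightarrow$(a) the paper does not argue at all: it cites \cite{EfratHaran94}*{Th.\ 3.1}. Your self-contained attempt is therefore a genuinely different route, but as framed it proves the wrong statement. ``Realizing each building block and each operation by a Pythagorean field'' via Propositions \ref{realizations of extensions} and \ref{realizations of free products} would show that every pair in $\ET^{\rm Pyth}$ equals $\calG_{F_0}(2)$ for \emph{some} Pythagorean field $F_0$; but in (b)$\Rightarrow$(a) the field $F$ is \emph{given}, and nothing identifies it with the field the realization machinery produces. Your declared ``main obstacle'' -- verifying that the explicit field produced by the realization propositions is Pythagorean -- is thus a misdiagnosis: in this direction no field is constructed, and $\Char F=0$ is a hypothesis, not something Witt vectors must supply. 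What rescues the argument is your own bridge, which by itself suffices: since for $\Char F\neq2$ Pythagoreanness is \emph{equivalent} to the purely group-theoretic condition that $G_F(2)$ be topologically generated by involutions (\S\ref{subsection on the Pythagorean case}), it is enough to show by structural induction that the underlying group of every pair in $\ET^{\rm Pyth}$ is generated by involutions, all of which moreover have character $-1$. This holds for $\calE$, passes through free products, and for $\dbZ_2\rtimes\bar\calG$ one takes a generator $a$ of the $\dbZ_2$-factor and an involution $\sigma\in\bar G$ with $\bar\theta(\sigma)=-1$, so that $(a\sigma)^2=a+\bar\theta(\sigma)a=0$ makes $a\sigma$ an involution and $a=(a\sigma)\sigma$ lies in the closed subgroup the involutions generate. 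Note that the character condition is intrinsic to the class, not ``the situation produced by Proposition \ref{realizations of extensions}''; that proposition plays no role here, and your ``precisely when'' formulation of the extension step should be replaced by this one-sided, character-aware statement. Your finiteness argument (finite rank of pairs in $\ET^{\rm Pyth}$ plus Kummer theory) is correct.

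One concrete step would fail as written: your base case places the trivial pair in $\ET^{\rm Pyth}$ as an empty free product. If $(1,1)\in\ET^{\rm Pyth}$, then the extension operation yields $\dbZ_2\rtimes(1,1)\isom\calZ^1\in\ET^{\rm Pyth}$, and $\calZ^1\isom\calG_{\dbC((t))}(2)$ with $\dbC((t))$ of characteristic $0$ and \emph{not} Pythagorean ($G_{\dbC((t))}(2)\isom\dbZ_2$ is torsion-free) -- so (b)$\Rightarrow$(a) would be false under your convention. Consistency forces the trivial pair out of $\ET^{\rm Pyth}$, with the degenerate quadratically closed case ($G_F(2)=1$) excluded from, or treated separately in, (a), as in \cite{EfratHaran94}; your induction should start at rank $1$ rather than rely on the empty-product reading.
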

\begin{proof}
The implication (a)$\Rightarrow$(b) is by Theorem \ref{decomposition theorem for Pythagorean fields}.
For the implication (b)$\Rightarrow$(a) see \cite{EfratHaran94}*{Th.\ 3.1}.
\end{proof}

\section{Evidence}
\label{section on evidence}
As discussed in the Introduction, the Elementary Type Conjecture emerged from similar conjectures in the context of quadratic forms.
These roughly correspond to the Lego version (Conjecture \ref{ETC Lego version}) in the case $p=2$, by the methods of \cite{JacobWare89} and \cite{JacobWare91}.
They were proved in many cases, including:
\begin{enumerate}
\item[(1)]
When $(F^\times:(F^\times)^2)\leq8$, i.e., $G_F(2)$ is generated as a pro-$2$ group by $\leq3$ elements, by Kula, Szczepanik, and Szymiczek \cite{KulaSzczepanikSzymiczek79};
\item[(2)]
When  $(F^\times:(F^\times)^2)=16$, by L.\ Berman (unpublished) and independently Szczepanik  \cite{Szczepanik85};
\item[(3)]
When  $(F^\times:(F^\times)^2)=32$, by Carson and Marshall with the aid of a computer \cite{CarsonMarshall82};
\item[(4)]
When $(F^\times:(F^\times)^2)=64$ or $=128$, by Lorenz and Sch\"onert, again with the aid of a computer \cite{LorenzSchonert25};
\item[(5)]
When the number of 2-fold Pfister forms over $F$ is $\leq 12$ (Kaplansky \cite{Kaplansky69}, Cordes \cite{Cordes82}, Kula \cite{Kula91}).
This is the same as the number of different cup products in $H^2(G_F(2))$, implying the conjecture when  $G_F(2)$ has $\leq3$ defining relations (see \cite{NeukirchSchmidtWingberg}*{Cor.\ 3.9.5}).
\end{enumerate}

As for the stronger arithmetical form of the conjecture (Conjecture \ref{ETC arithmetical form}), it is natural to prove it for fields with good local--global behaviour, such as for fields for which the Hasse principle for the Brauer groups holds.
In particular, it was proved for the following classes of fields: 
\begin{exam}
\rm
\begin{enumerate}
\item[(1)]
Algebraic extensions $F$ of global fields such that $\mu_p\subseteq F$ and $G_F(p)$ is finitely generated (\cite{Efrat97a}, \cite{Efrat99a});
\item[(2)]
Fields $F$ of transcendence degree $\leq1$ over a local field such that $\mu_p\subseteq F$ and $G_F(p)$ is finitely generated (\cite{Efrat00}, \cite{KrullNeukirch71});
\item[(3)]
Fields $F$ of transcendence degree $\leq1$ over a pseudo algebraically closed (PAC) field such that $\mu_p\subseteq F$ and $G_F(p)$ is finitely generated \cite{Efrat01}.
\end{enumerate}
\end{exam}

Another class of examples is that of fields $F$, containing as usual a root of unity of order $p$, such that $G_F(p)$ is `arithmetically full' in the following sense.
We consider subextensions $F\subseteq E\subseteq F(p)$ such that either:
\begin{enumerate}
\item[(i)]
$E$ is $p$-Henselian with respect to a valuation $v$ with $\Gam_v\neq p\Gam_v$ and $\Char\, \bar E_v\neq p$; or
\item[(ii)]
$p=2$ and $E$ is Euclidean.
\end{enumerate}

\begin{thm}[\cite{Efrat97b}*{Th.\ 4.5 and Lemma 1.2}]
Suppose that $G_F(p)$ is generated by subgroups $G_{E_i}(p)$, $i=1,2\nek n$, with $E_1\nek E_n$ as in (i) or (ii).
Moreover, suppose that $n$ is the minimal positive integer for which such generating subgroups exist.
Then 
\[
G_F(p)=G_{E_1}(p)*_p\cdots*_pG_{E_n}(p).
\]
Moreover, in case (i) $G_{E_i}(p)$ is a decomposition field of $v|_F$ in $F(p)$.
\end{thm}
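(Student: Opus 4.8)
The plan is to recognize $G_F(p)$ as the asserted free pro-$p$ product by combining the canonical comparison map with a cohomological isomorphism criterion. Form $P=G_{E_1}(p)*_p\cdots*_pG_{E_n}(p)$ and let $\pi\colon P\to G_F(p)$ be the homomorphism which is the given inclusion on each factor. Since the subgroups $G_{E_i}(p)$ generate $G_F(p)$ topologically, $\pi$ is surjective. By the standard criterion for pro-$p$ groups (\cite{NeukirchSchmidtWingberg}*{Prop.\ 1.6.15}), $\pi$ is an isomorphism as soon as $\pi^*$ is bijective on $H^1$ and injective on $H^2$. As $P$ is a free pro-$p$ product, restriction identifies $H^k(P)\isom\bigoplus_{i=1}^nH^k(G_{E_i}(p))$ for every $k\geq1$, and under this identification $\pi^*$ is the total restriction map. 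Hence the theorem reduces to the two statements:
\begin{enumerate}
\item[(A)] $\res\colon H^1(G_F(p))\to\bigoplus_iH^1(G_{E_i}(p))$ is an isomorphism; and
\item[(B)] $\res\colon H^2(G_F(p))\to\bigoplus_iH^2(G_{E_i}(p))$ is injective.
\end{enumerate}

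First I would record the local structure of the factors, which also yields the final (``moreover'') assertion. Because $F\subseteq E_i\subseteq F(p)$ we have $E_i(p)=F(p)$. In case (i) the valuation $v$ is then Henselian relative to $F(p)$, so it has a unique extension $\tilde v$ to $F(p)$ and $G_{E_i}(p)=Z(\tilde v/v)$ is the decomposition group of $v|_F$ in $G_F(p)$. Since $\Char\,\bar E_{i,v}\neq p$, Proposition \ref{extensions from tame valuations} gives $\calG_{E_i}(p)\isom\dbZ_p^{m_i}\rtimes\calG_{\bar E_{i,v}}(p)$ with $m_i=\dim_{\dbF_p}(\Gam_v/p\Gam_v)\geq1$, the last inequality by $\Gam_v\neq p\Gam_v$; in particular the inertia group is nontrivial. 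In case (ii) one has $G_{E_i}(p)\isom\dbZ/2$.

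Next I would treat (A). Injectivity is automatic, since a homomorphism $G_F(p)\to\dbZ/p$ vanishing on every $G_{E_i}(p)$ vanishes on the subgroup they generate, namely all of $G_F(p)$. For bijectivity I would pass to Frattini quotients: by the duality of $H^1(-)$ with the corresponding Frattini quotient, $\res$ on $H^1$ is dual to the natural map $\rho\colon\bigoplus_iG_{E_i}(p)/\Phi(G_{E_i}(p))\to G_F(p)/\Phi(G_F(p))$; surjectivity of $\rho$ is generation, while the injectivity of $\rho$ (equivalently, the bijectivity sought in (A)) is forced by the minimality of $n$ together with the independence of the local data. Indeed, were some factor to contribute a Frattini direction already lying in the span of the others, one could coarsen the corresponding valuation or discard a redundant ordering and so generate $G_F(p)$ with fewer than $n$ subgroups of the permitted type, contradicting minimality.

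The main obstacle is (B), which carries the genuine content of the theorem. I would derive it from the pairwise \emph{independence} of the valuations $v_1|_F\nek v_n|_F$ of case (i) and of the orderings of case (ii): any dependence -- a common nontrivial coarsening or a comparability of two of the $v_i|_F$, or an ordering refining one of the valuations -- would nest or merge the associated decomposition groups and thereby furnish a strictly smaller generating system, against the minimality of $n$. Granting independence, the free-product theorem for independent $p$-Henselian valuations (\cite{JacobWadsworth86}*{Th.\ 4.3}, \cite{Efrat97b}*{Prop.\ 4.3}) supplies the splitting of $H^2(G_F(p))$ into the local summands, which is precisely the injectivity in (B); for the valuation factors it in fact yields the free product directly. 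The delicate part is the simultaneous presence of valuation-theoretic and order-theoretic (Euclidean) factors, which must be shown not to interfere cohomologically; it is here that the hypotheses $\Char\,\bar E_{i,v}\neq p$ and $\Gam_v\neq p\Gam_v$ in (i), and the restriction $p=2$ producing the $\dbZ/2$-factors in (ii), enter decisively.
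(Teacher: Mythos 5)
The paper itself does not prove this theorem: it is quoted from \cite{Efrat97b}*{Th.\ 4.5 and Lemma 1.2}, where the argument indeed runs along the skeleton you propose (reduce, via the pro-$p$ isomorphism criterion, to an $H^1$-isomorphism and $H^2$-injectivity, then exploit independence of the local data as in \cite{JacobWadsworth86}*{Th.\ 4.3} and \cite{Efrat97b}*{Prop.\ 4.3}). So your reduction of the theorem to (A) and (B) is the right frame; the problem is that each of the three load-bearing steps is asserted rather than proved, and one of them is asserted incorrectly. The ``moreover'' clause is not the immediate observation you make in your second paragraph: $p$-Henselianity of $(E_i,v)$ together with $E_i(p)=F(p)$ gives only the containment $G_{E_i}(p)\subseteq Z(\tilde v/v|_F)$, because the decomposition field of $v|_F$ in $F(p)$ is the \emph{smallest} intermediate field $p$-Henselian with respect to a compatible extension of $v|_F$, and $E_i$ is merely \emph{some} such field, possibly strictly larger. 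You have conflated the decomposition group over $E_i$ (which is all of $G_{E_i}(p)$, by $p$-Henselianity) with the decomposition group over $F$. Equality is a genuine conclusion, obtained from minimality plus the free-product decomposition --- e.g., replace $E_i$ by the decomposition field $E_i'$ of $v|_F$ (checking it is again of type (i), which requires $\Gam_{v|_F}\neq p\Gam_{v|_F}$ and $\Char\,\bar F_{v|_F}\neq p$), and then a Frattini argument inside the free product forces $G_{E_i}(p)=G_{E_i'}(p)$; this is essentially the content of Lemma 1.2 of \cite{Efrat97b}, and it cannot be placed before the free-product statement is available.

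Your step (A) also does not follow from minimality as claimed. Minimality rules out the image in $G_F(p)/\Phi(G_F(p))$ of one \emph{whole} factor lying in the span of the images of the others, but non-injectivity of $\rho$ only requires a linear dependence among these images, which can happen with no single factor redundant (three subspaces can pairwise meet trivially while their sum fails to be direct); your ``coarsen the valuation or discard an ordering'' remedy does not address this, and coarsening in any case \emph{enlarges} decomposition groups and may destroy the condition $\Gam\neq p\Gam$. The correct route to (A) is weak approximation at independent localities, producing via Kummer theory elements of $F^\times$ with prescribed behavior at each $v_i$ or ordering. Finally, for (B) you ``grant independence'' and cite \cite{Efrat97b}*{Prop.\ 4.3}, but deducing pairwise independence of the $v_i|_F$ from minimality is itself nontrivial --- if two of them share a proper common coarsening $u$, one has $\Char\,\bar F_u\neq p$ automatically, but $\Gam_u\neq p\Gam_u$ can fail (the exact sequence (\ref{exact sequence for coarsenings}) allows all of $\Gam_{v_i}/p\Gam_{v_i}$ to come from the quotient valuation), so the merging trick must be supplemented by passing to the residue field of $u$ and inducting --- and you explicitly leave open the simultaneous presence of valuation and Euclidean factors, which is precisely where the content lies. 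As written, the proposal is an outline whose decisive steps (the identification of the factors with decomposition groups, the $H^1$ direct sum, and the $H^2$ injectivity in the mixed case) remain unestablished.
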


In view of Proposition \ref{extensions from tame valuations}, this implies:

\begin{cor}
\label{full fields}
Suppose that $F$ is the intersection of finitely many intermediate fields $F\subseteq E\subseteq F(p)$ satisfying (i) or (ii).
Then Conjecture \ref{ETC arithmetical form} holds for $F$.
\end{cor}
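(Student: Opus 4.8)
The plan is to combine the Galois correspondence with the quoted theorem of \cite{Efrat97b} and then read off that the resulting free factors are exactly those permitted in Conjecture \ref{ETC arithmetical form}. First I would record the standard fact that $E(p)=F(p)$ for any intermediate field $F\subseteq E\subseteq F(p)$: indeed $F(p)/E$ is pro-$p$ Galois, so $F(p)\subseteq E(p)$, while $F(p)$ is $p$-closed (that is, $G_{F(p)}(p)=1$) and $E(p)/F(p)$ is pro-$p$ Galois, forcing equality. Hence each $G_{E_i}(p)$ is genuinely the closed subgroup $\Gal(F(p)/E_i)$ of $G_F(p)$. Writing $F=E_1\cap\cdots\cap E_N$ with each $E_j$ as in (i) or (ii), the closed subgroup of $G_F(p)$ generated by the $G_{E_j}(p)$ has fixed field $\bigcap_j E_j=F$, so these subgroups generate $G_F(p)$. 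Thus $G_F(p)$ admits a finite generating family of subgroups of the required form; choosing $n$ minimal, the quoted theorem yields a free pro-$p$ product decomposition $G_F(p)=G_{E_1}(p)*_p\cdots*_pG_{E_n}(p)$.

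It then remains to match each factor to a case of Conjecture \ref{ETC arithmetical form}. For a field $E_i$ of type (ii) (so $p=2$ and $E_i$ is Euclidean), the relative real closure satisfies $F(2)=E_i(\sqrt{-1})$, whence $G_{E_i}(2)=\Gal(F(2)/E_i)\cong\dbZ/2$; this is case (iii) of the conjecture. For a field $E_i$ of type (i), the quoted theorem already identifies $G_{E_i}(p)$ as the decomposition group of an extension $\tilde v$ of $v|_F$ to $F(p)$, which is precisely the shape demanded in case (ii) of the conjecture; what is left is to verify that the associated inertia group is nontrivial. Here I would invoke Proposition \ref{extensions from tame valuations} with the field $E_i$ and its $p$-Henselian valuation $v$, using the tameness hypothesis $\Char\,\bar E_{i,v}\neq p$ from (i): its unique extension $\tilde v$ to $E_i(p)=F(p)$ has inertia group $T(\tilde v/v)\cong\dbZ_p^{m}$ with $m=\dim_{\dbF_p}(\Gam_v/p\Gam_v)$, and the hypothesis $\Gam_v\neq p\Gam_v$ gives $m\geq1$, so this group is nontrivial. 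Since $T(\tilde v/v)\subseteq T(\tilde v/(v|_F))$ (the inertia condition is the same, computed inside a larger ambient group), the inertia of $\tilde v$ over $v|_F$ is nontrivial as well, placing the factor in case (ii).

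Having exhibited every free factor as being of type (ii) or (iii) of Conjecture \ref{ETC arithmetical form}, the decomposition $G_F(p)=G_{E_1}(p)*_p\cdots*_pG_{E_n}(p)$ is exactly of the conjectured form (with no factors of type (i) occurring, which is harmless), so the conjecture holds for $F$; the degenerate case $G_F(p)=1$ is covered by allowing $n=0$ as in Remark \ref{remarks on the arithmetical ETC}(1), though it does not in fact arise once the family is nonempty, since each $E_i$ is then a proper subfield of $F(p)$. The step I expect to require the most care is the bookkeeping around the inertia group: one must keep straight that the valuation supplied by condition (i) lives on $E_i$ while the conjecture asks for a valuation on $F$, and that the identification $E_i(p)=F(p)$ legitimizes applying Proposition \ref{extensions from tame valuations} so that the nontrivial inertia over $E_i$ transfers to nontrivial inertia over $F$. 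Everything else is a direct reading of the quoted theorem through the Galois correspondence.
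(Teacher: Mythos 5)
Your proof is correct and follows exactly the route the paper intends: the paper's proof of Corollary \ref{full fields} is simply the observation that the quoted theorem of \cite{Efrat97b} together with Proposition \ref{extensions from tame valuations} yields the conclusion, and your write-up fills in precisely those implicit steps (the Galois-correspondence argument that the $G_{E_i}(p)$ generate $G_F(p)$, passage to a minimal generating family, identification $E_i(p)=F(p)$, and the transfer of nontrivial inertia from $v$ on $E_i$ to $v|_F$ via $T(\tilde v/v)\subseteq T(\tilde v/v|_F)$). No gaps; the inertia bookkeeping you flagged as delicate is handled correctly.
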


An important special case is when $p=2$ and $F$ is a Pythagorean field with $(F^\times:(F^\times)^2)<\infty$.
Then $F$ is the intersection of finitely many Euclidean closures, so Corollary \ref{full fields} recovers in this case the results of \S\ref{subsection on the Pythagorean case}.

\section{Demu\v skin groups}
\label{section on Demushkin groups}
The arithmetical form of the Elementary Type Conjecture implies other long-standing conjectures concerning the realization of pro-$p$ Demu\v skin groups as maximal pro-$p$ Galois groups.
This will be elaborated in this and the next sections.
In fact, as we shall see in \S\ref{section on equivalence of conjectures}, understanding the realization of these groups Galois-theoretically constitutes the exact difference between the arithmetical and Lego forms of the Elementary Type Conjecture.

\subsection{Definition and presentations} 
Recall that the rank of a pro-$p$ group $G$ is $\rank(G)=\dim_{\dbF_p}H^1(G)$ (\S\ref{section on the elementary type conjecture}).

\begin{defin} [\cite{Labute67}]
\label{Demuskin groups}
\rm
A pro-$p$ group $G$ is said to be \textsl{Demu\v skin} if the following conditions are satisfied:
\begin{enumerate}
\item[(i)]
$n=\rank(G)<\infty$;
\item[(ii)]
$\dim_{\dbF_p}H^2(G)=1$;
\item[(iii)]
The cup product $\cup\colon H^1(G)\times H^1(G)\to H^2(G)$ is a non-degenerate bilinear map.
\end{enumerate}
\end{defin}

\begin{exam}
\rm
A finite group $G$ is a pro-$p$ Demu\v skin group if and only if $G=\dbZ/2$, $p=2$ \cite{NeukirchSchmidtWingberg}*{Prop.\ 3.9.10}.
Furthermore, this is the only pro-$p$-cyclic Demu\v skin group.
Indeed, the only infinite pro-$p$-cyclic group is $G=\dbZ_p$, for which $H^2(G)=0$. 
\end{exam}

As shown by Serre, every Demu\v skin pro-$p$ group $G$ is equipped with a canonical homomorphism $\theta\colon G\to\dbZ_p^{\times,1}$ with the property that the cyclotomic pro-$p$ pair $\calG=(G,\theta)$ has the formal Hilbert 90 property \cite{Serre63}*{Section 9.3}.

\begin{defin}
\label{cyclotomic pairs of Demushkin type}
\rm
A cyclotomic pro-$p$ pair $\calG=(G,\theta)$ is of \textsl{Demu\v skin type} if $G$ is a Demu\v skin pro-$p$ group, and $\theta$ is the canonical character associated with $G$ as above.
\end{defin}

For a pro-$p$ Demu\v skin group $G$, the abelianization $G/[G,G]$ has the form $\dbZ_p^{n-1}\times(\dbZ_p/q\dbZ_p)$, where either $q>1$ is a $p$-power, or $q=0$.
Alternatively, $q$ is the maximal $p$-power $>1$ such that $\Img(\theta)\subseteq 1+q\dbZ_p$, and is $0$ if no such maximal $p$-power exists \cite{Labute67}. 
We note that when $q=2$, the index $(\Img(\theta):\Img(\theta)^2)$ is either $2$ or $4$.

By results of Demu\v skin \cite{Demushkin61}, Serre \cite{Serre63}, and Labute \cite{Labute67} (see also \cite{NeukirchSchmidtWingberg}*{\S3.9}), pairs of Demu\v skin type have the following explicit presentation in terms of pro-$p$ generators and relations (which we denote by $\langle\cdots\rangle_{{\rm pro-}p}$):

\medskip

\textsl{Case I:\ }
$q\neq2$. \ \ 
Then the rank $n$ is necessarily even, and 
\[
G\isom\langle x_1\nek x_n\ |\  x_1^q[x_1,x_2]\cdots[x_{n-1},x_n]=1\rangle_{{\rm pro}-p}.
\]
Here $\theta$ is given by $\theta(x_2)=1/(1-q)$ and $\theta(x_i)=1$ for any other $i$. 

\medskip

\textsl{Case II:\ }
$q=2$ and $n$ is odd.\ \ 
Then $p=2$ and
\[
G\isom\langle x_1\nek x_n\ |\ x_1^2x_2^{2^f}[x_2,x_3][x_4,x_5]\cdots[x_{n-1},x_n]=1\rangle_{{\rm pro}-2}
\]
for some $f\in\{2,3\nek \infty\}$ (where by convention $2^\infty=0$).
Here $\theta$ is given by $\theta(x_1)=-1$, $\theta(x_3)=1/(1-2^f)$, and $\theta(x_i)=1$ for any other $i$.

\medskip

\textsl{Case III:\ }
$q=2$, $n$ is even, and $(\Img(\theta):(\Img(\theta)^2)=2$.\ \ 
Then $p=2$ and
\[
G\isom\langle x_1\nek x_n\ |\ x_1^{2+2^f}[x_1,x_2][x_3,x_4][x_5,x_6]\cdots[x_{n-1},x_n]=1\rangle_{{\rm pro}-2}
\] 
for some $f\in\{2,3\nek \infty\}$.
Here $\theta$ is given by $\theta(x_2)=-1/(1+2^f)$ and $\theta(x_i)=1$ for any other $i$.

\medskip

\textsl{Case IV:\ }
$q=2$, $n$ is even, and $(\Img(\theta):(\Img(\theta)^2)=4$.\ \ 
Then $p=2$ and
\[
G\isom\langle x_1\nek x_n\ |\ x_1^2[x_1,x_2]x_3^{2^f}[x_3,x_4][x_5,x_6]\cdots[x_{n-1},x_n]=1\rangle_{{\rm pro}-2}
\] 
for some integer $f\geq2$.
Here $\theta$ is given by $\theta(x_2)=-1$, $\theta(x_4)=1/(1-2^f)$, and $\theta(x_i)=1$ for any other $i$.

\medskip

Conversely, every pro-$p$ group with generators $x_1\nek x_n$ and a single defining relation as in cases I--IV is a Demu\v skin group.

\subsection{Demu\v skin groups as Galois groups}
The main motivating example for Demu\v skin pro-$p$ groups comes from Galois theory of $p$-adic fields:

\begin{exam}
\rm
Consider a cyclotomic pro-$p$ pair of $p$-adic type, that is, a pair $\calG_F(p)$ where $F$ is a finite extension of $\dbQ_p(\mu_p)$ (Definition \ref{pairs of p-adic type}).
Then the Galois group $G=G_F(p)$ is a pro-$p$ Demu\v skin group of rank $n=[F:\dbQ_p]+2$ \cite{NeukirchSchmidtWingberg}*{Th.\ 7.5.11}.
The non-degeneracy of the cup product (property (iii)) reflects the fact the Hilbert symbol in local class field theory is non-degenerate.
Then $\theta$ is the cyclotomic character $\chi_F$ of $F$.
\end{exam}

\begin{exam}
\rm
The Galois group $G_{\dbQ_2}(2)$ is the pro-$2$ group generated by $x_1,x_2,x_3$ subject to a defining relation $x_1^2x_2^4[x_2,x_3]=1$. 
The character $\theta$ is given by $\theta(x_1)=-1$, $\theta(x_2)=1$, and $\theta(x_3)=-1/3=\sum_{k=0}^\infty 2^{2k}$ in $\dbZ_2^\times$.
\end{exam}

Every cyclotomic pair of $p$-adic type has both Galois type and Demu\v skin type, and its rank is $\geq3$.
It is currently an open question whether the opposite implication holds (see \cite{JacobWare89}*{p.\ 395} and \cite{MinacWare92}*{p.\ 339}):

\begin{conj}
\label{conjecture on Demuskin groups as p-adic groups}
If $F$ is a field containing a root of unity of order $p$ and if $\calG_F(p)$ is a Demu\v skin cyclotomic pro-$p$ pair of rank $\geq3$, then $\calG_F(p)$ has $p$-adic type.
\end{conj}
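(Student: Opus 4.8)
The plan is to establish the conjecture conditionally on the arithmetical form of the Elementary Type Conjecture (Conjecture \ref{ETC arithmetical form}); this appears to be the only route currently available, since there is no known way to manufacture the required $p$-adic valuation directly from the bare group-theoretic Demu\v skin hypothesis. So assume Conjecture \ref{ETC arithmetical form} holds for $F$. As $\calG_F(p)$ is Demu\v skin it has finite rank, so Theorem \ref{decomposition theorem} applies and produces a free product decomposition $\calG_F(p)\isom\calG_1*\cdots*\calG_n$ in which each factor has one of the four listed types. I would first recall the standard cohomological behaviour of free pro-$p$ products (e.g.\ \cite{NeukirchSchmidtWingberg}): the restriction maps give isomorphisms $H^1(G_F(p))\isom\bigoplus_iH^1(G_i)$ and $H^2(G_F(p))\isom\bigoplus_iH^2(G_i)$, and since restriction is a ring homomorphism sending $\alp\cup\beta$ to $\res(\alp)\cup\res(\beta)$, the global cup product is block diagonal: $\alp\cup\beta=0$ whenever $\alp\in H^1(G_i)$ and $\beta\in H^1(G_j)$ with $i\neq j$.

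The second step is to collapse the decomposition to a single nontrivial factor using the Demu\v skin hypothesis. If $G_i$ is a nontrivial factor, choose $0\neq\alp\in H^1(G_i)$; non-degeneracy of the cup product yields $\beta$ with $\alp\cup\beta\neq0$, and block diagonality forces the $G_i$-component $\beta_i$ of $\beta$ to satisfy $\alp\cup\beta_i\neq0$ in $H^2(G_i)$, so $H^2(G_i)\neq0$. Since $\dim_{\dbF_p}H^2(G_F(p))=\sum_i\dim_{\dbF_p}H^2(G_i)=1$, at most one factor has nonvanishing $H^2$; hence exactly one factor is nontrivial, and it equals $\calG_F(p)$. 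In particular every factor of type $\calZ^\alp$ (for which $H^2(\dbZ_p)=0$) is excluded.

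It then remains to identify this single factor among the four possibilities. The types $\calZ^\alp$ and $\calE$ have rank $1<3$ and are ruled out by hypothesis. For a factor of type $\dbZ_p^m\rtimes\bar\calG$ with $m\geq1$, Proposition \ref{generalized Wadsworth formula} together with Lemma \ref{cohomological properties of extensions}(a) gives
\[
\dim_{\dbF_p}H^2(G_F(p))=\dim_{\dbF_p}H^2(\bar G)+m\cdot\rank(\bar G)+\binom{m}{2},\qquad \rank(G_F(p))=m+\rank(\bar G).
\]
Imposing that the left-hand quantity equals $1$ and running through the cases ($m\geq3$ already forces $\binom{m}{2}\geq3$; for $m=1$ one is reduced to $\bar G\isom\dbZ_p$, and for $m=2$ to $\bar G=1$) shows that every Demu\v skin pair of this shape has rank at most $2$, contradicting $\rank\geq3$. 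Therefore the surviving factor is of $p$-adic type, which is the assertion.

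The main obstacle is exactly the conditional character of this argument. Everything after the invocation of Theorem \ref{decomposition theorem} is elementary cohomological bookkeeping, but the decomposition itself rests on the full arithmetical Elementary Type Conjecture. An unconditional proof would instead have to construct, directly from the Demu\v skin structure of $\calG_F(p)$, a $p$-Henselian valuation on $F$ whose residue data is of $p$-adic type --- precisely the Galois-to-valuation passage that the conjecture is built to provide and which is not available in general.
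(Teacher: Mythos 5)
The statement you were asked to prove is an open conjecture---the paper explicitly records it as open---so your decision to argue conditionally on the arithmetical ETC is the right call and matches what the paper itself does: the paper derives this conjecture from Conjecture \ref{ETC arithmetical form} via Theorem \ref{ETC implies Demuskin conjecture} (ETC implies that $F$ is arithmetically Demu\v skin) combined with Corollary \ref{arithmetically Demuskin implies Demuskin}. Your conditional derivation is correct, but it follows a genuinely different route. Where the paper works valuation-theoretically on $F$ itself---using Lemma \ref{Demuskin groups are indecomposable} and Lemma \ref{Demuskin groups are not extensions} to force $G_F(p)$ to be the decomposition group of a valuation with residue characteristic $p$, then invoking Theorem \ref{the wild case} to conclude that $F$ is arithmetically Demu\v skin---you apply Theorem \ref{decomposition theorem} as a black box and eliminate the non-$p$-adic factors by purely cohomological bookkeeping. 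Your collapse-to-one-factor step is in effect a reproof of Lemma \ref{Demuskin groups are indecomposable}, and your version (testing non-degeneracy factor by factor) is if anything cleaner than the paper's, which first identifies a free factor and then constructs a class annihilating all cup products; likewise your rank computation via Proposition \ref{generalized Wadsworth formula}, giving $\dim_{\dbF_p} H^2(G) = \dim_{\dbF_p} H^2(\bar G) + m\,\rank(\bar G) + \binom{m}{2} = 1$ and hence rank $\leq 2$ in every subcase, generalizes Lemma \ref{Demuskin groups are not extensions} from $m=1$ to arbitrary $m$ in one stroke, where the paper would instead iterate via (\ref{associativity of extension}). What the paper's route buys that yours does not is the stronger arithmetical conclusion: $F$ itself carries the $p$-adic-like valuation structure (it is arithmetically Demu\v skin), of which the $p$-adic type of the pair is only a corollary; your route is more economical but stops at the group/pair level. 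One caveat to keep in mind: Theorem \ref{decomposition theorem} is itself proved using the same valuation-theoretic machinery (Proposition \ref{extensions from tame valuations}, Corollary \ref{cor to wild case}), so the difference between the two arguments is one of organization rather than of underlying ingredients.
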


This conjecture is currently open for all cyclotomic pairs of Galois type which are not of $p$-adic type.

\begin{exam}
\rm
Conjecture \ref{conjecture on Demuskin groups as p-adic groups} rules out the appearance of many pro-$p$ Demu\v skin groups as maximal pro-$p$ Galois groups $G_F(p)$, the simplest example being
\[
G=\langle x_1,x_2,x_3\ | x_1^2[x_2,x_3]=1\rangle_{{\rm pro}-2}.
\]
See \cite{JacobWare89}*{Remark 5.5}.
It also rules out the appearance of  \textsl{pro-$p$ surface group} of even rank $n\geq4$ 
\[
G=\langle x_1\nek x_n\ | [x_1,x_2]\cdots[x_{n-1},x_n]=1\rangle_{{\rm pro}-p}.
\]
\end{exam}

\begin{conj}
\label{conjecture on no Demuskin absolute Galois groups}
No pro-$p$ Demu\v skin group of rank $\geq3$ can occur as an \textsl{absolute} Galois group.
\end{conj}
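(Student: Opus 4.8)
The plan is to deduce the statement from the arithmetical form of the conjecture, Conjecture~\ref{ETC arithmetical form}; unconditionally it remains open, so I would instead prove the implication that Conjecture~\ref{ETC arithmetical form} forbids a pro-$p$ Demu\v skin absolute Galois group of rank $\geq3$. Suppose, for contradiction, that $G=G_F$ is an absolute Galois group which is a pro-$p$ Demu\v skin group with $n=\rank(G)\geq3$. Since $G$ is pro-$p$ we have $G_F=G_F(p)$, so $\calG_F(p)$ is a cyclotomic pro-$p$ pair of Galois type whose underlying group is Demu\v skin, and $G_F$ is finitely generated as $n<\infty$. Granting Conjecture~\ref{ETC arithmetical form} for $F$, Theorem~\ref{absolute decomposition theorem} applies and yields a free product decomposition $\calG_F\isom\calG_1*\cdots*\calG_n$ in which every factor has one of the three listed shapes. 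The character plays no role in what follows: I would argue entirely at the level of the underlying pro-$p$ group $G_F=G_1*_p\cdots*_pG_n$ together with the three cohomological conditions of Definition~\ref{Demuskin groups}.

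First I would show that a Demu\v skin group of rank $\geq3$ is indecomposable as a nontrivial free pro-$p$ product, forcing the number of nontrivial factors to be one. Indeed, for a free pro-$p$ product one has the standard additivity $H^2(G_1*_p\cdots*_pG_n)\isom\bigoplus_iH^2(G_i)$, together with the vanishing of cup products of classes restricted from distinct factors. If two or more factors were nontrivial, then since $\dim_{\dbF_p}H^2(G)=1$ all but one factor would have trivial $H^2$, hence be free pro-$p$; a nontrivial free factor $G_j$ then contributes a nonzero class in $H^1$ that cups trivially both with classes from the other factors and with its own (as $H^2(G_j)=0$), hence with all of $H^1(G)$, contradicting the non-degeneracy of the cup product. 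Thus $\calG_F$ is a single factor of type (i), (ii), or (iii) of Theorem~\ref{absolute decomposition theorem}.

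Next I would rule out each shape. Types (i) and (ii) have underlying groups $\dbZ_p$ and $\dbZ/2$, of rank $1$, so they are excluded by $n\geq3$. The substantive case is type (iii), where $G=\dbZ_p^m\rtimes\bar G$ with $m\geq1$ and $\bar G=G_{\bar F}(p)$. Here I would feed this extension into the generalized Wadsworth formula, Proposition~\ref{generalized Wadsworth formula}, which gives
\[
\dim_{\dbF_p}H^2(G)=\dim_{\dbF_p}H^2(\bar G)+m\cdot\dim_{\dbF_p}H^1(\bar G)+\binom{m}{2},
\qquad
\rank(G)=m+\dim_{\dbF_p}H^1(\bar G).
\]
Imposing $\dim_{\dbF_p}H^2(G)=1$: if $m\geq2$ the term $\binom{m}{2}\geq1$ forces $m=2$ and $\dim_{\dbF_p}H^1(\bar G)=0$, i.e.\ $\bar G=1$ and $G\isom\dbZ_p^2$ of rank $2$; while if $m=1$ then $\dim_{\dbF_p}H^2(\bar G)+\dim_{\dbF_p}H^1(\bar G)=1$ forces $\dim_{\dbF_p}H^1(\bar G)\leq1$ and hence $\rank(G)\leq2$. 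In either case $\rank(G)\leq2$, contradicting $n\geq3$, which completes the conditional proof.

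The hard part is not this bookkeeping but the input: the argument is only as strong as Conjecture~\ref{ETC arithmetical form}, which is the genuine open problem. Should one wish to push toward an unconditional statement, the main structural obstacle is precisely the realization question for pairs of $p$-adic type recorded in Conjecture~\ref{conjecture on Demuskin groups as p-adic groups}: $p$-adic building blocks are exactly what is absent from the absolute elementary type class, so the crux becomes showing directly that a field $F$ with $\calG_F(p)$ of $p$-adic type cannot have $G_F$ pro-$p$ — a valuation-theoretic reduction that should ultimately rest on Lemma~\ref{not pro p}.
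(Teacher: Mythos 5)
Your conditional argument is correct, but it is a genuinely different route from the paper's, and it proves a weaker implication. The paper establishes this statement in \S\ref{section on arithmetically Demushkin fields} (part (b) of the theorem following Conjecture \ref{strong conjecture on Demuskin fields}) assuming only the Demu\v skin Conjecture \ref{strong conjecture on Demuskin fields}, which by Theorem \ref{ETC implies Demuskin conjecture} is itself a consequence of the arithmetical ETC that you assume outright; so the paper's hypothesis is weaker than yours. Its argument is valuation-theoretic rather than cohomological: if $G_F$ is pro-$p$ Demu\v skin of rank $\geq3$, then $F$ is arithmetically Demu\v skin by Conjecture \ref{strong conjecture on Demuskin fields}; the coarse valuation $u$ is $p$-Henselian by Proposition \ref{properties of the completion}(c), hence Henselian because $G_F$ is pro-$p$, so by (\ref{exact sequence for decomposition group}) the absolute Galois group of $E=\bar F_u$ -- a characteristic-$0$ field with a discrete valuation with finite residue field of characteristic $p$ -- is a quotient of $G_F$ and thus pro-$p$, contradicting Lemma \ref{not pro p}. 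Your route instead fuses the cohomological half of the paper's proof of Theorem \ref{ETC implies Demuskin conjecture} with Theorem \ref{absolute decomposition theorem}: your free-product exclusion is exactly Lemma \ref{Demuskin groups are indecomposable}, and your extension exclusion is Lemma \ref{Demuskin groups are not extensions}, generalized to arbitrary $m$ by the clean dimension count $\dim H^2(G)=\dim H^2(\bar G)+m\dim H^1(\bar G)+\binom{m}{2}$ from Proposition \ref{generalized Wadsworth formula} (both formulas check out). Note that the absence of $p$-adic blocks in the absolute decomposition, which makes your endgame purely group-cohomological, is itself secured by Lemma \ref{not pro p} via Corollary \ref{cor to wild case}, so both arguments ultimately rest on that lemma; what yours buys is a self-contained cohomological finish bypassing the arithmetically Demu\v skin machinery, at the cost of the stronger hypothesis and of part (a) of the paper's theorem ($p$-adic type recognition), which your decomposition cannot see. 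Two small points to patch: before invoking $\calG_F(p)$ and the ETC you should record that $\Char F\neq p$ and $\mu_p\subseteq F$ (if $\Char F=p$, a pro-$p$ absolute Galois group is free pro-$p$, hence not Demu\v skin; and $[F(\mu_p):F]$ divides $p-1$ while every finite quotient of $G_F$ is a $p$-group), as the paper does in one line; and you reuse $n$ both for $\rank(G)$ and for the number of free factors, which should be disentangled.
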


In the next section we shall see that both Conjecture \ref{conjecture on Demuskin groups as p-adic groups} and Conjecture \ref{conjecture on no Demuskin absolute Galois groups} follow from a general conjecture on the arithmetic structure of the fields with $G_F(p)$ a Demu\v skin group of rank $\geq3$.

\begin{rem}
\rm
Labute \cite{Labute66} and Min\'a\v c and Ware (\cite{MinacWare91}, \cite{MinacWare92}) studied a variant of Definition \ref{Demuskin groups}, replacing condition (i) with the assumption that $\rank(G)=\aleph_0$. 
The similar notion when $\rank(G)>\aleph_0$ was studied by Bar-On and Nikolov \cite{BarOnNikolov24}.

It turns out that the above conjectures become false for these variants.
For instance, Min\'a\v c and Ware use transcendental methods to construct fields $F$ for which $G_F(p)$ has rank $\aleph_0$ and (ii) and (iii) hold, but which do not arise in any natural way from a $p$-adic field, in contrast to Conjecture \ref{conjecture on Demuskin groups as p-adic groups}.
In fact, in their construction $F$ may even have a positive characteristic.

Similarly, the $p$-Sylow subgroup of $G_{\dbQ_p}$ has rank $\aleph_0$ and satisfies (ii) and (iii) \cite{Labute66}*{Th.\ 5}, in contrast to Conjecture \ref{conjecture on no Demuskin absolute Galois groups}.
\end{rem}

\subsection{Indecomposability of Demu\v skin groups}
Next, we show that cyclotomic pro-$p$ pairs of Demu\v skin type and of rank $\geq3$ are `atoms' in the category of cyclotomic pro-$p$ pairs, in the sense that they cannot be decomposed using the two operations of free product and extension.

\begin{lem}
\label{Demuskin groups are indecomposable}
A pro-$p$ Demu\v skin group $G$ cannot be decomposed as a free pro-$p$ product $G=G_1*_pG_2$ of nontrivial closed subgroups $G_1,G_2$ of $G$. 
\end{lem}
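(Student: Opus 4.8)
The plan is to play the Demu\v skin hypotheses off against the behaviour of mod-$p$ cohomology under free pro-$p$ products. Suppose for contradiction that $G=G_1*_pG_2$ with $G_1,G_2$ nontrivial closed subgroups. First I would invoke the standard computation of the cohomology of a free pro-$p$ product (e.g.\ \cite{NeukirchSchmidtWingberg}): with the trivial $\dbF_p$-coefficients used throughout, the restriction maps induce isomorphisms
\[
H^i(G)\isom H^i(G_1)\oplus H^i(G_2),\qquad i\geq1 .
\]
In particular $(\res_{G_1},\res_{G_2})$ is an isomorphism on $H^2(G)$. Since each $\res_{G_j}$ is a ring homomorphism, this forces the cup product to be \emph{block-diagonal}: writing $\alpha_1\in H^1(G_1)$ and $\alpha_2\in H^1(G_2)$ for the classes sitting inside $H^1(G)$ via the above splitting (so $\res_{G_2}(\alpha_1)=0$ and $\res_{G_1}(\alpha_2)=0$), both restrictions of $\alpha_1\cup\alpha_2$ vanish, whence $\alpha_1\cup\alpha_2=0$.

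Next I would recast the cup product as a bilinear form and track its non-degeneracy across the decomposition. Under $H^1(G)=H^1(G_1)\oplus H^1(G_2)$ and $H^2(G)=H^2(G_1)\oplus H^2(G_2)$, the vanishing of the cross terms from the previous step shows that the cup-product pairing on $H^1(G)$ is the orthogonal sum $b_1\perp b_2$ of the cup-product pairings $b_j\colon H^1(G_j)\times H^1(G_j)\to H^2(G_j)$ of the two factors. A one-line check of radicals then gives: $b_1\perp b_2$ is non-degenerate if and only if each $b_j$ is non-degenerate, since an element of the radical of $b_j$ yields (by padding with zero) an element of the radical of the whole form.

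Now I would bring in the Demu\v skin conditions. Because $\dim_{\dbF_p}H^2(G)=1$, the splitting of $H^2(G)$ forces one summand to vanish, say $H^2(G_2)=0$. Then $b_2$ is a pairing valued in the zero group, so its radical is all of $H^1(G_2)$; by the previous paragraph, non-degeneracy of the cup product on $G$ (property (iii) of Definition \ref{Demuskin groups}) forces $H^1(G_2)=0$. But $G_2\neq1$ is a pro-$p$ group, so $G_2/\Phi(G_2)\neq1$ and hence $H^1(G_2)=\Hom(G_2,\dbZ/p)\neq0$, a contradiction. Thus no nontrivial free product decomposition of $G$ exists.

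The main obstacle is the structural input on the cohomology \emph{ring} of a free pro-$p$ product---specifically the vanishing of cup products of classes coming from different free factors---rather than merely the additive splitting of the $H^i$. Everything after that is elementary linear algebra of bilinear forms. The one point requiring care is phrasing $b$ as an orthogonal sum of pairings taking values in \emph{different} groups and verifying that non-degeneracy descends to each block; this is exactly where one uses that $(\res_{G_1},\res_{G_2})$ is an isomorphism on $H^2$, not just the multiplicativity of a single restriction map.
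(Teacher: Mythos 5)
Your proof is correct and follows essentially the same route as the paper: both use the restriction isomorphism $H^2(G)\isom H^2(G_1)\oplus H^2(G_2)$ together with $\dim_{\dbF_p}H^2(G)=1$ to force, say, $H^2(G_2)=0$, and then exhibit a nonzero class in $H^1(G)$ supported on the $G_2$-summand that cups to zero against all of $H^1(G)$, contradicting condition (iii). Your version is in fact slightly leaner: where the paper first deduces that $G_2$ is free pro-$p$ and reduces to $G_2\isom\dbZ_p$ before writing down an explicit homomorphism $\psi$, you take any nonzero element of $H^1(G_2)=\Hom(G_2,\dbZ/p)$ directly (nonzero because a nontrivial pro-$p$ group has a nontrivial Frattini quotient), making that detour unnecessary.
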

\begin{proof}
Assume the contrary.
Then 
\[
H^2(G)=H^2(G_1)\oplus H^2(G_2)
\]
via the restriction maps.
By (ii), this cohomology group is $1$-dimensional over $\dbF_p$, so without loss of generality, $H^2(G_2)=0$.
Then $G_2$ is a free pro-$p$ group \cite{NeukirchSchmidtWingberg}*{Prop.\ 3.5.17}.
Since it is nontrivial and finitely generated (as a quotient of $G$), it is a free pro-$p$ product of a nonempty set of copies of $\dbZ_p$.
We may therefore assume that $G_2=\dbZ_p$.

Let $\psi\colon G\to\dbZ/p$ be the unique continuous homomorphism which is trivial on $G_1$ and is the projection map $\dbZ_p\to\dbZ/p$ on $G_2$.
We consider it as a nonzero element of $H^1(G)$.
Its restriction to $H^1(G_1)$ is $0$.
For every $\varphi\in H^1(G)$ the restriction of $\varphi\cup\psi$ is $0$ in both $H^2(G_1)$ and $H^2(G_2)=0$.
Therefore $\varphi\cup\psi=0$ in $H^2(G)$, contradicting (iii).
\end{proof}

\begin{lem}
\label{Demuskin groups are not extensions}
A cyclotomic pro-$p$ pair $\calG$ of Demu\v skin type and of rank $\geq3$ cannot be an extension $\dbZ_p\rtimes\bar\calG$.
\end{lem}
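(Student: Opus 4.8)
The plan is to obtain a contradiction by a dimension count in degree-$2$ cohomology, reading off the structure of $H^2(G)$ from the hypothetical extension via the generalized Wadsworth formula.

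Suppose, toward a contradiction, that $\calG\isom\dbZ_p\rtimes\bar\calG$ for some cyclotomic pro-$p$ pair $\bar\calG=(\bar G,\bar\theta)$. On the underlying groups this is precisely an extension as in (\ref{setting of an extension}) with $A=\dbZ_p$, so $m=1$. First I would apply Lemma~\ref{cohomological properties of extensions}(a): restriction gives $H^1(G)\isom H^1(\dbZ_p)\oplus H^1(\bar G)$. Since $H^1(\dbZ_p)=\Hom(\dbZ_p,\dbZ/p)$ is one-dimensional over $\dbF_p$ and $\rank(\calG)=\dim_{\dbF_p}H^1(G)=n$, this forces $\dim_{\dbF_p}H^1(\bar G)=n-1$.

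Next I would invoke Proposition~\ref{generalized Wadsworth formula} with $m=1$ and $i=2$, whose hypotheses hold by Lemma~\ref{cohomological properties of extensions}. There is then a single class $\beta\in H^1(G)$ restricting to a basis of $H^1(\dbZ_p)$ and vanishing on $H^1(\bar G)$, and the formula reads $H^2(G)=\inf(H^2(\bar G))\oplus\bigl(\inf(H^1(\bar G))\cup\beta\bigr)$. The essential input is the injectivity clause of that proposition: the map $H^1(\bar G)\to H^2(G)$, $\bar\varphi\mapsto\inf(\bar\varphi)\cup\beta$, is injective. Hence $\inf(H^1(\bar G))\cup\beta$ is a subspace of $H^2(G)$ of $\dbF_p$-dimension exactly $n-1$.

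Finally I would compare this with the Demu\v skin hypothesis. By condition (ii) of Definition~\ref{Demuskin groups}, $\dim_{\dbF_p}H^2(G)=1$, whereas the previous step exhibits inside $H^2(G)$ a subspace of dimension $n-1\geq2$, using $n=\rank(\calG)\geq3$. This is impossible, giving the desired contradiction. I expect no genuine obstacle here: all the substantive work is packaged in Proposition~\ref{generalized Wadsworth formula}, and the only care needed is the routine identification of $\beta$ with the generator of the restricted $H^1(\dbZ_p)$. Notably the argument uses only the Demu\v skin conditions (i) and (ii); the non-degeneracy of the cup product (iii) plays no role, in contrast with the free-product case treated in Lemma~\ref{Demuskin groups are indecomposable}.
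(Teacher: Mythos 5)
Your proof is correct and takes essentially the same route as the paper: both arguments rest on Proposition~\ref{generalized Wadsworth formula} with $m=1$, $i=2$ (including its injectivity clause) together with the Demu\v skin condition $\dim_{\dbF_p}H^2(G)=1$, and neither uses condition (iii). The only difference is the direction of the dimension count --- the paper deduces from $\dim_{\dbF_p}H^2(G)=1$ that $H^2(\bar G)=0$ and $\dim_{\dbF_p}H^1(\bar G)=1$, hence $\bar G\isom\dbZ_p$ and $\rank(G)=2$, contradicting $\rank(\calG)\geq3$, whereas you use $\rank(\calG)\geq3$ to exhibit the $(n-1)$-dimensional subspace $\inf(H^1(\bar G))\cup\beta$ inside $H^2(G)$ --- a routine rearrangement of the same argument.
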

\begin{proof}
Suppose that $\calG=A\rtimes\bar \calG$ has Demu\v skin type, where $A\isom\dbZ_p$.
Write $\calG=(G,\theta)$ and $\bar\calG=(\bar G,\bar\theta)$.
By Proposition \ref{generalized Wadsworth formula}, $H^2(G)\isom H^2(\bar G)\oplus H^1(\bar G)$.

Note that $H^1(\bar G)$ cannot be trivial, since then $\bar G$ would be trivial, so $G=A\isom\dbZ_p$, which is not Demu\v skin.
As $\dim_{\dbF_p}H^2(G)=1$, this implies that $H^2(\bar G)=0$ and $\dim_{\dbF_p}H^1(\bar G)=1$, meaning that $\bar G$ is a free pro-$p$ group of rank $1$, i.e., $\bar G\isom\dbZ_p$.
It follows that $G=\dbZ_p\rtimes\dbZ_p$, which has rank $2$.
\end{proof}

\section{Arithmetically Demu\v skin fields}
\label{section on arithmetically Demushkin fields}
In this section we explain a conjecture made in \cite{Efrat03}, according to which all pro-$p$ Demu\v skin groups of rank $\geq3$ which are realizable as maximal pro-$p$ Galois groups of fields arise from certain valuations which generalize the $p$-adic valuations on $\dbQ_p$.

\subsection{Milnor $K$-theory}
Let $n\geq0$.
Recall that the \textsl{$n$th Milnor $K$-group} of a field $F$ is 
\[
K^M_n(F)=(F^\times)^{\tensor n}/\mathrm{St}_n
\]
where $(F^\times)^{\tensor n}$ is the $n$th tensor power of $F^\times$ and $\mathrm{St}_n$ is its ideal generated by all pure tensors $a_1\tensor\cdots \tensor a_n$ such that $a_i+a_j=1$ for some $i<j$ (the \textsl{Steinberg relations}).
The natural map $\{\cdot,\cdot\}\colon F^\times\tensor_\dbZ F^\times\to K^M_2(F)$ induces a (functorial) augmented $\dbF_p$-bilinear map
\[
\overline{\{\cdot,\cdot\}}\colon F^\times/(F^\times)^p\times F^\times/(F^\times)^p\to K^M_2(F)/pK^M_2(F).
\]

When $F$ contains a root of unity of order $p$, the Kummer isomorphism gives a commutative diagram of bilinear maps
\begin{equation}
\label{Mer-Sus}
\xymatrix{
F^\times/(F^\times)^p\ar[d]^{\wr}&*-<3pc>{\times}& F^\times/(F^\times)^p\ar[d]^{\wr}\ar[r]^{\overline{\{\cdot,\cdot\}}\ \quad}& K^M_2(F)/pK^M_2(F)\ar[d]\\
H^1(G_F(p))&*-<3pc>{\times}&H^1(G_F(p))\ar[r]^{\cup}&H^2(G_F(p)),
}
\end{equation}
where the right vertical map is the \textsl{Galois symbol} of degree $2$;
By the Merkurjev--Suslin theorem, it is also an isomorphism \cite{GilleSzamuely17}.

\subsection{Definition and Galois structure}
\begin{defin}
\label{def of arithmetically Demuskin fields}
\rm
We say that a field $F$ containing a root of unity of order $p$ is \textsl{arithmetically Demu\v skin} if there exists a valuation $u$ on $F$ and a valuation $w$ on the residue field $E=\bar F_u$ with the following properties:
\begin{enumerate}
\item[(i)]
The value group of $u$ satisfies $\Gam_u=p\Gam_u$;
\item[(ii)]
$\Char\,E=0$ (whence also $\Char\,F=0$);
\item[(iii)]
The valuation $w$ is discrete;
\item[(iv)]
$\bar E_w$ is a finite field of characteristic $p$;
\item[(v)]
$1+\grm_u\leq(F^\times)^p$;
\item[(vi)]
$1+p^2\grm_w\leq (E^\times)^p$.
\end{enumerate}
\end{defin}
Conditions (v) and (vi) are weaker versions of Hensel's lemma; See \cite{Efrat06}*{Section 18.2}.

We also take $v$ to be the refinement of $u$ such that $w=v/u$ (see \S\ref{section on valuations}).

\begin{prop}
\label{properties of the completion}
Let $F$ be an arithmetically Demu\v skin field and let $E$ and $w$ be as above.
Let $(\hat E,\hat w)$ be the completion of the discretely valued field $(E,w)$.
Then:
\begin{enumerate}
\item[(a)]
The valued field $(\hat E,\hat w)$ is a finite extension of $\dbQ_p(\mu_q)$ with its $p$-adic valuation;
\item[(b)]
$\calG_{\hat E}(p)\isom \calG_E(p)$ via the restriction homomorphism;
\item[(c)]
The valued fields $(E,w)$, $(F,u)$ and $(F,v)$ are $p$-Henselian.
\end{enumerate}
\end{prop}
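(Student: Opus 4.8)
The plan is to prove part (c) first, since the $p$-Henselianity assertions are exactly what allow the valuation-theoretic structure theory to be invoked in (a) and (b). For $u$ I would use that $\Char E=0\neq p$ by (ii), so the criterion \cite{Efrat06}*{Prop.\ 18.2.4} applies directly: condition (v), $1+\grm_u\leq(F^\times)^p$, is precisely the statement that $(F,u)$ is $p$-Henselian. For $w$ the residue characteristic is $p$ by (iv), so instead I would feed condition (vi), $1+p^2\grm_w\leq(E^\times)^p$, into the weak form of Hensel's lemma of \cite{Efrat06}*{Section 18.2} (the exponent $p^2$ being the uniform bound that handles both $p=2$ and $p$ odd), yielding that $(E,w)$ is $p$-Henselian. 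Finally, $v$ is the refinement of $u$ with $w=v/u$, so by the transitivity of relative Henselianity \cite{Efrat06}*{Th.\ 20.3.2}, applied to $\bar F_u(p)=E(p)$, the valuation $v$ is Henselian relative to $F(p)$ — i.e.\ $p$-Henselian — because $u$ is $p$-Henselian and $w$ is Henselian relative to $E(p)$.

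For (a), note that by (iii) and (iv) the valuation $w$ is discrete on the characteristic-$0$ field $E$ and its residue field $\bar E_w$ is finite of characteristic $p$. The completion $(\hat E,\hat w)$ inherits the same (discrete) value group and the same finite residue field, so it is a complete discretely valued field of characteristic $0$ with finite residue field of characteristic $p$; the classification of such fields \cite{Serre67}*{Ch.\ II, Sect.\ 5} then identifies it with a finite extension of $\dbQ_p$ carrying its $p$-adic valuation. To locate the roots of unity I would use that $u$ is $p$-Henselian with residue characteristic $0$, so that the residue map induces an isomorphism between the groups of $p$-power roots of unity of $F$ and of $E$ (the $p$-power roots of unity are $u$-units and reduce injectively since $\Char E=0$, and lift back by Hensel); hence $\mu_q\subseteq F$ forces $\mu_q\subseteq E\subseteq\hat E$, making $\hat E$ a finite extension of $\dbQ_p(\mu_q)$.

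For (b) I would factor the comparison through the henselization $E^h$ of $(E,w)$, realized as the relative algebraic closure $E^{\mathrm{alg}}\cap\hat E$ of $E$ inside its completion. By the Krasner--Ostrowski lemma \cite{Efrat06}*{Cor.\ 18.5.3}, exactly as in the proof of Lemma \ref{not pro p}, one has $\hat E^{\mathrm{alg}}=\hat E\,E^{\mathrm{alg}}$ and therefore $G_{E^h}=G_{\hat E}$ via restriction, so in particular $\calG_{E^h}(p)\isom\calG_{\hat E}(p)$. Since $(E,w)$ is $p$-Henselian by part (c), $w$ extends uniquely to $E(p)$, and so the natural restriction map $\calG_{E^h}(p)\to\calG_E(p)$ is itself an isomorphism — passing to the henselization introduces no new pro-$p$ extensions. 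Composing the two isomorphisms yields the restriction isomorphism $\calG_{\hat E}(p)\isom\calG_E(p)$, the compatibility of the cyclotomic characters being automatic because the inclusions $E\subseteq E^h\subseteq\hat E$ respect the $p$-power roots of unity. The step I expect to be the main obstacle is this second isomorphism $\calG_{E^h}(p)\isom\calG_E(p)$: one must verify carefully that $p$-Henselianity of $(E,w)$ really forces the henselization (equivalently the completion) to create no additional pro-$p$ extensions, which is exactly where hypothesis (vi) does its work via \S\ref{subsection on Galois theory of valued fields}.
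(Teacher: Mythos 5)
Your treatment of (a) and of the valuations $u$ and $v$ in (c) is fine and agrees with the paper, but the two remaining steps contain genuine gaps, and they are the two places where the actual content of the proposition lives. First, your derivation of the $p$-Henselianity of $(E,w)$ inverts an implication. The equivalence of \cite{Efrat06}*{Prop.\ 18.2.4} between ``$1+\grm\leq(\cdot^\times)^p$'' and $p$-Henselianity is available only when the residue characteristic is $\neq p$; for $w$ the residue characteristic \emph{is} $p$ by (iv), and in that wild case condition (vi) is a \emph{consequence} of $p$-Henselianity (this is what the paper means by calling (v) and (vi) ``weaker versions of Hensel's lemma''), not a criterion for it. There is no result in \cite{Efrat06}*{\S18.2} producing $p$-Henselianity from (vi). In the paper the logic runs in the opposite order from yours: (b) is proved first, it yields $E=E(p)\cap\hat E$, and only then does \cite{Efrat06}*{Prop.\ 15.3.3} give that $(E,w)$ is $p$-Henselian. (This particular step of yours is repairable by an elementary density argument: if $a\in E^\times\cap(\hat E^\times)^p$, approximate a $p$-th root of $a$ in $\hat E$ by some $b\in E^\times$, so that $ab^{-p}\in 1+p^2\grm_w\leq(E^\times)^p$ by (vi); hence no nontrivial degree-$p$ subextension of $E(p)/E$ embeds in $\hat E$, so $E(p)\cap\hat E=E$. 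But you did not supply this, and the criterion you cite instead does not exist.)

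Second, and more seriously, the step you yourself flag as ``the main obstacle'' in (b) is precisely the heart of the proposition, and your sketch gives no mechanism for it. Your Krasner reduction to $E^h=E^{\rm alg}\cap\hat E$ is correct (it is the same move as in Lemma \ref{not pro p}), but $p$-Henselianity of $(E,w)$ only controls \emph{surjectivity} of the restriction $G_{\hat E}(p)\to G_E(p)$ (equivalently $E(p)\cap\hat E=E$); it says nothing about \emph{injectivity}, i.e.\ about $\hat E(p)=E(p)\hat E$, and in mixed characteristic the decomposition theory of \S\ref{subsection on Galois theory of valued fields} gives no inertia-type splitting that would rule out new pro-$p$ extensions of $E^h$. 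The paper closes exactly this gap cohomologically: (vi) makes $(E^\times)^p$ open in the $w$-topology, so \cite{Efrat03}*{Prop.\ 4.3(c)} gives isomorphisms $E^\times/(E^\times)^p\to\hat E^\times/(\hat E^\times)^p$ and $K^M_2(E)/p\to K^M_2(\hat E)/p$; via the Merkurjev--Suslin diagram (\ref{Mer-Sus}) these become isomorphisms $H^l(G_E(p))\to H^l(G_{\hat E}(p))$, $l=1,2$, compatible with cup products, and then Serre's criterion \cite{Serre65}*{Lemma 2} (bijective on $H^1$, injective on $H^2$) yields that restriction is an isomorphism of pro-$p$ groups. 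Your density considerations recover only the $H^1$ part; without the $H^2$ injectivity -- which in this proof requires the Merkurjev--Suslin theorem -- the isomorphism $\calG_{E^h}(p)\isom\calG_E(p)$ simply does not follow, so as written the proposal proves neither (b) nor, given your ordering, the $(E,w)$ part of (c).
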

\begin{proof}
(a) \quad
See \cite{Serre67}*{Ch.\ II, Sect.\ 5}.

\medskip

(b) \quad
The functoriality of the pairing $\overline{\{\cdot,\cdot\}}$ gives a commutative diagram
\[
\xymatrix{
E^\times/(E^\times)^p\ar[d]&*-<3pc>{\times}& E^\times/(E^\times)^p\ar[d]\ar[r]& K^M_2(E)/pK^M_2(E)\ar[d] \\
\hat E^\times/(\hat E^\times)^p&*-<3pc>{\times}& \hat E^\times/(\hat E^\times)^p\ar[r]&K^M_2(\hat E)/pK^M_2(\hat E).
}
\]
By (vi), $(E^\times)^p$ is open in the $w$-topology on $E$.
Hence, by \cite{Efrat03}*{Prop.\ 4.3(c)}, the vertical maps in the diagram are isomorphisms.

By (\ref{Mer-Sus}), this means that the restriction maps $H^l(G_E(p))\to H^l(G_{\hat E}(p))$, $l=1,2$, are isomorphisms compatible with the cup product.
By \cite{Serre65}*{Lemma 2}, the restriction map $G_{\hat E}(p)\to G_E(p)$ is an isomorphism.
It is clearly compatible with the cyclotomic characters.

\medskip

(c) \quad
By (b), $E=E(p)\cap \hat E$.
Since $(\hat E,\hat w)$ is ($p$-)Henselian, this implies that $(E,w)$ is $p$-Henselian \cite{Efrat06}*{Prop.\ 15.3.3}.

The $p$-Henselity of $(F,u)$ follows from (ii) and (iv).
The $p$-Henselity of $(F,v)$ follows from that of $(E,w)$ and $(F,u)$ \cite{Efrat06}*{Th.\ 20.3.2}. 
\end{proof}

We recover \cite{Efrat03}*{Th.\ 6.3(a)}:

\begin{cor}
\label{arithmetically Demuskin implies Demuskin}
If $F$ is an arithmetically Demu\v skin field containing a root of unity of order $p$, then $\calG_F(p)$ is a cyclotomic pro-$p$ pair of $p$-adic type.
\end{cor}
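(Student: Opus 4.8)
The plan is to show that the valuation $u$ contributes no inertia, so that the residue map identifies $\calG_F(p)$ with $\calG_E(p)$; I would then invoke Proposition~\ref{properties of the completion} to recognize the latter as a pair of $p$-adic type.

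First I would apply Proposition~\ref{extensions from tame valuations} to the valuation $u$ on $F$. Its hypotheses hold: the residue characteristic is $\Char\,E=0\neq p$ by condition~(ii), and $u$ is $p$-Henselian by Proposition~\ref{properties of the completion}(c). The proposition then yields
\[
\calG_F(p)\isom\dbZ_p^m\rtimes\calG_E(p),\qquad m=\dim_{\dbF_p}(\Gam_u/p\Gam_u),
\]
where the residue map induces the projection onto the second factor.

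The decisive step uses condition~(i): since $\Gam_u=p\Gam_u$, one has $\Gam_u/p\Gam_u=0$, so $m=0$ and the inertia factor $\dbZ_p^m$ is trivial. Hence the residue map gives an isomorphism $\calG_F(p)\isom\calG_E(p)$. Composing this with the isomorphism $\calG_E(p)\isom\calG_{\hat E}(p)$ from Proposition~\ref{properties of the completion}(b) produces $\calG_F(p)\isom\calG_{\hat E}(p)$. By Proposition~\ref{properties of the completion}(a), $\hat E$ is a finite extension of $\dbQ_p(\mu_q)$, hence a finite extension of $\dbQ_p(\mu_p)$; so $\calG_{\hat E}(p)$ is of $p$-adic type by Definition~\ref{pairs of p-adic type}, and therefore so is $\calG_F(p)$.

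Since the substantive content has been absorbed into the preceding propositions, the corollary presents no serious obstacle. The only point to notice is that the $p$-divisibility of $\Gam_u$ annihilates the inertia contribution of $u$; the genuinely difficult input -- the Merkurjev--Suslin comparison of $E$ with its completion $\hat E$ -- is already carried out in Proposition~\ref{properties of the completion}(b).
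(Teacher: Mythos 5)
Your proposal is correct and follows essentially the same route as the paper: apply Proposition~\ref{extensions from tame valuations} to $u$ (using the $p$-Henselianity from Proposition~\ref{properties of the completion}(c), $\Char\,E=0$, and $\Gam_u=p\Gam_u$ to kill the inertia factor), then compose with the isomorphism $\calG_E(p)\isom\calG_{\hat E}(p)$ of Proposition~\ref{properties of the completion}(b). Your explicit remark that $m=\dim_{\dbF_p}(\Gam_u/p\Gam_u)=0$ merely spells out what the paper leaves implicit, so there is nothing to add.
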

\begin{proof}
Since $(F,u)$ is $p$-Henselian (by Proposition \ref{properties of the completion}(c)), $\Gam_u=p\Gam_u$ and $\Char\,E=0$ (by (i) and (ii), respectively), Proposition \ref{extensions from tame valuations} shows that $\calG_F(p)\isom\calG_E(p)$. 
By Proposition \ref{properties of the completion}, $\calG_E(p)\isom\calG_{\hat E}(p)$ has $p$-adic type.
\end{proof}

Our main conjecture on Demu\v skin maximal pro-$p$ Galois groups is that the converse of Corollary \ref{arithmetically Demuskin implies Demuskin} also holds: 

\begin{conj}[Demu\v skin Conjecture]
\label{strong conjecture on Demuskin fields}
If $F$ is a field containing a root of unity of order $p$ and such that $G_F(p)$ is a pro-$p$ Demu\v skin group of rank $\geq3$, then $F$ is arithmetically Demu\v skin.
\end{conj}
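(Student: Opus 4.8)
The statement is an open conjecture, so I cannot hope to prove it outright; the realistic goal, and the one the introduction advertises, is to \emph{deduce} it from the arithmetical Elementary Type Conjecture. Thus the plan is to prove: if Conjecture \ref{ETC arithmetical form} holds for $F$ and $G_F(p)$ is a Demu\v skin group of rank $\geq 3$, then $F$ is arithmetically Demu\v skin. First I would invoke Conjecture \ref{ETC arithmetical form} to write $G_F(p)=G_1*_p\cdots*_pG_n$ with each $G_i$ of one of the three arithmetic types. Since a Demu\v skin group admits no nontrivial free pro-$p$ decomposition (Lemma \ref{Demuskin groups are indecomposable}), this forces $n=1$, so $G_F(p)=G_1$ is a single factor. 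The types $G_i\isom\dbZ_p$ and $G_i\isom\dbZ/2$ have rank $1$ and are excluded by the hypothesis rank $\geq 3$; hence $G_F(p)$ is the decomposition group $Z(\tilde v/v)$ of a valuation $v$ on $F$ with nontrivial inertia, and because this decomposition group is all of $G_F(p)$, the valuation $v$ is $p$-Henselian on $F$.

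Next I would pin down the residue characteristic of $v$. In the tame case $\Char\,\bar F_v\neq p$, Proposition \ref{extensions from tame valuations} yields $\calG_F(p)\isom\dbZ_p^m\rtimes\calG_{\bar F_v}(p)$ with $m\geq 1$, which by the associativity (\ref{associativity of extension}) is an extension by $\dbZ_p$; this is impossible for a Demu\v skin pair of rank $\geq 3$ by Lemma \ref{Demuskin groups are not extensions}. Therefore $\Char\,\bar F_v=p$, and I may feed $(F,v)$ into the wild-case analysis. By Corollary \ref{cor to wild case}, $\calG_F(p)\isom\dbZ_p^m\rtimes\bar\calG$ with $0\leq m<\infty$, where $\bar\calG$ is either of $p$-adic type or has a finitely generated free pro-$p$ underlying group. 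Lemma \ref{Demuskin groups are not extensions} again forces $m=0$, so $\calG_F(p)=\bar\calG$; the free option is ruled out because a free pro-$p$ group of rank $\geq 3$ has $H^2=0$ and hence is not Demu\v skin. This places us in case (i) of Theorem \ref{the wild case}: there is a coarsening $u$ of $v$ with $E=\bar F_u$ of characteristic $0$ and $\Gam_u=p\Gam_u$, and a discrete quotient valuation $w=v/u$ on $E$ whose residue field $\bar E_w=\bar F_v$ is finite of characteristic $p$, with completion $\hat E$ a finite extension of $\dbQ_p(\mu_p)$.

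Finally I would check the six defining conditions of Definition \ref{def of arithmetically Demuskin fields} against this output. Conditions (i)--(iv) are exactly what the wild-case analysis produced. For (v), $u$ is a coarsening of the $p$-Henselian $v$, hence itself $p$-Henselian (\S\ref{subsection on Galois theory of valued fields}), and since $\Char\,\bar F_u=0\neq p$ the tame $p$-Henselity criterion gives $1+\grm_u\leq(F^\times)^p$. For (vi), the isomorphism $\calG_E(p)\isom\calG_{\hat E}(p)$ together with $\hat E$ being $p$-adic shows that $(E,w)$ is $p$-Henselian, and the residue-characteristic-$p$ form of Hensel's lemma then delivers $1+p^2\grm_w\leq(E^\times)^p$ (cf.\ \cite{Efrat06}*{Section 18.2}). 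As a byproduct the argument also shows that $\calG_F(p)$ has $p$-adic type, which is precisely Conjecture \ref{conjecture on Demuskin groups as p-adic groups}.

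The genuine obstacle here is structural rather than technical: unconditionally there is no known way to manufacture a $p$-adic-like valuation on $F$ out of the purely cohomological Demu\v skin hypothesis, and it is exactly the arithmetical ETC that hands us the decomposition-group description of $G_F(p)$ on which the entire valuation-theoretic machinery (Proposition \ref{extensions from tame valuations} and Theorem \ref{the wild case}) depends. Within the conditional argument the most delicate point is the verification of (vi): one must transfer the residue-characteristic-$p$ Hensel condition from the completion $\hat E$ back to $E$ itself, using $\calG_E(p)\isom\calG_{\hat E}(p)$ to identify $p$-th powers in $E$ with $p$-th powers in $\hat E$.
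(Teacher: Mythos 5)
Your proposal is correct and matches the paper's treatment: the statement is indeed left open, and what the paper actually proves is precisely your conditional claim (Theorem \ref{ETC implies Demuskin conjecture}), by the same chain of steps --- indecomposability (Lemma \ref{Demuskin groups are indecomposable}) forcing a single factor of decomposition-group type, Lemma \ref{Demuskin groups are not extensions} together with the associativity (\ref{associativity of extension}) excluding the tame case and forcing $m=0$ in Corollary \ref{cor to wild case} (whence $\Gam_u=p\Gam_u$ and case (i) of Theorem \ref{the wild case}), and finally the $p$-Henselity of $u$ and $w$ delivering conditions (v) and (vi) of Definition \ref{def of arithmetically Demuskin fields}. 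Your only (harmless) deviation is in obtaining the $p$-Henselity of $(E,w)$ via the completion isomorphism as in Proposition \ref{properties of the completion}(c), whereas the paper gets it directly from the fact that quotients and coarsenings of the $p$-Henselian $v$ are again $p$-Henselian (\S\ref{subsection on Galois theory of valued fields}).
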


In the special case $p=2$ and $G_F(2)\isom G_{\dbQ_2}(2)$, positive results on this conjecture are given by Koenigsmann and Strommen \cite{KoenigsmannStrommen24}.

\subsection{Connections between the conjectures}

\begin{thm}
\label{ETC implies Demuskin conjecture}
The arithmetical form of the Elementary Type Conjecture (Conjecture \ref{ETC arithmetical form}) implies the Demu\v skin conjecture (Conjecture \ref{strong conjecture on Demuskin fields}).
\end{thm}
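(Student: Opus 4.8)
The plan is to feed the Demu\v skin hypothesis into the free-product decomposition supplied by Conjecture \ref{ETC arithmetical form}, collapse that decomposition to a single factor using the indecomposability of Demu\v skin pairs, and then read off the arithmetical data from the wild valuation analysis of Theorem \ref{the wild case}.

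First, since a pro-$p$ Demu\v skin group has finite rank by Definition \ref{Demuskin groups}(i), it is finitely generated, so Conjecture \ref{ETC arithmetical form} applies to $F$ and yields $G_F(p)=G_1*_p\cdots*_pG_n$ with factors as in (i)--(iii) there; discarding trivial factors we may assume each $G_i\neq1$. Because $G_F(p)$ is Demu\v skin, Lemma \ref{Demuskin groups are indecomposable} forbids writing it as a free pro-$p$ product of two nontrivial closed subgroups, which forces $n=1$ (an empty product would make $G_F(p)$ trivial, against $\rank\geq3$). Thus $G_F(p)=G_1$. Options (i) and (iii) give $\rank=1$ and are excluded, so $G_F(p)$ is the decomposition group of an extension $\tilde v$ of a valuation $v$ on $F$ with nontrivial inertia; as $G_1$ is all of $G_F(p)$, the valuation $v$ is $p$-Henselian.

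Next I would split into the tame and wild cases exactly as in the proof of Theorem \ref{decomposition theorem}. If $\Char\,\bar F_v\neq p$, then Proposition \ref{extensions from tame valuations} presents $\calG_F(p)$ as $\dbZ_p^m\rtimes\calG_{\bar F_v}(p)$ with $m=\dim_{\dbF_p}(\Gam_v/p\Gam_v)\geq1$; by the associativity \eqref{associativity of extension} this is an extension $\dbZ_p\rtimes\calG'$, contradicting Lemma \ref{Demuskin groups are not extensions} for a Demu\v skin pair of rank $\geq3$. Hence $\Char\,\bar F_v=p$. Now Theorem \ref{the wild case} and Corollary \ref{cor to wild case} provide a coarsening $u$ of $v$ with residue field $E=\bar F_u$ of characteristic $0$, a quotient valuation $w=v/u$ on $E$, and an isomorphism $\calG_F(p)\isom\dbZ_p^m\rtimes\bar\calG$ with $m=\dim_{\dbF_p}(\Gam_u/p\Gam_u)$. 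Again $m\geq1$ would contradict Lemma \ref{Demuskin groups are not extensions}, so $m=0$, i.e.\ $\Gam_u=p\Gam_u$ and $\calG_F(p)\isom\bar\calG$. Since a finitely generated free pro-$p$ group has vanishing $H^2$ and so is never Demu\v skin, option (ii) of Theorem \ref{the wild case} is impossible, and we are in option (i): $w$ is discrete, $\bar E_w=\bar F_v$ is finite of characteristic $p$, and the completion $(\hat E,\hat w)$ is a finite extension of $\dbQ_p(\mu_p)$ with $\calG_{\hat E}(p)\isom\calG_E(p)$.

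Finally I would verify the six conditions of Definition \ref{def of arithmetically Demuskin fields}. Conditions (i)--(iv) are immediate: (i) is $m=0$, while (ii)--(iv) are the content of option (i) of Theorem \ref{the wild case}. As $u$ is a coarsening of the $p$-Henselian $v$ it is $p$-Henselian, and since $\Char\,E\neq p$ the criterion recalled in \S\ref{subsection on Galois theory of valued fields} gives (v), namely $1+\grm_u\leq(F^\times)^p$. The main obstacle is condition (vi), $1+p^2\grm_w\leq(E^\times)^p$, where the residue characteristic is $p$ and that easy criterion no longer applies: here I would pass to $\hat E$, use Hensel's lemma in the finite extension $\hat E$ of $\dbQ_p(\mu_p)$ to obtain $1+p^2\grm_w\subseteq(\hat E^\times)^p$, and then invoke the Kummer reformulation $E^\times/(E^\times)^p\isom\hat E^\times/(\hat E^\times)^p$ of $\calG_{\hat E}(p)\isom\calG_E(p)$ to descend to $E^\times\cap(\hat E^\times)^p=(E^\times)^p$. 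This makes $F$ arithmetically Demu\v skin and completes the implication.
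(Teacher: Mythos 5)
Your proposal is correct and follows essentially the same route as the paper's proof: Lemma \ref{Demuskin groups are indecomposable} collapses the conjectural free pro-$p$ product to a single factor, rank $\geq3$ rules out cases (i) and (iii) of Conjecture \ref{ETC arithmetical form}, Lemma \ref{Demuskin groups are not extensions} (via the associativity (\ref{associativity of extension})) eliminates the tame case and forces $m=0$ in the wild case, and Theorem \ref{the wild case} with Corollary \ref{cor to wild case} then excludes the free pro-$p$ option and delivers the data of Definition \ref{def of arithmetically Demuskin fields}. The only divergence is at condition (vi), where the paper simply uses the $p$-Henselianity of $w$ together with \cite{Efrat06}*{\S 18.2}, while your detour through the completion $\hat E$ --- Hensel's lemma there, followed by descent via the Kummer-theoretic injectivity of $E^\times/(E^\times)^p\to\hat E^\times/(\hat E^\times)^p$ coming from the restriction isomorphism $\calG_{\hat E}(p)\isom\calG_E(p)$ --- is a valid, if slightly longer, substitute.
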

\begin{proof}
Suppose that $F$ is a field containing a root of unity of order $p$ and such that $G_F(p)$ is a Demu\v skin group of rank $\geq3$.
We need to show that $F$ is an arithmetically Demu\v skin field.

By Lemma \ref{Demuskin groups are indecomposable}, $G_F(p)$ is indecomposable as a free pro-$p$ product.
Therefore Conjecture \ref{ETC arithmetical form} implies that $G_F(p)$ has of one of the forms (i)--(iii) listed in the conjecture. 
Being of rank $\geq3$, the group $G_F(p)$ cannot be isomorphic to $\dbZ_p$ nor to $\dbZ/2$, so (i) and (iii) are impossible.
Therefore (ii) holds, that is, $G_F(p)=Z(\tilde v/v)$ is the decomposition group of some extension $\tilde v$ of a valuation $v$ on $F$ to $F(p)$ with a nontrivial inertia group.
Thus $v$ is $p$-Henselian.

If $\Char\,\bar F_v\neq p$, then by Proposition \ref{extensions from tame valuations}, $\calG_F(p)\isom\dbZ_p^m\rtimes\calG_{\bar F_v}(p)$, where $m=\dim_{\dbF_p}(\Gam_v/p\Gam_v)\geq1$.
By (\ref{associativity of extension}), $\calG_F(p)\isom\dbZ_p\rtimes(\dbZ_p^{m-1}\rtimes \calG_{\bar F_v}(p))$, contrary to Lemma \ref{Demuskin groups are not extensions}.

Consequently, $\Char\,\bar F_v=p$, so we are in the situation of Theorem \ref{the wild case} and Corollary \ref{cor to wild case}.
In particular, $\calG_F(p)\isom\dbZ_p^{m'}\rtimes\bar\calG$, with $0\leq m'<\infty$ and $\bar\calG$ is either of $p$-adic type or its underlying group is a finitely generated free pro-$p$ group.
Again, Lemma \ref{Demuskin groups are not extensions} implies that $m'=0$, and hence the underlying group of $\bar\calG$ is the Demu\v skin group $G_F(p)$, which is not free pro-$p$.
Therefore we are in the case where the coarsening $u$ of $v$ and the valuation $w=v/u$ on $E=\bar F_u$ (as in Theorem \ref{the wild case}) satisfy $\Gam_u=p\Gam_u$, and $w$ is discrete with a finite residue field of characteristic $p$.
Moreover, since $v$ is $p$-Henselian, so are $u$ and $w$ (see \S\ref{section on valuations}).
Hence $1+\grm_u\leq (F^\times)^p$ and $1+p^2\grm_w\leq(E^\times)^p$ \cite{Efrat06}*{\S18.2}.
Thus $F$ is an arithmetically Demu\v skin field.
\end{proof}

In fact, the `Lego form' of the Elementary Type Conjecture (Conjecture \ref{ETC Lego version}) together with the Demu\v skin Conjecture (Conjecture \ref{strong conjecture on Demuskin fields}) also imply the arithmetical form of the Elementary Type Conjecture (Conjecture \ref{ETC arithmetical form}).
The proof of this fact is quite subtle, and is developed in \S\ref{section on rigidity}--\S\ref{section on equivalence of conjectures}.

The Demu\v skin Conjecture \ref{strong conjecture on Demuskin fields} implies the conjectures of \S\ref{section on Demushkin groups}:

\begin{thm}
Assume the Demu\v skin Conjecture \ref{strong conjecture on Demuskin fields}.
Then:
\begin{enumerate}
\item[(a)]
Every cyclotomic pro-$p$ pair of rank $\geq3$ which is both of Galois type and of Demu\v skin type, is of $p$-adic type.
\item[(b)]
No pro-$p$ Demu\v skin group of rank $\geq3$ can occur as an absolute Galois group.
\end{enumerate}
\end{thm}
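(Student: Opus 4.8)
The plan is to deduce both parts formally from the Demu\v skin Conjecture~\ref{strong conjecture on Demuskin fields}, its consequence Corollary~\ref{arithmetically Demuskin implies Demuskin}, and the non-realizability Lemma~\ref{not pro p}. For part~(a), let $\calG=(G,\theta)$ be a cyclotomic pro-$p$ pair of rank $\geq3$ that is simultaneously of Galois and of Demu\v skin type. Unwinding the first hypothesis, $\calG=\calG_F(p)$ for some field $F$ containing a root of unity of order $p$, so its underlying group is $G=G_F(p)$; unwinding the second, this $G$ is a pro-$p$ Demu\v skin group of rank $\geq3$. The Demu\v skin Conjecture then declares $F$ arithmetically Demu\v skin, and Corollary~\ref{arithmetically Demuskin implies Demuskin} concludes that $\calG_F(p)=\calG$ has $p$-adic type. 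Thus part~(a) is a direct unwinding of definitions, with all the content residing in the conjecture.

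For part~(b), I would suppose toward a contradiction that $G=G_F$ is a pro-$p$ Demu\v skin group of rank $\geq3$ for some field $F$, and first arrange that the conjecture is applicable. I would rule out $\Char\,F=p$: in that case $G_F=G_F(p)$ is a free pro-$p$ group, so $H^2(G_F)=0$, violating condition~(ii) of the Demu\v skin property. Hence $\Char\,F\neq p$, and I claim $\mu_p\subseteq F$: indeed $[F(\mu_p):F]$ divides $p-1$, while it is a power of $p$ because every finite quotient of the pro-$p$ group $G_F$ has $p$-power order; being coprime to $p$, this degree equals $1$. Consequently $F$ contains a root of unity of order $p$ and $G_F(p)=G_F$ is Demu\v skin of rank $\geq3$, so the Demu\v skin Conjecture applies and $F$ is arithmetically Demu\v skin.

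Finally I would extract the contradiction from this arithmetic structure. By Definition~\ref{def of arithmetically Demuskin fields} there is a valuation $u$ on $F$ whose residue field $E=\bar F_u$ has characteristic $0$ and carries a discrete valuation $w$ with finite residue field of characteristic $p$; Lemma~\ref{not pro p} then says that $G_E$ is \emph{not} pro-$p$. On the other hand, exactly as in the proof of Corollary~\ref{cor to wild case}, the exact sequence~\eqref{exact sequence for decomposition group} presents $G_E=G_{\bar F_u}$ as an epimorphic image of the decomposition group $Z(\tilde u/u)$, a closed subgroup of $G_F$; since $G_F$ is pro-$p$ this subgroup, and hence its quotient $G_E$, is pro-$p$ as well --- the desired contradiction. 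I expect the only delicate point to be this last descent, namely that the residue absolute Galois group $G_E$ is genuinely a quotient of (a closed subgroup of) $G_F$ so that pro-$p$-ness is inherited; but this is precisely the mechanism already used in Corollary~\ref{cor to wild case}, where the $p$-Henselianity of $u$ (Proposition~\ref{properties of the completion}(c)) becomes full Henselianity once $F(p)=F^{\mathrm{sep}}$, so that in fact $Z(\tilde u/u)=G_F$.
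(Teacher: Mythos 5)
Your proposal is correct and follows essentially the same route as the paper: part (a) is the conjecture plus Corollary \ref{arithmetically Demuskin implies Demuskin}, and part (b) derives a contradiction by combining the arithmetically Demu\v skin structure with Lemma \ref{not pro p} via the exact sequence (\ref{exact sequence for decomposition group}) and the Henselianity of $(F,u)$ from Proposition \ref{properties of the completion}(c). You merely flesh out two points the paper leaves implicit --- that $\Char F\neq p$ and $\mu_p\subseteq F$ when $G_F$ is pro-$p$ Demu\v skin, and that pro-$p$-ness of $G_E$ already follows from $Z(\tilde u/u)$ being a closed subgroup of $G_F$ --- both of which are fine.
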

\begin{proof}
(a) \quad
This follows from Corollary \ref{arithmetically Demuskin implies Demuskin}.

\medskip

(b) \quad
Suppose that $F$ is a field such that $G_F$ is a pro-$p$ Demu\v skin group of rank $\geq3$.
In particular, $F$ contains a root of unity of order $p$.
By Conjecture \ref{strong conjecture on Demuskin fields}, $F$ is arithmetically Demu\v skin.
Let $u$ and $E$ be as in Definition \ref{def of arithmetically Demuskin fields}.
By Proposition \ref{properties of the completion}(c), $(F,u)$ is $p$-Henselian.
As $G_F$ is pro-$p$, $(F,u)$ is in fact Henselian.
By (\ref{exact sequence for decomposition group}), $G_E=G_{\bar F_u}$ is a quotient of $G_F$.
But by Lemma \ref{not pro p}, $G_E$ is not pro-$p$, a contradiction.
\end{proof}

\begin{rem} 
\label{existence of a valuation v on a Demuskin field}
\rm
In \cite{Efrat03}*{Th.\ 6.3(b)} Conjecture \ref{strong conjecture on Demuskin fields} is proved under the additional assumption that $F$ is equipped with a valuation $v$ such that $\Gam_v\neq p\Gam_v$ and the decomposition field of $v$ in $G_F(p)$ does not contain $\mu_{p^\infty}$. 

The opposite implication holds unconditionally, namely, for every arithmetically $p$-Demu\v skin field $F$ the valuation $v$ defined above satisfies these two properties.
Indeed, its decomposition group is all of $G_F(p)$, by Proposition \ref{properties of the completion}(c). 
By (\ref{exact sequence for coarsenings}) and (i), $\Gam_v/p\Gam_v\isom\Gam_w/p\Gam_w\isom\dbZ/p$.
Further, as in the proof of Corollary \ref{arithmetically Demuskin implies Demuskin}, the residue and restriction maps, respectively, give isomorphisms $\calG_F(p)\isom\calG_E(p)\isom\calG_{\hat E}(p)$.
Since the pro-$p$ cyclotomic character of $G_{\hat E}(p)$ is nontrivial, so is that of $G_F(p)$, i.e., $\mu_{p^\infty}\not\subseteq F$.
\end{rem}

\subsection{Trichotomic elements}
\label{subsection on trichotomic elements}
The following Milnor $K$-theoretic criterion for the existence of a valuation $v$ with $\Gam_v\neq p\Gam_v$, as in Remark \ref{existence of a valuation v on a Demuskin field}, is proved in \cite{Efrat03}*{Cor.\ 8.3}:

\begin{thm}
Suppose that $p\neq2$ and let $F$ be a field containing a root of unity of order $p$ with $G_F(p)$ a pro-$p$ Demu\v skin group of rank $\geq3$. 
The following conditions are equivalent:
\begin{enumerate}
\item[(a)]
There exists a valuation $v$ on $F$ with $\Gam_v\neq p\Gam_v$;
\item[(b)]
There exists $a\in F^\times\setminus(F^\times)^p$ such that for every $b\in F$, $b\neq0,1$, at least one of $\overline{\{a,b\}}$, $\overline{\{a,1-b\}}$, and $\overline{\{a,1-b\inv\}}$ is non-zero in $K^M_2(F)/pK^M_2(F)$.
\end{enumerate}
\end{thm}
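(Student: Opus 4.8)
The plan is to recast condition (b) as a \emph{rigidity} property of an index-$p$ subgroup of $F^\times$, and then to establish each implication through the valuation-theoretic structure theory of \S\ref{section on p-henselian valuations and extensions}.

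The first step uses the Demu\v skin hypothesis only through the nondegeneracy of the cup product: by diagram (\ref{Mer-Sus}) and the Merkurjev--Suslin isomorphism, the pairing $\overline{\{\cdot,\cdot\}}$ on $F^\times/(F^\times)^p$ is nondegenerate, hence a perfect pairing of finite-dimensional $\dbF_p$-spaces. Thus for each $a\in F^\times\setminus(F^\times)^p$ the kernel $H_a=\{b\in F^\times : \overline{\{a,b\}}=0\}$ is a subgroup containing $(F^\times)^p$ of index exactly $p$, and every index-$p$ subgroup containing $(F^\times)^p$ arises in this way. Since $p\neq2$ gives $-1=(-1)^p\in(F^\times)^p$, one has $\overline{\{a,-1\}}=0$, and the identity $1-b^{-1}=-(1-b)b^{-1}$ yields $\overline{\{a,1-b^{-1}\}}=\overline{\{a,1-b\}}-\overline{\{a,b\}}$. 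Hence the three symbols in (b) vanish simultaneously exactly when $b,1-b\in H_a$, and (b) becomes: there is an index-$p$ subgroup $H\supseteq(F^\times)^p$ that is \emph{rigid}, meaning no $b\neq0,1$ satisfies $b,1-b\in H$.

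For (a)$\Rightarrow$(b) I would first determine the arithmetic forced by a valuation $v$ with $\Gam_v\neq p\Gam_v$. Replacing $v$ by an associated $p$-Henselian valuation and invoking that $G_F(p)$ is Demu\v skin of rank $\geq3$, Lemma \ref{Demuskin groups are not extensions} rules out the tame decomposition $\dbZ_p^m\rtimes\cdot$ (with $m\geq1$) of Proposition \ref{extensions from tame valuations}; thus the residue characteristic must equal $p$, and Theorem \ref{the wild case} together with Corollary \ref{cor to wild case} places us in case (i): a coarsening $u$ with $\Gam_u=p\Gam_u$ and $\Char\,\bar F_u=0$, and a discrete $w=v/u$ whose completion $\hat E$ is a finite extension of $\dbQ_p(\mu_q)$, with $\calG_F(p)\isom\calG_E(p)\isom\calG_{\hat E}(p)$ by Proposition \ref{properties of the completion}. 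As this chain of isomorphisms respects the symbol pairings, (b) for $F$ is equivalent to the corresponding statement for the local field $\hat E$. There I would produce the rigid subgroup by a direct computation with the Hilbert symbol; the decisive point -- and the reason the tame analogue fails -- is that in residue characteristic $p$ the Steinberg-type coincidences that force $b$ and $1-b$ simultaneously into a symbol kernel (e.g.\ $1-\pi^p\in(F^\times)^p$ for a tame uniformizer $\pi$, since then $1-\pi^p\in1+\grm_v\subseteq(F^\times)^p$) are no longer available, so nondegeneracy of the local symbol leaves room for a rigid $H_a$.

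The hard direction is (b)$\Rightarrow$(a): from a rigid index-$p$ subgroup $H=H_a$ I must construct a valuation with $\Gam_v\neq p\Gam_v$. The naive attempt -- the Arason--Elman--Jacob--Ware construction of \S\ref{subsection on explicit constructions of valuations} with $S=(F^\times)^p$, whose candidate ring is $O^-(S,H)\cup O^+(S,H)$ -- cannot succeed here: rigidity forces $O^-((F^\times)^p,H)=(1-(F^\times)^p)\cap H$ to be trivial, and moreover, were this construction to yield a valuation, Proposition \ref{existence of tame valuations} would produce a $p$-Henselian $v$ with $\Char\,\bar F_v\neq p$ and $O_v^\times\leq H$, whence $\Gam_v\neq p\Gam_v$ (as $(F^\times:H)=p$), contradicting Lemma \ref{Demuskin groups are not extensions} via Proposition \ref{extensions from tame valuations}. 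The required valuation is therefore necessarily \emph{wild}, and must be assembled with the coarser divisor $S$ governed by the weaker Hensel conditions (v)--(vi) of Definition \ref{def of arithmetically Demuskin fields} rather than by $(F^\times)^p$; concretely, one recovers the coarsening $u$ and the discrete residue valuation $w$ of an arithmetically Demu\v skin field, so that the refinement $v$ of $u$ by $w$ satisfies $\Gam_v\neq p\Gam_v$ as in Remark \ref{existence of a valuation v on a Demuskin field}. I expect the main obstacle to be exactly the passage from the rigidity of $H_a$ to a genuine valuation ring in this mixed-characteristic setting: proving that the candidate ring extracted from $H_a$ is closed under addition, where the ratios $-y/x$, $(x+y)/x$, $(x+y)/y$ attached to a sum $x+y$ are precisely the triple $b,1-b,1-b^{-1}$ that the trichotomy controls.
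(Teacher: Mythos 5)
The paper contains no proof of this theorem: it is quoted verbatim from \cite{Efrat03}*{Cor.\ 8.3}, so your proposal can only be compared with that source and the surrounding machinery. The comparison turns up a fatal problem that is only partly your fault. The inequality in condition (b) as printed is evidently a misprint --- it should say that at least one of the three symbols is \emph{zero} --- and your entire proposal is built on the printed polarity, under which the theorem is false. Your opening reduction (using $-1\in(F^\times)^p$ for odd $p$ and $1-b^{-1}=-(1-b)b^{-1}$) of ``all three symbols nonvanishing-free'' to the existence of a \emph{rigid} index-$p$ subgroup $H_a$, with no $b\neq0,1$ satisfying $b,1-b\in H_a$, is correct algebra; but no such subgroup exists in the very fields the theorem concerns. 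In a finite extension of $\dbQ_p(\mu_p)$ --- or in any field satisfying the $p$-Hensel condition (vi) of Definition \ref{def of arithmetically Demuskin fields} --- choose $z$ of large value and set $b=1-z^p$: then $1-b=z^p$ is a $p$th power, and $b$, being a deep principal unit, is a $p$th power as well by Hensel's lemma, so $b,1-b\in(F^\times)^p\subseteq H_a$ for \emph{every} $a$ and all three symbols vanish. Thus under your reading (b) fails for all $a$ while (a) holds (the $p$-adic valuation), your step ``produce the rigid subgroup by a direct computation with the Hilbert symbol'' cannot be executed, and your heuristic that in residue characteristic $p$ the Steinberg-type coincidences are ``no longer available'' is exactly backwards: wild Hensel's lemma supplies them one level deeper. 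Worse, for the unramified-type elements $a$ that the remark following the theorem identifies as the trichotomic ones, every unit is a norm from the unramified degree-$p$ extension, so already any unit $b$ with $1-b$ a unit defeats rigidity.

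Under the corrected reading --- for every $b\neq0,1$ at least one of $b$, $1-b$, $1-b^{-1}$ lies in $H_a$ --- the statement is the $K$-theoretic shadow of ``$H_a\supseteq O_v^\times(F^\times)^p$'', since the trichotomy $v(b)>0$, $v(b)<0$, $v(b)=0$ places $1-b$, $1-b^{-1}$, resp.\ $b$ in $O_v^\times$; and the logical structure is inverted relative to your plan. The implication (a)$\Rightarrow$(b) becomes the \emph{easy} direction: by Demu\v skin conditions (ii)--(iii) and diagram (\ref{Mer-Sus}), the map $a\mapsto\overline{\{a,\cdot\}}$ identifies the finite-dimensional space $F^\times/(F^\times)^p$ with its dual (the one piece of your first paragraph that is correct and genuinely needed), so the proper subgroup $O_v^\times(F^\times)^p$ lies inside some hyperplane $H_a$, and the valuation trichotomy gives (b) directly --- none of your detour through Theorem \ref{the wild case}, Corollary \ref{cor to wild case} and Proposition \ref{properties of the completion} is needed, and in any case your intermediate move ``replacing $v$ by an associated $p$-Henselian valuation'' is an unjustified leap: an arbitrary valuation with $\Gam_v\neq p\Gam_v$ need not admit one, which is precisely why \cite{Efrat03}*{Th.\ 6.3(b)} carries the extra hypothesis recorded in Remark \ref{existence of a valuation v on a Demuskin field}. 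The genuinely hard direction is (b)$\Rightarrow$(a), proved in \cite{Efrat99c} and \cite{Efrat03}*{\S8} by constructing a valuation from $H_a$; ironically, your closing observation --- that the ratios $-y/x$, $(x+y)/x$, $(x+y)/y$ attached to a sum $x+y$ are exactly the triple $b$, $1-b$, $1-b^{-1}$ --- is the right mechanism for that construction, but it serves the ``zero'' reading (membership of one of these ratios in $H_a$ is what makes the candidate ring additively closed), not the rigidity reading you set out to exploit.
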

Elements $a$ as in (b) are called in \cite{Efrat03} \textsl{trichotomic}.
When $F$ is arithmetically $p$-Demu\v skin, with $u$, $w$, $v$ as before, these are the elements $a\in F^\times$ such that $F(\root p\of a)/F$ is a nontrivial unramified extension of $F$ at $v$ \cite{Efrat03}*{Th.\ 8.3}.
More explicitly, these are the elements of the form 
\[
a=1-\omega(1-\zeta)pc^p,
\]
where $1\neq\zeta\in\mu_p$, $c\in F^\times$, and  $\omega$ is a $v$-integral element of $F$ with $\Tr(\bar\omega)\neq0$ for the trace map $\Tr\colon\bar E_w\to\dbF_p$ \cite{Koch92}*{Ch.\ I, Prop.\ 1.85}.

\section{Rigidity}
\label{section on rigidity}
Our next aim is to prove a converse to Corollary \ref{cor to ETC} and Theorem \ref{ETC implies Demuskin conjecture}, showing that the Lego form of the Elementary Type Conjecture, together with the Demu\v skin conjecture, implies its arithmetical form.
This will be attained in \S\ref{section on equivalence of conjectures}, after some preparations. 
Crucial to this result is the notion of \textsl{rigidity}, which we now describe.

\subsection{The general formalism}
We define an \textsl{augmented $\dbF_p$-bilinear map} to be a  bilinear map  $(\cdot,\cdot)\colon A_1\times A_1\to A_2$ of $\dbF_p$-linear spaces, where $A_1$ has a distinguished element $\eps_A$ satisfying $2\eps_A=0$ (so $\eps_A=0$ for $p\neq2$).
A \textsl{morphism} between augmented $\dbF_p$-bilinear maps $(\cdot,\cdot)$, $(\cdot,\cdot)'$ consists of group homomorphisms $\varphi_1,\varphi_2$ with a commutative diagram
\begin{equation}
\label{morphism of augmented bilinear maps}
\xymatrix{
A_1\ar[d]_{\varphi_1}&*-<3pc>{\times}&A_1\ar[r]^{(\cdot,\cdot)}\ar[d]_{\varphi_1}&A_2\ar[d]^{\varphi_2}\\
A'_1&*-<3pc>{\times}&A'_1\ar[r]^{(\cdot,\cdot)'}&A'_2,
}
\end{equation}
and such that $\varphi_1(\eps_A)=\eps_{A'}$ (with the obvious notation). 

\begin{defin}
\rm
Given an augmented $\dbF_p$-bilinear map as above, we call $0\neq a\in A_1$ \textsl{rigid} if for every $b\in A_1$ with $(a,b)=0\in A_2$, the elements $\eps_A+a$ and $b$ are $\dbF_p$-linearly dependent in $A_1$.
\end{defin}

The proof of the following lemma is straightforward:  

\begin{lem}
\label{bilinear pairings and rigidity}
Consider a morphism of augmented $\dbF_p$-bilinear maps as in (\ref{morphism of augmented bilinear maps}) with $\varphi_1$ injective.
If $0\neq a\in A_1$ and if $\varphi_1(a)$ is rigid with respect to $(\cdot,\cdot)'$, then $a$ is rigid with respect to $(\cdot,\cdot)$.
\end{lem}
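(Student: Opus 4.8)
The plan is to unwind the definitions of rigidity and of a morphism, and to transport the defining property of rigidity backward along the injection $\varphi_1$. The statement is elementary, so the proof is essentially a diagram chase combined with the linear-dependence bookkeeping in the definition of rigid elements.

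First I would fix $0\neq a\in A_1$ and assume $\varphi_1(a)$ is rigid with respect to $(\cdot,\cdot)'$. To show $a$ is rigid with respect to $(\cdot,\cdot)$, I take an arbitrary $b\in A_1$ with $(a,b)=0\in A_2$ and must produce an $\dbF_p$-linear dependence between $\eps_A+a$ and $b$. Applying $\varphi_2$ and using the commutativity of the square in (\ref{morphism of augmented bilinear maps}), I get $(\varphi_1(a),\varphi_1(b))'=\varphi_2\bigl((a,b)\bigr)=\varphi_2(0)=0$ in $A'_2$. Since $\varphi_1(a)$ is rigid with respect to $(\cdot,\cdot)'$, this forces $\eps_{A'}+\varphi_1(a)$ and $\varphi_1(b)$ to be $\dbF_p$-linearly dependent in $A'_1$.

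Next I would translate that dependence back through $\varphi_1$. Using the morphism condition $\varphi_1(\eps_A)=\eps_{A'}$ together with the $\dbF_p$-linearity of $\varphi_1$, the element $\eps_{A'}+\varphi_1(a)$ equals $\varphi_1(\eps_A+a)$. So the linear dependence says that $\varphi_1(\eps_A+a)$ and $\varphi_1(b)$, i.e.\ the images $\varphi_1(\eps_A+a)$ and $\varphi_1(b)$, are linearly dependent in $A'_1$. Because $\varphi_1$ is injective and $\dbF_p$-linear, it reflects linear dependence: a nontrivial relation $\lambda\,\varphi_1(\eps_A+a)+\mu\,\varphi_1(b)=0$ pushes back to $\varphi_1\bigl(\lambda(\eps_A+a)+\mu b\bigr)=0$, and injectivity then gives $\lambda(\eps_A+a)+\mu b=0$ with $(\lambda,\mu)\neq(0,0)$. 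Hence $\eps_A+a$ and $b$ are $\dbF_p$-linearly dependent, which is exactly the conclusion that $a$ is rigid with respect to $(\cdot,\cdot)$.

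There is no serious obstacle here; the only point requiring a small amount of care is the reflection of linear dependence by $\varphi_1$, where one must verify that the relation one obtains downstairs is genuinely nontrivial (the coefficients are unchanged, so $(\lambda,\mu)\neq(0,0)$ survives) and that one does not implicitly need surjectivity of $\varphi_1$ — injectivity alone suffices precisely because we are reflecting, not producing, a relation. This is why the hypothesis $\varphi_1$ injective is exactly the right one.
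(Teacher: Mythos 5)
Your proof is correct and is exactly the straightforward diagram chase the paper has in mind: indeed, the paper omits the proof entirely, remarking only that it is straightforward. The one point worth care---that $\varphi_1$, being injective and $\dbF_p$-linear, reflects linear dependence with the same coefficients, so the relation downstairs stays nontrivial---is precisely the point you handle explicitly.
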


\begin{exam}
\rm
(1) \quad
Given a cyclotomic pro-$p$ pair $\calG=(G,\theta)$, the \textsl{augmented $\dbF_p$-bilinear map associated with $\calG$} is the cup product map
\[
\cup\colon H^1(G)\times H^1(G)\to H^2(G),
\]
with the distinguished element $\eps=\eps_\calG$ of (\ref{eps calG}).

\smallskip

(2) \quad
For a field $F$, the natural map (\S\ref{subsection on trichotomic elements})
\[
\overline{\{\cdot,\cdot\}}\colon F^\times/(F^\times)^p\times F^\times/(F^\times)^p\to K^M_2(F)/pK^M_2(F)
\]
is an augmented $\dbF_p$-bilinear map with the distinguished element $\eps=\overline{-1}$.
\end{exam}

\begin{rem}
\label{MerSus 2}
\rm
When $F$ contains a root of unity of order $p$, these two maps may be identified via the commutative diagram (\ref{Mer-Sus}), which is an isomorphism of augmented $\dbF_p$-bilinear maps.
Indeed, we have $\eps_{\calG_F(p)}=(-1)_F$ (Remark \ref{aaa}(2)).
\end{rem}

\subsection{A cohomological identity}
\label{subsection on cohomological identity}
We now assume again that $\calG= A\rtimes\bar\calG$ is an extension as in \S\ref{section on cohomology of extension}.
We show that this group structure naturally gives rise to rigid elements.
To this end we first prove a group-theoretic identity, which is a partial analog of the identity $\{-a,a\}=0$ in Milnor $K$-theory \cite{Efrat06}*{Prop.\ 24.1.1}.
We write $\calG=(G,\theta)$ and $\bar\calG=(\bar G,\bar\theta)$.

\begin{prop}
\label{dihedral identity}
For every $\beta\in H^1(G)$ with $\Res_A\beta\neq0$ in $H^1(A)$ and $\Res_{\bar G}\beta=0$ in $H^1(\bar G)$, one has $\beta\cup(\eps_\calG+\beta)=0$ in $H^2(G)$ (with $\eps_\calG$ as in (\ref{eps calG})).
\end{prop}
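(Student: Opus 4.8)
The plan is to separate the easy odd case from the genuine case $p=2$, and in the latter to recognize $\beta\cup\beta$ as a Bockstein class and compute it explicitly, thereby reproducing cohomologically the Milnor $K$-theory relation $\{-a,a\}=0$. When $p>2$ we have $\eps_\calG=0$, so the assertion reduces to $\beta\cup\beta=0$; this is immediate from graded-commutativity of the cup product, since $\beta\cup\beta=-\beta\cup\beta$ gives $2(\beta\cup\beta)=0$ and $2$ is invertible in $\dbF_p$. (This case uses neither hypothesis on $\beta$.)

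So from now on I assume $p=2$, where the cup product on $H^1(G)$ is symmetric; hence the desired identity $\beta\cup(\eps_\calG+\beta)=0$ is equivalent to
\[
\beta\cup\beta=\beta\cup\eps_\calG.
\]
The key classical input I would use is that, for $p=2$, one has $\beta\cup\beta=\Bock(\beta)$, where $\Bock\colon H^1(G)\to H^2(G)$ is the Bockstein homomorphism attached to the coefficient sequence $0\to\dbZ/2\to\dbZ/4\to\dbZ/2\to0$. The task then becomes to compute $\Bock(\beta)$ and identify it with $\eps_\calG\cup\beta$ (which is cohomologous to $\beta\cup\eps_\calG$).

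First I would record the shape of $\beta$: by Lemma~\ref{cohomological properties of extensions}(a) together with the hypotheses $\Res_{\bar G}\beta=0$ and $\Res_A\beta\neq0$, the class $\beta$ is supported on $A$, i.e. $\beta(a\bar g)=\phi(a)$ for all $a\in A$, $\bar g\in\bar G$, where $\phi=\Res_A\beta\colon A\to\dbZ/2$ is a nonzero continuous homomorphism. Since $A=\dbZ_2^m$ is a free $\dbZ_2$-module, $\phi$ lifts to a continuous homomorphism $\tilde\phi\colon A\to\dbZ/4$; I then set $b(a\bar g)=\tilde\phi(a)$, a set-theoretic lift of $\beta$ to $\dbZ/4$, so that $\Bock(\beta)$ is represented by the image of $b(g_1)+b(g_2)-b(g_1g_2)\in 2\dbZ/4$ under $2\dbZ/4\isom\dbZ/2$. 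Using the multiplication rule $(a_1\bar g_1)(a_2\bar g_2)=\bigl(a_1+\theta(g_1)a_2\bigr)\bar g_1\bar g_2$ (conjugation by $g_1$ acts on $A$ as the scalar $\theta(g_1)=\bar\theta(\bar g_1)$) and the $\dbZ_2$-linearity of $\tilde\phi$, I would compute
\[
b(g_1)+b(g_2)-b(g_1g_2)=\bigl(1-\theta(g_1)\bigr)\tilde\phi(a_2)\in\dbZ/4.
\]
The crucial arithmetic fact is $\theta(g)\equiv 1-2\eps_\calG(g)\pmod4$, which is exactly the definition of $\eps_\calG$ through the projection $\dbZ_2^{\times,1}=\{\pm1\}\times(1+4\dbZ_2)\to\{\pm1\}$; hence $1-\theta(g_1)\equiv 2\eps_\calG(g_1)\pmod4$, and the displayed expression equals $2\eps_\calG(g_1)\tilde\phi(a_2)$. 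Its image under $2\dbZ/4\isom\dbZ/2$ is $\eps_\calG(g_1)\phi(a_2)=\eps_\calG(g_1)\beta(g_2)$, which is precisely the standard cocycle representing $\eps_\calG\cup\beta$. This yields $\Bock(\beta)=\eps_\calG\cup\beta$, whence $\beta\cup\beta=\eps_\calG\cup\beta=\beta\cup\eps_\calG$ and $\beta\cup(\eps_\calG+\beta)=2(\beta\cup\eps_\calG)=0$.

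I expect the main difficulty to be bookkeeping rather than conceptual: keeping straight the homomorphic lift $\tilde\phi$ valued in $\dbZ/4$ versus the resulting set-theoretic lift of $\beta$, confirming that the Bockstein cocycle is independent of these choices, and fixing the normalization in $\theta(g)\equiv 1-2\eps_\calG(g)\pmod4$ so that the final cocycle matches $\eps_\calG\cup\beta$ on the nose. Conceptually the statement is the cohomological shadow of $\{-a,a\}=0$ (equivalently $\{a,a\}=\{-1,a\}$), and the computation says that the obstruction to lifting $\beta$ from $\dbZ/2$ to $\dbZ/4$ is measured by $\eps_\calG\cup\beta$. If one prefers to avoid the general $A=\dbZ_2^m$ case, one may first use that the $\bar G$-action on $A$ is scalar to split $A=\dbZ_2\oplus A'$ with $\Res_{A'}\beta=0$, invoke the associativity~(\ref{associativity of extension}) to write $\calG\isom\dbZ_2\rtimes(A'\rtimes\bar\calG)$, and run the same computation with a single copy of $\dbZ_2$.
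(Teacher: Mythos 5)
Your proof is correct, and it takes a genuinely different route from the paper's. The paper splits the case $p=2$ according to whether $\eps_\calG=0$ or $\eps_\calG\neq0$, maps $G$ onto an explicit finite quotient ($\dbZ/4$ via $\lam\rtimes1$, respectively the dihedral group $D_4$ via $\lam\rtimes\eps_{\bar\calG}$), identifies the relevant class ($\tilde\beta\cup\tilde\beta$, respectively $\bar\beta\cup(\bar\eps+\bar\beta)$) with the corresponding group-extension class using \cite{EfratMinac11}*{Prop.\ 9.1}, and then kills the inflated class by Hoechsmann's lemma. You instead prove the single unified identity $\Bock(\beta)=\eps_\calG\cup\beta$ by computing the connecting map directly with the continuous lift $b(a\bar g)=\tilde\phi(a)$, and combine it with the standard fact $\Bock(\beta)=\beta\cup\beta$ for $p=2$ (the Bockstein is $\mathrm{Sq}^1$; the usual $\{0,1\}$-valued lift argument works verbatim with continuous cochains). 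Your computation checks out: $\delta b(g_1,g_2)=(1-\theta(g_1))\tilde\phi(a_2)$, the congruence $1-\theta(g)\equiv2\eps_\calG(g)\pmod 4$ is exactly the definition of $\eps_\calG$ via $\dbZ_2^{\times,1}=\{\pm1\}\times(1+4\dbZ_2)\to\{\pm1\}$, and $(g_1,g_2)\mapsto\eps_\calG(g_1)\beta(g_2)$ is indeed the standard cocycle for $\eps_\calG\cup\beta$. The two arguments are cousins --- the paper's Case II is precisely the vanishing of the lifting obstruction $\Bock(\beta)$ when $\theta\equiv1\pmod 4$, and the $D_4$ quotient encodes the twisted lifting when $\eps_\calG\neq0$ --- but yours merges the two $p=2$ cases into one cochain calculation and dispenses with the dihedral quotient, the extension-class dictionary, and Hoechsmann. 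What the paper's version buys is independence from cochain conventions (everything reduces to the known $H^2$ of two concrete finite $2$-groups); what yours buys is self-containedness and the sharper intermediate statement $\Bock(\beta)=\eps_\calG\cup\beta$ for all $\beta$ supported on $A$. Three small points you flagged as bookkeeping are indeed fine and worth recording explicitly: $b$ is continuous because $a\bar g\mapsto a$ is (topologically $G\cong A\times\bar G$); $\dbZ_2$-linearity of $\tilde\phi$ is automatic for a continuous homomorphism into the discrete $\dbZ_2$-module $\dbZ/4$, which justifies $\tilde\phi(\theta(g_1)a_2)=\theta(g_1)\tilde\phi(a_2)$; and your argument never actually uses the hypothesis $\Res_A\beta\neq0$ (the statement is trivial for $\beta=0$), so that hypothesis is harmless but superfluous on your route, whereas the paper uses it to obtain the epimorphism $\lam\colon A\to\dbZ/4$.
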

\begin{proof}
We distinguish between 3 cases:

\case  $p>2$. \rm
Here $\eps_\calG=0$, so the assertion follows from the anti-commutativity of the cup product.

\case  $p=2$ and $\eps_\calG=0$. \rm
Here $\Img(\bar\theta)=\Img(\theta)\subseteq1+4\dbZ_2$.
The restriction $\Res_A\beta\colon A\to\dbZ/2$ factors through an epimorphism $\lam\colon A\to\dbZ/4$.
We obtain a homomorphism
\[
\lam\rtimes1\colon G=A\rtimes\bar G\to\dbZ/4.
\]
There is a commutative diagram with an exact row
\begin{equation}
\label{first extension}
\xymatrix{
&&&G\ar[ld]_{\lam\rtimes1}\ar[d]^{\beta}&\\
0\ar[r]&\dbZ/2\ar[r]&\dbZ/4\ar[r]&\dbZ/2\ar[r]&0.
}
\end{equation}
Write $\tilde G=G/\Ker(\beta)\isom\dbZ/2$ and let $\tilde\beta\colon \tilde G\to\dbZ/2$ be the isomorphism induced by $\beta$.
By \cite{EfratMinac11}*{Prop.\ 9.1(c)},  $\tilde\beta\cup\tilde\beta\in H^2(\tilde G)$ corresponds to the group extension in (\ref{first extension}).
Inflating to $G$, we obtain from Hoechsmann lemma that $\beta\cup\beta=0$ in $H^2(G)$ \cite{NeukirchSchmidtWingberg}*{Prop.\ 3.5.9}.

\case  $p=2$ and $\eps_\calG\neq0$. \rm
As before, $\Res_A\beta\colon A\to\dbZ/2$ factors through an epimorphism $\lam\colon A\to\dbZ/4$.
Consider the dihedral group of order $8$ 
\[
D_4=(\dbZ/4)\rtimes(\dbZ/2)=\langle r,s\ | \ r^4=s^2=(rs)^2=1\rangle.
\]
We note the following epimorphisms:
\[
\lam\rtimes\eps_{\bar\calG}\colon G=A\rtimes\bar G\to D_4,\quad \Img(\lam)=\langle r\rangle,\,\, \Img(\eps_{\bar\calG})=\langle s\rangle,
\]
\[
\tilde\beta\colon D_4\to\dbZ/2, \quad r\mapsto \bar1, \ \ s\mapsto \bar0,
\]
\[
\tilde\eps\colon D_4\to\dbZ/2, \quad r\mapsto\bar0,\ \  s\mapsto\bar1.
\]
Note that
\[
\tilde\beta\circ(\lam\rtimes\eps_{\bar\calG})=\beta, \qquad  
 \tilde\eps\circ(\lam\rtimes\eps_{\bar\calG})=\eps_\calG.
\]
We obtain an epimorphism
\[
\rho=(\tilde\beta,\tilde\eps+\tilde\beta)\colon D_4\to (\dbZ/2)^2, \quad r\mapsto(\bar1,\bar1),\ \  s\mapsto(\bar0,\bar1).
\]
with kernel $\langle r^2\rangle\isom\dbZ/2$.
There is a commutative diagram with an exact row
\begin{equation}
\label{second extension}
\xymatrix{
&&&G\ar[ld]_{\lam\rtimes\eps_{\calG}}\ar[d]^{(\beta,\eps_\calG+\beta)}&\\
0\ar[r]&\dbZ/2\ar[r]& D_4\ar[r]_{\rho}&(\dbZ/2)^2\ar[r]&0.
}
\end{equation}
Since $\tilde\beta\neq\tilde\eps+\tilde\beta$, we may identify $\tilde G:=G/(\Ker(\tilde\beta)\cap\Ker(\tilde\eps+\tilde\beta))\isom(\dbZ/2)^2$.
Let $\bar\beta,\bar\eps\colon\tilde G\to\dbZ/2$ be the homomorphisms induced by $\tilde\beta,\tilde\eps$, respectively.
By \cite{EfratMinac11}*{Prop.\ 9.1(e)}, $\bar\beta\cup (\bar\eps+\bar\beta)\in H^2(\tilde G)$ corresponds to the group extension in (\ref{second extension}).
Inflating to $G$, we obtain as before from Hoechsmann lemma that $\beta\cup(\eps_\calG+\beta)=0$ in $H^2(G)$.
\end{proof}

\subsection{Rigidity in extensions}
As in \S\ref{section on cohomology of extension}, and with the same setup as in \S\ref{subsection on cohomological identity}, we choose elements $\beta_l\in H^1(G)$, $l\in L$, whose restrictions to $H^1(A)$ form an $\dbF_p$-linear basis and such that $\Res_{\bar G}\beta_l=0$.

\begin{prop}
\label{criterion for rigidity}
Every $\varphi\in H^1(G)\setminus\inf(H^1(\bar G))$ is rigid with respect to the cup product augmented bilinear map.
\end{prop}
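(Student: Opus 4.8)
The plan is to exploit the freedom in the choice of the family $\{\beta_l\}$: since rigidity is an intrinsic property of the cup product together with the distinguished element $\eps_\calG$, and does not depend on which admissible basis is used, I may adapt the $\beta_l$ to the given $\varphi$. The starting observation is that $\theta$ factors through the projection $\pi\colon G\to\bar G$, so that $\eps_\calG=\inf(\eps_{\bar\calG})$; in particular $\Res_A\eps_\calG=0$. Since $\varphi\notin\inf(H^1(\bar G))$, Lemma \ref{cohomological properties of extensions}(a) gives $\Res_A\varphi\neq0$, so $\Res_A\varphi$ may be taken as one vector of the $\dbF_p$-basis of $H^1(A)$, indexed by the maximal element $l_0$ of $(L,\leq)$. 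With this choice $\varphi-\beta_{l_0}$ restricts to $0$ on $A$, hence lies in $\inf(H^1(\bar G))$, so $\varphi=\beta_{l_0}+\inf(\bar\varphi)$ for some $\bar\varphi\in H^1(\bar G)$, and $\eps_\calG+\varphi=\beta_{l_0}+\inf(\eps_{\bar\calG}+\bar\varphi)$.

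Next I would record the diagonal cup product $\beta_{l_0}\cup\beta_{l_0}$. As $\Res_A\beta_{l_0}\neq0$ and $\Res_{\bar G}\beta_{l_0}=0$, Proposition \ref{dihedral identity} yields $\beta_{l_0}\cup(\eps_\calG+\beta_{l_0})=0$, whence $\beta_{l_0}\cup\beta_{l_0}=\inf(\eps_{\bar\calG})\cup\beta_{l_0}$ after using anticommutativity of the cup product. This is the step where the distinguished element $\eps_\calG$ actually enters, and it is exactly what forces the rigid `centre' to be $\eps_\calG+\varphi$ rather than $\varphi$ itself.

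Then, for an arbitrary $b=\sum_l d_l\beta_l+\inf(\bar b)$, I would expand $\varphi\cup b$ bilinearly and sort the outcome into the direct summands of $H^2(G)$ supplied by Proposition \ref{generalized Wadsworth formula}. Using $\eps_\calG=\inf(\eps_{\bar\calG})$ and the diagonal identity above, the off-diagonal terms contribute $-\sum_{l<l_0}d_l\,(\beta_l\cup\beta_{l_0})$ in the $\beta_l\cup\beta_{l_0}$ summands; the component in $\inf(H^1(\bar G))\cup\beta_l$ for $l\neq l_0$ is $\inf(d_l\bar\varphi)\cup\beta_l$; the component in $\inf(H^1(\bar G))\cup\beta_{l_0}$ is $\inf\bigl(d_{l_0}(\eps_{\bar\calG}+\bar\varphi)-\bar b\bigr)\cup\beta_{l_0}$; and the $\inf(H^2(\bar G))$ component is $\inf(\bar\varphi\cup\bar b)$. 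Setting $\varphi\cup b=0$ and invoking the directness of the decomposition together with the injectivity statements of Proposition \ref{generalized Wadsworth formula}, the $\beta_l\cup\beta_{l_0}$ terms force $d_l=0$ for every $l\neq l_0$, while the $\beta_{l_0}$-component forces $\bar b=d_{l_0}(\eps_{\bar\calG}+\bar\varphi)$; the remaining components are then automatically satisfied.

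Finally I would assemble these: $b=d_{l_0}\beta_{l_0}+\inf\bigl(d_{l_0}(\eps_{\bar\calG}+\bar\varphi)\bigr)=d_{l_0}(\eps_\calG+\varphi)$, so $b$ and $\eps_\calG+\varphi$ are $\dbF_p$-linearly dependent, which is precisely rigidity of $\varphi$. The main obstacle is not conceptual but one of bookkeeping: correctly matching each term of the bilinear expansion with its summand in the Wadsworth decomposition — in particular processing the diagonal term $\beta_{l_0}\cup\beta_{l_0}$ through Proposition \ref{dihedral identity} — since it is this precise matching of coordinates that simultaneously eliminates the $d_l$ with $l\neq l_0$ and pins $\bar b$ down to a single scalar multiple of $\eps_{\bar\calG}+\bar\varphi$.
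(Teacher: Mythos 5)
Your proof is correct and is essentially the paper's own argument: both expand $\varphi\cup b$ in the direct-sum decomposition of $H^2(G)$ from Proposition \ref{generalized Wadsworth formula} and handle the diagonal term via Proposition \ref{dihedral identity}, your only deviation being the legitimate normalization (the $\beta_l$ and the total order on $L$ are freely chosen in the setup) putting $\varphi=\beta_{l_0}+\inf(\bar\varphi)$ with $l_0$ maximal, which collapses the paper's system $j_l\bar\psi=k_l(\bar\varphi+j_l\eps_\calG)$, $j_lk_{l'}=j_{l'}k_l$ to a single coordinate comparison and makes the $p=2$ bookkeeping (the paper's reduction to $j_l=1$) automatic. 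One harmless slip in wording: the $\inf(H^2(\bar G))$-component $\inf(\bar\varphi\cup\bar b)$ is not ``automatically satisfied'' by the forced equations $d_l=0$, $\bar b=d_{l_0}(\eps_{\bar\calG}+\bar\varphi)$ --- for $p=2$ the class $\bar\varphi\cup(\eps_{\bar\calG}+\bar\varphi)$ can be nonzero in $H^2(\bar G)$ --- but it is simply a further consequence of the hypothesis $\varphi\cup b=0$ that your deduction never needs, so the argument stands.
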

\begin{proof}
We compute using Proposition \ref{generalized Wadsworth formula}.
For simplicity we identify $H^1(\bar G)$ with its image in $H^1(G)$ under inflation.

Suppose that $\varphi\cup\psi=0$ for some $\psi\in H^1(G)$.
Write 
\[
\varphi=\bar\varphi+\sum_{l\in L}j_l\beta_l, \quad \psi=\bar\psi+\sum_{l'\in L}k_{l'}\beta_{l'}
\]
with $\bar\varphi,\bar\psi\in H^1(\bar G)$ and $0\leq j_l, k_{l'}\leq p-1$ which are almost always $0$.
Then
\[
\bar\varphi\cup\bar\psi+\sum_lj_l\beta_l\cup\bar\psi+\sum_{l'}k_{l'}\bar\varphi\cup\beta_{l'}
+\sum_{l,l'}j_lk_{l'}\beta_l\cup\beta_{l'}=0.
\]
By Proposition \ref{dihedral identity}, $\beta_l\cup\beta_l=\eps_\calG\cup\beta_l$ for every $l$.
This and the anti-commutativity of the cup product imply that
\[
\bar\varphi\cup\bar\psi+\sum_l(-j_l\bar\psi+k_l\bar\varphi+j_lk_l\eps_\calG)\cup\beta_l
+\sum_{l<l'}(j_lk_{l'}-j_{l'}k_l)\beta_l\cup\beta_{l'}=0.
\]
From the direct sum decomposition of $H^2(G)$ as in Proposition \ref{generalized Wadsworth formula} we get
\[
\begin{split}
&j_l\bar\psi=k_l(\bar\varphi+j_l\eps_\calG) \hbox{ for every }l\in L,\\
&j_lk_{l'}=j_{l'}k_l\hbox{ for every }l,l'\in L.
\end{split}
\]
It follows that for every $l$,
\[
j_l\psi=j_l(\bar\psi+\sum_{l'}k_{l'}\beta_{l'})=k_l(\bar\varphi+\sum_{l'}j_{l'}\beta_{l'}+j_l\eps_\calG)
=k_l(\varphi+j_l\eps_\calG).
\]
Now the assumption that $\varphi\not\in H^1(\bar G)$ means that $j_l\neq0$ for some $l$.
Also note that $\eps_\calG=0$ unless $p=2$, and in this case $j_l=1$.
We deduce that $\psi\in\langle\varphi+\eps_\calG\rangle$, as desired.
\end{proof}

\section{Construction of valuations and rigidity}
\label{section on construction of valuations and rigidity}
Let $F$ be again a field containing a root of unity of order $p$.
We prove a partial converse of Proposition \ref{extensions from tame valuations}, namely, that if $\calG_F(p)\isom\dbZ_p^m\rtimes\bar\calG$ with $m\geq1$, then, with certain well-understood exceptions, $F$ is equipped with a $p$-Henselian valuation $v$ such that $(\Gam_v:p\Gam_v)\geq m$ and $\Char\,\bar F_v\neq p$.

For a subgroup $S$ of $F^\times$ we define the \textsl{Milnor $K$-group of $F$ modulo $S$} to be 
\[
K^M_n(F)/S=(F^\times/S)^{\tensor n}/\mathrm{St}_n(S),
\]
where $\mathrm{St}_n(S)$ is the subgroup generated by all pure tensors $a_1S\tensor\cdots\tensor a_nS$ such that $1\in a_iS+a_jS$ for some $i<j$ \cite{Efrat06}*{\S24.1}.
For $S=\{1\}$ this is just $K^M_n(F)$, and for $S=(F^\times)^p$ this recovers $K^M_n(F)/pK^M_n(F)$.

Every pure tensor $a_1S\tensor\cdots\tensor a_nS$ such that $a_iS=-a_jS$ for some $i<j$ belongs to $\mathrm{St}_n(S)$.
Indeed, this follows from
\[
a_jS=(1-a_i)(1-a_i\inv)\inv S.
\]
The following conditions are equivalent:
\begin{enumerate}
\item[(i)]
For every $n\geq2$ the group ${\rm St}_n(S)$ is generated by the pure tensors $a_1S\tensor\cdots\tensor a_nS$ such that $a_iS=-a_jS$ for some $i<j$
\item[(ii)]
The group ${\rm St}_2(S)$ is generated by all pure tensors $aS\tensor (-aS)$, $a\in F^\times$.
\end{enumerate}
In this case we say that the subgroup $S$ is \textsl{totally rigid} \cite{Efrat06}*{p.\ 235}.

We now take $S=(F^\times)^p$.
Let $N$ be the subgroup of $F^\times$ generated by $-1$, $(F^\times)^p$, and by all non-rigid elements in $F^\times$ with respect to $\overline{\{\cdot,\cdot\}}$.
When $S=(F^\times)^p$ is totally rigid, $N=\langle-1,(F^\times)^p\rangle$ \cite{Efrat06}*{Cor.\ 26.4.7}.

We will need the following special cases of \cite{Efrat06}*{Th.\ 26.5.5} and \cite{Efrat06}*{Th.\ 26.6.1}, respectively:

\begin{thm}
\label{Efr0626 5 5}
\begin{enumerate}
\item[(a)]
Suppose that $(F^\times)^p$ is not totally rigid and that $(F^\times:(F^\times)^p)<\infty$ or $p=2$.
Then there exists a valuation $v$ on $F$ such that $1+\grm_v\leq (F^\times)^p$ and $N=(F^\times)^pO_v^\times$.
\item[(b)]
Suppose that $(F^\times)^p$ is totally rigid.
Then there exists a valuation $v$ on $F$ such that $1+\grm_v\leq (F^\times)^p$ and $N$ is a subgroup of $(F^\times)^pO_v^\times$ of index dividing $p$.
\end{enumerate}
\end{thm}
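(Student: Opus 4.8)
The plan is to invoke the Arason--Elman--Jacob--Ware construction recalled in \S\ref{subsection on explicit constructions of valuations} with $S=(F^\times)^p$ and $H=N$, taking $v=v_{(2)}((F^\times)^p,N)$, so that $O_v=O((F^\times)^p,N)$. The guiding heuristic is that rigid elements should serve as the ``uniformizer directions'' for $v$, while $N$ --- generated by $-1$, by $(F^\times)^p$, and by all non-rigid elements --- should collect precisely the classes of $p$-divisible $v$-value, i.e. $(F^\times)^pO_v^\times$. Note one inclusion is then automatic: by construction $O_v^\times\leq N$, and since also $(F^\times)^p\leq N$, we get $(F^\times)^pO_v^\times\leq N$ in both cases for free.

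The \emph{crux} is the existence of $v$, that is, the nonemptiness of the family of valuations $u$ with $1+\grm_u\leq(F^\times)^p$ and $O_u^\times\leq N$; only then does the cited construction apply and deliver $v$ with $O_v=O((F^\times)^p,N)$. Equivalently one must verify that $O((F^\times)^p,N)$ is genuinely a valuation ring, which amounts to establishing the closure properties of $O^-((F^\times)^p,N)$ and $O^+((F^\times)^p,N)$ demanded by the criterion \cite{ArasonElmanJacob87}*{Prop.\ 3.2}. This is where the hypotheses enter and where I expect the main difficulty to lie. In case (a), a non-rigid element exists precisely because $(F^\times)^p$ is not totally rigid, and one leverages it to build such a valuation; the classical quadratic-form arguments of AEJW handle $p=2$, while the finiteness assumption $(F^\times:(F^\times)^p)<\infty$ is what drives the analysis for odd $p$. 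In case (b), total rigidity is so restrictive that the valuation can be exhibited directly, requiring no finiteness.

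It then remains to prove the reverse inclusion. In case (a) one checks $N\leq(F^\times)^pO_v^\times$ generator by generator: one has $-1\in O_v^\times$ (as $\Gam_v$ is torsion-free, $v(-1)=0$) and $(F^\times)^p\leq(F^\times)^pO_v^\times$ trivially, so the substantive point is that every non-rigid element $a$ satisfies $v(a)\in p\Gam_v$. This is exactly the content of the construction: rigidity with respect to $\overline{\{\cdot,\cdot\}}$ is engineered to detect non-$p$-divisibility of value, so that the non-rigid classes are precisely the unit-times-$p$-power classes; combined with the automatic inclusion this yields $N=(F^\times)^pO_v^\times$. In case (b) one has $N=\langle-1,(F^\times)^p\rangle$ by the totally-rigid computation recalled before the theorem, so $N\leq(F^\times)^pO_v^\times$ holds automatically; what must be bounded is the index, and here the totally rigid structure forces $(F^\times)^pO_v^\times$ to exceed $N$ by at most one class --- traceable to the status of $-1$ together with the residue field $\bar F_v$ contributing at most a factor of $p$ to its $p$-th power classes --- giving index dividing $p$.

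Finally, since the $v$ produced is the coarsest valuation in its family, Proposition \ref{existence of tame valuations} applies with $H=N$ and shows that in addition one may arrange $\Char\,\bar F_v\neq p$, whence $v$ is $p$-Henselian; this is the form in which the theorem is subsequently used.
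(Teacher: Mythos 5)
The first thing to say is that the paper does not prove this theorem at all: it is quoted verbatim as special cases of \cite{Efrat06}*{Th.\ 26.5.5} and \cite{Efrat06}*{Th.\ 26.6.1}, which are substantial results (a chapter-length development of the Arason--Elman--Jacob--Ware rigidity-to-valuation machinery). Your proposal correctly locates that machinery --- take $v=v_{(2)}((F^\times)^p,H)$ with $O_v=O((F^\times)^p,H)$ --- but at both decisive points it only names the difficulty instead of resolving it: (i) the nonemptiness of the family of valuations with $1+\grm_u\leq(F^\times)^p$ and $O_u^\times\leq H$ (equivalently, that $O((F^\times)^p,H)$ really is a valuation ring under the stated hypotheses) is deferred to ``the classical quadratic-form arguments of AEJW'' and an unexplained role of the finiteness hypothesis for odd $p$; (ii) the reverse inclusion, i.e.\ that every non-rigid element $a$ satisfies $v(a)\in p\Gam_v$, is asserted to be ``exactly the content of the construction.'' These two items \emph{are} the theorem; nothing in the proposal establishes them, so what you have is an outline of where a proof would live, not a proof.

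There is also a concrete error in case (b): the uniform choice $H=N$ cannot work there. If $v=v_{(2)}((F^\times)^p,N)$ exists, then $O_v^\times\leq N$, so your ``automatic'' inclusion gives $(F^\times)^pO_v^\times\leq N$, and the only index you could ever produce is $1$; but the conclusion of (b) allows, and in general requires, index $p$. Take $p=2$ and $F=\dbQ_5$: here $-1\in(F^\times)^2$, the square classes are represented by $1,u,5,5u$ with $u$ a non-square unit, $(F^\times)^2$ is totally rigid (all of $u,5,5u$ are rigid), and $N=\langle-1,(F^\times)^2\rangle=(F^\times)^2$. The only nontrivial valuation on $\dbQ_5$ is the $5$-adic one, whose unit group contains $u\notin N$, so the family of valuations with $O_v^\times\leq N$ is empty and your construction never starts --- while the $5$-adic valuation does satisfy (b), with index exactly $2$. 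The correct move, visible in the one special case the paper proves itself (Lemma \ref{AEJ lemma}), is to run the $O(S,H)$ construction with the possibly larger group $H=S\langle B(S)\rangle$ generated by $S=(F^\times)^p$ and the $S$-basic elements, and only afterwards bound the index of $N$ in $(F^\times)^pO_v^\times$; your identification $H=N$ is plausible in case (a), but case (b) requires decoupling $H$ from $N$.
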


We also record the following special case, where we use the results and terminology of \cite{ArasonElmanJacob87} on the valuation $O(S,H)$ discussed in \S\ref{section on valuations}.

\begin{lem}
\label{AEJ lemma}
Let $p=2$ and let $F$ be a non-Pythagorean field of characteristic $\neq2$.
Suppose that $S=(F^\times)^2$ is totally rigid,  $(F^\times:S)=4$, and $-1\in S$.
Then there is a valuation $v$ on $F$ with $1+\grm_v\leq S$ and $(SO_v^\times:S)=2$.
\end{lem}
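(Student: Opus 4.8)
The plan is to produce the valuation from Theorem~\ref{Efr0626 5 5}(b) and then to show that the index it provides is exactly $2$ rather than $1$, working directly with the explicit Arason--Elman--Jacob valuation ring $O(S,H)$.

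First I would observe that, since $-1\in S$ and $S=(F^\times)^2$ is totally rigid, the subgroup $N$ generated by $-1$, by $S$, and by all non-rigid elements collapses: as recalled just before Theorem~\ref{Efr0626 5 5}, total rigidity gives $N=\langle-1,(F^\times)^2\rangle$, and here $-1\in S$ forces $N=S$. Thus Theorem~\ref{Efr0626 5 5}(b) applies and yields a valuation $v$ with $1+\grm_v\leq S$ and with $S=N$ a subgroup of $SO_v^\times$ of index dividing $2$. The condition $1+\grm_v\leq S$ is then already in hand, and the whole content of the lemma reduces to excluding the possibility $O_v^\times\leq S$, i.e.\ to showing $(SO_v^\times:S)\neq 1$.

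To force index $2$ I would choose the overgroup $H$ carefully. Since $(F^\times:S)=4$ we have $F^\times/S\isom(\dbZ/2)^2$; fix a nontrivial class and let $H=\langle a\rangle S$ be the corresponding subgroup of index $2$, so $S\leq H\leq F^\times$. Because $-1\in S$ and $S$ is totally rigid, every element of $F^\times\setminus S$ is rigid, and in particular every element of $F^\times\setminus H$ is rigid; by \cite{ArasonElmanJacob87}*{Prop.\ 3.2} (see also \cite{Efrat06}*{Lemma 11.3.3 and Lemma 11.3.4}) the set $O(S,H)=O^-(S,H)\cup O^+(S,H)$ is therefore a valuation ring. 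Hence the coarsest valuation with $1+\grm_v\leq S$ and $O_v^\times\leq H$ exists and has $O_v=O(S,H)$, so that $1+\grm_v\leq S$ and $SO_v^\times\leq H$ hold automatically. It remains only to verify that $SO_v^\times$ is not merely $S$, equivalently that the class $a$ is represented by a $v$-unit.

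The main obstacle is precisely this last point: ruling out the fully split alternative in which every $v$-unit is a square, so that the value group detects all four square classes and the index is $1$. I expect to handle it by exploiting that $v$ is the \emph{coarsest} valuation with $O_v^\times\leq H$: enlarging $O_v$ so as to contain $a^{\pm1}$ produces a valuation still constrained by $1+\grm_v\leq S$ and $O_v^\times\leq H$, so minimality should force $a\in O_v^\times$ and hence $SO_v^\times=H$. Concretely this amounts to verifying $a\in O(S,H)$ and $a^{-1}\in O(S,H)$ from the description $O^-(S,H)=(1-S)\cap H$ together with the rigidity of the elements of $F^\times\setminus H$, which I anticipate to be the technical heart of the argument; the delicate feature to monitor throughout is that it is the residue field, and not the value group, that must absorb the class $a$, so that $\dim_{\dbF_p}(\Gam_v/2\Gam_v)=1$.
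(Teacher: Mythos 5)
You have correctly identified where the real difficulty lies, but the step you defer to ``the technical heart'' is not just unfinished --- the argument you propose for it cannot work. First, the citations you lean on for existence are misread: \cite{ArasonElmanJacob87}*{Prop.\ 3.2} and \cite{Efrat06}*{Lemmas 11.3.3--11.3.4} only identify $O(S,H)$ with the ring of the \emph{coarsest} valuation satisfying $1+\grm_v\leq S$ and $O_v^\times\leq H$ \emph{provided} at least one such valuation exists; they prove neither existence nor any lower bound on $SO_v^\times$. Second, your minimality heuristic (``coarsest-ness should force $a\in O_v^\times$'') is false. Take $F=\dbC((x))((y))$: here $-1\in S$, $S$ is totally rigid, $(F^\times:S)=4$ with classes $1,x,y,xy$, and $F$ is non-Pythagorean since $1+(i(1+x))^2=-x(2+x)\notin S$. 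Choose $a=y$ and $H=\langle y\rangle S$. The coarsest valuation with $1+\grm\leq S$ and $O^\times\leq H$ is the rank-$2$ valuation with residue field $\dbC$ (the $y$-adic and trivial coarsenings have units of class $x$, resp.\ of all classes, so they violate $O^\times\leq H$); its units are all squares, so $SO_v^\times=S$, the index collapses to $1$, and $y\notin O_v^\times$. Thus for an arbitrarily prescribed class your construction simply fails, and no appeal to coarsest-ness can rescue it. A telling symptom is that you never use the hypothesis that $F$ is non-Pythagorean: in the actual proof this is precisely what guarantees, via non-exceptionality in the sense of \cite{ArasonElmanJacob87}*{Def.\ 2.15}, that \cite{ArasonElmanJacob87}*{Th.\ 2.16} (not Prop.\ 3.2) produces a valuation at all, and it does so for the specific group $H=S\langle B(S)\rangle$ generated by $S$ and the $S$-basic elements.

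The paper's proof also reveals that the printed hypothesis ``$-1\in S$'' is a typo for ``$-1\notin S$'': the lemma is invoked in Theorem \ref{existence of valuations}(c) under the assumption $-1\notin(F_i^\times)^2$, and the paper's argument concludes with ``since $-1\in O_v^\times\setminus S$'', which is impossible when $-1\in S$. Under the corrected hypothesis your opening reduction $N=S$ is wrong ($N=\langle-1,S\rangle$ has index $2$ over $S$), but more importantly the difficulty you struggled with evaporates: total rigidity gives $H=S\langle B(S)\rangle\leq\langle-1,S\rangle$, so once \cite{ArasonElmanJacob87}*{Th.\ 2.16} supplies $v$ with $1+\grm_v\leq S$ and $O_v^\times\leq H$, one has $(SO_v^\times:S)\leq(H:S)\leq2$, and the lower bound is automatic because $-1$ is a unit for \emph{every} valuation ($2v(-1)=v(1)=0$ and $\Gam_v$ is torsion-free) while $-1\notin S$. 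In other words, the unit of nontrivial square class is not something to be forced by minimality --- it is $-1$ itself. Your plan, built around prescribing an arbitrary class $a$, has no substitute for this automatic unit, and that is the missing idea.
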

\begin{proof}
Our assumptions imply that $F$ is \textsl{not exceptional} in the sense of \cite{ArasonElmanJacob87}*{Def.\ 2.15}.
Let $A$ be the set of all $x\in F^\times$ such that $1-x\not\in S\cup -xS$.
In the terminology of \cite{ArasonElmanJacob87}, $B(S)=A\cup-A$ is the set of all \textsl{$S$-basic} elements in $F$.
We note that $AS$ consists of all $x\in F^\times$ such that $S-xS\not\subseteq S\cup -xS$.
By \cite{Efrat06}*{Lemma 26.4.6 and Cor.\ 26.4.7}, $A\subseteq\langle -1,S\rangle$, so $H:=S\langle B(S)\rangle\leq \langle -1,S\rangle$.
Now \cite{ArasonElmanJacob87}*{Th.\ 2.16} says that $O(S,H)$ is a valuation ring with maximal ideal $\grm$ satisfying $1+\grm\leq S$.
Let $v$ be the corresponding valuation on $F$.
Then $O_v^\times\leq H$.
It follows that $(SO_v^\times:S)\leq (H:S)\leq2$, and since $-1\in O_v^\times\setminus S$, these are equalities.
\end{proof}

Now suppose that
\[
\calG=\calG_F(p)\isom\dbZ_p^m\rtimes\bar\calG, \quad m\geq1.
\]
We write as before $\calG=(G,\theta)$ and $\bar\calG=(\bar G,\bar\theta)$. 
%, and let $A$ be the closed normal subgroup of $G$ corresponding to $\dbZ_p^m$.
By Lemma \ref{cohomological properties of extensions}(a), $H^1(G)=(\dbZ/p)^m\oplus \inf(H^1(\bar G))$.
Let $T\leq F^\times$ correspond under the Kummer isomorphism to the summand $\inf(H^1(\bar G))$.
Thus $(F^\times)^p\leq T$, $-1\in T$, and
\begin{equation}
\label{formula for m}
\dim_{\dbF_p}(F^\times/T)=m.
\end{equation}

\begin{cor}
\label{corollary on rigidity}
$N\leq T$.
\end{cor}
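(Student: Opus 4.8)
The plan is to reduce the statement to a rigidity assertion and then feed it directly into the extension-rigidity criterion already established. Recall that $N$ is generated by $-1$, by $(F^\times)^p$, and by all non-rigid elements of $F^\times$ with respect to $\overline{\{\cdot,\cdot\}}$. Since $T$ is a subgroup of $F^\times$ containing both $-1$ and $(F^\times)^p$, it suffices to show that every non-rigid element lies in $T$. I will prove the contrapositive: every $a\in F^\times\setminus T$ is rigid with respect to $\overline{\{\cdot,\cdot\}}$.

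First I would translate the hypothesis $a\notin T$ into cohomology. By the Kummer isomorphism $F^\times/(F^\times)^p\isom H^1(G)$, the subgroup $T$ corresponds, by its very definition, to $\inf(H^1(\bar G))$; moreover $\dim_{\dbF_p}(F^\times/T)=m$ as recorded in (\ref{formula for m}). Hence for $a\in F^\times\setminus T$ the Kummer element $(a)_F$ lies in $H^1(G)\setminus\inf(H^1(\bar G))$, and it is nonzero since $(F^\times)^p\leq T$. Proposition \ref{criterion for rigidity} then applies verbatim to the extension $\calG=\dbZ_p^m\rtimes\bar\calG$ and shows that $(a)_F$ is rigid with respect to the cup-product augmented $\dbF_p$-bilinear map on $H^1(G)$.

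It remains to transport this rigidity back to the Milnor side. Here I would invoke Remark \ref{MerSus 2}: the diagram (\ref{Mer-Sus}) is an \emph{isomorphism} of augmented $\dbF_p$-bilinear maps, the distinguished elements matching as $\eps_{\calG_F(p)}=(-1)_F$. Applying Lemma \ref{bilinear pairings and rigidity} with $\varphi_1$ the (injective) Kummer isomorphism, the rigidity of $(a)_F$ for the cup product yields the rigidity of $a$ for $\overline{\{\cdot,\cdot\}}$. Consequently every element of $F^\times\setminus T$ is rigid, so all non-rigid elements lie in $T$, and together with $-1\in T$ and $(F^\times)^p\leq T$ this gives $N\leq T$.

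The argument is short precisely because the heavy lifting is done by Proposition \ref{criterion for rigidity}; the only point demanding care is that rigidity is defined relative to the distinguished element $\eps$, so the transfer between the two pairings is legitimate only because the identification of (\ref{Mer-Sus}) is a morphism of \emph{augmented} bilinear maps carrying $\overline{-1}$ to $(-1)_F=\eps_{\calG_F(p)}$. I expect verifying this compatibility of distinguished elements (already packaged into Remark \ref{MerSus 2}) to be the only genuinely substantive step; everything else is bookkeeping with the Kummer correspondence.
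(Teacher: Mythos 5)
Your proposal is correct and follows exactly the paper's own argument: the paper proves the corollary by combining Proposition \ref{criterion for rigidity}, Lemma \ref{bilinear pairings and rigidity}, and Remark \ref{MerSus 2} to show that every $a\in F^\times\setminus T$ is rigid with respect to $\overline{\{\cdot,\cdot\}}$, which is precisely your contrapositive reduction. Your write-up merely makes explicit the bookkeeping (Kummer translation of $a\notin T$, nonvanishing of $(a)_F$, and the matching of distinguished elements $\overline{-1}\mapsto(-1)_F=\eps_{\calG_F(p)}$) that the paper leaves implicit.
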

\begin{proof}
By combining Proposition \ref{criterion for rigidity} with Lemma \ref{bilinear pairings and rigidity} and Remark \ref{MerSus 2}, we deduce that every $a\in F^\times\setminus T$ is rigid with respect to $\overline{\{\cdot,\cdot\}}$.
\end{proof}

\begin{thm}
\label{existence of valuations}
\begin{enumerate}
\item[(a)]
Assume that $(F^\times)^p$ is not totally rigid, and further, that $G_F(p)$ is a finitely generated pro-$p$ group or $p=2$.
Then there exists a $p$-Henselian valuation $v$ on $F$ such that  $\Char\,\bar F_v\neq p$ and $\dim_{\dbF_p}(\Gam_v/p\Gam_v)\geq m$.
\item[(b)]
Assume that $(F^\times)^p$ is totally rigid and $-1\in(F^\times)^p$ (resp., $p=2$ and $-1\not\in(F^\times)^2$).
Then there exists a $p$-Henselian valuation $v$ on $F$ such that  $\Char\,\bar F_v\neq p$ and the natural map $F^\times/(F^\times)^p\to\Gam_v/p\Gam_v$ has kernel of order dividing $p$ (resp., $4$).
\item[(c)]
Assume that $p=2$, $(F^\times:(F^\times)^2)=4$, $(F^\times)^2$ is totally rigid, $F$ is non-Pythagorean, and $-1\not\in(F^\times)^2$.
Then there exists a $2$-Henselian valuation $v$ on $F$ such that  $\Char\,\bar F_v\neq 2$ and $\Gam_v\neq 2\Gam_v$.
\end{enumerate}
\end{thm}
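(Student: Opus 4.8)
The plan is to reduce each of the three cases to the valuation-construction results already stated, namely Theorem~\ref{Efr0626 5 5} and Lemma~\ref{AEJ lemma}, and then to upgrade the resulting valuation to a $p$-Henselian one with residue characteristic $\neq p$ by invoking Proposition~\ref{existence of tame valuations}. The common mechanism is that all three theorems produce a valuation $v$ with $1+\grm_v\leq(F^\times)^p$; since $\Char\,F\neq p$ (as $F$ contains a root of unity of order $p$), Proposition~\ref{existence of tame valuations} then furnishes a valuation (possibly a coarsening) which is $p$-Henselian with $\Char\,\bar F_v\neq p$, preserving the relevant invariants. So the substantive work in each part is only to control the size of $\Gam_v/p\Gam_v$, or equivalently the index $(SO_v^\times:S)$ where $S=(F^\times)^p$.

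For part~(a), I would apply Theorem~\ref{Efr0626 5 5}(a) to obtain $v$ with $1+\grm_v\leq(F^\times)^p$ and $N=(F^\times)^pO_v^\times$. The key point is Corollary~\ref{corollary on rigidity}, which gives $N\leq T$; combined with $(F^\times)^p\leq T$ and the fact that $T$ corresponds to $\inf(H^1(\bar G))$, this forces $O_v^\times\leq T$ modulo $p$th powers. Then the natural surjection $F^\times/(F^\times)^p\to\Gam_v/p\Gam_v$ has kernel $(F^\times)^pO_v^\times/(F^\times)^p=N/(F^\times)^p\leq T/(F^\times)^p$, so by~(\ref{formula for m}) its image has dimension at least $\dim_{\dbF_p}(F^\times/T)=m$. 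This gives $\dim_{\dbF_p}(\Gam_v/p\Gam_v)\geq m$. Applying Proposition~\ref{existence of tame valuations} to replace $v$ (its value group still surjects onto $\Gam_v/p\Gam_v$ via~(\ref{exact sequence for coarsenings}), so the inequality is preserved) yields the $p$-Henselian valuation with $\Char\,\bar F_v\neq p$.

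For part~(b), in the totally rigid case Theorem~\ref{Efr0626 5 5}(b) gives $v$ with $1+\grm_v\leq(F^\times)^p$ and $(N:(F^\times)^pO_v^\times\cap N)$ controlled, more precisely $N$ of index dividing $p$ in $(F^\times)^pO_v^\times$. When $-1\in(F^\times)^p$ one has $N=\langle-1,(F^\times)^p\rangle=(F^\times)^p$, so the kernel $(F^\times)^pO_v^\times/(F^\times)^p$ of the map to $\Gam_v/p\Gam_v$ has order dividing $p$; when $p=2$ and $-1\notin(F^\times)^2$ the element $-1$ contributes an extra factor of $2$, giving order dividing $4$. For part~(c), since $(F^\times:(F^\times)^2)=4$ with $(F^\times)^2$ totally rigid, $F$ non-Pythagorean, and $-1\notin(F^\times)^2$, I would apply Lemma~\ref{AEJ lemma} directly to get $v$ with $1+\grm_v\leq(F^\times)^2$ and $(SO_v^\times:S)=2$; this means the map $F^\times/(F^\times)^2\to\Gam_v/p\Gam_v$ has image of dimension $\dim_{\dbF_2}(F^\times/(F^\times)^2)-1=2-1=1$, so $\Gam_v\neq2\Gam_v$. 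In all parts the final step is the same appeal to Proposition~\ref{existence of tame valuations} to secure $p$-Henselity and $\Char\,\bar F_v\neq p$.

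The main obstacle I anticipate is bookkeeping the exact kernel index across the passage from $v$ to the tame/$p$-Henselian valuation produced by Proposition~\ref{existence of tame valuations}. That proposition may replace $v$ by a coarsening $v_{(2)}((F^\times)^p,H)$, and one must verify that the surjection $\Gam_v/p\Gam_v\to\Gam_{v'}/p\Gam_{v'}$ from~(\ref{exact sequence for coarsenings}) does not destroy the lower bound (for part~(a)) or inflate the kernel beyond the claimed order (for parts~(b),(c)). Checking that $O_v^\times\leq H$ with the correct $H=N$ (or $H$ from Lemma~\ref{AEJ lemma}) guarantees the coarsening agrees with the constructed valuation, so the indices are preserved exactly; the residue-characteristic step then costs nothing since it only further coarsens within the same $H$. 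Reconciling the two normalizations ($v_{(2)}(S,H)$ versus the valuation from Theorem~\ref{Efr0626 5 5}) so that $SO_v^\times$ is literally $N$ is where the care is needed.
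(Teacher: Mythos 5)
Your proposal is correct and follows essentially the same route as the paper: Theorem~\ref{Efr0626 5 5}(a),(b) and Lemma~\ref{AEJ lemma} to produce the valuation with controlled $(F^\times)^pO_v^\times$, Corollary~\ref{corollary on rigidity} with (\ref{formula for m}) for the bound in (a), and Proposition~\ref{existence of tame valuations} for $p$-Henselity and residue characteristic $\neq p$. The only slip is the parenthetical in (a) — coarsening makes $\Gam_v/p\Gam_v$ a \emph{quotient}, not a supergroup, so (\ref{exact sequence for coarsenings}) alone does not preserve the bound — but your final paragraph supplies the correct fix, which is also how the paper argues: Proposition~\ref{existence of tame valuations} keeps $O_v^\times\leq H$ (with $H=N$, resp.\ the relevant $H$), so the kernel $(F^\times)^pO_v^\times/(F^\times)^p$ stays inside $N/(F^\times)^p$ (resp.\ keeps order dividing $p$, $4$, or $2$) and one simply computes the index \emph{after} the replacement.
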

\begin{proof}
(a) \quad
When $G_F(p)$ is finitely generated, $(F^\times:(F^\times)^p)<\infty$.
Since in addition $(F^\times)^p$ is not totally rigid, Theorem \ref{Efr0626 5 5}(a) gives rise to a valuation $v$ on $F$ such that 
\[
1+\grm_v\leq(F^\times)^p, \quad O_v^\times\leq N.
\]
By Proposition \ref{existence of tame valuations} we may further assume that $\Char\,\bar F_v\neq p$ and $v$ is $p$-Henselian.
By Corollary \ref{corollary on rigidity}, $N\leq T$, so
\[
(\Gam_v:p\Gam_v)=(F^\times:(F^\times)^pO_v^\times)\geq(F^\times:N)\geq(F^\times:T).
\]
From (\ref{formula for m}) we deduce that $\dim_{\dbF_p}(\Gam_v/p\Gam_v)\geq m$.

\medskip

(b) \quad
Assume first that $-1\in(F^\times)^p$.
Then $N=(F^\times)^p$.
Theorem \ref{Efr0626 5 5}(b) yields a valuation $v$ on $F$ such that $1+\grm_v\leq(F^\times)^p$ and $((F^\times)^pO_v^\times:(F^\times)^p)|p$.
As in (a), we may assume that $\Char\,\bar F_v\neq p$ and $v$ is $p$-Henselian.
The kernel of $F^\times/(F^\times)^p\to\Gam_v/p\Gam_v$ is $(F^\times)^pO_v^\times/(F^\times)^p$, which has the asserted size.

Next assume that $-1\not\in (F^\times)^p$.
Then $p=2$ and $N=\langle -1,(F^\times)^2\rangle$.
As in the previous case, there is a $2$-Henselian valuation $v$ on $F$ such that $N$ is a subgroup of $(F^\times)^2O_v^\times$ of index dividing $2$, and $\Char\,\bar F_v\neq 2$.
The above kernel is $(F^\times)^2O_v^\times/(F^\times)^2$, and we have
\[
((F^\times)^2O_v^\times:(F^\times)^2)|2(N:(F^\times)^2)=4. 
\]
(c) \quad
We argue as in (b), using Lemma \ref{AEJ lemma}.
\end{proof}

\section{Lego ETC + Demu\v skin $\Rightarrow$  Arithmetical ETC}
\label{section on equivalence of conjectures}
We prove the opposite implication to Corollary \ref{cor to ETC} and Theorem \ref{ETC implies Demuskin conjecture}:

\begin{thm}
The `Lego form' of the Elementary type Conjecture (Conjecture \ref{ETC Lego version}) together with the Demu\v skin Conjecture (Conjecture \ref{strong conjecture on Demuskin fields}) imply the arithmetical form of the Elementary Type Conjecture (Conjecture \ref{ETC arithmetical form}).
\end{thm}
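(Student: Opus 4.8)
The plan is to establish the arithmetical form (Conjecture~\ref{ETC arithmetical form}) by induction on $n=\rank(\calG_F(p))$, using the Lego form to place $\calG_F(p)$ in $\ET_p$ and then converting its inductive build-up into arithmetic. Since every member of $\ET_p$ is obtained from the four families of building blocks by free products and extensions $\bar\calG\mapsto\dbZ_p\rtimes\bar\calG$, such a pair is a finite free product of free-product-indecomposable members of $\ET_p$; by the associativity (\ref{associativity of extension}), an indecomposable member which is not a single building block is an iterated extension $\dbZ_p^m\rtimes\bar\calG$ with $m\ge1$. I would first dispose of the case where $F$ is Pythagorean (necessarily $p=2$): here $F$ is a finite intersection of Euclidean closures, so Corollary~\ref{full fields} (equivalently the Jacob--Min\'a\v c Theorem~\ref{decomposition theorem for Pythagorean fields}) gives the conclusion unconditionally. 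The base cases $n\le1$ are immediate: $\calZ^\alp$ contributes a factor $\isom\dbZ_p$ (type~(i)) and $\calE$ a factor $\isom\dbZ/2$ (type~(iii)).

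Next I treat an indecomposable $\calG_F(p)$ of rank $\ge2$. If $G_F(p)$ is Demu\v skin of rank $\ge3$ (in particular if $\calG_F(p)$ is a building block of $p$-adic type), then the Demu\v skin Conjecture~\ref{strong conjecture on Demuskin fields} makes $F$ arithmetically Demu\v skin, and Remark~\ref{existence of a valuation v on a Demuskin field} furnishes a valuation $v$ on $F$ with $\Gam_v\ne p\Gam_v$ whose decomposition group in $F(p)$ is all of $G_F(p)$; thus $G_F(p)$ is a single factor of type~(ii). Otherwise, being indecomposable and not a $p$-adic building block, $\calG_F(p)\isom\dbZ_p^m\rtimes\bar\calG$ with $m\ge1$, and I would invoke the rigidity package: Proposition~\ref{criterion for rigidity}, through Lemma~\ref{bilinear pairings and rigidity} and Remark~\ref{MerSus 2}, gives Corollary~\ref{corollary on rigidity}, so the subgroup $N\le F^\times$ generated by $-1$, $(F^\times)^p$ and the non-rigid elements lies in $T$, where $\dim_{\dbF_p}(F^\times/T)=m$. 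Feeding this into Theorem~\ref{existence of valuations} yields a $p$-Henselian valuation $v$ on $F$ with $\Char\,\bar F_v\ne p$ and $\Gam_v\ne p\Gam_v$; by Proposition~\ref{extensions from tame valuations} its inertia is $\dbZ_p^{m'}$ with $m'\ge1$ and, by $p$-Henselity, $Z(\tilde v/v)=G_F(p)$, so once more $G_F(p)$ is a single factor of type~(ii). The three parts (a), (b), (c) of Theorem~\ref{existence of valuations} correspond to whether $(F^\times)^p$ is totally rigid and to the position of $-1$, and the residual low-rank configuration has already fallen under the Pythagorean reduction above.

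It remains to handle a genuine free product $\calG_F(p)\isom\calG_1*\cdots*\calG_k$ with $k\ge2$. Each $\calG_i$ lies in $\ET_p$ and has strictly smaller rank. The rank-$\le1$ factors contribute summands $\isom\dbZ_p$ or $\isom\dbZ/2$, for which types~(i) and~(iii) ask only for the isomorphism type and need no realization over $F$. For a higher-rank factor I would pass to the fixed field $E_i=F(p)^{G_i}$, apply the inductive hypothesis there, and use indecomposability of $\calG_i$ to conclude that $E_i$ is either Euclidean or $p$-Henselian with respect to a valuation having nontrivial inertia and residue characteristic $\ne p$. These subgroups together generate $G_F(p)$, and the structure theorem of \cite{Efrat97b} (the theorem preceding Corollary~\ref{full fields}) then promotes the abstract decomposition to a free product over $F$ in which each valuation-theoretic summand is the decomposition group of the restricted valuation $v|_F$ on $F$, i.e.\ a factor of type~(ii).

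The main obstacle is precisely this last passage from an \emph{abstract} $\ET_p$-decomposition to \emph{genuine} valuations and orderings on the given field $F$. Types~(i) and~(iii) are insensitive to the field, but a type~(ii) summand must be the decomposition group of a valuation on $F$ itself, and a nontrivial free product destroys the global rigidity on $F$, so Theorem~\ref{existence of valuations} cannot be applied to $F$ directly. The delicate points are therefore the identification $G_{E_i}(p)=G_i$ that lets the induction descend to the fixed fields, the compatibility of this descent with the cyclotomic characters, and the verification that the resulting decomposition subgroups generate $G_F(p)$, so that the generation/structure theorem of \cite{Efrat97b} applies and returns the free product together with its decomposition fields on $F$.
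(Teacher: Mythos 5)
Your indecomposable branch is essentially the paper's argument (its Cases I--V, run with $F_i=F$): Demu\v skin Conjecture plus Remark \ref{existence of a valuation v on a Demuskin field} for $p$-adic blocks, and Corollary \ref{corollary on rigidity} feeding Theorem \ref{existence of valuations}(a)--(c) for extension factors. But in the free-product branch you diverge, and there the proposal has a genuine gap. The paper needs no induction on rank and no ``promotion'' step: it transports the Lego isomorphism into an internal decomposition $G_F(p)=G_1*_p\cdots*_pG_n$ of closed subgroups, sets $F_i=F(p)^{G_i}$, and observes that $F_i(p)=F(p)$ and $\calG_{F_i}(p)=\calG_i$ with $\chi_{F_i}=\chi_F|_{G_i}$ --- the identification and character compatibility you flag as the delicate points are routine. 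It then runs the whole case analysis \emph{over each $F_i$}: the Demu\v skin Conjecture makes $F_i$ arithmetically Demu\v skin in the $p$-adic case, and when $\calG_i\isom\dbZ_p\rtimes\bar\calG$ the rigidity machinery applies to $F_i$ because $\calG_{F_i}(p)$ itself has the semidirect-product structure. Your claim that ``a nontrivial free product destroys the global rigidity on $F$, so Theorem \ref{existence of valuations} cannot be applied to $F$ directly'' is true for $F$ but beside the point: it can be, and in the paper is, applied to $F_i$, whose resulting $p$-Henselian valuation with nontrivial inertia restricts to $F$ and yields the type (ii) factor directly.

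The step you substitute instead --- invoking the generation theorem of \cite{Efrat97b} quoted before Corollary \ref{full fields} to promote the abstract decomposition --- would fail. First, condition (i) of that theorem requires $\Char\,\bar E_v\neq p$, whereas for a $p$-adic-type factor the valuation on the arithmetically Demu\v skin field $F_i$ with $\Gam_v\neq p\Gam_v$ (Remark \ref{existence of a valuation v on a Demuskin field}) has residue field $\bar E_w$, a finite field of characteristic $p$; so these factors satisfy neither (i) nor (ii) and the theorem cannot see them. Second, the theorem presupposes that the chosen subgroups $G_{E_i}(p)$ \emph{generate} $G_F(p)$ and that their number is minimal; when some factor is $\calZ^\alp$ --- which by Remark \ref{remarks on the arithmetical ETC}(4) may carry no valuation or ordering at all --- the arithmetic subfamily does not generate $G_F(p)$, and the minimality hypothesis is nowhere verified. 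A smaller slip: the residual rank-two configuration $\calG_i\isom\dbZ_2\rtimes\calZ^\alp$ with $(F_i^\times:(F_i^\times)^2)=4$ and $-1\notin(F_i^\times)^2$ is never absorbed by your Pythagorean reduction, since $\dbZ_2\rtimes\dbZ_2$ is not generated by involutions, so $F_i$ (or $F$, in your indecomposable branch) is non-Pythagorean; that non-Pythagoreanness is exactly the hypothesis needed to apply Theorem \ref{existence of valuations}(c) via Lemma \ref{AEJ lemma}, as in the paper's Case V --- again carried out over $F_i$, not over $F$.
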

\begin{proof}
Let $F$ be a field containing a root of unity of order $p$ and such that $G_F(p)$ is finitely generated.
By Conjecture \ref{ETC Lego version}, $G_F(p)$ can be written
as 
\[
\calG_F(p)=\calG_1*\cdots*\calG_n,
\]
where for each $1\leq i\leq n$, the cyclotomic pro-$p$ pair $\calG_i$ is one of the building blocks (1)--(4) of Definition \ref{Definition of ETp}, or has the form $\dbZ_p\rtimes\bar\calG$ for some $\bar\calG=\bar\calG_i$.
We may assume that none of the $\calG_i$ is the trivial pair $(1,1)$.
When $\calG_i=\dbZ_p\rtimes\bar\calG_i$ we may also assume that $\bar\calG_i$ is nontrivial, since otherwise $\calG_i=\calZ^1$.
Also, when $p=2$, we may assume that $\bar\calG_i\neq\calE$, since otherwise, we can further decompose $\calG_i\isom\calE*\calE$ (Example \ref{Z2 E is E E}).

Write $\calG_i=\calG_{F_i}(p)$ for a subfield $F\subseteq F_i\subseteq F(p)$.
We show that in all cases, the underlying group $G_i$ of $\calG_i$ is as in Conjecture \ref{ETC arithmetical form}.

\medskip

{\sl Case I: \quad $\calG_i\isom\calZ^\alp$ for some $\alp\in\dbZ^{\times,1}$, or $\calG_i\isom\calE$, $p=2$.} \quad
Then the claim is immediate.

\medskip

{\sl Case II: \quad $\calG_i$ has $p$-adic type.} \quad
Then, by Conjecture \ref{strong conjecture on Demuskin fields}, $\calG_i\isom\calG_{F_i}(p)$ for some arithmetically Demu\v skin field $F_i$.
In this case we take $u,w$ as in Definition \ref{def of arithmetically Demuskin fields}, and let $v$ be again the refinement of $u$ with $w=v/u$.
Recall that $u$, $w$, and $v$ are $p$-Henselian (Proposition \ref{properties of the completion}(c)).
For the completion $(\hat E,\hat w)$  of $(E,w)$ we have $G_E(p)\isom G_{\hat E}(p)$ via restriction, by Proposition \ref{properties of the completion}(b).
Since the residue fields of $w,\hat w$ are the same, the restriction maps the inertia group of $\hat w$ in $\hat E(p)$ isomorphically onto the inertia subgroup of $w$ in $E(p)$ (see (\ref{exact sequence for decomposition group})).
Since the inertia group of $\hat w$ is nontrivial, so is the inertia group of $w$.
As $\Gam_u=p\Gam_u$ and $\Char\,\bar F_u=0$, the inertia group of $u$ is trivial (Proposition \ref{extensions from tame valuations}).
By Lemma \ref{inertia groups}, the inertia group of $v$ is isomorphic to that of $w$, whence is also nontrivial, as required.

\medskip

{\sl Case III:  \quad $\calG_i\isom\dbZ_p\rtimes\bar\calG$ for some pair $\bar\calG$, and $(F_i^\times)^p$ is not totally rigid in $F_i$.} \quad
Here Theorem \ref{existence of valuations}(a) yields a $p$-Henselian valuation $v$ on $F_i$ such that $\Char\,(\overline{F_i})_v\neq p$ and $\Gam_v\neq p\Gam_v$.
Then $G_{F_i}(p)$ is the decomposition group of $v$ with respect to its unique extension to $F(p)=F_i(p)$ and the inertia group is $\dbZ_p^{\dim(\Gam_v/p\Gam_v)}\neq1$ (Proposition \ref{extensions from tame valuations}).

\medskip

{\sl Case IV:  \quad $\calG_i\isom\dbZ_p\rtimes\bar\calG$ for some nontrivial pair $\bar\calG$, $(F_i^\times)^p$ is totally rigid in $F_i$, and $-1\in (F_i^\times)^p$.} \quad
Then $p^2|(F_i^\times:(F_i^\times)^p)$.
Theorem \ref{existence of valuations}(b) yields a $p$-Henselian valuation $v$ on $F_i$ such that $\Char\,(\overline{F_i})_v\neq p$ and
the natural map $F^\times/(F^\times)^p\to\Gam_v/p\Gam_v$ has kernel of size dividing $p$.
Therefore $\Gam_v\neq p\Gam_v$, and we proceed as in Case III.

\medskip

{\sl Case V:  \quad $\calG_i\isom\dbZ_p\rtimes\bar\calG$ for some nontrivial pair $\bar\calG=(\bar G,\bar\theta)\not\isom\calE$, the subgroup $(F_i^\times)^p$ is totally rigid in $F_i$, and $-1\not\in (F_i^\times)^p$.} \quad
Then $p=2$.
Theorem \ref{existence of valuations}(b) yields a $2$-Henselian valuation $v$ on $F_i$ such that $\Char\,(\overline{F_i})_v\neq 2$ and
the map $F^\times/(F^\times)^2\to\Gam_v/2\Gam_v$ has kernel of size dividing $4$.
As before, this proves the assertion for $\dim_{\dbF_2}(F_i^\times/(F_i^\times)^2)\geq3$, i.e., $\dim_{\dbF_2}H^1(\bar G)\geq2$.

Since $\bar\calG$ is nontrivial, $\dim_{\dbF_2}H^1(\bar G)\neq0$.

It remains to consider the case $\dim_{\dbF_2}H^1(\bar G)=1$.
As $\bar\calG\not\isom\calE$, this implies that $\bar\calG\isom\calZ^\alp$ for some $\alp\in\dbZ_2^\times$, so
$\calG_i=\dbZ_2\rtimes\calZ^\alp$ and $(F_i^\times\colon(F_i^\times)^2)=4$.
Since $G_{F_i}(2)$ is not generated by elements of order $2$, the field $F_i$ is not Pythagorean.
Theorem \ref{existence of valuations}(c) yields a $2$-Henselian valuation $v$ on $F_i$ with $\Char\,(\bar F_i)_v\neq2$ and $\Gam_v\neq2\Gam_v$, so we are done again by Proposition \ref{extensions from tame valuations}.
\end{proof}

\section{Applications}
\label{section on applications}
In this final section we list several conjectures in Galois cohomology and quadratic form theory that were settled in the elementary type case.

\subsection{The level conjecture}
Here we set $p=2$.
Recall that the \textsl{level} $s(F)$ of a field $F$ of characteristic $\neq 2$ is the minimal positive integer $s$ such that $-1$ is a sum of $s$ squares in $F$; if there is no such integer (equivalently, $F$ admits an ordering), one sets $s(F)=\infty$.
By a celebrated result of Pfister, if $s(F)$ is finite, then it is a power of $2$ \cite{Lam05}*{Ch.\ XI, Th.\ 2.2}.
It is an open question whether, for $F$ satisfying $(F^\times:(F^\times)^2)<\infty$ (equivalently, $G_F(2)$ is finitely generated), the level must be $1,2,4$, or $\infty$ \cite{Lam05}*{Ch.\ XIII, Question 6.2}.
This was proved by Becher for $(F^\times:(F^\times)^2)\leq 256$ \cite{Becher01}.

Now given a cyclotomic pro-$2$ pair $\calG=(G,\theta)$, we view the homomorphism $\eps_\calG\colon G\to\{\pm1\}$ of (\ref{eps calG}) as an element of $H^1(G)$.
We define the \textsl{logarithmic level} ${\rm logl}(\calG)$ of $\calG$ to be the minimal positive integer $m$ such that the $m$-fold cup product $\eps_\calG\cup\cdots\cup\eps_\calG$ is $0$ in $H^m(G)$, and to be $\infty$ if no such integer exists.
Recall that if $\calG=\calG_F(2)$ for a field $F$ of characteristic $\neq2$, then $\eps_\calG=(-1)_F$ (Remark \ref{aaa}(2)).
Further, $(-1)_F\cup\cdots\cup(-1)_F=0$ in $H^m(G_F(2))$ if and only if 
the $m$-fold Pfister form $\langle\langle 1\nek1\rangle\rangle=2^m\langle1\rangle$ is isotropic, or equivalently, $-1$ is a sum of at most $2^m-1$ squares in $F$.
By Pfister's theorem, this means that $s(F)\leq 2^{m-1}$.
Thus the above problem is whether ${\rm logl}(\calG_F(2))\in\{1,2,3,\infty\}$.
The following theorem answers this question positively when $\calG\in\ET_p$:

\begin{thm}
Every $\calG=(G,\theta)\in\ET_2$ has logarithmic level $1,2,3$ or $\infty$.
\end{thm}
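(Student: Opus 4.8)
The plan is to argue by structural induction on the class $\ET_2$ (Definition \ref{Definition of ETp}). It suffices to check that each of the four kinds of building blocks has logarithmic level lying in $\{1,2,3,\infty\}$, and that this set is preserved under the two generating operations, extension $\bar\calG\mapsto\dbZ_2\rtimes\bar\calG$ and free product. The heart of the matter is to establish two transformation rules for ${\rm logl}$ under these operations; once these are in hand the statement follows at once, since $\{1,2,3,\infty\}$ is closed under maxima and no new finite value can be created.

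Consider first the extension $\calG=\dbZ_2\rtimes\bar\calG=(G,\theta)$. Since $\theta=\bar\theta\circ\pi$ factors through the projection $\pi\colon G\to\bar G$, we have $\eps_\calG=\inf(\eps_{\bar\calG})$, and hence $\eps_\calG^{\cup k}=\inf(\eps_{\bar\calG}^{\cup k})$ for every $k\geq1$ because inflation is a ring homomorphism. The $j=0$ (empty list) instance of the injectivity assertion in Proposition \ref{generalized Wadsworth formula} says precisely that $\inf\colon H^k(\bar G)\to H^k(G)$ is injective; therefore $\eps_\calG^{\cup k}=0$ if and only if $\eps_{\bar\calG}^{\cup k}=0$, and so ${\rm logl}(\dbZ_2\rtimes\bar\calG)={\rm logl}(\bar\calG)$. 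For a free product $\calG=\calG_1*\cdots*\calG_n$, write $\calG=(G,\theta)$ and $\calG_i=(G_i,\theta_i)$. The restriction maps give an isomorphism $H^k(G)\isom\bigoplus_{i}H^k(G_i)$ for every $k\geq1$ (the standard description of the cohomology of free pro-$2$ products, \cite{NeukirchSchmidtWingberg}), and restriction is multiplicative; since $\theta|_{G_i}=\theta_i$ gives $\Res_{G_i}\eps_\calG=\eps_{\calG_i}$, we obtain $\Res_{G_i}(\eps_\calG^{\cup k})=\eps_{\calG_i}^{\cup k}$. Hence $\eps_\calG^{\cup k}=0$ precisely when $\eps_{\calG_i}^{\cup k}=0$ for all $i$, which yields ${\rm logl}(\calG_1*\cdots*\calG_n)=\max_i{\rm logl}(\calG_i)$.

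It remains to evaluate ${\rm logl}$ on the building blocks. For the trivial pair $\eps_\calG=0$, so ${\rm logl}=1$. For $\calZ^\alp$ the underlying group is $\dbZ_2$, whose cohomology satisfies $H^2(\dbZ_2)=0$; thus $\eps_\calG^{\cup2}=0$ and ${\rm logl}\in\{1,2\}$. For $\calE=(\dbZ/2,\theta)$ the element $\eps_\calG$ is the generator of the polynomial ring $H^*(\dbZ/2)$, so every cup power is nonzero and ${\rm logl}=\infty$. Finally, a pair of $2$-adic type is $\calG_{\hat E}(2)$ for a finite extension $\hat E/\dbQ_2$, whose maximal pro-$2$ Galois group is a Demu\v skin group and in particular has cohomological dimension $2$; hence $H^3(G)=0$, which forces $\eps_\calG^{\cup3}=0$ and ${\rm logl}\in\{1,2,3\}$. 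Feeding these values into the two transformation rules and inducting over the construction of $\ET_2$, we conclude that every $\calG\in\ET_2$ has ${\rm logl}\in\{1,2,3,\infty\}$. I expect the only delicate points to be the cohomological inputs supporting the two rules — the injectivity of inflation furnished by Proposition \ref{generalized Wadsworth formula} and the multiplicativity of the free-product splitting — while the exclusion of the value $4$ is then a formal consequence: a finite logarithmic level propagates through extensions unchanged and through free products only as a maximum of already-present finite values, so no finite value exceeding $3$ can ever appear.
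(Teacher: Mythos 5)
Your proposal is correct, and its skeleton is the same as the paper's: the identical structural induction over $\ET_2$, with the extension rule ${\rm logl}(\dbZ_2\rtimes\bar\calG)={\rm logl}(\bar\calG)$ proved exactly as in the paper (via $\eps_\calG=\inf(\eps_{\bar\calG})$ and the injectivity of inflation extracted from the $j=0$ case of Proposition \ref{generalized Wadsworth formula}), the free-product rule ${\rm logl}=\max_i{\rm logl}(\calG_i)$ obtained from the same splitting of cohomology of free pro-$2$ products in \cite{NeukirchSchmidtWingberg}, and the same evaluations on $(1,1)$, $\calZ^\alp$, and $\calE$. The one genuine divergence is the $2$-adic building blocks: the paper computes the level, $s(F)\in\{2,4\}$ for a finite extension $F$ of $\dbQ_2$, and translates it into ${\rm logl}(\calG)\in\{2,3\}$ via Pfister's theorem, whereas you argue cohomologically that $G_{\hat E}(2)$ is Demu\v skin, hence of cohomological dimension $2$, so $H^3(G)=0$ and ${\rm logl}\leq3$. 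Your route works and has the advantage of staying entirely inside Galois cohomology, avoiding quadratic form theory, though it is less precise about the actual value; one caveat you should make explicit is that ``Demu\v skin implies $\cd=2$'' fails for the unique finite Demu\v skin group $\dbZ/2$ (which has infinite cohomological dimension), so you must invoke the fact, recorded in \S\ref{section on Demushkin groups}, that pairs of $2$-adic type have rank $\geq3$ and are therefore infinite Demu\v skin groups, which are Poincar\'e duality groups of dimension $2$. With that line added, the proof is complete, and your closing observation is right: finiteness of the set $\{1,2,3,\infty\}$ under the two rules is a purely formal consequence, since extensions preserve ${\rm logl}$ and free products only take maxima.
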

\begin{proof}
We argue by induction on the structure of $\calG$.

For $\calG=(1,1)$ or $\calG=\calZ_\alp$, $\alp\in\dbZ_p^{\times,1}$, we have $H^2(G)=0$, so ${\rm logl}(\calG)\in\{1,2\}$.

For $\calG=\calE$, the cup product $\eps_\calG\cup\cdots\cup\eps_\calG\in H^m(\dbZ/2)$ is nonzero for every $m$ \cite{AdemMilgram94}*{Ch.\ II, Th.\ 4.4}, so ${\rm logl}(\calE)=\infty$.

Next take $\calG$ of $2$-adic type, so $\calG=\calG_F(2)$ where $F$ is a finite extension of $\dbQ_2$.
Then $s(F)\in\{2,4\}$ \cite{Lam05}*{Ch.\ XI, Example 2.4(7)}, so ${\rm logl}(\calG)\in\{2,3\}$.                                                                                                                                
 
Now suppose that $\calG=\calG_1*\calG_2$ and the assertion is true for $\calG_i=(G_i,\theta_i)$, $i=1,2$.
Then $H^*(G)=H^*(G_1)\coprod H^*(G_2)$ as graded rings \cite{NeukirchSchmidtWingberg}*{Th.\ 4.1.4} and $\eps_\calG$ corresponds to $(\eps_{\calG_1},\eps_{\calG_2})$ under $H^1(G)=H^1(G_1)\oplus H^1(G_2)$.
It follows that ${\rm logl}(\calG)=\max\{{\rm logl}(\calG_1),{\rm logl}(\calG_2)\}$.

Finally, suppose that $\calG=\dbZ_p\rtimes\bar\calG$, where the assertion holds for $\bar\calG$.
Then $H^1(G)=(\dbZ/2)\oplus\inf(H^1(\bar G))$  and $\eps_\calG=\inf(\eps_{\bar\calG})$ (Lemma \ref{cohomological properties of extensions}(a)).
Moreover, $\inf\colon H^*(\bar G)\to H^*(G)$ is injective (Proposition \ref{generalized Wadsworth formula}).
Hence ${\rm logl}(\calG)={\rm logl}(\bar\calG)$.
\end{proof}

\subsection{Pythagoras numbers}
Let $F$ be a field of characteristic $\neq2$.
The \textsl{Pythagoras number} $P(F)$ of $F$ is the minimal positive integer $n$ such that every sum of squares in $F$ is a sum of $n$ squares, if such $n$ exists, and is $\infty$ if there is no such $n$.
Thus $F$ is Pythagorean if and only if $P(F)=1$.
If $p=2$ and $\calG_F(2)$ has elementary type, then it can be shown that $P(F)\leq5$ \cite{Marshall04}*{\S3.4}.
It appears to be unknown if this holds under the weaker assumption that $(F^\times:(F^\times)^2)<\infty$. 
If one removes the finiteness assumption, then any positive integer can be realized as the Pythagoras number of a field \cite{Hoffmann99}.

Various other quadratic form invariants can be treated similarly. 
For instance, the \textsl{$u$-invariant} $u(F)$ of $F$, which is the maximal dimension of an anisotropic quadratic form over $F$ (and $\infty$ if there is no such maximal dimension), can be shown to be a $2$-power whenever $\calG_F(2)$ has elementary type; See \cite{Lam05}*{Ch.\ XIII, \S6}. 

\subsection{Witt rings}
We take again $p=2$.
As shown by Jacob and Ware \cite{JacobWare89}, the operations of free product and extension in the category of cyclotomic pro-$2$ pairs correspond to the operations of direct product and group ring formation in the category of abstract Witt rings \cite{Marshall80}.
Further, the building blocks in Conjecture \ref{ETC Lego version} correspond to the Witt rings of $\dbC$, finite fields, $\dbR$, and the finite extensions of $\dbQ_2$, respectively.
Therefore, as shown in \cite{JacobWare89}, Conjecture \ref{ETC Lego version} implies Marshall's Elementary Type Conjecture for finite Witt rings of fields of characteristic $\neq2$ (see \cite{Marshall80}).

\subsection{The strong Massey vanishing conjecture}
Massey products form a general ``external' construction in the homological context of differential graded algebras.
In recent years they were extensively studied  in the context of the cohomology of maximal pro-$p$ Galois groups $G=G_F(p)$, where as usual $F$ contains a root of unity of order $p$.
There, given $\varphi_1\nek\varphi_n\in H^1(G)$, the Massey product $\langle\varphi_1\nek\varphi_n\rangle$ is a subset of $H^2(G)$.
When $n=2$ we have $\langle\varphi_1,\varphi_2\rangle=\{\varphi_1\cup\varphi_2\}$.
The \textsl{Massey Vanishing Conjecture} by Min\'a\v c and T\^an predicts that if $n\geq3$, any nonempty $n$-fold Massey product in $H^2(G)$ necessarily contains $0$.
This was proved for $n=3$ by Min\'a\v c--T\^an \cite{MinacTan16} and Matzri and the author (\cite{Matzri14}, \cite{EfratMatzri17}),
and for $n=4$, $p=2$, by Merkurjev and Scavia \cite{MerkurjevScavia23}.

Now assuming further that $\sqrt{-1}\in F$ when $p=2$,  Quadrelli \cite{Quadrelli24} proves the Massey vanishing conjecture (in fact in a stronger form) for every $F$ for which $\calG_F(p)$ has elementary type.

\subsection{The Massey symbol length conjecture}
Let $F$ be again a field containing a root of unity of order $p$.
The cup product homomorphism $\cup\colon H^1(G_F(p))^{\tensor2}\to H^2(G_F(p))$ is surjective, by the Merkurjev--Suslin theorem.
Thus every element $\alp$ of $H^2(G_F(p))$ is a sum of finitely many cup products $\psi\cup\psi'$ with $\psi,\psi'\in H^1(G_F(p))$.
The minimal number of cup products required is called the \textsl{symbol length} of $\alp$.

The \textsl{Massey Symbol Length Conjecture} predicts that, given $n\geq2$, there is a uniform bound $M(n,p)<\infty$ such that for every $F$, every $\varphi_1\nek\varphi_n\in H^1(G_F(p))$, 
and every element $\alp$ of the $n$-fold Massey product $\langle\varphi_1,\nek\varphi_n\rangle$, the symbol length of $\alp$ is at most $M(n,p)$.
For instance, one trivially has $M(2,p)=1$, and by the above results of \cite{MinacTan16}, \cite{Matzri14}, and \cite{EfratMatzri17}, $M(3,p)=2$.
Merkurjev and Scavia pointed out to the author that $M(4,2)\leq4$.
In \cite{Efrat24} we prove that the conjecture holds when restricted to the class of fields $F$ such that $\calG_F(p)$ has elementary type.
In fact, the paper proves for this class a more general conjecture by Positselski about the symbol lengths of pullbacks of certain cohomology elements.

\subsection{Koszulity conjectures}
A conjecture by Positselski \cite{Positselski14} predicts that for a field $F$ containing a root of unity of order $p$, the cohomology graded algebra $H^*(G_F(p))=\bigoplus_{i\geq0}H^i(G_F(p))$ with the cup product has a strong property known as \textsl{Koszulity}.
If true, this would partly generalize the  isomorphism $K^M_*(F)/p\xrightarrow{\sim}H^*(G_K(p))$ given by the Norm Residue Theorem of Voevodsky and Rost.
In \cite{MinacPasiniQuadrelliTan21} Min\'a\v c et al prove this conjecture under the assumption that $\calG_F(p)$ has elementary type.

\begin{bibdiv}
\begin{biblist}

\bib{AdemMilgram94}{book}{
   author={Adem, Alejandro},
   author={Milgram, R. James},
   title={Cohomology of finite groups},
%   series={Grundlehren der mathematischen Wissenschaften [Fundamental Principles of Mathematical Sciences]},
   volume={309},
   publisher={Springer-Verlag, Berlin},
   date={1994},
   pages={viii+327},
%   isbn={3-540-57025-X},
%   review={\MR{1317096}},
%   doi={10.1007/978-3-662-06282-1},
}

\bib{ArasonElmanJacob87}{article}{
   author={Arason, J\'on Kr.},
   author={Elman, Richard},
   author={Jacob, Bill},
   title={Rigid elements, valuations, and realization of Witt rings},
   journal={J. Algebra},
   volume={110},
   date={1987},
%   number={2},
   pages={449--467},
%   issn={0021-8693},
%   review={\MR{0910395}},
%   doi={10.1016/0021-8693(87)90057-3},
}

\bib{BarOnNikolov24}{article}{
   author={Bar-On, Tamar},
   author={Nikolov, Nikolay},
   title={Demushkin groups of uncountable rank},
   journal={J. Lond. Math. Soc. (2)},
   volume={109},
   date={2024},
%  number={3},
   pages={Paper No. e12875, 24},
%  issn={0024-6107},
%  review={\MR{4709827}},
%  doi={10.1112/jlms.12875},
}

\bib{Becher01}{article}{
   author={Becher, Karim Johannes},
   title={On the number of square classes of a field of finite level},
   booktitle={Proceedings of the Conference on Quadratic Forms and Related Topics (Baton Rouge, LA, 2001)},
   journal={Doc. Math.},
   date={2001},
   pages={65--84},
%   issn={1431-0635},
%   review={\MR{1869389}},
}

\bib{Becker74}{article}{
author={Becker, Eberhard},
title={Euklidische K\"orper und euklidische  H\"ullen von K\"orper},
journal={J.\ reine angew.\ Math.},
volume={268-279},
date={1974},
pages={41\ndash52},
}

\bib{Broecker76}{article}{
   author={Br\"ocker, Ludwig},
   title={Characterization of fans and hereditarily Pythagorean fields},
   journal={Math. Z.},
   volume={151},
   date={1976},
%   number={2},
   pages={149--163},
%   issn={0025-5874},
%   review={\MR{0422233}},
%   doi={10.1007/BF01213992},
}

\bib{Broecker77}{article}{
   author={Br\"ocker, Ludwig},
   title={\"Uber die Anzahl der Anordnungen eines kommutativen K\"orpers},
   journal={Arch. Math. (Basel)},
   volume={29},
   date={1977},
   number={5},
   pages={458--464},
%   issn={0003-889X},
%   review={\MR{0485809}},
%   doi={10.1007/BF01220438},
}

\bib{CarsonMarshall82}{article}{
   author={Carson, Andrew B.},
   author={Marshall, Murray A.},
   title={Decomposition of Witt rings},
   journal={Canadian J. Math.},
   volume={34},
   date={1982},
%   number={6},
   pages={1276--1302},
%   issn={0008-414X},
%   review={\MR{678670}},
%   doi={10.4153/CJM-1982-089-1},
}

\bib{Cordes82}{article}{
   author={Cordes, Craig M.},
   title={Quadratic forms over fields with four quaternion algebras},
   journal={Acta Arith.},
   volume={41},
   date={1982},
%   number={1},
   pages={55--70},
%   issn={0065-1036},
%   review={\MR{0667709}},
%   doi={10.4064/aa-41-1-55-70},
}

\bib{Craven78}{article}{
   author={Craven, Thomas C.},
   title={Characterizing reduced Witt rings of fields},
   journal={J. Algebra},
   volume={53},
   date={1978},
%   number={1},
   pages={68--77},
%   issn={0021-8693},
%   review={\MR{480332}},
%   doi={10.1016/0021-8693(78)90205-3},
}

\bib{Demushkin61}{article}{
   author={Demu\v{s}kin, S. P.},
   title={The group of a maximal $p$-extension of a local field},
   language={Russian},
   journal={Izv. Akad. Nauk SSSR Ser. Mat.},
   volume={25},
   date={1961},
   pages={329--346},
%   issn={0373-2436},
%   review={\MR{0123565}},
}

\bib{Efrat93}{article}{
   author={Efrat, Ido},
   title={Free product decompositions of Galois groups over Pythagorean
   fields},
   journal={Comm. Algebra},
   volume={21},
   date={1993},
%   number={12},
   pages={4495--4511},
%   issn={0092-7872},
%   review={\MR{1242845}},
%   doi={10.1080/00927879308824813},
}

\bib{Efrat95}{article}{
  author={Efrat, Ido},
  title={Orderings, valuations and free products of Galois groups},
  book={
     series={S\'eminaire de Structures Alg\'ebriques Ordonn\'ees, Lecture Notes},
     volume={54},
     publisher={University of Paris VII},
   },
   date={1995},
    }

\bib{Efrat97a}{article}{
   author={Efrat, Ido},
   title={Pro-$p$ Galois groups of algebraic extensions of $\bold Q$},
   journal={J. Number Theory},
   volume={64},
   date={1997},
%   number={1},
   pages={84--99},
%   issn={0022-314X},
%   review={\MR{1450486}},
%   doi={10.1006/jnth.1997.2091},
}
	
\bib{Efrat97b}{article}{
   author={Efrat, Ido},
   title={Free pro-$p$ product decompositions of Galois groups},
   journal={Math. Z.},
   volume={225},
   date={1997},
%   number={2},
   pages={245--261},
   issn={0025-5874},
%   review={\MR{1464929}},
%   doi={10.1007/PL00004310},
}

\bib{Efrat98}{article}{
   author={Efrat, Ido},
   title={Small maximal pro-$p$ Galois groups},
   journal={Manuscripta Math.},
   volume={95},
   date={1998},
%   number={2},
   pages={237--249},
%   issn={0025-2611},
%   review={\MR{1603329}},
%   doi={10.1007/s002290050026},
}

\bib{Efrat99a}{article}{
   author={Efrat, Ido},
   title={Finitely generated pro-$p$ absolute Galois groups over global  fields},
   journal={J. Number Theory},
   volume={77},
   date={1999},
   number={1},
   pages={83--96},
%   issn={0022-314X},
%   review={\MR{1695702}},
%   doi={10.1006/jnth.1999.2379},
}

\bib{Efrat99b}{article}{
   author={Efrat, Ido},
   title={Finitely generated pro-$p$ Galois groups of $p$-Henselian fields},
   journal={J. Pure Appl. Algebra},
   volume={138},
   date={1999},
%   number={3},
   pages={215--228},
%   issn={0022-4049},
%   review={\MR{1691472}},
%   doi={10.1016/S0022-4049(98)00033-4},
}

\bib{Efrat99c}{article}{
   author={Efrat, Ido},
   title={Construction of valuations from $K$-theory},
   journal={Math. Res. Lett.},
   volume={6},
   date={1999},
%   number={3-4},
   pages={335--343},
%   issn={1073-2780},
%   review={\MR{1713134}},
%   doi={10.4310/MRL.1999.v6.n3.a7},
}

\bib{Efrat00}{article}{
   author={Efrat, Ido},
   title={Pro-$p$ Galois groups of function fields over local fields},
   journal={Comm. Algebra},
   volume={28},
   date={2000},
   number={6},
   pages={2999--3021},
%   issn={0092-7872},
%   review={\MR{1757442}},
%   doi={10.1080/00927870008827005},
}

\bib{Efrat01}{article}{
   author={Efrat, Ido},
   title={A Hasse principle for function fields over PAC fields},
   journal={Israel J. Math.},
   volume={122},
   date={2001},
   pages={43--60},
%   issn={0021-2172},
%   review={\MR{1826490}},
%   doi={10.1007/BF02809890},
}
	
\bib{Efrat03}{article}{
   author={Efrat, Ido},
   title={Demu\v skin fields with valuations},
   journal={Math. Z.},
   volume={243},
   date={2003},
%   number={2},
   pages={333--353},
%   issn={0025-5874},
%   review={\MR{1961869}},
%   doi={10.1007/s00209-002-0471-1},
}

\bib{Efrat06}{book}{
   author={Efrat, Ido},
   title={Valuations, Orderings, and Milnor $K$-Theory},
   series={Mathematical Surveys and Monographs},
   volume={124},
   publisher={American Mathematical Society, Providence, RI},
   date={2006},
   pages={xiv+288},
%   isbn={0-8218-4041-X},
%   review={\MR{2215492}},
%   doi={10.1090/surv/124},
}

\bib{Efrat24}{article}{
   author={Efrat, Ido},
   title={The symbol length for elementary type pro-$p$ groups and Massey
   products},
   journal={J. Algebra},
   volume={653},
   date={2024},
   pages={298--324},
%   issn={0021-8693},
%   review={\MR{4748734}},
%   doi={10.1016/j.jalgebra.2024.04.025},
}
	
\bib{EfratHaran94}{article}{
   author={Efrat, Ido},
   author={Haran, Dan},
   title={On Galois groups over Pythagorean and semi-real closed fields},
   journal={Israel J. Math.},
   volume={85},
   date={1994},
%   number={1-3},
   pages={57--78},
%   issn={0021-2172},
%   review={\MR{1264339}},
%   doi={10.1007/BF02758636},
}

\bib{EfratMatzri17}{article}{
   author={Efrat, Ido},
   author={Matzri, Eliyahu},
   title={Triple Massey products and absolute Galois groups},
   journal={J. Eur. Math. Soc. (JEMS)},
   volume={19},
   date={2017},
%   number={12},
   pages={3629--3640},
%   issn={1435-9855},
%   review={\MR{3730509}},
%   doi={10.4171/JEMS/748},
}

\bib{EfratMinac11}{article}{
   author={Efrat, Ido},
   author={Min\'a\v c, J\'an},
   title={On the descending central sequence of absolute Galois groups},
   journal={Amer. J. Math.},
   volume={133},
   date={2011},
%   number={6},
   pages={1503--1532},
%   issn={0002-9327},
%   review={\MR{2863369}},
%   doi={10.1353/ajm.2011.0041},
}

\bib{EfratQuadrelli19}{article}{
author={Efrat, Ido},
author={Quadrelli, Clausio},
title={The Kummerian property and maximal pro-$p$ Galois groups}, 
journal={J.\ Algebra},
volume={525},
date={2019},
pages={284\ndash310},
}

\bib{FriedJarden23}{book}{
   author={Fried, Michael D.},
   author={Jarden, Moshe},
   title={Field Arithmetic},
%   series={Ergebnisse der Mathematik und ihrer Grenzgebiete. 3. Folge. A Series of Modern Surveys in Mathematics [Results in Mathematics and Related Areas. 3rd Series. A Series of Modern Surveys in Mathematics]},
%   volume={11},
   edition={4},
%   note={Revised by Moshe Jarden},
   publisher={Springer, Cham},
   date={2023},
   pages={xxxi+827},
%   isbn={978-3-031-28019-1},
%   isbn={978-3-031-28020-7},
%   review={\MR{4647281}},
%   doi={10.1007/978-3-031-28020-7},
}

\bib{GilleSzamuely17}{book}{
   author={Gille, Philippe},
   author={Szamuely, Tam\'as},
   title={Central simple algebras and Galois cohomology},
   series={Cambridge Studies in Advanced Mathematics},
   volume={165},
   edition={2},
   publisher={Cambridge University Press, Cambridge},
   date={2017},
   pages={xi+417},
%   isbn={978-1-316-60988-0},
%   isbn={978-1-107-15637-1},
%   review={\MR{3727161}},
}

\bib{Hoffmann99}{article}{
   author={Hoffmann, Detlev W.},
   title={Pythagoras numbers of fields},
   journal={J. Amer. Math. Soc.},
   volume={12},
   date={1999},
%   number={3},
   pages={839--848},
%   issn={0894-0347},
%   review={\MR{1670858}},
%   doi={10.1090/S0894-0347-99-00301-X},
}

\bib{HwangJacob95}{article}{
   author={Hwang, Yoon Sung},
   author={Jacob, Bill},
   title={Brauer group analogues of results relating the Witt ring to valuations and Galois theory},
   journal={Canad. J. Math.},
   volume={47},
   date={1995},
%   number={3},
   pages={527--543},
%   issn={0008-414X},
%   review={\MR{1346152}},
%   doi={10.4153/CJM-1995-029-4},
}

\bib{Jacob81}{article}{
   author={Jacob, Bill},
   title={On the structure of Pythagorean fields},
   journal={J. Algebra},
   volume={68},
   date={1981},
%   number={2},
   pages={247--267},
%   issn={0021-8693},
%   review={\MR{608534}},
%   doi={10.1016/0021-8693(81)90263-5},
}

\bib{JacobWadsworth86}{article}{
   author={Jacob, Bill},
   author={Wadsworth, Adrian R.},
   title={A new construction of noncrossed product algebras},
   journal={Trans. Amer. Math. Soc.},
   volume={293},
   date={1986},
%   number={2},
   pages={693--721},
%   issn={0002-9947},
%   review={\MR{0816320}},
%   doi={10.2307/2000032},
}

\bib{JacobWare89}{article}{
   author={Jacob, Bill},
   author={Ware, Roger},
   title={A recursive description of the maximal pro-$2$ Galois group via Witt rings},
   journal={Math. Z.},
   volume={200},
   date={1989},
%   number={3},
   pages={379--396},
%   issn={0025-5874},
%   review={\MR{978598}},
%   doi={10.1007/BF01215654},
}

\bib{JacobWare91}{article}{
   author={Jacob, Bill},
   author={Ware, Roger},
   title={Realizing dyadic factors of elementary type Witt rings and pro-$2$
   Galois groups},
   journal={Math. Z.},
   volume={208},
   date={1991},
%   number={2},
   pages={193--208},
%   issn={0025-5874},
%   review={\MR{1128705}},
%   doi={10.1007/BF02571520},
}

\bib{Kaplansky69}{article}{
   author={Kaplansky, Irving},
   title={Fr\"ohlich's local quadratic forms},
   journal={J. Reine Angew. Math.},
   volume={239/240},
   date={1969},
   pages={74--77},
%   issn={0075-4102},
%   review={\MR{0256989}},
%   doi={10.1515/crll.1969.239-240.74},
}

\bib{Koch92}{collection}{
author={Koch, Helmut},
   title={Number Theory. II},
   series={Encyclopaedia of Mathematical Sciences},
   volume={62},
   editor={Parshin, A. N.},
   editor={Shafarevich, I. R.},
%   note={Algebraic number theory;
%   A translation of {\it Number theory, 2 (Russian)}, Akad.\ Nauk SSSR,
%   Vsesoyuz.\ Inst.\ Nauchn.\ i Tekhn.\ Inform., Moscow, 1990 [MR1100312
%   (92a:11118a)];
%   Translation edited by A. N. Parshin and I. R. Shafarevich},
   publisher={Springer-Verlag, Berlin},
   date={1992},
%   pages={iv+269},
%   isbn={3-540-53386-9},
%   review={\MR{1218887}},
}

\bib{KoenigsmannStrommen24}{article}{
   author={Koenigsmann, Jochen},
   author={Strommen, Kristian},
   title={Recovering $p$-adic valuations from pro-$p$ Galois groups},
   journal={J. Lond. Math. Soc. (2)},
   volume={109},
   date={2024},
%   number={5},
   pages={Paper No. e12901, 38},
%   issn={0024-6107},
%   review={\MR{4745875}},
%   doi={10.1112/jlms.12901},
}

\bib{KrullNeukirch71}{article}{
   author={Krull, Wolfgang},
   author={Neukirch, J\"{u}rgen},
   title={Die Struktur der absoluten Galoisgruppe \"{u}ber dem K\"{o}rper ${\rm R}(t)$},
%   language={German},
   journal={Math. Ann.},
   volume={193},
   date={1971},
   pages={197--209},
%   issn={0025-5831},
%   review={\MR{289471}},
%   doi={10.1007/BF02052391},
}

\bib{KuhlmannPankRoquette86}{article}{
   author={Kuhlmann, Franz-Viktor},
   author={Pank, Matthias},
   author={Roquette, Peter},
   title={Immediate and purely wild extensions of valued fields},
   journal={Manuscripta Math.},
   volume={55},
   date={1986},
%   number={1},
   pages={39--67},
%   issn={0025-2611},
%   review={\MR{0828410}},
%   doi={10.1007/BF01168612},
}

\bib{Kula79}{article}{
   author={Kula, Mieczys\l aw},
   title={Fields with prescribed quadratic form schemes},
   journal={Math. Z.},
   volume={167},
   date={1979},
%   number={3},
   pages={201--212},
%   issn={0025-5874},
%   review={\MR{0539104}},
%   doi={10.1007/BF01174801},
}

\bib{Kula91}{book}{
   author={Kula, Mieczys\l aw},
   title={Finitely generated Witt rings},
   series={%Prace Naukowe Uniwersytetu \'Sl\polhk askiego w Katowicach
   Scientific Publications of the University of Silesia},
   volume={1207},
%   note={With Polish and Russian summaries},
   publisher={Uniwersytet \'Sl\polhk aski, Katowice},
   date={1991},
   pages={52},
%   isbn={83-226-0373-8},
%   review={\MR{1151834}},
}

\bib{KulaSzczepanikSzymiczek79}{article}{
   author={Kula, Mieczys\l aw},
   author={Szczepanik, Lucyna},
   author={Szymiczek, Kazimierz},
   title={Quadratic forms over formally real fields with eight square    classes},
   journal={Manuscripta Math.},
   volume={29},
   date={1979},
%   number={2-4},
   pages={295--303},
%   issn={0025-2611},
%   review={\MR{0545046}},
%   doi={10.1007/BF01303632},
}

\bib{Labute66}{article}{
   author={Labute, John P.},
   title={Demushkin groups of rank $\aleph_0$},
   journal={Bull.\ Soc.\ Math.\ France},
   volume={94},
   date={1966},
   pages={211\ndash244},
}

\bib{Labute67}{article}{
   author={Labute, John P.},
   title={Classification of Demushkin groups},
   journal={Canadian J. Math.},
   volume={19},
   date={1967},
   pages={106--132},
%   issn={0008-414X},
%   review={\MR{210788}},
%   doi={10.4153/CJM-1967-007-8},
}

\bib{Lam05}{book}{
   author={Lam, T. Y.},
   title={Introduction to quadratic forms over fields},
   series={Graduate Studies in Mathematics},
   volume={67},
   publisher={American Mathematical Society, Providence, RI},
   date={2005},
   pages={xxii+550},
%   isbn={0-8218-1095-2},
%   review={\MR{2104929}},
%   doi={10.1090/gsm/067},
}

\bib{LorenzSchonert25}{article}{
    author={Lorenz, Nico},
    author={Sch\"onert, Alexander},
    title={Normal quaternionic matrices and finitely generated Witt rings},
    status={preprint},
    eprint={arXiv:2505.14485},
    date={2025},
}

\bib{Marshall80}{book}{
   author={Marshall, Murray},
   title={Abstract Witt rings},
   series={Queen's Papers in Pure and Applied Mathematics},
   volume={57},
   publisher={Queen's University, Kingston, ON},
   date={1980},
   pages={vi+257},
%   review={\MR{0674651}},
}

\bib{Marshall04}{article}{
   author={Marshall, M.},
   title={The elementary type conjecture in quadratic form theory},
   conference={
      title={Algebraic and arithmetic theory of quadratic forms},
   },
   book={
      series={Contemp. Math.},
      volume={344},
      publisher={Amer. Math. Soc., Providence, RI},
   },
   isbn={0-8218-3441-X},
   date={2004},
   pages={275--293},
%   review={\MR{2060204}},
%   doi={10.1090/conm/344/06224},
}

\bib{Matzri14}{article}{
author={Matzri, Eliyahu},
title={Triple Massey products in Galois cohomology},
eprint={arXiv:1411.4146},
date={2014},
}

\bib{MerkurjevScavia23}{article}{
author={Merkurjev, Alexander},
author={Scavia, Federico},
title={The Massey Vanishing Conjecture for fourfold Massey products modulo $2$},
eprint={arXiv:2301.09290},
date={2023},
}

\bib{Minac86}{article}{
   author={Min\'{a}\v{c}, J\'{a}n},
   title={Galois groups of some $2$-extensions of ordered fields},
   journal={C. R. Math. Rep. Acad. Sci. Canada},
   volume={8},
   date={1986},
%   number={2},
   pages={103--108},
%   issn={0706-1994},
%  review={\MR{831786}},
}

\bib{MinacPasiniQuadrelliTan21}{article}{
   author={Min\'a\v c, Jan},
   author={Pasini, Federico William},
   author={Quadrelli, Claudio},
   author={T\^an, Nguyen Duy},
   title={Koszul algebras and quadratic duals in Galois cohomology},
   journal={Adv. Math.},
   volume={380},
   date={2021},
   pages={Paper No. 107569, 49},
%   issn={0001-8708},
%   review={\MR{4200471}},
%   doi={10.1016/j.aim.2021.107569},
}

\bib{MinacTan16}{article}{
   author={Min\'{a}\v{c}, J\'{a}n},
   author={T\^{a}n, N.D.},
   title={Triple Massey products vanish over all fields},
   journal={J. Lond. Math. Soc. (2)},
   volume={94},
   date={2016},
%   number={3},
   pages={909--932},
%   issn={0024-6107},
%   review={\MR{3614934}},
%  doi={10.1112/jlms/jdw064},
}

\bib{MinacWare91}{article}{
   author={Min\'a\v c, J\'an},
   author={Ware, Roger},
   title={Demu\v skin groups of rank $\aleph_0$ as absolute Galois groups},
   journal={Manuscripta Math.},
   volume={73},
   date={1991},
%  number={4},
   pages={411--421},
%   issn={0025-2611},
%   review={\MR{1134572}},
%   doi={10.1007/BF02567651},
}

\bib{MinacWare92}{article}{
   author={Min\'a\v c, J\'an},
   author={Ware, Roger},
   title={Pro-$2$-Demu\v skin groups of rank $\aleph_0$ as Galois groups of maximal $2$-extensions of fields},
   journal={Math. Ann.},
   volume={292},
   date={1992},
%  number={2},
   pages={337--353},
%   issn={0025-5831},
%   review={\MR{1149039}},
%   doi={10.1007/BF01444625},
}

\bib{Neukirch65}{article}{
   author={Neukirch, J\"urgen},
   title={\"Uber gewisse ausgezeichnete unendliche algebraische Zahlk\"orper},
   journal={Bonn. Math. Schr.},
   volume={25},
   date={1965},
   pages={vii+73},
%   issn={0524-045X},
%   review={\MR{0236156}},
}

\bib{NeukirchSchmidtWingberg}{book}{
   author={Neukirch, J\"{u}rgen},
   author={Schmidt, Alexander},
   author={Wingberg, Kay},
   title={Cohomology of Number Fields},
%   series={Grundlehren der mathematischen Wissenschaften [Fundamental Principles of Mathematical Sciences]},
%   volume={323},
   edition={2},
   publisher={Springer-Verlag, Berlin},
   date={2008},
   pages={xvi+825},
%   isbn={978-3-540-37888-4},
%   review={\MR{2392026}},
%  doi={10.1007/978-3-540-37889-1},
}

\bib{Positselski14}{article}{
   author={Positselski, Leonid},
   title={Galois cohomology of a number field is Koszul},
   journal={J. Number Theory},
   volume={145},
   date={2014},
   pages={126--152},
%   issn={0022-314X},
%   review={\MR{3253297}},
%   doi={10.1016/j.jnt.2014.05.024},
}

\bib{Quadrelli24}{article}{
author={Quadrelli, Claudio},
title={Massey products in Galois cohomology and the Elementary Type Conjecture},
journal={J.\ Number Theory},
volume={258},
date={2024},
pages={40\ndash65},
}

\bib{Ribes91}{article}{
   author={Ribes, Luis},
   title={Virtually free factors of pro-$p$ groups},
   journal={Israel J. Math.},
   volume={74},
   date={1991},
%   number={2-3},
   pages={337--346},
%   issn={0021-2172},
%   review={\MR{1135243}},
%   doi={10.1007/BF02775795},
}

\bib{Szczepanik85}{article}{
   author={Szczepanik, Lucyna},
   title={Fields and quadratic form schemes with the index of radical not
   exceeding $16$},
   journal={Ann. Math. Sil.},
%   number={13},
   date={1985},
   pages={23--46},
%   issn={0860-2107},
%   review={\MR{0813142}},
}

\bib{Serre63}{article}{
   author={Serre, Jean-Pierre},
   title={Structure de certains pro-$p$ groupes (d'apr\`es Demu\v{s}kin)},
%   language={French},
   conference={
      title={S\'{e}minaire Bourbaki, Vol. 8},
   },
   book={
      publisher={Soc. Math. France, Paris},
   },
   date={1995},
   pages={Exp. No. 252, 145--155},
%   review={\MR{1611538}},
label={Ser63}
}

\bib{Serre65}{article}{
   author={Serre, Jean-Pierre},
   title={Sur la dimension cohomologique des groupes profinis},
%   language={French},
   journal={Topology},
   volume={3},
   date={1965},
   pages={413--420},
%   issn={0040-9383},
%   review={\MR{0180619}},
%   doi={10.1016/0040-9383(65)90006-6},
}

\bib{Serre67}{book}{
   author={Serre, Jean-Pierre},
   title={Local Fields},
%   series={Graduate Texts in Mathematics},
   volume={67},
%   note={Translated from the French by Marvin Jay Greenberg},
   publisher={Springer-Verlag, New York-Berlin},
   date={1979},
   pages={viii+241},
%   isbn={0-387-90424-7},
%   review={\MR{0554237}},
}

\bib{Wadsworth83}{article}{
   author={Wadsworth, Adrian R.},
   title={$p$-Henselian field: $K$-theory, Galois cohomology, and graded    Witt rings},
   journal={Pacific J. Math.},
   volume={105},
   date={1983},
%   number={2},
   pages={473--496},
%   issn={0030-8730},
%   review={\MR{0691616}},
}

\bib{Ware81}{article}{
   author={Ware, Roger},
   title={Valuation rings and rigid elements in fields},
   journal={Canadian J. Math.},
   volume={33},
   date={1981},
%   number={6},
   pages={1338--1355},
%   issn={0008-414X},
%   review={\MR{0645230}},
%   doi={10.4153/CJM-1981-103-0},
}

\end{biblist}
\end{bibdiv}

\end{document}